\newcites{appendix}{References}
\newtheorem{theorem}{Theorem}[section]
\newtheorem{proposition}{Proposition}[section]
\newtheorem{prop}{Proposition}[section]
\newtheorem{lemma}{Lemma}[section]
\newtheorem{corollary}{Corollary}[section] 
\newtheorem{assumption}{Assumption}[section]
\newtheorem{algo}{Algorithm}[section]
\DeclareMathOperator{\E}{\mathbb{E}}
\DeclareMathOperator{\V}{\mathbb{V}}
\DeclareMathOperator{\bbeta}{\boldsymbol{\beta}}
\DeclareMathOperator{\bD}{\mathbf{D}}
\DeclareMathOperator{\bV}{\mathbf{V}}
\DeclareMathOperator{\bmu}{\boldsymbol{\mu}}
\DeclareMathOperator{\bSigma}{\boldsymbol{\Sigma}}
\DeclareMathOperator{\bY}{\mathbf{Y}}
\DeclareMathOperator{\bF}{\mathbf{F}}
\DeclareMathOperator{\bU}{\mathbf{U}}
\DeclareMathOperator{\bI}{\mathbf{I}}
\DeclareMathOperator{\bX}{\mathbf{X}}
\DeclareMathOperator{\bH}{\mathbf{H}}
\DeclareMathOperator{\bR}{\mathbf{R}}
\DeclareMathOperator{\btau}{\boldsymbol{\tau}}
\DeclareMathOperator{\bzero}{\boldsymbol{0}}
\DeclareMathOperator{\bx}{\mathbf{x}}
\DeclareMathOperator{\bh}{\mathbf{h}}
\newcommand{\blind}{0}
\newcommand*\samethanks[1][\value{footnote}]{\footnotemark[#1]}
\begin{document}

\def\spacingset#1{\renewcommand{\baselinestretch}%
{#1}\small\normalsize} \spacingset{1}


\if0\blind  
{ 
  \title{\bf Super-Consistent Estimation of Points of Impact in Nonparametric Regression with Functional Predictors}
  \author{
  Dominik Po{\ss}\thanks{Institute of Finance and Statistics, University of Bonn, Bonn, Germany},
  Dominik Liebl\thanks{Institute of Finance and Statistics and Hausdorff Center for Mathematics, University of Bonn, Bonn, Germany},
  Alois Kneip\samethanks,
  Hedwig Eisenbarth\thanks{School of Psychology, Victoria University of Wellington, Wellington, New Zealand},
  Tor D.~Wager\thanks{Presidential Cluster in Neuroscience and Department of Psychological and Brain Sciences, Dartmouth College, Hanover, New Hampshire, USA}, \and
  and Lisa Feldman Barrett\thanks{Department of Psychology, Northeastern University and Department of Psychiatry, Massachusetts General Hospital/Harvard Medical School, Boston, Massachusetts, USA; and Athinoula A.~Martinos Center for Biomedical Imaging, Massachusetts General Hospital, Charlestown, Massachusetts, USA}
  }
  \date{}
  \maketitle

} \fi

\if1\blind
{
  \bigskip
  \bigskip
  \bigskip
  \begin{center}
    {\LARGE\bf Super-Consistent Estimation of Points of Impact in Nonparametric Regression with Functional Predictors} 
\end{center}
  \medskip
} \fi

\bigskip

\begin{abstract}
Predicting scalar outcomes using functional predictors is a classic problem in functional data analysis. In many applications, however, only specific locations or time-points of the functional predictors have an impact on the outcome. Such ``points of impact'' are typically unknown and have to be estimated in addition to estimating the usual model components.  We show that our points of impact estimator enjoys a super-consistent convergence rate and does not require knowledge or pre-estimates of the unknown model components. This remarkable result facilitates the subsequent estimation of the remaining model components as shown in the theoretical part, where we consider the case of nonparametric models and the practically relevant case of generalized linear models. The finite sample properties of our estimators are assessed by means of a simulation study. Our methodology is motivated by data from a psychological experiment in which the participants were asked to continuously rate their emotional state while watching an affective video eliciting a varying intensity of emotional reactions.
\end{abstract}

\noindent
{\it Keywords:}
functional data analysis;
variable selection;
nonparametric regression;
quasi-maximum likelihood;
emotional stimuli;
online video rating

\vfill

\newpage

\spacingset{1.5}

\section{Introduction}\label{sec:intro}
Identifying important time points in time continuous trajectories is a difficult but highly relevant problem. For instance, current psychological research on emotional experiences often includes time continuous stimuli such as videos to induce emotional states, say $X(t)\in\mathbb{R}$, with $t\in[a,b]$, where $a$ denotes the start of the video and $b$ the end (see Figure \ref{fig:appl_1}). The evaluation of such stimuli is based on asking participants whether the video made them feel negative, say $Y=0$, or positive, say $Y=1$. In this paper we consider a novel dataset where participants were asked to continuously report their emotional states while watching an affective documentary video on the persecution of African albinos. After watching the video, the participants were asked to rate their final overall feeling. Psychologists are interested in understanding how such concluding overall ratings relate to the fluctuations of the emotional states while watching the video, as this has implications for the way emotional states are assessed in research using such material. Due to a lack of appropriate statistical methods, existing approaches use heuristics such as the ``peak-and-end rule'' in order to link the overall ratings with the continuous emotional stimuli (see Section \ref{sec:RDA}).  Such heuristic approaches, however, can produce results that do not accurately capture the summary rating and can be easily over-interpreted, as there is no unbiased formal inference about which time points contribute to the summary rating.  By contrast, our new methodology allows us to identify the crucial affective video scenes -- the basic prerequisite to understanding the emergence of emotional states in this kind of experiment.

The identification of ``influential'' stimuli occurring in a video corresponds to identify corresponding time points $\tau\in(a,b)$. We aim to estimate such time points within the nonparametric model
\begin{align}\label{eq:IntoM1}
Y = g\big(X(\tau_1),\dots,X(\tau_S)\big) + \varepsilon,
\end{align}
where $\tau_1,\dots,\tau_S \in (a,b)$ and their number $S \in \mathbb{N}$ are unknown and need to be estimated.  The values $\tau_1,\dots,\tau_S$ are called \emph{points of impact} and provide specific locations at which the functional predictor $X\in L^2([a,b])$ influences the scalar outcome $Y$. In our real data application in Section \ref{sec:RDA}, $Y$ is a binary variable and the functional predictor $X$ is evaluated at two estimated points of impact $\hat\tau_1$ and $\hat\tau_2$; see Figure \ref{sec:RDA}.
\begin{figure}[!htb]
\center
\includegraphics[width=.9\textwidth]{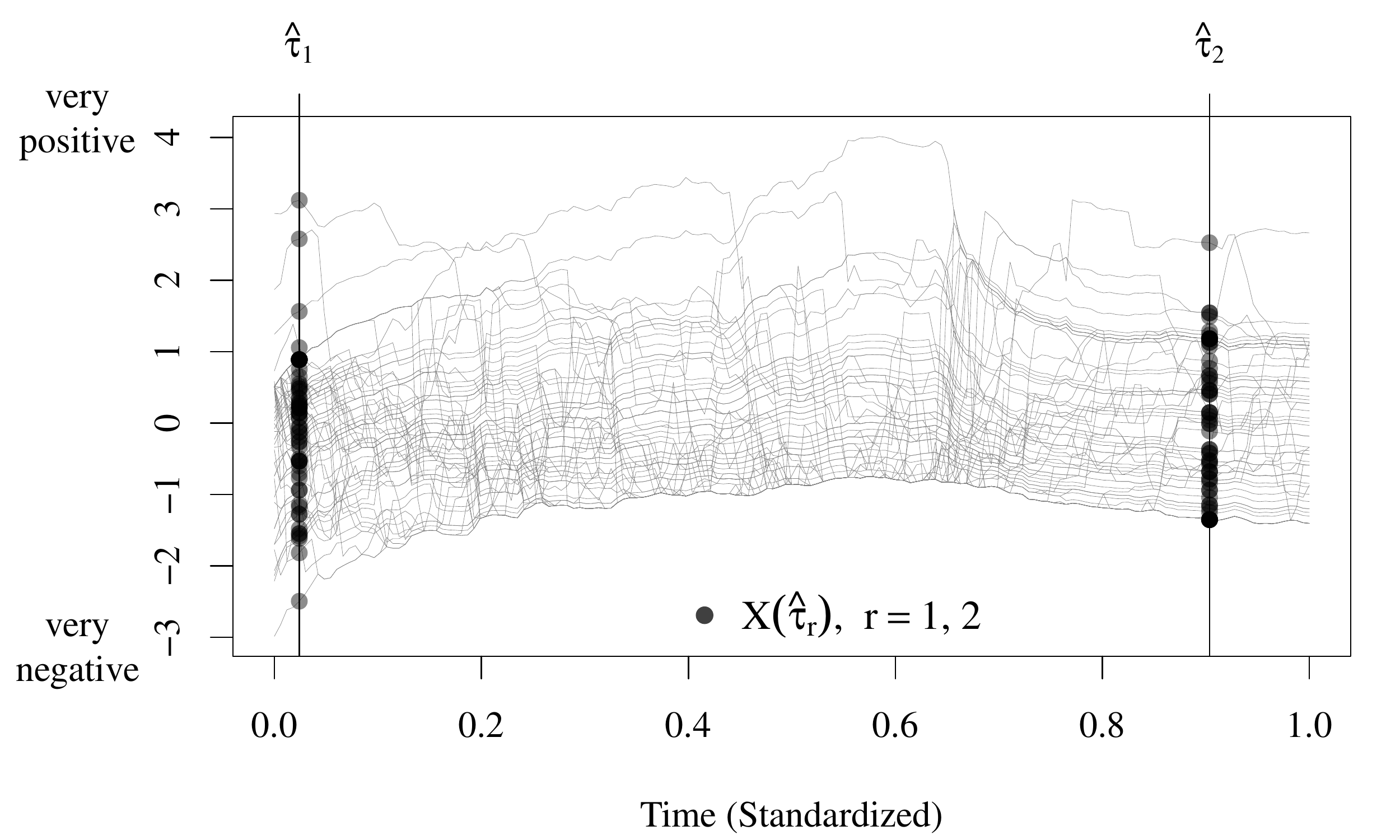}
\caption[]{Continuously self-reported emotion trajectories $\{X(t):0\leq t\leq 1\}$ of $n=65$ participants with two estimated points of impact $\hat\tau_1$ and $\hat\tau_2$; see application in Section \ref{sec:RDA}.}
\label{fig:appl_1}
\end{figure}

Our method builds upon the work of \cite{KPS2015}, however, we consider the much more challenging case of estimating points of impact within a fully nonparametric function $g$.  A remarkable feature of our method is that identification and estimation of the points of impact $\tau_1,\dots,\tau_S$ neither require knowledge about the nonparametric function $g$ nor an estimate of $g$.  The estimation of the points of impact $\tau_1,\dots,\tau_S$ is thus robust to model misspecifications and free of additional contaminating estimation errors.  This result goes far beyond the special case of a functional linear model as considered by \cite{KPS2015}.


To the best of our knowledge, the problem of estimating $\tau_1,\dots,\tau_S$ in \eqref{eq:IntoM1} is, so far, only considered by \cite{FHV2010}, who propose to estimate $g$ nonparametrically for any combination of point of impact candidates $t^\ast_1,\dots,t^\ast_{S^\ast}\in\{t_j: t_j=a+j(b-a)/p\,\text{ with }\,j=1,\dots,p\}$ and to select the best model using cross-validation. This brute force method, however, becomes problematic in practice for $S\geq 2$ and large $p$. Furthermore, the nonparametric estimation of $g$ implies that the points of impact $\tau_1,\dots,\tau_S$ can be estimated at most with the non-parametric rate $n^{-2/(4+S)}$, where $n$ denotes the sample size. Here the speed of convergence decreases dramatically for dimensions $S\geq 2$. By contrast, we can estimate the points of impact with a super-consistent convergence rate, that is, faster than the parametric rate $n^{-1/2}$, and our estimation algorithm is applicable in practice for any fixed $S$ and large $p\gg n$.

The super-consistency result for our points of impact estimators is very beneficial for subsequent estimation problems and allows us to estimate the unknown function $g$ as if the points of impact were known. We demonstrate this for a nonparametric model $g$ as well as for the practically relevant case of generalized linear models with linear predictors, that is, $g(X(\tau_1),\dots,X(\tau_S))=g(\alpha+\sum_{r=1}^S\beta_rX(\tau_r))$ with assumed known parametric link function $g$.

So far, the purely nonparametric framework is only considered by \cite{FHV2010}. The case of a known $g$ and linear predictor function $\alpha+\sum_{r=1}^S\beta_rX(\tau_r)$ had already been considered by previous studies; however, none of these studies provides a super-consistent estimation of points of impact independent of the model $g$. The term ``impact point'' was coined by \cite{LMcK2009} and \cite{LMcKB2010}.  \cite{LMcK2009} consider a logistic regression framework and \cite{LMcKB2010} consider a linear regression framework.  A point of impact model, where $S=1$ is assumed known, has also been studied in survival analysis for the Cox regression model \citep{Z2012}.  \cite{KPS2015} allow for an unknown number $S\geq 0$ of points of impact augmenting the functional linear regression model.  \cite{LRR2019} propose an improved estimation algorithm for the latter work. \cite{GP2014} consider a linear regression framework with multiple points of impact postulating the existence of some consistent estimation procedure.  \cite{BBC2017} consider a linear regression framework and propose a reproducing kernel Hilbert space approach.
Selecting sparse features from functional data $X$ is also useful for clustering. For instance, \cite{FV2017} propose a method for sparse clustering of functional data. In a slightly different context, \cite{PJF2016} focus on selecting predictive subdomains of the functional data.
Related to this paper is also the work of \cite{L2012} and \cite{SL2014}. \cite{L2012} extends structural equation models to the functional data analysis setting and uses his methodology to select significantly impacting time-intervals in  functional magnetic resonance imaging (fMRI) data. \cite{SL2014} propose a mixed effects model which facilitates selecting significant impact regions in fMRI data by controlling for systematic measurement errors.

The rest of this work is structured as follows. Section \ref{sec:POI} considers the estimation of the points of impact $\tau_r$ and their number $S$ independent of the model $g$. Subsequent estimation of the function $g$ is discussed in Section \ref{sec:Inference}. The simulation study and the real data application are in Sections \ref{sec:SIM} and \ref{sec:RDA}. All proofs and additional simulation results can be found in the appendixes of the supplementary paper supporting this article \citep{Suppl2019}.  The \textsf{R}-package \texttt{fdapoi} and the \textsf{R}-scripts for reproducing our main empirical results are also provided as part of the online supplementary material \citep{Suppl2019}.

\section{Estimating points of impact}\label{sec:POI}
In the following we present our theoretical framework (Section \ref{ssec:theoreticalframe}), the estimation algorithm (Section \ref{SubSec:Estim}) and our asymptotic results (Section \ref{SubSec:Asymp}). The section concludes with a discussion of possibilities to generalize our theoretical results (Section \ref{ssec:general}).  

\subsection{Theoretical framework}\label{ssec:theoreticalframe}
In this section we present our theoretical framework for estimating the points of impact $\tau_1,\dots,\tau_S$ without knowing or (pre-)estimating the possibly nonparametric model function $g$. The identification of points of impact constitutes a particular variable selection problem. Since we consider the case where the functional predictor is observed over a densely discretized grid, one might be tempted to apply multivariate variable selection methods like Lasso or related procedures. Note, however, that the high correlation of the predictor at neighboring discretization points violates the basic requirements of these multivariate variable selection procedures. 

Suppose we are given an i.i.d.~sample of data $(X_i, Y_i)$, $i=1,\dots, n$, where $X_i=\{X_i(t),t\in [a,b]\}$ is a stochastic process with $\mathbb{E}(\int_a^b X_i(t)^2 \, dt)<\infty$, $[a,b]$ is a compact subset of $\mathbb{R}$ and $Y_i$ is a real valued random variable. It is assumed that the relationship between $Y_i$ and the functional predictor $X_i$ can be modeled as
\begin{align}\label{eq:whnp}
Y_i = g\big(X_i(\tau_1),\dots,X_i(\tau_S)\big) + \varepsilon_i,
\end{align}
where $\varepsilon_i$ denotes the statistical error term with $\mathbb{E}(\varepsilon_i|X_i(t)) = 0$ for all $t\in [a,b]$. The number $0\leq S<\infty$ and the points of impact $\tau_1, \dots,\tau_S$ are unknown and have to be estimated from the data -- without knowing the true model function $g$. The points of impact $\tau_1, \dots,\tau_S$ indicate the locations at which the functional values $X_i(\tau_1), \dots, X_i(\tau_S)$ have a specific influence on $Y_i$. Without loss of generality, we consider centered random functions $X_i$ with $\E(X_i(t))=0$ for all $t\in[a,b]$.

Surprisingly, the unknown function $g$ has to fulfill only some very mild regularity conditions and does not have to be estimated in order to estimate the points of impact $\tau_1,\dots,\tau_S$ (see Theorem \ref{thm:NP1}). Estimating points of impact, however, necessarily depends on the structure of $X_i$. Motivated by our application we consider stochastic processes with rough sample paths such as (fractional) Brownian motion, Ornstein-Uhlenbeck processes, Poisson processes, etc. These processes also have important applications in fields such as finance, chemometrics, econometrics, and the analysis of gene expression data \citep{LR1991,LWCM2007,DS2006,RHN2013}. Common to these processes are covariance functions $\sigma(s,t)=\E(X_i(s)X_i(t))$ which are two times continuously differentiable for all points $s\neq t$, but not two times differentiable at the diagonal $s=t$. The following assumption on the covariance function of $X_i$ describes a very large class of such stochastic processes and allows us to derive precise theoretical results:
\begin{assumption}\label{assum1}
For some open subset $\Omega\subset\mathbb{R}^3$ with $[a,b]^2\times [0,b-a]\subset \Omega$,
there exists a twice continuously differentiable function $\omega:\Omega \rightarrow
\mathbb{R}$ as well as some $0<\kappa<2$ such that for all $s,t\in [a,b]$
\begin{align}
\sigma(s,t)=\omega(s,t,|s-t|^\kappa).\label{deromega}
\end{align}
Moreover, $0<\inf_{t\in [a,b]}c(t)$, where $c(t):=-\frac{\partial}{\partial z}\omega(t,t,z)|_{z=0}$.
\end{assumption}
The parameter $\kappa$ quantifies the degree of smoothness of the covariance function $\sigma$ at the diagonal. While a twice continuously differentiable covariance function will satisfy \eqref{deromega} with $\kappa=2$, small values $0<\kappa<2$ indicate a process with non-smooth sample paths.

Assumption \ref{assum1} covers several important classes of stochastic processes. Recall, for instance, that the class of self-similar (not necessarily centered) processes $X_i=\{X_i(t):t\geq 0\}$ has the property that $X_i(c_1t)=c_1^HX_i(t)$ for any constant $c_1>0$ and some exponent $H>0$. It is then well known that the covariance function of any such process $X_i$ with stationary increments and $0<\mathbb{E}(X_i(1)^2)<\infty$ satisfies
\begin{equation*}
\sigma(s,t)= \omega(s,t,|s-t|^{2H})=(s^{2H}+t^{2H}-|s-t|^{2H})\,c_2
\end{equation*}
for some constant $c_2>0$; see Theorem 1.2 in \cite{EM2000}. If $0<H<1$ such a process respects Assumption \ref{assum1} with $\kappa =2H$ and $c(t) = c_2$. A prominent example of a self-similar process is the fractional Brownian motion.

Another class of processes is given when $X_i=\{X_i(t):t\geq 0\}$ is a second order process with stationary and independent increments. In this case it is easy to show that
$\sigma(s,t) =\allowbreak \omega(s,t,|s-t|)=\allowbreak (s+t-|s-t|)\, c_3$
for some constant $c_3>0$. The Assumption \ref{assum1} then holds with $\kappa=1$ and $c(t)=c_3$.
The latter conditions on $X_i$ are, for instance, satisfied by second order L\'{e}vy processes which include important processes such as Poisson processes, compound Poisson processes, or jump-diffusion processes.

A third important class of processes satisfying Assumption \ref{assum1} are those with a Mat\'{e}rn covariance function. For this class of processes the covariance function depends only on the distance between $s$ and $t$ through
$$\sigma(s,t) = \omega_\nu(s,t,|s-t|) = \frac{\pi \phi}{2^{\nu-1}\Gamma(\nu+1/2)\alpha^{2\nu}}(\alpha|s-t|)^\nu K_\nu\big(\alpha|s-t|\big),$$
where $K_\nu$ is the modified Bessel function of the second kind, and $\rho$, $\nu$ and $\alpha$ are non-negative parameters of the covariance. It is known that this covariance function is $2m$ times differentiable if and only if $\nu > m$  \citep[cf.][Ch.~2.7, p.~32]{S1999_book}. Assumption \ref{assum1} is then satisfied for $\nu <1$. For the special case where $\nu = 0.5$ one may derive the long term covariance function of an Ornstein-Uhlenbeck process which is given as $\sigma(s,t) =\allowbreak\omega(s,t,|s-t|)= \allowbreak  0.5\,\exp(- \theta |s-t|)\sigma_{OU}^2/\theta$, for some parameter $\theta>0$ and $\sigma_{OU}>0$. Assumption \ref{assum1} is then covered with $\kappa=1$ and $c(t) = 0.5\,\sigma_{OU}^2$.

The remarkable result that identification and estimation of the points of impact $\tau_1,\dots,\allowbreak\tau_S$ requires neither knowledge about the possibly nonparametric function $g$ nor an estimate of $g$ is based on the following theorem.
\begin{theorem}\label{thm:NP1}
Let $X_i$ be a Gaussian process. For any function $g(x_1,\dots,x_S)$ such that for all $r=1,\dots,S$ the partial derivative $\partial g(x_1,\dots,x_S)/\partial x_r$ is continuous almost everywhere and $0<|\E(\frac{\partial}{\partial x_r} g(X_i(\tau_1),\dots, X_i(\tau_S)))|<\infty$, we define $\vartheta_r=\E\big(\frac{\partial}{\partial x_r} g(X_i(\tau_1),\dots, X_i(\tau_S))\big)$. Then the equation
$\E\big(X_i(s)Y_i\big)= \sum_{r=1}^S \vartheta_r \sigma(s,\tau_r)$ holds for all $s\in[a,b]$.


\end{theorem}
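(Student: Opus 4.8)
The plan is to reduce the infinite-dimensional statement to a finite-dimensional identity for jointly Gaussian random variables and then to invoke a multivariate version of Stein's identity. First I would dispose of the error term: substituting $Y_i=g(X_i(\tau_1),\dots,X_i(\tau_S))+\varepsilon_i$ and using $\E(\varepsilon_i\mid X_i(t))=0$ with the particular choice $t=s$, the tower property gives $\E(X_i(s)\varepsilon_i)=\E(X_i(s)\,\E(\varepsilon_i\mid X_i(s)))=0$. It therefore remains to establish
\[
\E\big(X_i(s)\,g(X_i(\tau_1),\dots,X_i(\tau_S))\big)=\sum_{r=1}^S\vartheta_r\,\sigma(s,\tau_r).
\]

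Next I would exploit the Gaussian assumption. Because $X_i$ is a Gaussian process, the finite collection $(X_i(s),X_i(\tau_1),\dots,X_i(\tau_S))$ is a jointly centered Gaussian vector; write $Z:=X_i(s)$ and $\mathbf{W}:=(X_i(\tau_1),\dots,X_i(\tau_S))$ with covariance matrix $\Sigma$, so that $\Cov(Z,W_r)=\sigma(s,\tau_r)$. The claim then becomes the multivariate Stein identity $\E(Z\,g(\mathbf{W}))=\sum_{r=1}^S\Cov(Z,W_r)\,\E(\partial_r g(\mathbf{W}))$, since $\E(\partial_r g(\mathbf{W}))=\vartheta_r$ by definition. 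To prove this identity I would project $Z$ onto $\mathbf{W}$: joint Gaussianity yields $Z=\sum_r c_rW_r+U$ with $U$ independent of $\mathbf{W}$ and mean zero, so that $\E(Z\,g(\mathbf{W}))=\sum_r c_r\,\E(W_r\,g(\mathbf{W}))$. For each coordinate, integration by parts against the Gaussian density $\phi$, using the score identity $w_r\,\phi(\mathbf{w})=-\sum_k\Sigma_{rk}\,\partial_k\phi(\mathbf{w})$, gives $\E(W_r\,g(\mathbf{W}))=\sum_k\Sigma_{rk}\,\E(\partial_k g(\mathbf{W}))$. Summing and using $\sum_r c_r\Sigma_{rk}=\Cov(\sum_r c_rW_r,W_k)=\Cov(Z,W_k)=\sigma(s,\tau_k)$ completes the reduction.

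The main obstacle is justifying the integration by parts under the weak regularity imposed on $g$: its partial derivatives are assumed only continuous almost everywhere, with $\E(\partial_r g(\mathbf{W}))$ finite, rather than globally smooth with fast-decaying derivatives. I would handle this by mollifying $g$ with a smooth approximating sequence $g_m$, applying Stein's identity to each $g_m$ where the integration by parts is unproblematic, and passing to the limit; the finiteness conditions $0<|\vartheta_r|<\infty$ together with the almost-everywhere continuity of the partials and dominated convergence should control both sides. A secondary technical point is the possible degeneracy of $\Sigma$ when the $X_i(\tau_r)$ are linearly dependent; for distinct points of impact the covariance matrix of a process satisfying Assumption \ref{assum1} is nonsingular, and in any degenerate case the argument is carried out on the affine support of $\mathbf{W}$ using a pseudoinverse in the score identity.
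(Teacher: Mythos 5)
Your proposal is correct and follows essentially the same route as the paper: the paper reduces the claim to the multivariate Stein identity for the jointly Gaussian vector $(X_i(s),X_i(\tau_1),\dots,X_i(\tau_S))$, citing Lemma 1 of Liu (1994) with a function that is constant in the first coordinate, whereas you reprove that same identity from scratch via the projection $X_i(s)=\sum_r c_r X_i(\tau_r)+U$ with $U$ independent of the $X_i(\tau_r)$, followed by Gaussian integration by parts. Your explicit handling of the error term through $\E(\varepsilon_i\mid X_i(s))=0$ and your remarks on mollification under the weak regularity of $g$ and on possible degeneracy of the covariance matrix fill in details that the paper's one-paragraph proof leaves implicit.
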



Theorem \ref{thm:NP1} allows to decompose the cross-covariance $\E(X_i(s)Y_i)$ into coefficients $\vartheta_r$, which depend on the unknown function $g$, and the covariance function $\sigma$, which only depends on $X_i$. Our estimation strategy for the points of impact $\tau_r$ works for unknown $\vartheta_r$ with $0<|\vartheta_r|<\infty$. The latter imposes only mild regularity assumptions on $g$ and is fulfilled, for instance, by any nonparametric single-index model, $g(X_i(\tau_1),\dots, X_i(\tau_S))\equiv g(\eta_i)$ with $\eta_i=\alpha +\sum_{r=1}^S\beta_rX_i(\tau_r)$, where $0<|\E(g'(\eta_i))|<\infty$. Of course, the class of possible functions $g$ defined by Theorem \ref{thm:NP1} also contains much more complex cases than single-index models.

The intention of our estimator for the points of impact $\tau_r$ is to exploit the covariance structure of  processes described by Assumption \ref{assum1}. Covariance functions $\sigma(s,t)$ satisfying Assumption \ref{assum1} are obviously not two times differentiable at the diagonal $s=t$, but are two times differentiable for $s\neq t$.  Using Theorem \ref{thm:NP1} in conjunction with Assumption \ref{assum1} allows us to uniquely identify the locations of the points of impact from the cross-covariance $\E(X_i(s)Y_i)$.  Let us make this more precise by defining
\begin{equation*}
f_{XY}(s):=\E(X_i(s)Y_i) = \sum_{r=1}^S \vartheta_r \sigma(s, \tau_r)\quad\text{for}\quad s\in[a,b].
\end{equation*}
Since $\sigma(s,t)$ is not two times differentiable at $s=t$, the cross-covariance $f_{XY}(s)$ will not be two times differentiable at $s=\tau_r$, for all $r=1,\dots, S$, resulting in kink-like features at $\tau_r$ as depicted in the upper plot of Figure \ref{fig:cor}.
\begin{figure}[!thb]
\centering
\includegraphics[width=0.9\textwidth]{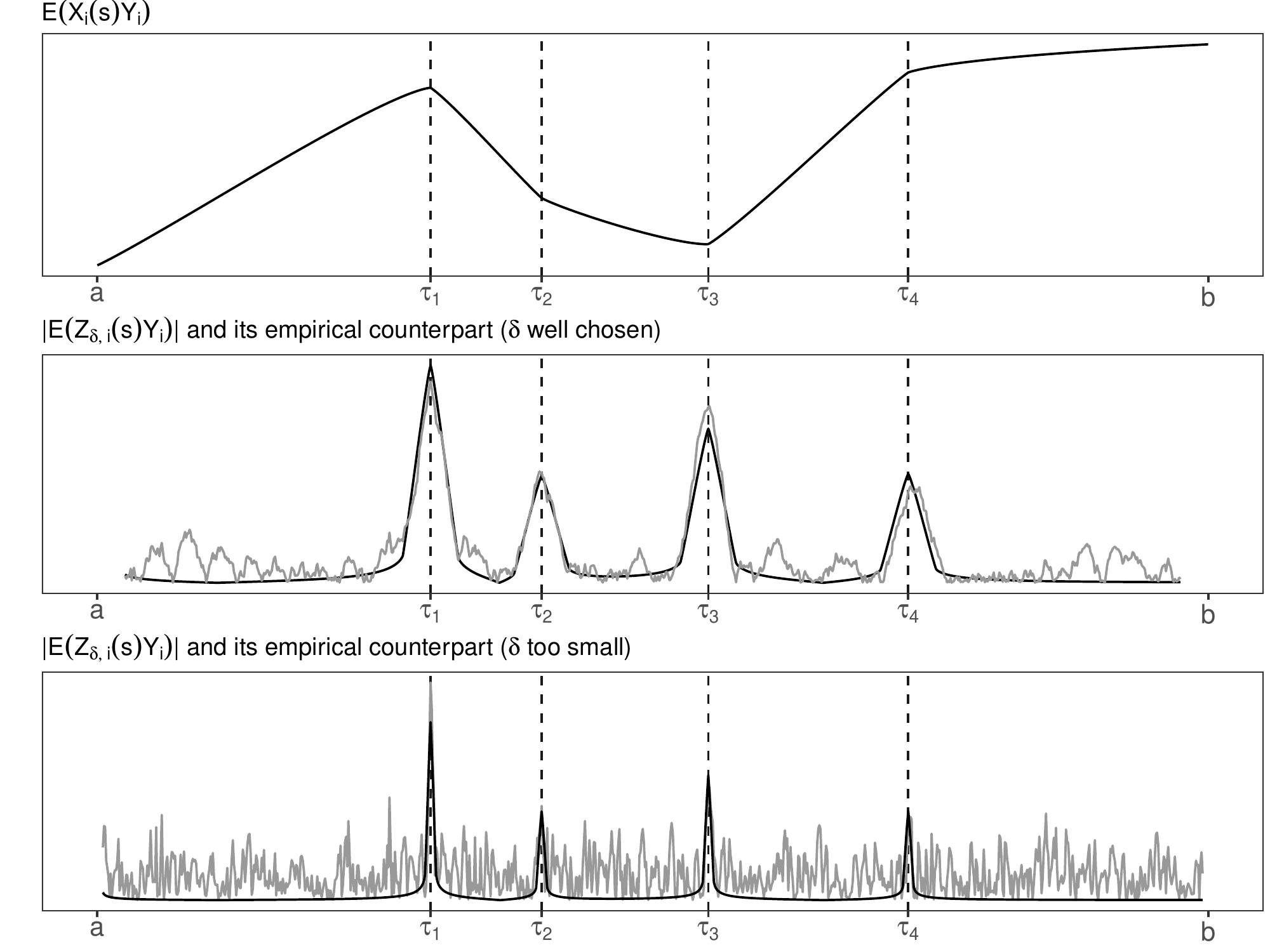} 
\caption[Illustrating estimation of points of impact]{The upper panel shows $\E(X_i(s)Y_i)$ as a function of $s$ with four kink-like features at the points of impact (dashed vertical lines). The two lower panels show $|\E(Z_{\delta,i}(s)Y_i)|$, $a+\delta\leq s\leq b-\delta$, (black solid line) and its empirical counterparts (gray solid line) for two different values of $\delta>0$.}
\label{fig:cor}
\end{figure}

A natural strategy for estimating $\tau_r$ is to detect these kinks by considering the following modified central difference approximation of the second derivative of $f$ at a point $s\in [a-\delta, b-\delta]$ for some $\delta>0$:
\begin{equation}\label{eq:2nd_diff}
f_{XY}(s)-\frac{1}{2}(f_{XY}(s+\delta)+ f_{XY}(s-\delta)).
\end{equation}
By defining the auxiliary process
\begin{equation*}
Z_{\delta,i}(s):= X_i(s) - \frac{1}{2}(X_i(s-\delta) + X_i(s+\delta)) \quad\text{for}\quad s \in [a+\delta, b-\delta],
\end{equation*}
we have the following equivalent moment expression for \eqref{eq:2nd_diff}:
\begin{equation}\label{eq:va2nd}
\mathbb{E}(Z_{\delta,i}(s)Y_i)=f_{XY}(s) - \frac{1}{2}(f_{XY}(s+\delta)+ f_{XY}(s-\delta)).
\end{equation}

At $s=\tau_r$, expression \eqref{eq:va2nd} will decline more slowly to zero as $\delta\to 0$ than for $s\neq \tau_r$, $r=1,\dots,S$.  For suitable values of $\delta$, the points of impact $\tau_r$ can then be estimated using the local extrema of the empirical counterpart of $|\mathbb{E}(Z_{\delta,i}(s)Y_i)|$ (see middle panel of Figure \ref{fig:cor}). \\
More precisely, Theorem \ref{thm:NP1} together with Proposition \ref{thmcharX} and Lemma \ref{lem3} in Appendix \ref{app:poiEST} of the supplementary paper \cite{Suppl2019} imply that as $\delta\to 0$
\begin{equation*}
\begin{array}{ll}
\mathbb{E}(Z_{\delta,i}(s)Y_i) = \vartheta_r c(\tau_r)\delta^\kappa + o(\delta^\kappa) &\text{if } s\in\{\tau_1,\dots,\tau_S\}\\
\mathbb{E}(Z_{\delta,i}(s)Y_i) = O(\delta^2) &\text{if } s\notin\{\tau_1,\dots,\tau_S\},
\end{array}
\end{equation*}
where $0<\kappa<2$ and $c(\cdot)>0$ are as defined in Assumption \ref{assum1}.

Of course, $\mathbb{E}(Z_{\delta,i}(s)Y_i)$ is not known and we have to rely on $n^{-1}\sum_{i=1}^n Z_{\delta,i}(s)Y_i$ as its estimate. Under our setting we will have that the variance $\V(Z_{\delta,i}(s)Y_i) = O(\delta^\kappa)$ which implies
$$\frac{1}{n}\sum_{i=1}^n Z_{\delta,i}(s)Y_i - \mathbb{E}(Z_{\delta,i}(s)Y_i) = O_P\left(\sqrt{\delta^\kappa/n}\right).$$
Consequently, the identification of points of impact requires a sensible choice of $\delta$. For too small $\delta$-values (e.g., $\delta^\kappa\sim n^{-1}$) the estimation noise will overlay the signal; this situation is depicted in the bottom plot of Figure \ref{fig:cor}. For too large $\delta$-values, however, it will not be possible to distinguish between neighboring points of impact.

\smallskip

\noindent\textbf{Remark:} \textit{Even if the covariance function $\sigma(s,t)$ does not satisfy Assumption \ref{assum1}, the points of impact $\tau_r$ may still be estimated using the local extrema of $\E(Z_{\delta,i}(s)Y_i)$. Suppose, for instance, there exists a $m\geq 2$ times differentiable function $\widetilde{\sigma}: \mathbb{R} \to \mathbb{R}$ such that $\sigma(s,t) = \widetilde{\sigma}(|s-t|)$, where $\widetilde{\sigma}(|s-t|)$ decays fast enough, as $|s-t|$ increases, such that $X_i(s)$ is essentially uncorrelated with $X_i(\tau_r)$ for $|\tau_r-s|\gg 0$. If $|\widetilde{\sigma}''(0)| > |\widetilde{\sigma}''(|s-t|)|$, for $s\neq t$, and $\min_{r \neq k}|\tau_r - \tau_k|$ is large enough, then all points of impact might be identified as local extrema of $\E(Z_{\delta,i}(s)Y_i)$.}

\subsection{Estimation algorithm}\label{SubSec:Estim}
In the following we consider the case where each $X_i$ has been observed over $p$ equidistant points $t_j=a+(j-1)(b-a)/(p-1)$, $j=1, \dots, p$, where $p$ may be much larger than $n$. Estimators for the points of impact $\tau_r$ are determined by sufficiently large local maxima of $\left|n^{-1}\sum_{i=1}^n Z_{\delta,i}(t_j)Y_i\right|$. 

\begin{algo}{\bf(Estimating points of impact)}\label{Algo:1}\
\spacingset{1.1}
\begin{enumerate}
\item[{\bf 1.}] {\bf Calculate}
$\widehat{f}_{XY}(t_j) := \frac{1}{n}\sum_{i=1}^n X_i(t_j)Y_i, \quad\text{for each}\quad j=1,\dots, p$
\item[{\bf 2.}] {\bf Choose} $\delta>0$ such that there exists some $k_\delta\in\mathbb{N}$ with $1\leq k_\delta<(p-1)/2$
and $\delta=k_\delta(b-a)/(p-1)$.
\item[{\bf 3.}] {\bf Calculate}
$\widehat{f}_{ZY}(t_j):=\widehat{f}_{XY}(t_j)-\frac{1}{2}(\widehat{f}_{XY}(t_j-\delta) + \widehat{f}_{XY}(t_j+\delta))$, for all $j\in {\cal J}_{\delta}$, where ${\cal J}_{\delta}:=\{k_\delta+1,\ldots,p-k_\delta\}$
\item[{\bf 4.}] {\bf Repeat:}
\begin{itemize}
\item[]{\bf Initiate} the repetition by setting $l=1$.
\item[]{\bf Estimate} the $l$th point of impact candidate as $\widehat{\tau}_l=\underset{t_j: \,j\in {\cal J}_{\delta}}{\arg\max} |\widehat{f}_{ZY}(t_j)|.$
\item[]{\bf Update} ${\cal J}_{\delta}$ by eliminating all points in ${\cal J}_{\delta}$ in an interval of size $\sqrt{\delta}$ around $\widehat{\tau}_l$.\\
\phantom{{\bf Update}} Set $l=l+1$.
\item[]{\bf End} repetition if ${\cal J}_{\delta} = \emptyset$.
\end{itemize}
\end{enumerate}
\spacingset{1.5}
The procedure will result in estimates $\widehat{\tau}_1, \widehat{\tau}_2, \dots, \widehat{\tau}_{M_\delta}$, where $M_\delta<\infty$ denotes the maximum number of possible repetitions. The estimator of $S$ is
\begin{equation*}
\widehat S =\min\left\{l\in\mathbb{N}_0: \left|\frac{\frac{1}{n}\sum_{i=1}^n Z_{\delta,i}(\widehat{\tau}_{l+1})Y_i}{(\frac{1}{n}\sum_{i=1}^n Z_{\delta,i}(\widehat{\tau}_{l+1})^2)^{1/2}}\right|<\lambda\right\}\quad
\text{for some threshold }\;\lambda>0.
\end{equation*}
\end{algo}

\smallskip

\noindent An asymptotically valid choice of the threshold $\lambda$ is presented in Theorem \ref{poicon} and a practical implementation of $\lambda$ is discussed below of Theorem \ref{poicon}. 

\smallskip

\noindent\textbf{Remark:} \textit{
This estimation algorithm is made for the case of densely observed functional data.  In practice this means functional data that are sampled at a high frequency such as in our real data example (Section \ref{sec:RDA}).  Unfortunately, we do not see a simple way to  generalize  our method to the case of irregularly or sparsely sampled functional data. Such a generalization would require a very different approach based on nonparametric smoothing procedures.}


\subsection{Asymptotic results}\label{SubSec:Asymp}
In this section, we consider asymptotics as $n\rightarrow \infty$ with $p\equiv p_n \geq L n^{1/\kappa}$ for some constant $(b-a)/2<L<\infty$. We introduce the following assumption:
\begin{assumption} \label{assum2}\
\begin{itemize}
\item[a)] $X_1,\dots,X_n$ are i.i.d.~random functions distributed according to $X$. The process $X$ is
Gaussian with covariance function $\sigma(s,t)$.
\item[b)] There exists a $0<\sigma_{|y|}<\infty$ such that for each $m=1, 2, \dots$ we have \\
$\E(|Y_i|^{2m}) \leq 2^{m-1} m! \sigma_{|y|}^{2m}$.
\end{itemize}
\end{assumption}
The moment condition in b) is obviously fulfilled for bounded $Y_i$. For instance, in the functional logistic regression we have that $\E(|Y_i|^m) \leq 1$ for all $m =1,2,\dots$. Condition b) also holds for any centered sub-Gaussian $Y_i$, where a centering of $Y_i$ can always be achieved by substituting $g(X_i(\tau_1),\dots,X_i(\tau_S)) + \E(g(X_i(\tau_1),\dots,X_i(\tau_S)))$ for $g(X_i(\tau_1),\dots,X_i(\tau_S))$ in Model \eqref{eq:whnp}. If $X_i$ satisfies condition a), then condition b) in particular holds if the errors $\varepsilon_i$ are sub-Gaussian and $g$ is differentiable with bounded partial derivatives.

The following result shows consistency of our estimators for the points of impact $\widehat{\tau}_r$ and the estimator $\widehat{S}$:
\begin{theorem} \label{poicon}
Under Assumptions \ref{assum1}, \ref{assum2}, and the assumptions of Theorem~\ref{thm:NP1}, let $\delta\equiv\delta_n\rightarrow 0$ as $n\rightarrow\infty$ such that $n\delta^\kappa/|\log \delta|\rightarrow \infty$ and  $\delta^\kappa/n^{-\kappa+1}\rightarrow 0$. We then obtain that
\begin{equation}\label{thm3eq1}
\max_{\phantom{\widehat{S}}r=1,\ldots,\widehat{S}}\min_{\phantom{\widehat{S}}s=1,\ldots,S\phantom{\widehat{S}}}|\widehat{\tau}_r-\tau_s|\ =\ O_P( n^{-1/\kappa}).
\end{equation}
Moreover, there exists a constant $0<D<\infty$ such that when  Algorithm \ref{Algo:1} is applied with threshold
$$\lambda\equiv \lambda_n=A\sqrt{ \frac{\sigma_{|y|}^2}{n} \log \left(\frac{b-a}{\delta}\right)}, \quad A>D, \text{\quad and \quad$\delta^2=O(n^{-1})$, \quad then}
$$
\begin{equation} \label{thm3eq4}
P(\widehat{S}=S)\ \rightarrow\ 1 \quad \text{as}\quad  n\rightarrow \infty.
\end{equation}
\end{theorem}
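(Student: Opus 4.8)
The plan is to split the argument into the two displayed claims and to build both on the same two ingredients: a uniform concentration bound for the centered empirical criterion and a precise description of the population signal $\E(Z_{\delta,i}(s)Y_i)$ near and away from the points of impact. For the concentration piece I would fix $s$ on the grid and note that, since $X$ is Gaussian, $Z_{\delta,i}(s)$ is Gaussian with $\V(Z_{\delta,i}(s))\asymp\delta^\kappa$; combined with the sub-exponential moment bound of Assumption \ref{assum2}b) on $Y_i$, the summands $Z_{\delta,i}(s)Y_i$ admit Bernstein-type exponential-moment control. A Bernstein inequality together with a union bound over the $\asymp(b-a)/\delta$ grid points in $\mathcal J_\delta$ then yields $\sup_{j\in\mathcal J_\delta}|\widehat f_{ZY}(t_j)-\E(Z_{\delta,i}(t_j)Y_i)|=O_P(\sqrt{\delta^\kappa|\log\delta|/n})$. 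For the population side I would use Theorem \ref{thm:NP1} to write $f_{XY}=\sum_r\vartheta_r\sigma(\cdot,\tau_r)$ and Assumption \ref{assum1} to isolate the non-smooth part $-c(\tau_r)|s-\tau_r|^\kappa$; its second difference scales as $\delta^\kappa\phi((s-\tau_r)/\delta)$ with $\phi(v)=|v|^\kappa-\tfrac12(|v+1|^\kappa+|v-1|^\kappa)$, so that near $\tau_r$ one obtains a sharp peak $\E(Z_{\delta,i}(\tau_r+u)Y_i)=\vartheta_r c(\tau_r)(\delta^\kappa-|u|^\kappa)+O(\delta^2)$ for $|u|\ll\delta$, while $\E(Z_{\delta,i}(s)Y_i)=O(\delta^2)$ once $s$ is bounded away from every $\tau_r$ (the two asymptotic regimes recalled above).

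For the rate \eqref{thm3eq1}, the condition $n\delta^\kappa/|\log\delta|\to\infty$ makes the peak height $\asymp\delta^\kappa$ dominate the uniform noise floor $\sqrt{\delta^\kappa|\log\delta|/n}$, so every selected $\widehat\tau_r$ is forced to sit near some true $\tau_s$. To pin down the distance I would run a peak-localization (basic-inequality) argument: since $\widehat\tau_r$ maximizes $|\widehat f_{ZY}|$, comparing its value with the value at the grid point nearest $\tau_s$ shows that the population signal drop $\asymp|\widehat\tau_r-\tau_s|^\kappa$ must be absorbed by the increment of the centered process over that distance. Here the sampling condition $p\geq Ln^{1/\kappa}$ with $L>(b-a)/2$ is used to guarantee a grid point within $O(n^{-1/\kappa})$ of each $\tau_s$, so that discretization cannot spoil the rate. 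Controlling the relevant increments requires the sharper estimate $\V\big((Z_{\delta,i}(s)-Z_{\delta,i}(s'))Y_i\big)\asymp|s-s'|^\kappa$ for $|s-s'|\ll\delta$, hence an increment standard deviation $\asymp\sqrt{|s-s'|^\kappa/n}$ after averaging; a chaining/maximal inequality then bounds the modulus of continuity of the noise at this scale. Setting $h=|\widehat\tau_r-\tau_s|$ and balancing the signal drop $\asymp h^\kappa$ against the noise increment $\asymp\sqrt{h^\kappa/n}$ yields $h=O_P(n^{-1/\kappa})$. The upper-bound condition $\delta^\kappa=o(n^{-\kappa+1})$ is what guarantees that this localization lands in the regime $h\ll\delta$ and that the smooth $O(\delta^2)$ corrections and the cross-interference between distinct, well-separated points of impact remain negligible at scale $n^{-1/\kappa}$, so that they cannot displace or create spurious peaks.

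For the cardinality claim \eqref{thm3eq4} I would analyse the studentized statistic used in Algorithm \ref{Algo:1}. First, using Gaussianity of $X$ and chi-square concentration, I would show that the denominator $n^{-1}\sum_i Z_{\delta,i}(s)^2$ concentrates at its mean $\asymp\delta^\kappa$ uniformly over the grid, so that after studentization the maximal noise is of order $\sqrt{|\log\delta|/n}$; this is precisely the order of $\lambda_n$, and the Bernstein constant governing the maximal studentized noise fixes the threshold constant $D$ with $A>D$. No under-selection: at each true point of impact the studentized signal is $\asymp\vartheta_r\sqrt{c(\tau_r)}\,\delta^{\kappa/2}$, which exceeds $\lambda_n$ because $n\delta^\kappa/|\log\delta|\to\infty$, so $\widehat S\geq S$ with probability tending to one. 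No over-selection: once all $S$ true points have been removed by the elimination step, every remaining candidate is bounded away from the $\tau_r$, where the signal contributes only $O(\delta^2)$; the extra condition $\delta^2=O(n^{-1})$ makes this bias negligible relative to $\lambda_n$, so the studentized value there is dominated by noise and falls below $\lambda_n$ for $A>D$, giving $\widehat S\leq S$ with probability tending to one. Combining the two inequalities yields $P(\widehat S=S)\to1$.

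I expect the main obstacle to be the rate step: obtaining the exact $n^{-1/\kappa}$ localization, rather than a cruder bound, requires matching at a single scale the non-smooth local geometry of the population peak (the $|u|^\kappa$ decay coming from $\phi$ and Assumption \ref{assum1}) with a correspondingly sharp control of the increments of the centered empirical process, whose variance also scales like $|s-s'|^\kappa$. The delicate points are the chaining bound for the noise increments uniformly over the grid, the verification that the smooth remainder and the inter-peak interference do not shift the argmax beyond order $n^{-1/\kappa}$, and the bookkeeping that keeps the entire localization inside the regime $h\ll\delta$ secured by the two calibration conditions on $\delta$.
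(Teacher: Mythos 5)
Your overall architecture matches the paper's: the same decomposition into a deterministic signal (Theorem \ref{thm:NP1} plus the local expansions of $\sigma$ under Assumption \ref{assum1}, i.e.\ the content of Proposition \ref{thmcharX} and Lemma \ref{lem3}) and a centered empirical process controlled by Bernstein/Orlicz bounds with a union bound over a fine grid (Lemmas \ref{lem1} and \ref{lem0}), and the same studentization-plus-$\chi^2$-concentration treatment of $\widehat S$. The one place where your sketch has a genuine gap is the step you yourself flag as delicate: ``setting $h=|\widehat\tau_r-\tau_s|$ and balancing the signal drop $\asymp h^\kappa$ against the noise increment $\asymp\sqrt{h^\kappa/n}$ yields $h=O_P(n^{-1/\kappa})$.'' As stated this is circular, because $h$ is the random quantity being bounded: if you apply a single uniform maximal inequality over the whole window $|t-\tau_r|\le\delta/2$ you only get a noise bound of order $\sqrt{\delta^\kappa\log(1/\delta)/n}$, and balancing that against $h^\kappa$ gives the much weaker rate $h=O_P((\delta^\kappa\log(1/\delta)/n)^{1/(2\kappa)})$. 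The paper resolves this with a peeling argument: dyadic shells $\delta/2^m$, $m=1,\dots,M_p$ with $\delta/2^{M_p}\asymp n^{-1/\kappa}$, an event ${\cal A}(n,m,r)$ at each shell on which the localizer cannot escape past radius $\delta/2^{m+1}$, and --- crucially --- a \emph{polynomial} (Markov-type, probability $1-1/q$) tail bound at each shell rather than an exponential one, so that the failure probabilities $\sum_m C\sqrt{2^{\kappa m}/(\delta^\kappa n)}$ form a geometric series controlled by an arbitrary $\epsilon$. Without some version of this multi-scale argument you either lose a logarithmic factor in the rate or cannot close the loop on the random localization radius; this is the central idea of the super-consistency proof and it is absent from your plan.

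A second, more minor issue concerns the over-selection step. After the elimination of $\sqrt{\delta}$-neighborhoods, the surviving candidates are only guaranteed to be at distance of order $\sqrt{\delta}$ from each $\tau_r$, not bounded away by a constant, so by \eqref{lem3eq1} the residual bias there is of order $\delta^2/(\sqrt{\delta})^{2-\kappa}=\delta^{1+\kappa/2}$, not $O(\delta^2)$ as you assert. After dividing by the studentizing denominator $\asymp\delta^{\kappa/2}$ this leaves a term of order $\delta$, and it is exactly the condition $\delta^2=O(n^{-1})$ that makes $\delta=o(\lambda_n)$; your accounting happens to reach the same conclusion but from a bias bound that does not hold on the relevant region.
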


Note that the rates of convergence in \eqref{thm3eq1} are super-consistent, since $0<\kappa<2$. For instance, for Ornstein-Uhlenbeck processs or Brownian motions we have $\kappa=1$, such that $\max_{r=1,\ldots,\widehat{S}}\min_{s=1,\ldots,S}|\widehat{\tau}_r-\tau_s|\ =\ O_P( n^{-1})$.  

In principle, arbitrarily fast rates of convergence can be achieved for $\kappa$-values close to zero, because small $\kappa$-values correspond to rough processes, $X_i$.  Roughness means that the process has strong local variations also within small intervals $[\tau_r-\epsilon,\tau_r+\epsilon]$, $\epsilon>0$, which facilitates differentiating a point of impact $\tau_r$, $r=1\dots,S$, from the neighboring points $t\in[\tau_r-\epsilon,\tau_r+\epsilon]$. By contrast, for smooth processes (large $\kappa$-values) all values of $X_i(t)$ with $t\in[\tau_r-\epsilon,\tau_r+\epsilon]$ will be almost identical which makes it hard to identify the correct point of impact $\tau_r$.

A practical and asymptotically valid threshold specification which performed well in our simulation studies is given by $\lambda = A ((\E(Y_i^4))^{1/2} \log\big((b-a)/\delta\big)/n)^{1/2}$, where $\E(Y_i^4)$ is estimated by $\widehat{\E}(Y_i^4) = n^{-1}\sum_{i=1}^n Y_i^4$ and $A=\sqrt{2 \sqrt{3}}$. This value is motivated by an argument using the central limit theorem in the derivations of the threshold for Theorem~\ref{poicon}. See the remark after the proof of Lemma~\ref{lem0} in Appendix~\ref{app:poiEST} for additional information.

The super-consistency result in Theorem \ref{poicon} is very general and does not require knowledge of $g$ or a pre-estimate of $g$; only a set of mild and verifiably assumptions on $g$ is postulated. Therefore, we expect that the theorem will be found useful for deriving inferential results for a broad variety of different models $g$. In the following we demonstrate the usefulness of Theorem \ref{poicon} for deriving inferential results for nonparametric models and parametric generalized linear models. Note that the related Corollary 1 in \cite{FHV2010} requires the simultaneous estimation of the nonparametric model function $g$ and the points of impact. This approach results in substantially slower nonparametric convergence rates and limits the applicability of their result considerably.


\subsection{Generalizations}\label{ssec:general}
The above theoretical assumptions provide a tractable setup that will be used also in the remaining parts of the paper.  In this subsection, however, we show that the Gaussian assumption of Theorem \ref{thm:NP1} and Theorem \ref{poicon} can be relaxed and that our approach for identifying and estimating the points of impact may also work for a large class of non-Gaussian processes (Section \ref{ssec:General_1}).  Moreover, we outline how our estimation procedure can be adapted to a more general version of the covariance Assumption \ref{assum1} (Section \ref{ssec:General_2}).

\subsubsection{Non-Gaussian processes}\label{ssec:General_1}
To generalize Theorem \ref{thm:NP1} one can build upon the framework of elliptical processes which includes the case of non-Gaussian, heavy-tailed distributions.  That is, one can consider processes $X_i$ that depend on some latent random variable $V_i$ such that the conditional distribution of $X_i$ given $V_i=v$ is Gaussian.  However, the (unconditional) distribution of $X_i$ then additionally depends on the distribution of $V_i$ and may be far from Gaussian.

Our conditions A and B in Appendix \ref{app:poiID-2} define a general framework for such non-Gaussian processes $X_i$ and Proposition \ref{idgeneral} in Appendix \ref{app:poiID-2} generalizes Theorem \ref{thm:NP1} for this general framework.  Here in this subsection, however, we focus on the arguably most important special case of our general framework -- namely, the case of elliptically distributed processes.  Elliptical distributions include the special case of a Gaussian distribution as considered in Theorem \ref{thm:NP1}, but also many important non-Gaussian distributions such as the t-distribution, the Laplace distribution, and the logistic distribution \cite[see, for instance,][]{BBT2014}.

Let $X_i$ be a (centered) elliptical process, that is, let $X_i(t)\overset{d}{=}V_iX_i^*(t)$, $t\in[a,b]$, where $V_i>0$ is a strictly positive real-valued random variable, $X_i^*$ is a zero mean Gaussian process with covariance function $\sigma^*(s,t)$, and where $V_i$ and $X_i^*$ are independent of each other.  Moreover, let the error term $\varepsilon_i$ in \eqref{eq:whnp} be independent of $V_i$ and $X_i$ and let $\V(V_i)<\infty$.  Then the elliptically distributed random function $X_i$ fulfills our conditions A and B in Appendix \ref{app:poiID-2} and it follows by Proposition \ref{idgeneral} in Appendix \ref{app:poiID-2} that 
\begin{equation*}
\E\big(X_{i}(s)Y_i\big)=\sum_{r=1}^S \sigma^*(s,\tau_r) \E\left( V_i^2  \vartheta_r(V_i)\right)=
\sum_{r=1}^S \sigma(s,\tau_r) \frac{\E\left(V_i^2\vartheta_r(V_i)\right)}{\V(V_i)},
\end{equation*}
where $\vartheta_r(V_i)=
\E\big(\frac{\partial}{\partial x_r}g(X_i(\tau_1),\dots,X_i(\tau_S))\big| V_i\big)$ and $\sigma(s,t)=\V(V_i)\sigma^*(s,t)$ is the covariance function of the elliptically distributed process $X_i$.  As in the case of Theorem \ref{thm:NP1}, the above result allows us to decompose the cross-covariance $\E(X_i(s)Y_i)$ into a scaling coefficient $\E\left(V_i^2\vartheta_r(V_i)\right)/\V(V_i)$ which depends on the unknown function $g$ (via $\vartheta_r$) and the covariance function $\sigma(s,\tau_r)$ which only depends on $X_i$.  This result holds for elliptically distributed $X_i$ and requires only mild regularity assumptions on $g$ which are essentially equivalent to those imposed by Theorem \ref{thm:NP1}; see conditions A and B in Appendix \ref{app:poiID-2}.

As in the preceding section, the identification of the points of impact relies only on the structural covariance Assumption \ref{assum1} which holds for rough -- Gaussian or non-Gaussian -- processes $X_i$.  Since $\sigma(s,t)=\V(V_i)\sigma^*(s,t)$, the requirements of Assumption \ref{assum1} may directly be applied to the covariance function $\sigma^*(s,t)$ of the Gaussian process component $X_i^*$ of the elliptical process $X_i$. If $\sigma^*$ satisfies Assumption \ref{assum1} for some $\omega^*:\Omega\to\mathbb{R}$, 
then Proposition \ref{idgeneral} in Appendix \ref{app:poiID-2} leads to
\begin{equation*}
\begin{array}{ll}
\mathbb{E}(Z_{\delta,i}(s)Y_i) =  C(\tau_r)\delta^\kappa + o(\delta^\kappa)&\text{if } s\in\{\tau_1,\dots,\tau_r\}\\
\mathbb{E}(Z_{\delta,i}(s)Y_i)   = O(\delta^2) &\text{if } s\notin\{\tau_1,\dots,\tau_r\}
\end{array}
\end{equation*}
as $\delta\to 0$ 
with 
$C(\tau_r)=c^*(\tau_r)
\E\left( V_i^2  \vartheta_r(V_i)\right)$, where $c^*(\tau_r)= -\frac{\partial}{\partial z}\omega^*(\tau_r,\tau_r,z)|_{z=0}$, $r=1,\dots,S$.

Theorem \ref{poicon} can also be generalized to the case that $X_i$ is elliptically distributed.  
Note that then $Z_{\delta,i}(s)Y_i\overset{d}{=}Z^\star_{\delta,i}(s)Y^\star_i$, where $Z^\star_{\delta,i}(s)=X^\star_i(s) - \frac{1}{2}(X^\star_i(s-\delta), + X^\star_i(s+\delta))$, for $s \in [a+\delta, b-\delta]$, and $Y_i^\star=V_iY_i$. 
Therefore, estimating points of impact from data $(X_i,Y_i)$ is equivalent to estimating points of impact from data $(X_i^\star,Y_i^\star)$. 
Thus, Theorem \ref{poicon} remains valid if all conditions on $X_i$ and $Y_i$ in Theorem \ref{poicon} now apply to $X_i^\star$ and $Y_i^\star$.

Our more general framework of conditions A and B in Appendix \ref{app:poiID-2} includes even more complex cases than the above discussed elliptical processes.  For instance, one may consider processes $X_i\overset{d}{=}V_{i1}(t)X_i^*(t)+V_{i2}(t)$, where $(V_{i1},V_{i2})$ is jointly independent of $X_i^*$ and where $V_{i1}$ and $V_{i2}$ are almost surely twice continuously differentiable functions on $[a,b]$ (see Appendix \ref{app:poiID-2} in the supplementary paper \cite{Suppl2019} for more details).

\subsubsection{Generalizing covariance Assumption \ref{assum1}}\label{ssec:General_2} 
Assumption~\ref{assum1} holds for non-smooth/rough processes $X_i$ with covariance function $\sigma(s,t)=\omega(s,t,|s-t|^\kappa)$, where the requirement $0<\kappa<2$ excludes all smooth, twice continuously differentiable processes, $X_i$, with $\kappa\geq 2$.  

However, the degree of roughness of the processes, $X_i$, is actually not a necessary requirement for identifying and estimating points of impact.  The crucial property is that the covariance function $\sigma(s,t)$ of $X_i$ is less smooth at the diagonal than for $|t-s|>0$.  For instance, let $\sigma(s,t)$ be $d=4$ times continuously differentiable at all off-diagonal points, $s\neq t$, but \emph{not} $d=4$ times differentiable at the diagonal points, $s=t$.  This scenario corresponds to a generalization of Assumption \ref{assum1} with $0<\kappa<d=4$ which now excludes only all four times continuously differentiable processes, $X_i$, with $\kappa\geq d=4$.  In this case, one may look at the modified $4$th central difference approximation of the $4$th derivative of $\mathbb{E}(X_i(s)Y_i)$ and replace $Z_{\delta,i}(s)$ by
$$\tilde{Z}^{(4)}_{\delta,i}(s):=X_i(s)-\frac{2}{3}\big(X_i(s-\delta) + X_i\big(s+\delta)\big) + \frac{1}{6}\big(X_i(s-2\delta)+X_i(s+2\delta)\big).$$
Theoretical results may then be derived under a generalized version of Assumption \ref{assum1} demanding that there exists a $d=4$ times differentiable function $\omega$ such that \eqref{deromega} holds for any $\kappa<d=4$.  

Equivalent generalizations can, for instance, be made for any $d\in\{2,4,6,8,\dots\}$, which would involve then a modified $d$th order central difference processes $\tilde{Z}^{(d)}_{\delta,i}(s)$.  This way, Assumption \ref{assum1} can be generalized to the requirement $0<\kappa<d$ which also then includes smooth processes, $X_i$.  Deriving the estimation theory under this setup would then lead to even more accurate points of impact estimators with an even faster super-consistent convergence rate.  However, taking higher order differences in practice usually involves numerical instabilities.

\section[]{Subsequent estimation of $g$}\label{sec:Inference}
Given estimates of the points of impact $\tau_1,\dots,\tau_S$ and their number $S$, one is typically interested in the subsequent estimation and inference regarding the remaining model components. The following section considers the case of a nonparametric model $g$.  Section \ref{sec:PES} considers the case of a generalized linear model, which is of particular practical relevance.

In the following we assume the existence of some consistent estimation procedure for the points of impact satisfying $\max_{r=1,\ldots,\widehat{S}}|\widehat{\tau}_r-\tau_r|\ =\ O_P( n^{-1/\kappa})$ and $P(\widehat{S}=S)\to 1$, where we use matched labels in the sense that $\tau_r = \arg \min_{s=1,\dots, S}|\widehat{\tau}_r-\tau_s|$. These requirements are fulfilled by our estimation procedure described in Section \ref{SubSec:Estim}, but may also be fulfilled for alternative procedures.

\subsection{Nonparametric estimation}\label{sec:NP}
Estimating the nonparametric function $g$ in \eqref{eq:whnp} is a non-standard estimation problem, since the unknown points of impact $\tau_r$ of the predictor variables $X_i(\tau_r)$ must be replaced by their estimates $\widehat\tau_r$. That is, for given estimates $\widehat{\tau}_1,\dots,\widehat{\tau}_S$ we may estimate the unknown regression function $g$ by the following Nadaraya-Watson type estimator
\begin{align}\label{eq:nadaraya}
\widehat{g}_{\widehat{\tau}}(x_1,\dots,x_S) = \frac{\sum_{i=1}^n K\Big(\frac{X_i(\widehat{\tau}_1)-x_1}{h_1},\dots, \frac{X_i(\widehat{\tau}_S)-x_S}{h_S}\Big) Y_i}{\sum_{i=1}^n K\Big(\frac{X_i(\widehat{\tau}_1)-x_1}{h_1},\dots, \frac{X_i(\widehat{\tau}_S)-x_S}{h_S}\Big)},
\end{align}
where $K$ denotes a standard nonnegative symmetric bounded second-order kernel function with $\smallint K(u)du=1$, and where $h_1,\dots,h_S$ denote the bandwidth parameters.

For the following result we make use of our super-consistency result in Theorem \ref{poicon}. Note, however, that the rates of consistency for the point of impact estimators $\widehat{\tau}_r$ of Theorem \ref{poicon} cannot be used directly to quantity the errors $|X_i(\widehat{\tau}_r)-X_i(\tau_r)|$, $r=1,\dots,S$, since under Assumption \ref{assum1} we cannot make use of Taylor-expansions of $X_i$. Therefore, the following result is non-standard because of the additional error component $\widehat{g}_{\widehat{\tau}}(x_1,\dots,x_S)-\widehat{g}_{\tau}(x_1,\dots,x_S)=O_p\big(\sum_{r=1}^S1/(n^{\min\{1,1/\kappa\}}(h_1\cdots h_S) h_r^2)\big)$ contained in \eqref{eq:NWkernelREG}, where $\widehat{g}_\tau$ is defined as in \eqref{eq:nadaraya}, but using the true predictor variables $X_i(\tau_1),\dots,X_i(\tau_S)$.


\begin{theorem}\label{thm:lcr}
Let $\widehat{S}=S$, $\max_{r=1,\dots, S}|\widehat{\tau}_r-\tau_r| = O_p(n^{-1/\kappa})$, and let Assumptions \ref{assum1} and \ref{assum2} and the assumptions of Theorem \ref{poicon} hold.  
Moreover, let the kernel function $K: \mathbb{R}^S \to \mathbb{R}$ be a second-order kernel (i.e., a density function that is symmetric around zero) with continuous second-order partial derivatives and let the regression function $g$ have continuous second-order partial derivatives. We then have for any points $x_1,\dots,x_S$ in the interior of the support of $X$ that
\begin{align}\label{eq:NWkernelREG}
\begin{split}
&\widehat{g}_{\widehat{\tau}}(x_1,\dots,x_S) - g(x_1,\dots,x_S)=\\
&\quad O_p\left(\sum_{r=1}^S h_r^2 + (nh_1\cdots h_S)^{-1/2}+\sum_{r=1}^S\frac{1}{n^{\min\{1,1/\kappa\}} (h_1\cdots h_S) h_r^2}\right)
\end{split}
\end{align}
for $n\to\infty$, and $h_1,\dots,h_S\to 0$ with $n^{\min\{1,1/\kappa\}}(h_1\cdots h_S)h_r^2 \to \infty$, for each $r=1,\dots, S$.
\end{theorem}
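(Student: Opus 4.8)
The plan is to split the error into a standard Nadaraya--Watson part and a novel ``plug-in'' part that accounts for using $\widehat{\tau}_r$ in place of $\tau_r$, namely
\[
\widehat{g}_{\widehat{\tau}}(x_1,\dots,x_S)-g(x_1,\dots,x_S)
=\big[\widehat{g}_{\widehat{\tau}}(x_1,\dots,x_S)-\widehat{g}_{\tau}(x_1,\dots,x_S)\big]
+\big[\widehat{g}_{\tau}(x_1,\dots,x_S)-g(x_1,\dots,x_S)\big].
\]
The second bracket is the ordinary kernel-regression error evaluated at the \emph{true} points of impact. Since $(X_i(\tau_1),\dots,X_i(\tau_S))$ has a density bounded away from zero at any interior point $(x_1,\dots,x_S)$ and $g$ has continuous second-order partial derivatives, the classical bias--variance analysis for a symmetric second-order kernel yields the standard rate $O_p\big(\sum_{r=1}^S h_r^2+(nh_1\cdots h_S)^{-1/2}\big)$, and I would invoke the usual arguments rather than reproduce them.

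The work lies in the first bracket. Writing $\widehat{g}=\widehat{q}/\widehat{p}$ with the kernel-density estimate $\widehat{p}$ and the kernel-weighted response average $\widehat{q}$ (each normalised by $nh_1\cdots h_S$), and using that both $\widehat{p}_{\widehat{\tau}}$ and $\widehat{p}_{\tau}$ converge to the positive density $p(x_1,\dots,x_S)$, a delta-method expansion reduces the problem to bounding the numerator and denominator increments $\widehat{q}_{\widehat{\tau}}-\widehat{q}_{\tau}$ and $\widehat{p}_{\widehat{\tau}}-\widehat{p}_{\tau}$. These share the same structure, so I focus on $\widehat{q}_{\widehat{\tau}}-\widehat{q}_{\tau}$. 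Because $K$ has continuous bounded second-order partial derivatives, I can Taylor-expand $K$ in its $S$ arguments; crucially this expands the \emph{smooth} kernel and never requires a (non-existent) Taylor expansion of the rough path $X_i$. The resulting terms are weighted averages of products of kernel derivatives with the increments $D_{ir}:=X_i(\widehat{\tau}_r)-X_i(\tau_r)$ and with $1/h_r$ factors coming from the kernel arguments.

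The increments $D_{ir}$ are controlled purely through the covariance structure of Assumption~\ref{assum1}, which is the substitute for the forbidden Taylor expansion of $X_i$. Using $\sigma(s,t)=\omega(s,t,|s-t|^\kappa)$, the symmetry $\omega(s,t,z)=\omega(t,s,z)$ (which kills the first-order-in-$(s-t)$ contribution on the diagonal), and $|\widehat{\tau}_r-\tau_r|=O_p(n^{-1/\kappa})$, I obtain $\E(D_{ir}^2)\approx 2c(\tau_r)|\widehat{\tau}_r-\tau_r|^\kappa=O_p(n^{-1})$, while the cross term $\E(D_{ir}X_i(\tau_{r'}))=O(n^{-\min\{1,1/\kappa\}})$, the exponent $\min\{1,1/\kappa\}$ arising because the smooth part of $\omega$ contributes at order $|\widehat{\tau}_r-\tau_r|=n^{-1/\kappa}$ and the rough part at order $|\widehat{\tau}_r-\tau_r|^\kappa=n^{-1}$. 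Combined with the kernel localisation, which contributes a factor $h_1\cdots h_S$ offset against the normalising $1/(h_1\cdots h_S)$, and with the $1/h_r^2$ accumulated from the increment factors, these moment bounds deliver the claimed plug-in rate $O_p\big(\sum_{r=1}^S 1/(n^{\min\{1,1/\kappa\}}(h_1\cdots h_S)h_r^2)\big)$; adding the two brackets then gives \eqref{eq:NWkernelREG}.

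I expect the main obstacle to be that $\widehat{\tau}_r$ is random and data-dependent, so $X_i(\widehat{\tau}_r)$ is not the evaluation of $X_i$ at a fixed location and the summands of $\widehat{q}_{\widehat{\tau}}-\widehat{q}_{\tau}$ are neither i.i.d.\ nor independent of $\widehat{\tau}$. My plan is to condition on the high-probability event $\{\max_r|\widehat{\tau}_r-\tau_r|\le C n^{-1/\kappa}\}$ guaranteed by Theorem~\ref{poicon}, and then pass from the pointwise moment bounds above to uniform control of $\sup_{|\Delta|\le Cn^{-1/\kappa}}|X_i(\tau_r+\Delta)-X_i(\tau_r)|$ and of the associated kernel-weighted sums. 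This uniformity is the technically delicate point and would be handled with a maximal/chaining inequality for the Gaussian increments together with the localisation provided by the kernel derivatives, while the moment condition in Assumption~\ref{assum2}(b) ensures the response weights remain integrable throughout.
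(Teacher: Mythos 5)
Your proposal is correct and follows essentially the same route as the paper: the ratio decomposition with a Taylor expansion of the smooth kernel (never of the rough path), moment bounds $\E(D_{ir}^2)=O_p(n^{-1})$ and $\E(D_{ir}X_i(\tau_{r'}))=O(n^{-\min\{1,1/\kappa\}})$ derived from the covariance structure of Assumption \ref{assum1} together with the Gaussian (Stein-type) identity, and uniformity over the event $\{\max_r|\widehat{\tau}_r-\tau_r|\le Cn^{-1/\kappa}\}$ obtained by chaining/maximal inequalities (the paper's Proposition \ref{prop:1new} and Theorem \ref{thm:KDE} carry out exactly these steps via Orlicz-norm bounds). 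No gaps worth flagging.
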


If each bandwidth has the same order of magnitude and $0<\kappa\leq 1$, the well-known optimal bandwidth choice $h_r\sim n^{-1/(S+4)}$, $r=1,\dots,S$, can be used to simplify Theorem \ref{thm:lcr} as following.
\begin{corollary}\label{cor:lcr}
Under the assumptions of Theorem \ref{thm:lcr}, let $0<\kappa\leq 1$ and $h_r\sim n^{-1/(S+4)}$ for all $r=1,\dots,S$. Then
$$\widehat{g}_{\widehat{\tau}}(x_1,\dots,x_S)-g(x_1,\dots,x_S)=  O_p\big(n^{-2/(S+4)}\big).$$
\end{corollary}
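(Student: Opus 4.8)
The plan is to substitute the prescribed bandwidth order $h_r\sim n^{-1/(S+4)}$ directly into the three-term bound of Theorem \ref{thm:lcr} and verify that each of the three terms is of order $n^{-2/(S+4)}$, so that their sum is as well. No new probabilistic argument is required: everything reduces to bookkeeping of exponents of $n$, with the single genuine ingredient being the observation that the restriction $0<\kappa\le 1$ forces $\min\{1,1/\kappa\}=1$.

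First I would record that since $0<\kappa\le 1$ we have $1/\kappa\ge 1$, hence $\min\{1,1/\kappa\}=1$; this removes the only $\kappa$-dependence from the bound and is exactly the feature that keeps the point-of-impact estimation error from dominating the usual bias--variance terms. Next, with all bandwidths of the common order $h_r\sim n^{-1/(S+4)}$, I would compute the product $h_1\cdots h_S\sim n^{-S/(S+4)}$ once and for all, since it recurs in two of the three terms.

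Then I would evaluate each term separately. The squared-bias term gives $\sum_{r=1}^S h_r^2\sim S\,n^{-2/(S+4)}=O(n^{-2/(S+4)})$. The variance term gives $(n\,h_1\cdots h_S)^{-1/2}\sim (n^{1-S/(S+4)})^{-1/2}=(n^{4/(S+4)})^{-1/2}=n^{-2/(S+4)}$. For the point-of-impact error term, using $\min\{1,1/\kappa\}=1$, the denominator of each summand is $n\,(h_1\cdots h_S)\,h_r^2\sim n^{\,1-S/(S+4)-2/(S+4)}=n^{2/(S+4)}$, so each summand is $n^{-2/(S+4)}$ and the sum over $r=1,\dots,S$ remains $O(n^{-2/(S+4)})$. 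Adding the three $O(n^{-2/(S+4)})$ contributions inside the $O_p(\cdot)$ of Theorem \ref{thm:lcr} yields the claimed rate.

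Finally I would check that the prescribed bandwidths satisfy the hypotheses of Theorem \ref{thm:lcr}: clearly $h_r\sim n^{-1/(S+4)}\to 0$, and the same exponent computation shows $n^{\min\{1,1/\kappa\}}(h_1\cdots h_S)h_r^2\sim n^{2/(S+4)}\to\infty$, so the theorem applies. There is no substantial obstacle here; the only point requiring care is correctly tracking the exponents, and the one conceptual remark worth making is that $\kappa\le 1$ is precisely the threshold at which the factor $n^{\min\{1,1/\kappa\}}$ equals $n$. For $\kappa>1$ the third term would be of strictly larger order, and the optimal nonparametric rate $n^{-2/(S+4)}$ would no longer be attainable with this bandwidth choice, which explains why the corollary is stated under the restriction $0<\kappa\le 1$.
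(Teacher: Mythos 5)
Your proof is correct and follows the same route as the paper, which simply notes that the corollary is a direct consequence of substituting $h_r\sim n^{-1/(S+4)}$ into the bound of Theorem \ref{thm:lcr}; your exponent bookkeeping, the observation that $\kappa\le 1$ gives $\min\{1,1/\kappa\}=1$, and the verification that $n(h_1\cdots h_S)h_r^2\sim n^{2/(S+4)}\to\infty$ are exactly the details the paper leaves implicit.
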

\noindent That is, under the conditions of Corollary \ref{cor:lcr}, we have the same optimal rates of convergence as in the case where the points of impact were known.

\subsection{Parametric estimation}\label{sec:PES}
In this section it is assumed that the relationship between $Y_i$ and the functional predictor $X_i$ can be modeled using the framework of generalized linear models with known parametric function $g$,
\begin{align}\label{eq:wh}
Y_i = g\Big(\alpha + \sum_{r=1}^S \beta_r X_i(\tau_r)\Big) + \varepsilon_i,
\end{align}
in which the i.i.d.~error term $\varepsilon_i$ respects $\mathbb{E}(\varepsilon_i|X_i(t)) = 0$ for all $t\in [a,b]$ and where $\V(\varepsilon_i|X_i(t),t\in [a,b]) = \sigma^2(g(\eta_i))<\infty$ with strictly positive variance function $\sigma^2(\cdot)$ defined over the range of $g$. For simplicity the function $g$ is assumed to be a known, strictly monotone and smooth function with bounded first and second order derivatives and hence invertible \citep[see, for instance,][for similar assumptions]{MS2005}. The constant $\alpha$ allows us to consider centered random functions $X_i$ with $\E(X_i(t))=0$ for all $t\in[a,b]$. Note that we do not assume that the conditional distribution of $Y_i$ belongs to the exponential family of distributions. Denoting the linear predictor
\begin{align}\label{eq:lp}
\eta_i = \alpha + \sum_{r=1}^S \beta_r X_i(\tau_r)
\end{align}
allows us to write $\E(Y_i|X_i) = g(\eta_i)$ as well as $\V(Y_i|X_i)=\sigma^2(g(\eta_i))<\infty$. Hence, this setup of model \eqref{eq:wh} belongs to the broad class of quasi-likelihood models which can be seen as a generalization of a generalized linear model framework \citep[cf.][Ch.~9]{MCN1989}.

Identifiability of the model parameters in \eqref{eq:wh} is not obvious due to the functional predictor $X_i(\cdot)$, which, in principle, allows for infinitely many alternative model candidates. The following Theorem \ref{thmident} shows that any possible kind of model-misspecification in $\alpha$, $\beta_r$, $\tau_r$, $r=1,\dots,S$, or $S$, will lead to a different model in the mean squared error sense implying the identifiability of model \eqref{eq:wh}.


%
\begin{theorem}\label{thmident}
Let $g(\cdot)$ be invertible and assume that $X_i$ satisfies Assumptions~\ref{assum1} and \ref{assum2}. Then for all $S^*\geq S$, all $\alpha^*, \beta_1^*,\ldots,\beta_{S^*}^*\in\mathbb{R}$, and all $\tau_1,\dots,\tau_{S^*}\in(a,b)$ with $\tau_{k}\notin\{\tau_1,\dots,\tau_S\}$, $k=S+1,\dots,S^*$, we obtain
\begin{align}
\mathbb{E}\left(\bigg( g(\alpha + \sum_{r=1}^{S}\beta_rX_i(\tau_r) ) - g(\alpha^* + \sum_{r=1}^{S^*}\beta_r^*X_i(\tau_r)) \bigg)^2\right)>0,
\label{Eq:eqident}
\end{align}
whenever $|\alpha-\alpha^*|>0$,
or $\sup_{r=1,\ldots,S}|\beta_r-\beta^*_r|>0$,
or $\sup_{r=S+1,\ldots,S^*}|\beta^*_r|>0$.
\end{theorem}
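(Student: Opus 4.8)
The plan is to reduce the claim about $g$ first to a statement about the linear predictors and then to a linear-independence property of the values $X_i(\tau_1),\dots,X_i(\tau_{S^*})$. Write $\eta_i=\alpha+\sum_{r=1}^S\beta_rX_i(\tau_r)$ and $\eta_i^*=\alpha^*+\sum_{r=1}^{S^*}\beta_r^*X_i(\tau_r)$. Since $g$ is invertible, $g(\eta_i)=g(\eta_i^*)$ holds precisely when $\eta_i=\eta_i^*$; hence the nonnegative random variable $(g(\eta_i)-g(\eta_i^*))^2$ has zero expectation if and only if it vanishes almost surely, i.e.\ if and only if $\eta_i=\eta_i^*$ a.s. Consequently the expectation in \eqref{Eq:eqident} is strictly positive unless $\eta_i=\eta_i^*$ a.s., and it suffices to prove that $\eta_i=\eta_i^*$ a.s.\ forces $\alpha=\alpha^*$, $\beta_r=\beta_r^*$ for $r\le S$, and $\beta_r^*=0$ for $S<r\le S^*$ — which would contradict the stated separation of the parameters.

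Assume therefore that $\eta_i=\eta_i^*$ a.s. Taking expectations of $\eta_i-\eta_i^*=0$ yields $\alpha=\alpha^*$, since $\mathbb{E}X_i(\tau_r)=0$, and the identity then reduces to $\sum_{r=1}^{S^*}c_rX_i(\tau_r)=0$ a.s., where $c_r=\beta_r-\beta_r^*$ for $r\le S$ and $c_r=-\beta_r^*$ for $S<r\le S^*$. Everything then rests on the following claim: for pairwise distinct $\tau_1,\dots,\tau_{S^*}\in(a,b)$ the covariance matrix $\big(\sigma(\tau_j,\tau_k)\big)_{j,k}$ is positive definite, equivalently the variables $X_i(\tau_1),\dots,X_i(\tau_{S^*})$ are linearly independent in $L^2$. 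Granting the claim, the a.s.\ identity would have positive variance $\sum_{j,k}c_jc_k\sigma(\tau_j,\tau_k)$ unless all $c_r=0$, forcing $\beta_r=\beta_r^*$ ($r\le S$) and $\beta_r^*=0$ ($S<r\le S^*$), which is the contradiction we seek.

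The positive-definiteness claim is the crux, and I expect it to be the main obstacle; I would establish it using the same diagonal second-difference that defines $Z_{\delta,i}$. Suppose $\sum_rc_rX_i(\tau_r)=0$ a.s.\ with some $c_j\neq0$; taking the covariance with $X_i(s)$ produces the functional identity $\sum_{r=1}^{S^*}c_r\sigma(s,\tau_r)=0$ for every $s\in[a,b]$. Applying the operator $h\mapsto h(\tau_j)-\tfrac12\big(h(\tau_j-\delta)+h(\tau_j+\delta)\big)$ to $s\mapsto\sigma(s,\tau_r)$ — which is exactly $\mathbb{E}\big(Z_{\delta,i}(\tau_j)X_i(\tau_r)\big)$ — and summing against $c_r$, the result is identically zero in $\delta$. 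By the expansions behind Proposition \ref{thmcharX} and Lemma \ref{lem3} (the dichotomy displayed above with $Y_i$ taken to be $X_i(\tau_r)$), the diagonal summand $r=j$ contributes $c_jc(\tau_j)\delta^\kappa+o(\delta^\kappa)$, while every off-diagonal summand $r\neq j$ — where $\tau_r\neq\tau_j$ and $\sigma(\cdot,\tau_r)$ is twice continuously differentiable near $\tau_j$ — contributes only $O(\delta^2)$. Dividing by $\delta^\kappa$ and letting $\delta\to0$, the off-diagonal terms vanish because $\kappa<2$, leaving $c_jc(\tau_j)=0$; since $c(\tau_j)>0$ by Assumption \ref{assum1}, this gives $c_j=0$, the required contradiction. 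Intuitively, no smooth combination of the off-diagonal covariances can reproduce the kink that $\sigma(\cdot,\tau_j)$ carries at $\tau_j$.

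Two points would need care. First, the hypothesis only requires $\tau_k\notin\{\tau_1,\dots,\tau_S\}$ for $k>S$, but both the argument and the statement presuppose that $\tau_1,\dots,\tau_{S^*}$ are \emph{pairwise} distinct: coincident extra points carrying opposite coefficients would keep $\sup_{r>S}|\beta_r^*|>0$ while yielding the identical model, so distinctness among the additional points is indispensable and should be made explicit. Second, the reduction in the first step uses only injectivity of $g$, and finiteness of the expectation in \eqref{Eq:eqident} follows from the bounded first derivative of $g$ together with the finite second moments of the $X_i(\tau_r)$ — though finiteness is not even needed for the conclusion ``$>0$''. Notably, Gaussianity enters only through the covariance structure of Assumption \ref{assum1}, so the identifiability conclusion ultimately rests on the second-moment geometry of $X_i$ rather than on full Gaussianity.
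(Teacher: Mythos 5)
Your proof is correct, and its skeleton is the same as the paper's: reduce the claim to the linear predictors via the injectivity of $g$ (so that $\E\bigl((g(\eta_i)-g(\eta_i^*))^2\bigr)=0$ forces $\eta_i=\eta_i^*$ a.s.), and then show that equality of the linear predictors is incompatible with the stated parameter separation. The difference lies in how the linear step is handled. The paper simply invokes Theorems 1 and 3 of \cite{KPS2015}, which together assert exactly that under Assumption \ref{assum1} the map $(\alpha,\beta_1,\dots)\mapsto \alpha+\sum_r\beta_rX_i(\tau_r)$ is identified in mean square; you instead prove this from scratch by showing that the covariance matrix $\bigl(\sigma(\tau_j,\tau_k)\bigr)_{j,k}$ is strictly positive definite, using the central second-difference at $\tau_j$ so that the kink of $\sigma(\cdot,\tau_j)$ at the diagonal contributes $c_jc(\tau_j)\delta^\kappa+o(\delta^\kappa)$ while the smooth off-diagonal terms are only $O(\delta^2)$ — which is precisely the mechanism behind the cited KPS2015 results (and behind Proposition \ref{thmcharX} and \eqref{Zdelta5} in the supplement). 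So your argument is self-contained where the paper's is not, at the cost of reproving a known lemma; nothing is lost and the dichotomy you exploit is the correct one. Your two side remarks are also apt: pairwise distinctness of the additional points $\tau_{S+1},\dots,\tau_{S^*}$ is indeed tacitly assumed (two coincident extra points with cancelling coefficients would defeat the claim as literally stated, and KPS2015's Theorem 1 requires distinct points), and the paper's own proof likewise uses only invertibility of $g$ and second-moment structure, consistent with the remark after Theorem \ref{thmident} that the result extends to non-Gaussian $X_i$.
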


Note that the proof of Theorem \ref{thmident} does only require the existence of second moments and, therefore, may be generalized also to the case of non-Gaussian processes $X_i$.


Estimation of $\bbeta_0 = (\alpha,\beta_1,\dots, \beta_S)^T$ is performed by quasi-maximum likelihood.  Define $\bX_i(\widehat{\btau}) = (1, X_i(\widehat{\tau}_1), \dots, X_i(\widehat{\tau}_S))^T$ and denote the $j$th, $1\leq j\leq S+1$, element of the latter vector as $\widehat{X}_{ij}$.
For $\bbeta \in \mathbf{R}^{S+1}$ let
$\widehat{\eta}_i(\bbeta) = \bX_i(\widehat{\btau})^T\bbeta$,
$\widehat{\bmu}_n(\bbeta) = (g(\widehat{\eta}_1(\bbeta)),\dots, g(\widehat{\eta}_n(\bbeta)))^T$,
$\widehat{\bD}_n(\bbeta)$ be the $n\times (S+1)$ matrix with entries
$g'(\widehat{\eta}_i(\bbeta))\widehat{X}_{ij}$, and let
$\widehat{\bV}_n(\bbeta)$ be a $n\times n$ diagonal matrix with elements
$\sigma^2(g(\widehat{\eta}_i(\bbeta)))$.
Furthermore, denote the corresponding objects evaluated at the true points of impact $\tau_r$ by
$\bX_i(\btau)$, $X_{ij}$,  $\eta_i(\bbeta)$, $\bmu_n(\bbeta)$, $\bD_n(\bbeta)$, and $\bV_n(\bbeta)$; this notational convention applies also to the below defined objects.

Our estimator $\widehat{\bbeta}$ for $\bbeta_0=(\alpha, \beta_1,\dots,\beta_S)^T$ is defined as the solution of the $S+1$ score equations $\widehat{\bU}_n(\widehat{\bbeta})=0$, where
\begin{align}
\widehat{\bU}_n(\bbeta) = \widehat{\bD}_n(\bbeta)^T\widehat{\bV}_n(\bbeta)^{-1}(\bY_n-\widehat{\bmu}_n(\bbeta)). \label{eq:QML}
\end{align}
Note that these are non-classic score equations evaluated at the estimates $\widehat{\tau}_r$ instead of $\tau_r$.

In the following, it will be convenient to define
\begin{align*}
\bF_n(\bbeta) = \bD_n(\bbeta)^T\bV_n(\bbeta)^{-1}\bD_n(\bbeta)
\quad\text{and}\quad
\widehat{\bF}_n(\bbeta) = \widehat{\bD}_n(\bbeta)^T\widehat{\bV}_n(\bbeta)^{-1}\widehat{\bD}_n(\bbeta).
\end{align*}
By definition it holds that $\E(n^{-1}\bF_n(\bbeta))=[\E(g'(\eta_i(\bbeta))^2/\sigma^2(g(\eta_i(\bbeta)))\, X_{ik}X_{il})]_{k,l}$ with $k,l=1,\dots, S+1$. Let $\eta(\bbeta)$ and $X_j$ be generic copies of $\eta_i(\bbeta)$ and of the $j$th component of $\bX_i(\btau)$, respectively. This allows us to write $\E(n^{-1}\bF_n(\bbeta))=\E(\bF(\bbeta))$ with $\E(\bF(\bbeta))=[\E(g'(\eta(\bbeta))^2/\sigma^2(g(\eta(\bbeta)))\, X_{k}X_{l})]_{k,l}$, where we point out that $\E(\bF(\bbeta))$ is for all $\bbeta \in \mathbf{R}^{S+1}$ a symmetric and strictly positive definite matrix with inverse $\E(\bF(\bbeta))^{-1}$. Indeed, suppose $\E(\bF(\bbeta))$ were not strictly positive definite, we would then derive the contradiction $\E((\sum_{j=1}^{S+1} a_j X_j g'(\eta(\bbeta))/\sigma(g(\eta(\bbeta))))^2)=0$ for nonzero constants $a_1,\dots, a_{S+1}$. A similar argument can be used to show that $\E(\widehat{\bF}(\bbeta))$ is strictly positive definite, where $\E(\widehat{\bF}(\bbeta))=[\E(g'(\widehat{\eta}(\bbeta))^2/\sigma^2(g(\widehat{\eta}(\bbeta)))\,\widehat{X}_{k}\widehat{X}_{l})]_{k,l}$.


The following additional set of assumptions are used to derive more precise theoretical statements:
\begin{assumption} \label{assum3}\
\begin{itemize}
\item[a)] There exists a constant $0<M_{\varepsilon}<\infty$, such that $\E(\varepsilon_i^p|X_i(t))\leq M_{\varepsilon}$, for all $t\in[a,b]$ and for some even $p$ with $p \geq \max\{2/\kappa+\epsilon, 4\}$ and some $\epsilon>0$.
\item[b)] The function $g$ is monotone, invertible with two bounded derivatives $|g'(\cdot)|\leq c_g$, $|g''(\cdot)|\leq c_g$, for some constant $0\leq c_g<\infty$.
\item[c)] $h(\cdot):=g'(\cdot)/\sigma^2(g(\cdot))$ is a bounded function with two bounded derivatives.
\end{itemize}
\end{assumption}
Condition a) states that some higher moments of $\varepsilon_i$ exist. While the condition on $p\geq 4$ and $p$ being even simplifies the proofs, the condition $p> 2/\kappa$ is a more crucial one and is used in the proof of Proposition~\ref{lem:score1} in the supplementary Appendix~\ref{app:PARAest}. Conditions a) to c) hold, for example, in the important case of a functional logistic regression with points of impact, where $g$ is the standard logistic function. Condition c) is satisfied, for instance, in the special case of generalized linear models with natural link functions. For the latter case, we have $\sigma^2(g(x)) = g'(x)$ such that $h(x)=1$.

\begin{theorem}\label{th:paraestML1}
Let $\widehat{S}=S$, $\max_{r=1,\dots, S}|\widehat{\tau}_r-\tau_r|= O_P(n^{-1/\kappa})$ and let $X_i$ be a Gaussian process satisfying Assumption~\ref{assum1}. Under Assumption \ref{assum3} we then obtain
\begin{align}
\sqrt{n}(\widehat{\bbeta}-\bbeta_0) \stackrel{d}{\to} N\big(\bzero, (\mathbb{E}(\bF(\bbeta_0)))^{-1}\big).\label{eq:thMLE2a}
\end{align}
\end{theorem}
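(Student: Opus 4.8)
The plan is to follow the classical route for the asymptotic normality of an M-estimator defined through estimating equations, but with careful bookkeeping of the extra randomness created by plugging the estimated points of impact $\widehat{\btau}$ into $\widehat{\bU}_n$ in place of the true $\btau$. Throughout I would work, whenever convenient, on the event $E_n=\{\max_r|\widehat{\tau}_r-\tau_r|\le C n^{-1/\kappa}\}$, which by Theorem~\ref{poicon} has probability tending to one and on which each $\widehat{\tau}_r$ lies in a shrinking neighbourhood of $\tau_r$. First I would establish existence and consistency of a root $\widehat{\bbeta}$ of $\widehat{\bU}_n(\cdot)=0$. This rests on the identifiability from Theorem~\ref{thmident}, which makes $\bbeta_0$ the unique zero of the limiting expected score, together with the fact (argued before the theorem) that $\E(\bF(\bbeta))$ is symmetric and strictly positive definite for every $\bbeta$. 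A standard argument — expected score zero at $\bbeta_0$ plus a uniform law of large numbers for $n^{-1}\widehat{\bU}_n$ on a neighbourhood of $\bbeta_0$ — then yields $\widehat{\bbeta}\stackrel{P}{\to}\bbeta_0$.

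Writing $h=g'/\sigma^2(g)$, the score reads $\widehat{\bU}_n(\bbeta)=\sum_i \bX_i(\widehat{\btau})\,h(\widehat{\eta}_i(\bbeta))(Y_i-g(\widehat{\eta}_i(\bbeta)))$, and a mean-value expansion around $\bbeta_0$ gives
$$\sqrt{n}(\widehat{\bbeta}-\bbeta_0)=\big(-n^{-1}\nabla_{\bbeta}\widehat{\bU}_n(\bar{\bbeta})\big)^{-1}\,n^{-1/2}\widehat{\bU}_n(\bbeta_0),$$
with $\bar{\bbeta}$ on the segment between $\widehat{\bbeta}$ and $\bbeta_0$. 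Differentiating, $\nabla_{\bbeta}\widehat{\bU}_n(\bbeta)=-\widehat{\bF}_n(\bbeta)+\sum_i \bX_i(\widehat{\btau})\,h'(\widehat{\eta}_i(\bbeta))(Y_i-g(\widehat{\eta}_i(\bbeta)))\,\bX_i(\widehat{\btau})^T$; the second, residual-weighted term is mean zero at $(\bbeta_0,\btau)$ and vanishes in probability, so that $-n^{-1}\nabla_{\bbeta}\widehat{\bU}_n(\bar{\bbeta})\stackrel{P}{\to}\E(\bF(\bbeta_0))$ once I show $n^{-1}(\widehat{\bF}_n(\bar{\bbeta})-\bF_n(\bar{\bbeta}))=o_P(1)$ (the replacement bound below) and combine it with a uniform LLN for the oracle $n^{-1}\bF_n$.

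The heart of the proof is the numerator, which I would split as $n^{-1/2}\widehat{\bU}_n(\bbeta_0)=n^{-1/2}\bU_n(\bbeta_0)+n^{-1/2}(\widehat{\bU}_n(\bbeta_0)-\bU_n(\bbeta_0))$. The oracle part $\bU_n(\bbeta_0)=\sum_i \bX_i(\btau)h(\eta_i)(Y_i-g(\eta_i))$ is a sum of i.i.d.\ mean-zero vectors (using $\E(\varepsilon_i\mid X_i)=0$) whose common covariance is exactly $\E(\bF(\bbeta_0))$, since $\V(Y_i\mid X_i)=\sigma^2(g(\eta_i))$ forces $\bX_i\bX_i^Th(\eta_i)^2\sigma^2(g(\eta_i))=\bX_i\bX_i^T g'(\eta_i)^2/\sigma^2(g(\eta_i))$; the Lindeberg--Feller CLT (finite moments via Assumptions~\ref{assum2} and \ref{assum3}a) then gives $n^{-1/2}\bU_n(\bbeta_0)\stackrel{d}{\to}N(\bzero,\E(\bF(\bbeta_0)))$. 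The main obstacle is to show the replacement term is negligible, $n^{-1/2}(\widehat{\bU}_n(\bbeta_0)-\bU_n(\bbeta_0))=o_P(1)$. Decomposing the per-observation difference into a design change, a weight change through $h(\widehat{\eta}_i)-h(\eta_i)$, and a fitted-mean bias $g(\eta_i)-g(\widehat{\eta}_i)$, the crucial quantities are the increments $\Delta_{ir}=X_i(\widehat{\tau}_r)-X_i(\tau_r)$. Here I cannot Taylor-expand $X_i$, since its paths are rough; instead I use Assumption~\ref{assum1}, which yields $\V(\Delta_{ir})\asymp c(\tau_r)|\widehat{\tau}_r-\tau_r|^\kappa$, of order $O_P(n^{-1})$ on $E_n$ by the super-consistent rate. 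The delicate point is that the terms lacking a mean-zero residual factor — in particular the fitted-mean-bias contribution $\sum_i\bX_i(\btau)h(\eta_i)g'(\eta_i)\sum_r\beta_r\Delta_{ir}$ — are not controlled by a crude $L^1$ bound; one must exploit the near-cancellation $\Cov(X_i(\tau_k),\Delta_{ir})=\sigma(\tau_k,\widehat{\tau}_r)-\sigma(\tau_k,\tau_r)=O_P(n^{-\min\{1,1/\kappa\}})$, giving this term conditional mean of order $n^{1/2-\min\{1,1/\kappa\}}=o(1)$ and conditional variance $O_P(n^{-1})$. Handling the data-dependence of $\widehat{\btau}$ uniformly over the shrinking neighbourhood, via maximal inequalities for the Gaussian increments, is precisely where the higher-moment Assumption~\ref{assum3}a with $p>2/\kappa$ enters; this is the content of the auxiliary Proposition~\ref{lem:score1}.

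Finally, combining the two displays by Slutsky's theorem gives $\sqrt{n}(\widehat{\bbeta}-\bbeta_0)\stackrel{d}{\to}\E(\bF(\bbeta_0))^{-1}N(\bzero,\E(\bF(\bbeta_0)))=N(\bzero,\E(\bF(\bbeta_0))^{-1})$, where the sandwich collapses because the $1/\sigma^2$-weighting built into the score makes the ``bread'' and ``meat'' matrices coincide. I expect the replacement step for the fitted-mean-bias term to be the genuine difficulty, since it is the only place where the roughness of $X_i$ and the merely $O_P(n^{-1/\kappa})$ (rather than Taylor-expandable) accuracy of $\widehat{\btau}$ interact with the $\sqrt{n}$ normalization.
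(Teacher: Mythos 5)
Your proposal is correct and follows essentially the same route as the paper: a mean-value expansion of the estimated score around $\bbeta_0$, the Lindeberg CLT for the oracle score with the quasi-likelihood weighting collapsing the sandwich, and control of the replacement error $n^{-1/2}(\widehat{\bU}_n(\bbeta_0)-\bU_n(\bbeta_0))=O_P(n^{1/2-\min\{1,1/\kappa\}})=o_P(1)$ via the covariance increment $\sigma(\tau_k,\widehat{\tau}_r)-\sigma(\tau_k,\tau_r)$ together with maximal inequalities over the shrinking neighbourhood — exactly the content of the paper's Propositions \ref{lem:score1} and \ref{th:paraestQML}. The only cosmetic difference is that the paper establishes existence and consistency of the root locally, following the Fahrmeir--Kaufmann quasi-likelihood argument on shrinking neighbourhoods $N_n(\delta_1)$, rather than via global identifiability plus a uniform law of large numbers, but both are standard and lead to the same conclusion.
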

That is, our estimator based on $\widehat{\tau}_r$ enjoys the same asymptotic efficiency properties as if the true points of impact $\tau_r$ were known.  In fact, it achieves the same asymptotic efficiency properties as under classic multivariate setups \citep[cf.][]{MC1983}. In practice one might replace $\mathbb{E}(\bF(\bbeta_0))$ with its consistent estimator $n^{-1}\widehat{\bF}_n(\widehat{\bbeta})$ in order to derive approximate results. This is a direct consequence of Equations \eqref{eq:thQML3mean1} and \eqref{eq:exist0} in the supplementary Appendix~\ref{app:PARAest}.

\subsubsection{Parametric estimation: Practical implementation}\label{sec:PIM}
An implementation of our parametric estimation procedure comprises, first, the estimation of the points of impact $\tau_r$ and, second, the estimation of the parameters $\alpha$ and $\beta_r$. Estimating the points of impact $\tau_r$ relies on the choice of $\delta$ and a choice of the threshold parameter $\lambda$ (see Section \ref{SubSec:Estim}). Asymptotic specifications are given in Theorem~\ref{poicon}; however, these determine the tuning parameters $\delta$ and $\lambda$ only up to constants and are generally of a limited use in practice. In the following we propose an alternative fully data-driven model selection approach.

For a given $\delta$, our estimation procedure leads to a set of potential point of impact candidates $\{\widehat{\tau}_1,\widehat{\tau}_2,\dots,\widehat{\tau}_{M_\delta}\}$ (see Section \ref{SubSec:Estim}). Selecting final point of impact estimates from this set of candidates corresponds to a classic variable selection problem. In the case where the distribution of $Y_i|X_i$ belongs to the exponential family (as in the logistic regression) one may perform a best subset selection optimizing a standard model selection criterion such as the Bayesian Information Criterion (BIC),
\begin{align}\label{eq:BIC}
\operatorname{BIC}_{\mathcal{X}}(\delta)= -2 \log \mathcal{L}_{\mathcal{X}} + K_{\mathcal{X}}\log{(n)}.
\end{align}
Here, $\log\mathcal{L}_{\mathcal{X}}$ is the log-likelihood of the model with intercept and predictor variables $\mathcal{X}\subseteq \{X_i(\widehat{\tau}_1),X_i(\widehat{\tau}_2),\dots,X_i(\widehat{\tau}_{M_\delta})\}$, where $K_{\mathcal{X}}=1+|\mathcal{X}|$ denotes the number of predictors. Minimizing $\operatorname{BIC}_{\mathcal{X}}(\delta)$ over $0<\delta<(b-a)/2$ leads to the final model choice.

In the more general case of quasi-likelihood models \citep[cf.][Ch.~9]{MCN1989} where only the first two moments $\E(Y_i|X_i) = g(\eta_i)$ and $\V(Y_i|X_i)=\sigma^2(g(\eta_i))$ are known, one may replace the deviance $-2 \log \mathcal{L}_{\mathcal{X}}$ by the expression for the quasi-deviance
$-2Q_{\mathcal{X}} = -2\sum_{i=1}^n\int_{y_i}^{g(\widehat{\eta}_{\mathcal{X},i})}(y_i-t)/(\sigma^2(t))\,dt$,
where $\widehat{\eta}_{\mathcal{X},i}$ is the linear predictor with intercept and predictor variables $\mathcal{X}$.

In order to calculate $\operatorname{BIC}_{\mathcal{X}}(\delta)$, we need the estimates $\widehat{\bbeta}$ solving the estimation equations $\widehat{\bU}_n(\widehat{\bbeta})=0$. In practice these equations are solved iteratively, for instance, by the usual Newton-Raphson method with Fisher-type scoring. That is, for an arbitrary initial value $\widehat{\bbeta}_0$ sufficiently close to $\widehat{\bbeta}$ one generates a sequence of estimates $\widehat{\bbeta}_m$, with $m=1,2,\dots$,
\begin{align}
\widehat{\bbeta}_m = \widehat{\bbeta}_{m-1} + \big(\widehat{\bF}_n(\widehat{\bbeta}_{m-1})\big)^{-1}
\widehat{\bU}_n(\widehat{\bbeta}_{m-1}).\label{eq:FS}
\end{align}
Iteration is executed until convergence and the final step of the procedure yields the estimate $\widehat{\bbeta}$. Here, $\widehat{\bF}_n(\bbeta)$ and $\widehat{\bU}_n(\bbeta)$ replace $\bF_n(\bbeta)$ and $\bU_n(\bbeta)$ in the usual Fisher scoring algorithm, since the unknown $\tau_r$, $1\leq r\leq S$, are replaced by their estimates $\widehat{\tau}_r$. The latter is justified asymptotically by our results in Corollary \ref{cor:uniform} and Proposition \ref{th:paraestQML} in Appendix~\ref{app:PARAest}.

\section{Simulation}\label{sec:SIM} 
We investigate the finite sample performance of our estimators using Monte Carlo simulations. After simulating a trajectory $X_i$ over $p$ equidistant grid points $t_j$, $j=1,\dots,p$, on $[a,b]=[0,1]$, linear predictors of the form $\eta_i = \alpha + \sum_{r=1}^S \beta_r X_i(\tau_r)$ are constructed for some predetermined model parameters $\alpha$, $\beta_r$, $\tau_r$, and $S$, where a point of impact is implemented as the smallest observed grid point $t_j$ closest to $\tau_r$. The response $Y_i$ is derived as a realization of a Bernoulli random variable with success probability $g(\eta_i)=\exp(\eta_i)/(1+\exp(\eta_i))$, resulting in a logistic regression framework with points of impact.
The simulation study is implemented in \textsf{R} \citep{Rcite}, where we use the \textsf{R}-package \texttt{glmulti} \citep{GLMULTI} in order to implement the fully data-driven BIC-based best subset selection estimation procedure described in Section~\ref{sec:PIM}. The threshold estimator from Section~\ref{SubSec:Estim} requires appropriate choices of $\delta = \delta_n$ and $\lambda = \lambda_n$. Theorem~\ref{poicon} suggests that a suitable choice of $\delta$ is given by $\delta= c_{\delta}\,n^{-1/2}$ for some constant $c_{\delta}>0$. Our simulation results are based on $c_{\delta} = 1.5$; similar qualitative results were derived for a broader range of values $c_{\delta}$. For the threshold $\lambda$ we use $\lambda=A((\widehat{\E}(Y^4))^{1/2}\log((b-a)/\delta)/n)^{1/2}$, where $A=\sqrt{2\sqrt{3}}$ and $\widehat{\E}(Y^4) = n^{-1}\sum_{i=1}^n Y_i^4$, as motivated below of Theorem~\ref{poicon}.

In what follows, we denote the BIC-based selection (see Section \ref{sec:PIM}) of points of impact by POI and the threshold-based selection (Algorithm \ref{Algo:1}) by TRH.
Estimated points of impact candidates are related to the true impact points by the following matching rule: In a first step the interval $[a,b]$ is partitioned into $S$ subintervals of the form $I_j=[m_{j-1},m_{j})$, where $m_0 = a$, $m_S=b$ and $m_j = (\tau_{j}+\tau_{j+1})/2$ for $0<j<S$. The candidate $\widehat{\tau}_l$ in interval $I_j$ with the closest distance to $\tau_j$ is then taken as the estimate of $\tau_j$. 

The simulation results for our parametric estimation procedure (Section \ref{sec:PES}) are based on $1000$ Monte Carlo iterations for each constellation of $n\in\{100,200,500,1000,3000\}$ and $p\in\{100,500,1000\}$. The results for our nonparametric estimation procedure (Section \ref{sec:NP}), are based on the same general setup, but consider the reduced set of sample sizes $n\in\{100,200,500\}$.  Estimation errors for the parametric estimation procedure are illustrated by boxplots with error bars representing the $10\%$ and $90\%$ quantiles. The estimation errors for the nonparametric estimation procedure are quantified by the Mean Average Squared Error, $\operatorname{MASE}=1000^{-1}\sum_{r=1}^{1000}n^{-1}\sum_{i=1}^n\big(g(\eta_i)-\widehat{g}^r_{\widehat{\tau}}(X^r_{i}(\widehat{\tau}^r_{1}),\dots,X^r_{i}(\widehat{\tau}^r_{\hat{S}}))\big)^2$, where the superscript $r$ denotes the $r$th simulation run.

Five data generating processes (DGP) are considered (see Table \ref{tab:simulatione_1}) using the following three processes $\{X_i(t):0\leq t\leq 1\}$ covering a broad range of situations:
\begin{description}
\item[OUP] {\sc Ornstein-Uhlenbeck Process}. A Gaussian process with covariance function $\sigma(s,t) = \sigma^2_u/(2\theta) (\exp(-\theta|s-t|) - \exp(-\theta(s+t)))$. We choose $\theta=5$ and $\sigma^2_u=3.5$.
\item[GCM] {\sc Gaussian Covariance Model}. A Gaussian process with covariance function $\sigma(s,t) = \sigma(|s-t|) = \exp(-(|s-t|/d)^2)$. We choose $d=1/10$.
\item[EBM] {\sc Exponential Brownian Motion}. A non Gaussian process with covariance function $\sigma(s,t) = \exp((s+t+ |s-t|)/2)-1$. It is defined by $X_i(t) = \exp(B_i(t))$, where $B_i(t)$ is a Brownian motion.
\end{description}

\spacingset{1}
\begin{table}[!htb]
\addtolength{\tabcolsep}{-3.pt}
\centering
\caption[]{Data generating processes considered in the simulations\\}\label{tab:simulatione_1}
\begin{adjustbox}{max width=0.9\textwidth}
\begin{tabular}{l*{11}{c}}
\midrule
\multicolumn{2}{c}{\textbf{Model}} & \multicolumn{5}{c}{\textbf{Points of impact}} & \multicolumn{5}{c}{\textbf{Parameters}} \\
\cmidrule(lr){1-2}    \cmidrule(lr){3-7 } \cmidrule(lr){8-12}
\textbf{Label} & \textbf{Process} & $\boldsymbol{S}$ & $\boldsymbol{\tau_1}$ &$\boldsymbol{\tau_2}$ & $\boldsymbol{\tau_3}$  & $\boldsymbol{\tau_4}$ & $\boldsymbol{\alpha}$  & $\boldsymbol{\beta_1}$ & $\boldsymbol{\beta_2}$ & $\boldsymbol{\beta_3}$  & $\boldsymbol{\beta_4}$\\
\midrule
DGP 1 & OUP & $1^\ast$                  & 1/2 &     &     &     & 1 &  4 &   &    &   \\
DGP 2 & OUP & $2\phantom{^\ast}$        & 1/3 & 2/3 &     &     & 1 & -6 & 5 &    &   \\
DGP 3 & OUP & $4\phantom{^\ast}$        & 1/6 & 2/6 & 4/6 & 5/6 & 1 & -6 & 6 & -5 & 5 \\
DGP 4 & GCM & $2\phantom{^\ast}$        & 1/3 & 2/3 &     &     & 1 & -6 & 5 &    &   \\
DGP 5 & EBM & $2\phantom{^\ast}$        & 1/3 & 2/3 &     &     & 1 & -6 & 5 &    &   \\
\bottomrule
\multicolumn{12}{c}{$^\ast$Note: $S=1$ is assumed known (only for DGP 1).}
\end{tabular}
\end{adjustbox}
\end{table}
\spacingset{1.5}

DGP 1-3 are increasingly complex, but satisfy our theoretical assumptions. The general setups of DGP 4 and DGP 5 are equivalent to DGP 2, but the processes $X_i$ (GCM and EBM) violate our theoretical assumptions. The covariance function in DGP 4 is infinitely many times differentiable, even at the diagonal where $s=t$, contradicting Assumption \ref{assum1}, but fitting the remark underneath this Assumption. The process in DGP 4 contradicts the Gaussian Assumption \ref{assum2}.

\subsection{Evaluation of the parametric estimation procedure}

\begin{figure}[!htb]
\centering 
{\sc DGP 1: Estimation errors for $n\in\{100,200,500,1000,3000\}$}
\par\medskip
\includegraphics[width=0.9\textwidth]{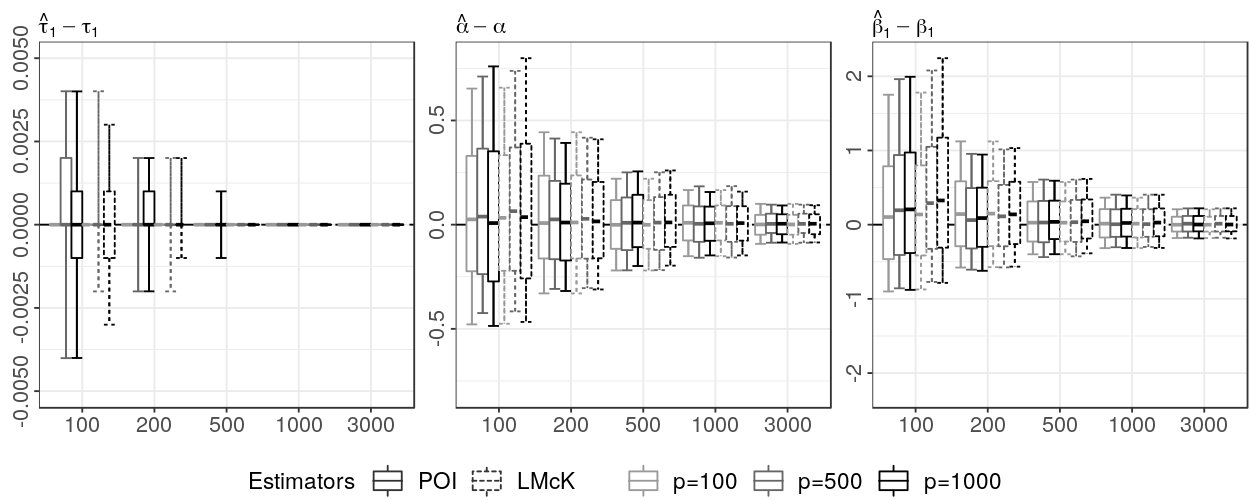}
\caption[Estimation errors for DGP 1]{Comparison of the estimation errors from using our BIC-based method POI (solid lines) and the method of \cite{LMcK2009} (dashed lines). }
\label{fig:sim_mod1}
\end{figure}
DGP 1 allows us to compare our data-driven BIC-based estimation procedure from Section~\ref{sec:PIM} (denoted as POI) with the estimation procedure of \cite{LMcK2009} (denoted as LMcK). \cite{LMcK2009} consider situations where $S=1$ is known and propose estimating the unknown parameters $\alpha, \beta_1$ and $\tau_1$ by simultaneously maximizing the likelihood over $\alpha$, $\beta_1$ and the grid points $t_j$. Our estimation procedure does not require knowledge about $S$, but profits from a situation where $S=1$ is known. Therefore, for reasons of comparability, we restrict the BIC-based model selection process to allow only for models containing one point of impact candidate. The simulation results are depicted in Figure \ref{fig:sim_mod1} and are virtually identical for both methods and show a satisfying behavior of the estimates. It should be noted, however, that our estimator is computationally advantageous as it greatly thins out the number of possible point of impact candidates by allowing only the local maxima of $|n^{-1}\sum_{i=1}^n Z_{\delta,i}(s)Y_i|$ as possible point of impact candidates. Our threshold-based estimation procedure leads to similar qualitative results. We omit these results, however, in order to allow for a clear display in Figure \ref{fig:sim_mod1}. The performance of our threshold-based procedure is reported in detail for the remaining simulation studies (DGP 2-5).

\begin{figure}[!htb]
\centering
{\sc DGP 2: Estimation errors for $n\in\{100,200,500,1000,3000\}$}
\par\medskip
\includegraphics[width=1\textwidth]{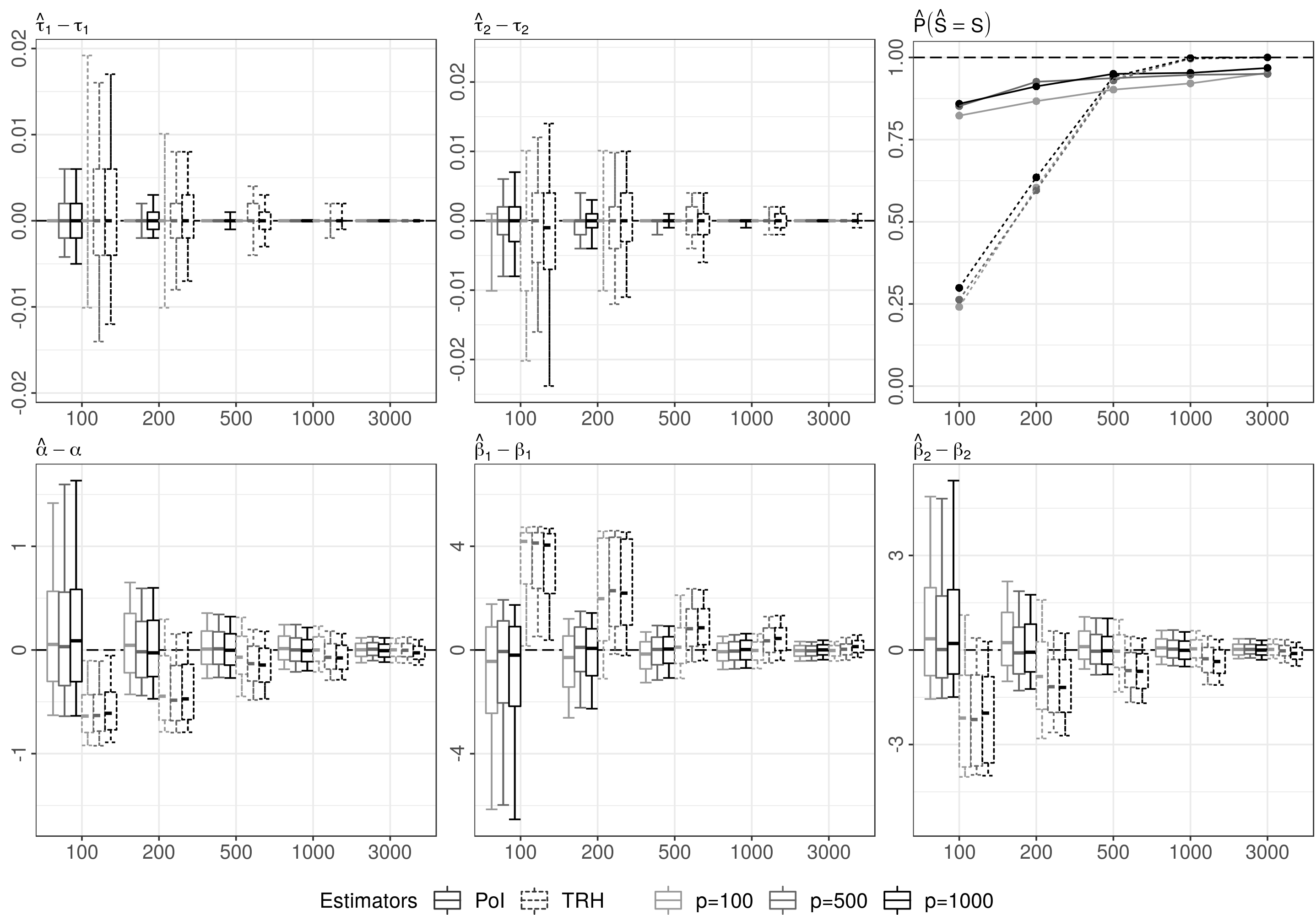}
\caption[Estimation errors for simulation model 2 (2 impact points)]{Comparison of the estimation errors from using our BIC-based method POI (solid lines) and our threshold-based method TRH (dashed lines). }
\label{fig:sim_mod2}
\end{figure}
DGP 2 is more complex than DGP 1 since $S=2$ and considered unknown. Figure~\ref{fig:sim_mod2} compares the estimation errors from using our BIC-based POI estimator with those from our threshold-based estimator (denoted as TRH). For smaller sample sizes $n$, the POI estimator seems to be preferable to the TRH estimator. Although, estimating the locations of the points of impact $\tau_1$ and $\tau_2$ is quite accurate for both procedures, the number $S$ is estimated correctly more often  using the POI estimator (see upper right panel). The more precise estimation of $S$ when using the POI estimator results in essentially unbiased estimates of the parameters $\alpha$, $\beta_1$, and $\beta_2$. By contrast, the less precise estimation of $S$ using the TRH estimator leads to clearly visible omitted variable biases in the estimates of the parameters $\alpha$, $\beta_1$, and $\beta_2$. As the sample size increases, however, the accuracy of estimating $\widehat{S}$ improves for the TRH estimator such that both estimators show eventually a similar performance.

DGP 3 with $S=4$ unknown points of impact comprises an even more complex situation than DGP 2. For reasons of space, Figure \ref{fig:sim_mod3} is referred to Appendix \ref{app:SIM_MORE}. It shows that the qualitative results from DGP 2 still hold. For large $n$, the POI and TRH estimators both lead to accurate estimates of the model parameters for all choices of $p$. As already observed in DGP 2, however, the TRH estimator leads to imprecise estimates of $S$ for small $n$, which results in omitted variables biases in the estimates of the parameters $\alpha$, $\beta_1$, $\beta_2$, $\beta_3$, and $\beta_4$. Because of the increased complexity of DGP 3, these biases are even more pronounced than in DGP 2. The reason for this is partly due to the construction of the TRH estimator, where we set the value of $\delta$ to $\delta=c_{\delta}n^{-1/2}$ with $c_{\delta}=1.5$. Asymptotically, the choice of $c_{\delta}$ has a negligible effect, but may be inappropriate for small $n$, since the estimation procedure eliminates all points within a $\sqrt{\delta}$-neighborhood around a chosen candidate $\widehat{\tau}_r$ (see Section \ref{SubSec:Estim}). For DGP 3, the choice of $c_{\delta}=1.5$ results in a too large $\sqrt{\delta}$-neighborhood, such that the estimation procedure also eliminates true point of impact locations for small $n$.
By contrast, the POI estimator is able to avoid such adverse eliminations as the
BIC criterion is also minimized over $\delta$.

\begin{figure}[!htb]
\centering
{\sc DGP 4: Estimation errors for $n\in\{100,200,500,1000,3000\}$}
\par\medskip
\includegraphics[width=1\textwidth]{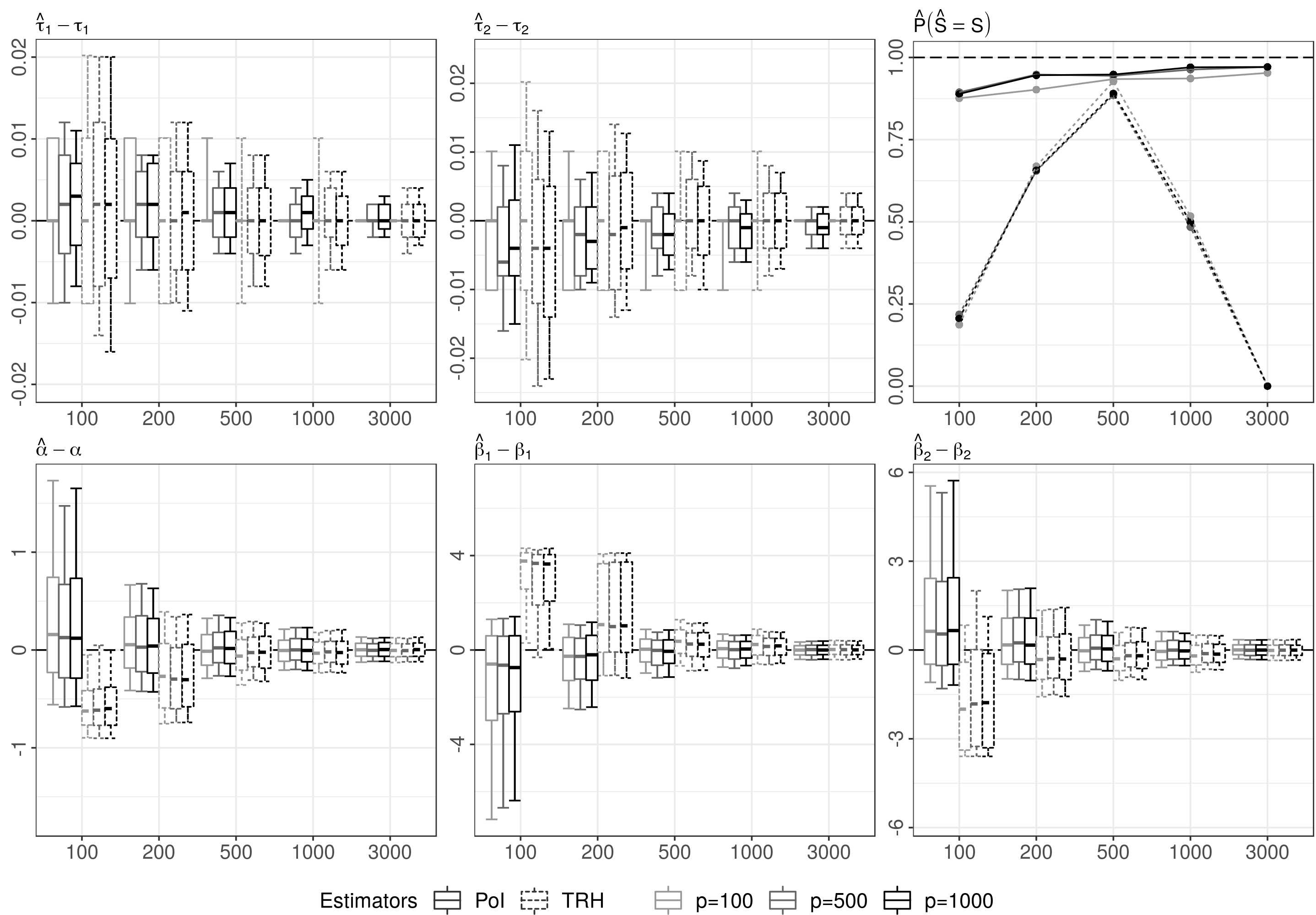}
\caption[]{Comparison of the estimation errors from using our BIC-based method POI (solid lines) and our threshold-based method TRH (dashed lines).}
\label{fig:sim_mod4}
\end{figure}
DGP 4 takes up the general setup of DGP 2, but the functional data $X_i$ are simulated using a Gaussian covariance model (GCM) which is characterized by an infinitely many times differentiable covariance function. This setting contradicts our basic Assumption~\ref{assum1}, but fits our remark at the end of Section \ref{ssec:theoreticalframe}.  From Figure~\ref{fig:sim_mod4} it can be concluded that even under the failure of Assumption~\ref{assum1}, both estimation procedures are capable of consistently estimating the points of impact and the model parameters. The TRH estimator, however, fails to estimate the number of points of impact $S$ even for large $n$, since the $\lambda$-threshold is tailored for situations under Assumption~\ref{assum1}. Here the TRH estimator is able to estimate the true points of impact, but additionally selects more and more redundant point of impact candidates as $n$ becomes large. That is, the TRH estimator becomes more a screening than a selection procedure which can be problematic in practice.
By contrast, the POI estimator is able to avoid such redundant selections of point of impact candidates, as the BIC criterion only selects points of impact candidates if they result in a sufficiently large improvement of the model fit.

DGP 5 also takes up the setup of DGP 2; however, the process $X_i$ is simulated as an exponential Brownian Motion (EBM) violating Assumption \ref{assum2}, but still satisfying Assumption~\ref{assum1}. Here we set the asymptotically negligible tuning parameter $c_\delta$ of the TRH estimator equal to 3.
The evolution of the estimation errors can be seen in Figure~\ref{fig:sim_mod5} in Appendix \ref{app:SIM_MORE}. The results are comparable with our previous simulations in DGP 2 and DGP 3, indicating that the estimation procedure is robust to at least some violations of Assumption~\ref{assum2}.


\subsection{Evaluation of the nonparametric estimation procedure}
Table \ref{tab:NPsim} contains the simulation results for our nonparametric estimation procedure described in Section \ref{sec:NP}. We focus on the more challenging data generating processes, DGP2-5, with at least two points of impact and compare our nonparametric method with the Most-Predictive Design Points (MPDP) method of \cite{FHV2010}. To the best of our knowledge, the MPDP method is the only comparable method in the literature.  We tried hard to carry out the full simulation study for the MPDP method; however, \cite{FHV2010} use a brute force minimization approach based on cross-validation considering $2^p$ grid point combinations, which makes their method computationally extremely expensive.\footnote{Due to the high computational costs, the simulation study in \cite{FHV2010} is based on only 50 Monte Carlo replications. In a readme-file, provided at Frederic Ferraty's homepage, the authors report that one run with a dataset of $n=149$ curves and $p=700$ grid points lasts about 30 minutes.} For the MPDP method, we, therefore, had to limit the number of Monte Carlo replications to 500, the number of grid points to $p\in\{100,500\}$ and the sample sizes to $n\in\{100,200\}$.

\spacingset{1}
\begin{table}[!tb]
\centering
\caption[]{Mean Average Squared Errors (MASE) for the nonparametric estimator $\widehat{g}_{\widehat{\tau}}$}\label{tab:NPsim}
\vspace{2ex}
\begin{minipage}[t]{.45\textwidth}
\centering
\begin{tabular}{ccccc}
\toprule
 & & & \multicolumn{2}{c}{MASE}\\
\cline{4-5}\\[-2ex]
DGP & $p$ & $n$ & TRH  & MPDP  \\
\midrule
2 &  100 & 100 & 0.098 & 0.100 \\
2 &  100 & 200 & 0.061 & 0.089 \\
2 &  100 & 500 & 0.017 &       \\
2 &  500 & 100 & 0.097 & 0.098 \\
2 &  500 & 200 & 0.064 & 0.092 \\
2 &  500 & 500 & 0.023 &       \\
2 & 1000 & 100 & 0.094 &       \\
2 & 1000 & 200 & 0.060 &       \\
2 & 1000 & 500 & 0.022 &       \\
\midrule
3 &  100 & 100 & 0.155 & 0.175 \\
3 &  100 & 200 & 0.105 & 0.156 \\
3 &  100 & 500 & 0.058 &       \\
3 &  500 & 100 & 0.150 & 0.173 \\
3 &  500 & 200 & 0.102 & 0.161 \\
3 &  500 & 500 & 0.060 &       \\
3 & 1000 & 100 & 0.149 &       \\
3 & 1000 & 200 & 0.100 &       \\
3 & 1000 & 500 & 0.059 &       \\
\bottomrule
\end{tabular}
\end{minipage}%
\begin{minipage}[t]{.45\textwidth}
  \centering
\begin{tabular}{ccccc}
  \toprule
  & & & \multicolumn{2}{c}{MASE}\\
  \cline{4-5}\\[-2ex]
DGP & $p$ & $n$ & TRH  & MPDP  \\
\midrule
4 &  100 & 100 & 0.089 & 0.093 \\
4 &  100 & 200 & 0.044 & 0.087 \\
4 &  100 & 500 & 0.011 &       \\
4 &  500 & 100 & 0.085 & 0.096 \\
4 &  500 & 200 & 0.045 & 0.087 \\
4 &  500 & 500 & 0.010 &       \\
4 & 1000 & 100 & 0.086 &       \\
4 & 1000 & 200 & 0.045 &       \\
4 & 1000 & 500 & 0.010 &       \\
\midrule
5 &  100 & 100 & 0.096 & 0.180 \\
5 &  100 & 200 & 0.089 & 0.177 \\
5 &  100 & 500 & 0.069 &       \\
5 &  500 & 100 & 0.094 & 0.179 \\
5 &  500 & 200 & 0.090 & 0.176 \\
5 &  500 & 500 & 0.069 &       \\
5 & 1000 & 100 & 0.092 &       \\
5 & 1000 & 200 & 0.091 &       \\
5 & 1000 & 500 & 0.066 &       \\
\bottomrule
\end{tabular}
\end{minipage}\\[2ex]%
\end{table}
\spacingset{1.5}

The results in Table \ref{tab:NPsim} show that the MASE decreases with increasing sample size $n$ and that the effect of different numbers of grid points $p$ is essentially negligible for both methods. The differences in the simulation results for the different data generating processes are generally equivalent to those discussed for the parametric estimation procedure. DGP 3 with its four points of impact is the most challenging case and, therefore, produces the largest estimation errors. The MPDP method of \cite{FHV2010} has throughout larger estimation errors than our nonparametric estimation results based on the TRH estimator (Algorithm \ref{Algo:1}). The larger estimation errors in $\widehat{g}$ of the MPDP method can be explained by its larger estimation errors when estimating the points of impact $\tau_1,\dots,\tau_S$ (see Table \ref{tab:NPsim2}). In fact, our super-consistent points of impact estimator has substantially smaller estimation errors (factor $1/10$ to $1/100$) than the MPDP method.

\spacingset{1}
\begin{table}[!htb]
\centering
\caption[]{Average Mean Squared Errors (AvgMSE) for $\widehat{\tau}_1,\dots,\widehat{\tau}_S$}\label{tab:NPsim2}
\vspace{2ex}
\begin{minipage}[t]{.45\textwidth}
\centering
\begin{tabular}{ccccc}
\toprule
 & & & \multicolumn{2}{c}{AvgMSE}\\
\cline{4-5}\\[-2ex]
DGP & $p$ & $n$ & TRH   & MPDP\\
\midrule
  2 & 100  & 100 & 0.0002 & 0.0063 \\
  2 & 100  & 200 & 0.0001 & 0.0023 \\
  2 & 100  & 500 & 0.0000 &  \\
  2 & 500  & 100 & 0.0002 & 0.0084 \\
  2 & 500  & 200 & 0.0001 & 0.0013 \\
  2 & 500  & 500 & 0.0000 &  \\
  2 & 1000 & 100 & 0.0002 &  \\
  2 & 1000 & 200 & 0.0001 &  \\
  2 & 1000 & 500 & 0.0000 &  \\
  \midrule
  3 & 100  & 100 & 0.0002 & 0.0186 \\
  3 & 100  & 200 & 0.0001 & 0.0036 \\
  3 & 100  & 500 & 0.0000 &  \\
  3 & 500  & 100 & 0.0002 & 0.0218 \\
  3 & 500  & 200 & 0.0001 & 0.0035 \\
  3 & 500  & 500 & 0.0000 &  \\
  3 & 1000 & 100 & 0.0002 &  \\
  3 & 1000 & 200 & 0.0001 &  \\
  3 & 1000 & 500 & 0.0000 &  \\
\bottomrule
\end{tabular}
\hspace*{.25ex} Legend: $\operatorname{AvgMSE}=S^{-1}\sum_{l=1}^S\operatorname{MSE}(\widehat{\tau}_l)$
\end{minipage}%
\begin{minipage}[t]{.45\textwidth}
  \centering
\begin{tabular}{ccccc}
  \toprule
  & & & \multicolumn{2}{c}{AvgMSE}\\
  \cline{4-5}\\[-2ex]
DGP & $p$ & $n$ & TRH   & MPDP\\
\midrule
4 & 100  & 100 & 0.0003 & 0.0072 \\
4 & 100  & 200 & 0.0001 & 0.0029 \\
4 & 100  & 500 & 0.0001 &  \\
4 & 500  & 100 & 0.0002 & 0.0062 \\
4 & 500  & 200 & 0.0001 & 0.0023 \\
4 & 500  & 500 & 0.0000 &  \\
4 & 1000 & 100 & 0.0002 &  \\
4 & 1000 & 200 & 0.0001 &  \\
4 & 1000 & 500 & 0.0000 &  \\
\midrule
5 & 100  & 100 & 0.0004 & 0.0111 \\
5 & 100  & 200 & 0.0006 & 0.0025 \\
5 & 100  & 500 & 0.0002 &  \\
5 & 500  & 100 & 0.0004 & 0.0097 \\
5 & 500  & 200 & 0.0006 & 0.0009 \\
5 & 500  & 500 & 0.0001 &  \\
5 & 1000 & 100 & 0.0004 &  \\
5 & 1000 & 200 & 0.0007 &  \\
5 & 1000 & 500 & 0.0002 &  \\
\bottomrule
\end{tabular}
\end{minipage}\\[2ex]%
\end{table}
\spacingset{1.5}

\section{Points of impact in continuous emotional stimuli}\label{sec:RDA}
Current psychological research on emotional experiences increasingly includes continuous emotional stimuli such as videos to induce emotional states as an attempt to increase ecological validity \citep{TFH2009}. Asking participants to evaluate those stimuli is most often done using an overall rating such as ``How positive or negative did this video make you feel?''. Such global overall ratings are guided by the participant's affective experiences while watching the video \citep{S1999,MLC2005} which makes it crucial to identify the relevant parts of the stimulus impacting the overall rating in order to understand the emergence of emotional states and to make use of specific ``impacting'' parts of the stimuli.

Due to a lack of appropriate statistical methods, existing approaches use heuristics such as the ``peak-and-end rule'' in order to link the overall ratings with the continuous emotional stimuli. The peak-and-end rule states that people's evaluations can be well predicted using just two characteristics: the moment of emotional peak intensity and the ending of the emotional stimuli \citep{F2000}. Such a heuristic approach, however, is only of limited practical use. The peak intensity moment and the ending are not necessarily good predictors. Furthermore, the peak intensity moment can vary strongly across participants, which prevents linking the overall rating to specific moments in the continuous emotional stimuli that are of a \textit{common} relevance. 

Our case study comprises data from $n=65$ participants, who were asked to continuously report their emotional state (from very negative to very positive) while watching a documentary video (112 sec.) on the persecution of African albinos.  
A version of the video can be found online at YouTube.\footnote{Link to the video: \url{https://youtu.be/9F6UpuJIFaY}. The video clip used in the experiment corresponds approximately to the first 115 sec.~of the video at YouTube.} The first six data points ($<1$ sec.) are removed as they contain some obviously erratic components. Figure \ref{fig:appl_1} shows the standardized emotion trajectories $X_i(t_j)$, where $t_j$ are equidistant grid points within the unit-interval $0=t_1<\dots<t_p=1$ with $p=167$. After watching the video, the participants were asked to rate their final overall feeling. This overall rating was coded as a binary variable $Y_i\in\{0,1\}$, where $Y_i=0$ denotes ``I feel negative'' ($48\%$ of the participants) and $Y_i=1$ denotes ``I do not feel negative'' ($52\%$ of the participants).
The data were collected in May 2013. Participants were recruited through Amazon Mechanical Turk (\url{www.mturk.com}) and received 1USD for completing the ratings via the online survey platform SoSci Survey (\url{www.soscisurvey.de}). The study was approved by the local institutional review board (IRB, University of Colorado Boulder). The documentary video is taken from the Interdisciplinary Affective Science Laboratory Movie Set (Feldman Barrett, L., unpublished).

\spacingset{1}
\begin{table}[!htb]
\addtolength{\tabcolsep}{-3.pt}
\centering
\caption[]{Estimation results.}\label{tab:appl_1} 
\smallskip
\begin{adjustbox}{max width=0.9\textwidth}
\begin{tabular}{l|cc|cc|cc|}
\toprule
\multicolumn{1}{l}{} & \multicolumn{2}{c}{\textbf{POI}}  &   \multicolumn{2}{c}{\textbf{PER-1}} & \multicolumn{2}{c}{\textbf{PER-2}}\\
                            \cmidrule(lr){2-3} \cmidrule(lr){4-5} \cmidrule(lr){6-7}
\multicolumn{1}{l}{\textbf{Regressor}} & \textbf{Coeff.} & \multicolumn{1}{c}{\textbf{(S.E.)}} &\textbf{Coeff.} & \multicolumn{1}{c}{\textbf{(S.E.)}}  & \textbf{Coeff.} & \multicolumn{1}{c}{\textbf{(S.E.)}}  \\
\midrule
$X(\widehat{\tau}_1)$       & $\phantom{-}1.16^{***}$   &$(0.41)$ &          &            &           &  \\
$X(\widehat{\tau}_2)$& $\phantom{-}0.71^{**\phantom{*}}$&$(0.32)$ &         &            &           &   \\
$X(p^{\operatorname{abs}})$ &                 &          & $\phantom{-}0.41$ & $(0.36)$  &           &   \\
$X(p^{\operatorname{pos}})$ &                 &          &          &            & $\phantom{-}0.46$  &$(0.29)$\\
$X(p^{\operatorname{neg}})$ &                 &          &          &            &  $\phantom{-}0.54$ &$(0.43)$\\
$X(1)$                      &                &          &  $\phantom{-}0.20$ & $(0.26)$  & $\phantom{-}0.04$   & $(0.28)$\\
Constant                    &  $-0.29\phantom{^{***}}$        & $(0.29)$& $-0.98$  & $(0.71) $ & $-0.29$   & $(0.73)$\\
\midrule
\multicolumn{1}{l}{Log Likelihood}    & \multicolumn{2}{c}{$-36.03$}           & \multicolumn{2}{c}{$-43.48$}           & \multicolumn{2}{c}{$-41.58$} \\
\multicolumn{1}{l}{Akaike Inf.~Crit.} & \multicolumn{2}{c}{$\phantom{-}78.07$} & \multicolumn{2}{c}{\phantom{-4}$92.96$} & \multicolumn{2}{c}{\phantom{-4}$91.15$} \\
\multicolumn{1}{l}{McFadden Pseudo-R$^2$}& \multicolumn{2}{c}{$\phantom{-7}0.19$}& \multicolumn{2}{c}{\phantom{-42}$0.03$}& \multicolumn{2}{c}{\phantom{-41}$0.07$} \\
\multicolumn{1}{l}{Somers' $D_{xy}$} & \multicolumn{2}{c}{$\phantom{-7}0.53$} & \multicolumn{2}{c}{\phantom{-42}$0.20$} & \multicolumn{2}{c}{\phantom{-41}$0.34$} \\
\bottomrule
\multicolumn{7}{c}{Note: $^{*}\text{pvalue}<0.1$; $^{**}\text{pvalue}<0.05$; $^{***}\text{pvalue}<0.01$} \\
\end{tabular}
\end{adjustbox}
\end{table}
\spacingset{1.5}

To analyze the data we use our parametric estimation procedure (Section \ref{sec:PES}) using a logit link function $g$ and the BIC-based selection of points of impact (Section \ref{sec:PIM}).  We compare our estimation procedure with the performance of the following two logit regression models based on peak-and-end rule (PER) predictor variables:
\begin{description}
\item[PER-1] Logit regression with peak intensity predictor $X_i(p^{\operatorname{abs}}_i)$ and the end-feeling predictor $X_i(1)$, where $p^{\operatorname{abs}}_i=\arg\max_t(|X_i(t)|)$
\item[PER-2] Logit regression with peak intensity predictors  $X_i(p^{\operatorname{pos}}_i)$ and $X_i(p^{\operatorname{neg}}_i)$ and end-feeling predictor $X_i(1)$, where
$p^{\operatorname{pos}}_i=\arg\max_t(X_i(t))$ and $p^{\operatorname{neg}}_i=\arg\min_t(X_i(t))$
\end{description}

Table \ref{tab:appl_1} shows the estimated coefficients, standard errors, as well as summary statistics for each of the three models, where our estimation procedure is denoted by POI. In comparison to our POI estimator, both benchmark models (PER-1 and PER-2) have significantly lower model fits (McFadden Pseudo R$^2$) and significantly lower predictive abilities (Somers' $D_{xy}$), where $D_{xy}=0$ means that a model is making random predictions and $D_{xy}=1$ means that a model discriminates perfectly.
\begin{figure}[!ht]
\centering
\includegraphics[width=.9\textwidth]{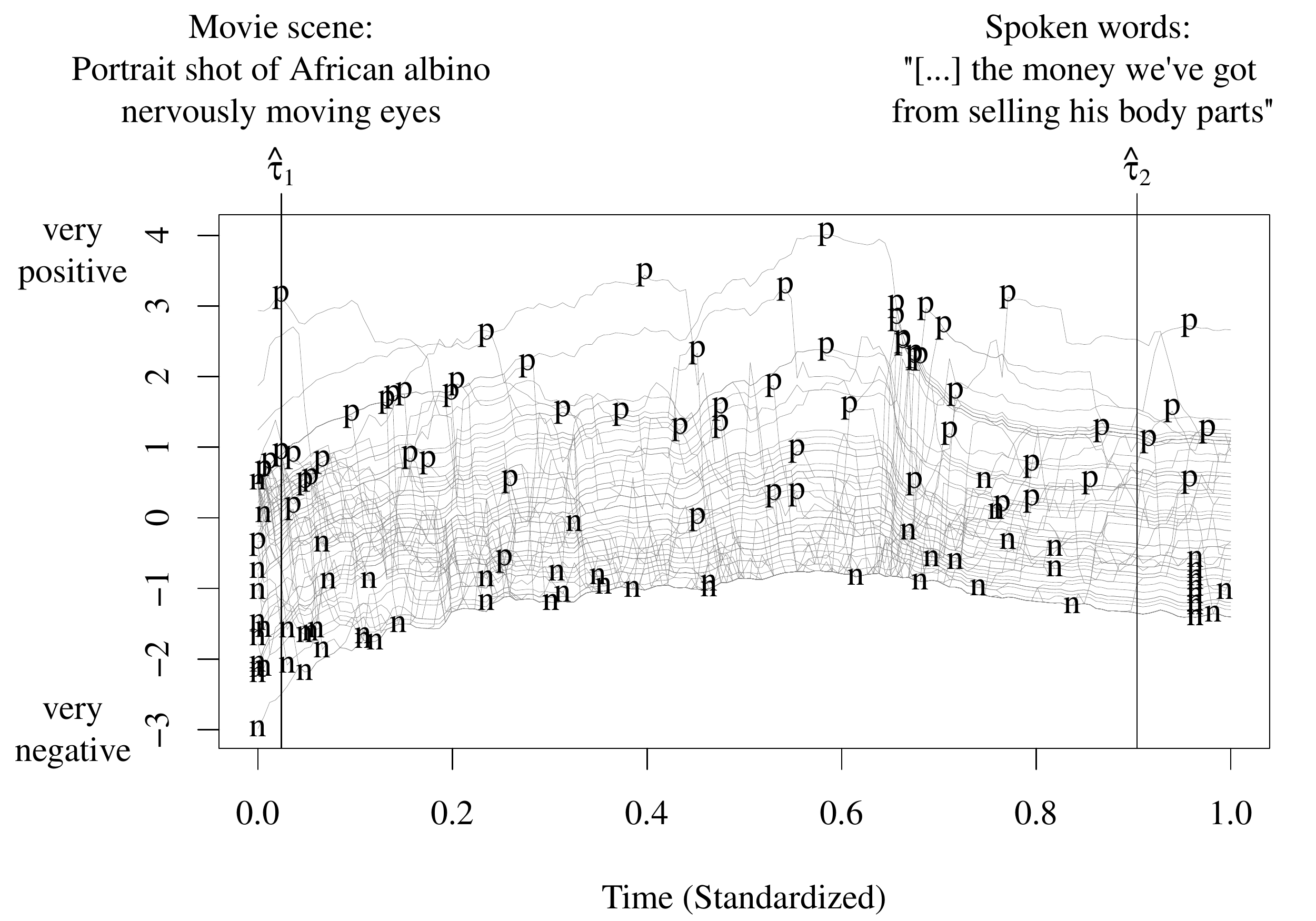}
\caption[]{Visualization of the impact points $\widehat{\tau}_1$ and $\widehat{\tau}_2$ (vertical lines).  The positive and the negative peak intensity predictors, $X_i(p_i^{\operatorname{pos}})$ and $X_i(p_i^{\operatorname{neg}})$, are marked by ``p'' and ``n''.}
\label{fig:appl_2}
\end{figure}

Figure \ref{fig:appl_2} shows the positive (p) and negative (n) peak intensity predictors, $X_i(p_i^{\operatorname{pos}})$ and $X_i(p_i^{\operatorname{neg}})$, for all participants; the absolute intensity predictors, $X_i(p_i^{\operatorname{abs}})$, form a subset of these. The peak intensity predictors are distributed across the total domain and, therefore, do not allow linking the overall ratings $Y_i$ to specific common time points $t\in[0,1]$ in the continuous emotional stimuli. By contrast, the estimated points of impact $\widehat{\tau}_1$ and $\widehat{\tau}_2$ allow for such a link and point to two emotionally arousing movie scenes: 
\begin{description}
\item[$\widehat{\tau}_1$:] Portrait shot of the traumatized African albino protagonist nervously moving eyes.
\item[$\widehat{\tau}_2$:] Spoken words: ``[\dots]the money we've got from selling his body parts.''  
\end{description}

\bigskip

\noindent\textbf{Supplementary materials.} The online supplementary materials \citep{Suppl2019} include the supplementary paper containing additional simulation results and the proofs of our theoretical results, the \textsf{R}-package \texttt{fdapoi} and \textsf{R}-scripts for reproducing our main empirical results.

\bigskip

\noindent\textbf{Acknowledgments.} The online rating tool for the data collection was kindly provided by Dominik Leiner (SoSci Survey, Germany).  Many thanks go to the Editor, the Associate Editor and two anonymous referees whose constructive comments helped us to improve our manuscript and motivated Section \ref{ssec:general}.

\bigskip

\noindent\textbf{Fundings.} Data collection was funded by the National Institutes of Health Director's Pioneer Award (DP1OD003312) to Lisa Feldman Barrett, and the National Institute on Drug Abuse grant (R01DA035484) to Tor D.~Wager. The development of the Interdisciplinary Affective Science Laboratory (IASLab) Movie Set was supported by a grant from the U.S.~Army Research Institute for the Behavioral and Social Sciences (W5J9CQ-11-C-0046) to Lisa Feldman Barrett and Tor D.~Wager. The views, opinions, and/or findings contained in this paper are those of the authors and shall not be construed as an official U.S.~Department of the Army position, policy, or decision, unless so designated by other documents.

\spacingset{1}
\bibliographystyle{Chicago}
\bibliography{bibfile}

\begin{thebibliography}{}

\bibitem[\protect\citeauthoryear{Aneiros and Vieu}{Aneiros and
  Vieu}{2014}]{GP2014}
Aneiros, G. and P.~Vieu (2014).
\newblock Variable selection in infinite-dimensional problems.
\newblock {\em Statistics \& Probability Letters\/}~{\em 94}, 12--20.

\bibitem[\protect\citeauthoryear{Berrendero, Bueno-Larraz, and
  Cuevas}{Berrendero et~al.}{2019}]{BBC2017}
Berrendero, J.~R., B.~Bueno-Larraz, and A.~Cuevas (2019).
\newblock An {RKHS} model for variable selection in functional linear
  regression.
\newblock {\em Journal of Multivariate Analysis\/}~{\em 170}, 25--45.

\bibitem[\protect\citeauthoryear{Boente, Barrera, and Tyler}{Boente
  et~al.}{2014}]{BBT2014}
Boente, G., M.~S. Barrera, and D.~E. Tyler (2014).
\newblock A characterization of elliptical distributions and some optimality
  properties of principal components for functional data.
\newblock {\em Journal of Multivariate Analysis\/}~{\em 131}, 254--264.

\bibitem[\protect\citeauthoryear{Calcagno}{Calcagno}{2013}]{GLMULTI}
Calcagno, V. (2013).
\newblock {\em glmulti: Model selection and multimodel inference made easy}.
\newblock R package version 1.0.7.

\bibitem[\protect\citeauthoryear{Dagsvik and Str{\o}m}{Dagsvik and
  Str{\o}m}{2006}]{DS2006}
Dagsvik, J.~K. and S.~Str{\o}m (2006).
\newblock Sectoral labour supply, choice restrictions and functional form.
\newblock {\em Journal of Applied Econometrics\/}~{\em 21\/}(6), 803--826.

\bibitem[\protect\citeauthoryear{Embrechts and Maejima}{Embrechts and
  Maejima}{2000}]{EM2000}
Embrechts, P. and M.~Maejima (2000).
\newblock An introduction to the theory of self-similar stochastic processes.
\newblock {\em International Journal of Modern Physics B\/}~{\em 14\/}(12n13),
  1399--1420.

\bibitem[\protect\citeauthoryear{Ferraty, Hall, and Vieu}{Ferraty
  et~al.}{2010}]{FHV2010}
Ferraty, F., P.~Hall, and P.~Vieu (2010).
\newblock Most-predictive design points for functional data predictors.
\newblock {\em Biometrika\/}~{\em 97\/}(4), 807--824.

\bibitem[\protect\citeauthoryear{Floriello and Vitelli}{Floriello and
  Vitelli}{2017}]{FV2017}
Floriello, D. and V.~Vitelli (2017).
\newblock Sparse clustering of functional data.
\newblock {\em Journal of Multivariate Analysis\/}~{\em 154}, 1--18.

\bibitem[\protect\citeauthoryear{Fredrickson}{Fredrickson}{2000}]{F2000}
Fredrickson, B.~L. (2000).
\newblock Extracting meaning from past affective experiences: The importance of
  peaks, ends, and specific emotions.
\newblock {\em Cognition \& Emotion\/}~{\em 14\/}(4), 577--606.

\bibitem[\protect\citeauthoryear{Kneip, Po{\ss}, and Sarda}{Kneip
  et~al.}{2016}]{KPS2015}
Kneip, A., D.~Po{\ss}, and P.~Sarda (2016).
\newblock Functional linear regression with points of impact.
\newblock {\em The Annals of Statistics\/}~{\em 44\/}(1), 1--30.

\bibitem[\protect\citeauthoryear{Lee and Ready}{Lee and Ready}{1991}]{LR1991}
Lee, C. and M.~J. Ready (1991).
\newblock Inferring trade direction from intraday data.
\newblock {\em The Journal of Finance\/}~{\em 46\/}(2), 733--746.

\bibitem[\protect\citeauthoryear{Levina, Wagaman, Callender, Mandair, and
  Morris}{Levina et~al.}{2007}]{LWCM2007}
Levina, E., A.~Wagaman, A.~Callender, G.~Mandair, and M.~Morris (2007).
\newblock Estimating the number of pure chemical components in a mixture by
  maximum likelihood.
\newblock {\em Journal of Chemometrics\/}~{\em 21\/}(1-2), 24--34.

\bibitem[\protect\citeauthoryear{Liebl, Rameseder, and Rust}{Liebl
  et~al.}{2020}]{LRR2019}
Liebl, D., S.~Rameseder, and C.~Rust (2020).
\newblock Improving estimation in functional linear regression with points of
  impact: Insights into {Google AdWords}.
\newblock {\em Working Paper arXiv:1709.02166\/}.

\bibitem[\protect\citeauthoryear{Lindquist}{Lindquist}{2012}]{L2012}
Lindquist, M.~A. (2012).
\newblock Functional causal mediation analysis with an application to brain
  connectivity.
\newblock {\em Journal of the American Statistical Association\/}~{\em
  107\/}(500), 1297--1309.

\bibitem[\protect\citeauthoryear{Lindquist and McKeague}{Lindquist and
  McKeague}{2009}]{LMcK2009}
Lindquist, M.~A. and I.~W. McKeague (2009).
\newblock Logistic regression with brownian-like predictors.
\newblock {\em Journal of the American Statistical Association\/}~{\em
  104\/}(488), 1575--1585.

\bibitem[\protect\citeauthoryear{Mauss, Levenson, McCarter, Wilhelm, and
  Gross}{Mauss et~al.}{2005}]{MLC2005}
Mauss, I.~B., R.~W. Levenson, L.~McCarter, F.~H. Wilhelm, and J.~J. Gross
  (2005).
\newblock The tie that binds? {C}oherence among emotion experience, behavior,
  and physiology.
\newblock {\em Emotion\/}~{\em 5\/}(2), 175.

\bibitem[\protect\citeauthoryear{McCullagh}{McCullagh}{1983}]{MC1983}
McCullagh, P. (1983).
\newblock Quasi-likelihood functions.
\newblock {\em The Annals of Statistics\/}~{\em 11\/}(1), 59--67.

\bibitem[\protect\citeauthoryear{McCullagh and Nelder}{McCullagh and
  Nelder}{1989}]{MCN1989}
McCullagh, P. and J.~Nelder (1989).
\newblock {\em Generalized Linear Models\/} (2 ed.).
\newblock Monographs on Statistics \& Applied Probability (37). Chapman and
  Hall/CRC.

\bibitem[\protect\citeauthoryear{McKeague and Sen}{McKeague and
  Sen}{2010}]{LMcKB2010}
McKeague, I.~W. and B.~Sen (2010).
\newblock Fractals with point impact in functional linear regression.
\newblock {\em The Annals of Statistics\/}~{\em 38\/}(4), 2559--2586.

\bibitem[\protect\citeauthoryear{M\"{u}ller and Stadtm\"{u}ller}{M\"{u}ller and
  Stadtm\"{u}ller}{2005}]{MS2005}
M\"{u}ller, H.-G. and U.~Stadtm\"{u}ller (2005).
\newblock Generalized functional linear models.
\newblock {\em The Annals of Statistics\/}~{\em 33\/}(2), 774--805.

\bibitem[\protect\citeauthoryear{Park, Aston, and Ferraty}{Park
  et~al.}{2016}]{PJF2016}
Park, A.~Y., J.~A. Aston, and F.~Ferraty (2016).
\newblock Stable and predictive functional domain selection with application to
  brain images.
\newblock {\em Working Paper arXiv:1606.02186\/}.

\bibitem[\protect\citeauthoryear{Po{\ss}, Liebl, Kneip, Eisenbarth, Wager, and
  Feldman~Barrett}{Po{\ss} et~al.}{2020}]{Suppl2019}
Po{\ss}, D., D.~Liebl, A.~Kneip, H.~Eisenbarth, T.~D. Wager, and
  L.~Feldman~Barrett (2020).
\newblock Supplement to ``{Super}-consistent estimation of points of impact in
  nonparametric regression with functional predictors''.

\bibitem[\protect\citeauthoryear{{R Core Team}}{{R Core Team}}{2020}]{Rcite}
{R Core Team} (2020).
\newblock {\em R: A language and environment for statistical computing}.
\newblock Vienna, Austria: R Foundation for Statistical Computing.

\bibitem[\protect\citeauthoryear{Rohlfs, Harrigan, and Nielsen}{Rohlfs
  et~al.}{2013}]{RHN2013}
Rohlfs, R.~V., P.~Harrigan, and R.~Nielsen (2013).
\newblock Modeling gene expression evolution with an extended
  {Ornstein}-{Uhlenbeck} process accounting for within-species variation.
\newblock {\em Molecular Biology and Evolution\/}~{\em 31\/}(1), 201--211.

\bibitem[\protect\citeauthoryear{Schubert}{Schubert}{1999}]{S1999}
Schubert, E. (1999).
\newblock Measuring emotion continuously: Validity and reliability of the
  two-dimensional emotion-space.
\newblock {\em Australian Journal of Psychology\/}~{\em 51\/}(3), 154--165.

\bibitem[\protect\citeauthoryear{Sobel and Lindquist}{Sobel and
  Lindquist}{2014}]{SL2014}
Sobel, M.~E. and M.~A. Lindquist (2014).
\newblock Causal inference for fmri time series data with systematic errors of
  measurement in a balanced on/off study of social evaluative threat.
\newblock {\em Journal of the American Statistical Association\/}~{\em
  109\/}(507), 967--976.

\bibitem[\protect\citeauthoryear{Stein}{Stein}{1999}]{S1999_book}
Stein, M. (1999).
\newblock {\em Interpolation of Spatial Data: Some Theory for Kriging}.
\newblock Springer Series in Statistics. Springer.

\bibitem[\protect\citeauthoryear{Trautmann, Fehr, and Herrmann}{Trautmann
  et~al.}{2009}]{TFH2009}
Trautmann, S.~A., T.~Fehr, and M.~Herrmann (2009).
\newblock Emotions in motion: Dynamic compared to static facial expressions of
  disgust and happiness reveal more widespread emotion-specific activations.
\newblock {\em Brain Research\/}~{\em 1284}, 100--115.

\bibitem[\protect\citeauthoryear{Zhang}{Zhang}{2012}]{Z2012}
Zhang, Y. (2012).
\newblock {\em Sparse selection in Cox models with functional predictors}.
\newblock Ph.\ D. thesis.

\end{thebibliography}


\begin{thebibliography}{}

\bibitem[\protect\citeauthoryear{Brillinger}{Brillinger}{2012a}]{B2012a}
Brillinger, D.~R. (2012a).
\newblock A generalized linear model with ``gaussian'' regressor variables.
\newblock In {\em Selected Works of David Brillinger}, pp.\  589--606.
  Springer.

\bibitem[\protect\citeauthoryear{Brillinger}{Brillinger}{2012b}]{B2012b}
Brillinger, D.~R. (2012b).
\newblock The identification of a particular nonlinear time series system.
\newblock In {\em Selected Works of David Brillinger}, pp.\  607--613.
  Springer.

\bibitem[\protect\citeauthoryear{Fahrmeir and Kaufmann}{Fahrmeir and
  Kaufmann}{1985}]{FK1985}
Fahrmeir, L. and H.~Kaufmann (1985).
\newblock Consistency and asymptotic normality of the maximum likelihood
  estimator in generalized linear models.
\newblock {\em The Annals of Statistics\/}~{\em 13\/}(1), 342--368.

\bibitem[\protect\citeauthoryear{Johnstone and Lu}{Johnstone and
  Lu}{2009}]{JL2009}
Johnstone, I.~M. and A.~Y. Lu (2009).
\newblock On consistency and sparsity for principal components analysis in high
  dimensions.
\newblock {\em Journal of the American Statistical Association\/}~{\em
  104\/}(486), 682--693.

\bibitem[\protect\citeauthoryear{Kneip, Po{\ss}, and Sarda}{Kneip
  et~al.}{2016}]{KPS_S_2015}
Kneip, A., D.~Po{\ss}, and P.~Sarda (2016).
\newblock Supplement to ``{F}unctional linear regression with points of
  impact''.
\newblock {\em The Annals of Statistics\/}.

\bibitem[\protect\citeauthoryear{Li and Racine}{Li and Racine}{2006}]{LR06}
Li, Q. and J.~Racine (2006).
\newblock {\em Nonparametric Econometrics: Theory and Practice}.
\newblock Princeton University Press.

\bibitem[\protect\citeauthoryear{Liu}{Liu}{1994}]{L1994}
Liu, J.~S. (1994).
\newblock Siegel's formula via stein's identities.
\newblock {\em Statistics \& Probability Letters\/}~{\em 21\/}(3), 247--251.

\bibitem[\protect\citeauthoryear{Stein}{Stein}{1981}]{S1981}
Stein, C.~M. (1981).
\newblock Estimation of the mean of a multivariate normal distribution.
\newblock {\em The Annals of Statistics\/}~{\em 9\/}(6), 1135--1151.

\bibitem[\protect\citeauthoryear{van~de Geer and Lederer}{van~de Geer and
  Lederer}{2013}]{vandeGeer2013}
van~de Geer, S. and J.~Lederer (2013).
\newblock The bernstein-orlicz norm and deviation inequalities.
\newblock {\em Probability Theory and Related Fields\/}~{\em 157\/}(1),
  225--250.

\bibitem[\protect\citeauthoryear{van~der Vaart and Wellner}{van~der Vaart and
  Wellner}{1996}]{vanderVaart1996}
van~der Vaart, A.~W. and J.~A. Wellner (1996).
\newblock {\em Weak Convergence and Empirical Processes: With Applications to
  Statistics}.
\newblock Springer.

\end{thebibliography}

\newpage

\setcounter{page}{1}
\thispagestyle{plain}
\pagenumbering{Roman}

\appendix

\vspace*{.5cm}
\spacingset{1}

\if0\blind
{ \begin{center}
  {\LARGE \bf Supplement to\\[1ex]
  ``Super-Consistent Estimation of Points of\\[.5ex]
  Impact in Nonparametric Regression with\\[1ex]
  Functional Predictors''}

  \bigskip

  Dominik Po{\ss}{}\\Institute of Finance and Statistics, University of Bonn, Bonn, Germany\\[2ex]
  Dominik Liebl\\Institute of Finance and Statistics and Hausdorff Center for Mathematics, University of Bonn, Bonn, Germany\\[2ex]
  Alois Kneip\\Institute of Finance and Statistics and Hausdorff Center for Mathematics, University of Bonn, Bonn, Germany\\[2ex]
  Hedwig Eisenbarth\\School of Psychology, Victoria University of Wellington, Wellington, New Zealand\\[2ex]
  Tor D.~Wager\\Presidential Cluster in Neuroscience and Department of Psychological and Brain Sciences, Dartmouth College, Hanover, New Hampshire, USA\\[1ex]and\\[1ex]
  Lisa Feldman Barrett\\
  Department of Psychology, Northeastern University and Department of Psychiatry, Massachusetts General Hospital/Harvard Medical School, Boston, Massachusetts, USA; and Athinoula A.~Martinos Center for Biomedical Imaging, Massachusetts General Hospital, Charlestown, Massachusetts, USA
  \end{center}
} \fi

\if1\blind
{
  \bigskip
  \bigskip
  \bigskip
  \begin{center}
    {\LARGE \bf Supplement to\\[1ex] ``Points of Impact in Generalized Linear\\[1ex]Models with Functional Predictors''}
\end{center}
  \medskip
} \fi

\bigskip

\section{Additional simulation results}\label{app:SIM_MORE}
This appendix contains the additional simulation results discussed in Section \ref{sec:SIM} of the main paper. Figure~\ref{fig:sim_mod3} depicts the results for DGP 3 and Figure~\ref{fig:sim_mod5} illustrates the results for DGP 5.

\begin{sidewaysfigure}  
\centering
{\sc DGP 3: Estimation errors for $n\in\{100,200,500,1000,3000\}$}
\par\medskip
\includegraphics[width=0.8\textwidth]{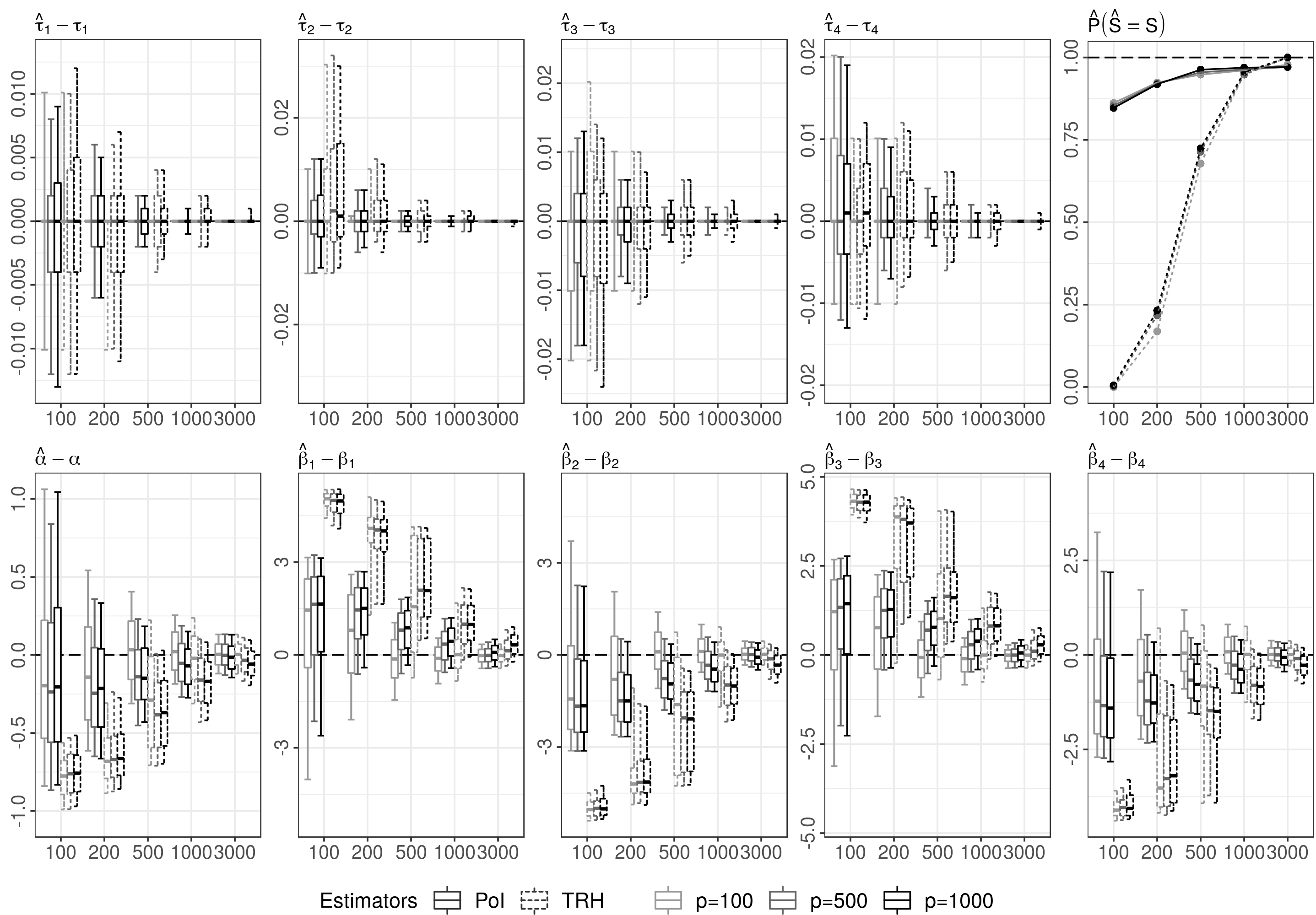}
\caption[Estimation errors for DGP 3 (4 impact points, BIC vs TRH)]{Comparison of the estimation errors from using our BIC-based method POI (solid lines) and our threshold-based method TRH (dashed lines). }
\label{fig:sim_mod3}
\end{sidewaysfigure}

\begin{sidewaysfigure}
\centering
{\sc DGP 5: Estimation errors for $n\in\{100,200,500,1000,3000\}$}
\par\medskip
\includegraphics[width=0.8\textwidth]{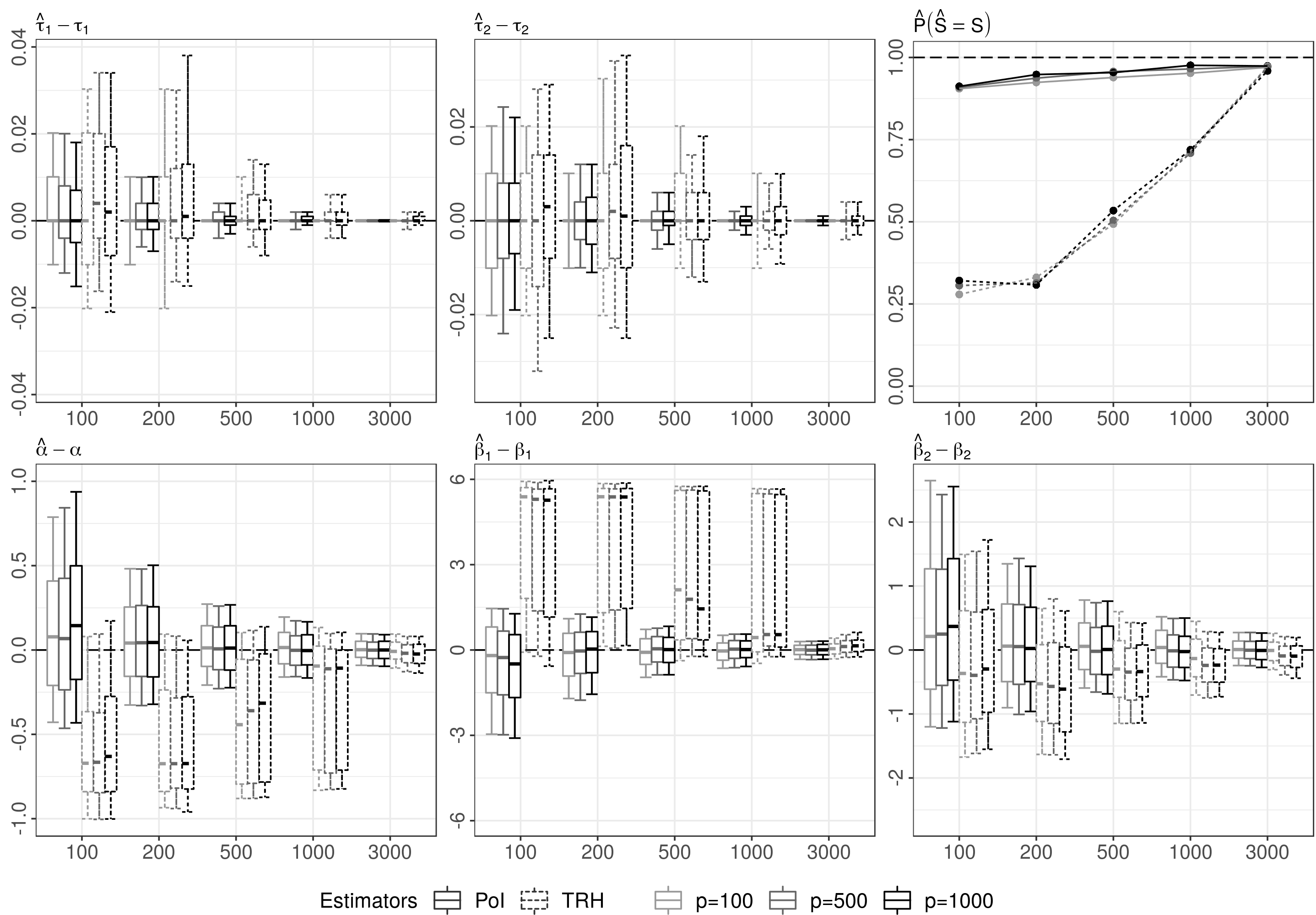}
\caption[Estimation errors for DGP 5 (2 impact points, BIC vs TRH, EBM)]{Comparison of the estimation errors from using our BIC-based method POI (solid lines) and our threshold-based method TRH (dashed lines). }
\label{fig:sim_mod5}
\end{sidewaysfigure}

\newpage

\section{Identifying points of impact}\label{app:poiID}


\subsection{Proof of Theorem \ref{thm:NP1}}\label{app:poiID-1}
\begin{proof}[{\bf Proof of Theorem \ref{thm:NP1}}]
Since $X_i$ is a Gaussian process satisfying $\E(X_i(s))=0$ for all $s \in [a,b]$, $\bX_{s,i}=(X_i(s),X_i(\tau_1),\dots, X_i(\tau_S))^T$ follows for all $s\in[a,b]$ a multivariate normal distribution with mean zero and covariance matrix $\bSigma_s=\E(\bX_{s,i}\bX_{s,i}^T)$.

\noindent Let $\mathbf{Z}_{s,i}$ be an $(S+1)$-dimensional standard normal distributed random vector, then $\bX_{s,i}=\bSigma_s^{1/2}\mathbf{Z}_{s,i}$. Furthermore, define $\tilde{g}(\bX_{s,i})=g(X_i(\tau_1),\dots, X_i(\tau_S))$ for all $s\in[a,b]$. It then follows from the proof of Lemma 1 in \citeappendix{L1994} that
\begin{align*}
\E &\big(\bX_{s,i}g(X_i(\tau_1),\dots, X_i(\tau_S))\big)=
\E\big(\bX_{s,i}\tilde{g}(\bX_{s,i})\big)=\\
&\bSigma_s\E\Big(\big(0,\frac{\partial}{\partial x_1}g(X_i(\tau_1),\dots,X_i(\tau_S)),\dots,\frac{\partial}{\partial x_S}g(X_i(\tau_1),\dots,X_i(\tau_S))\big)\Big)^T.
\end{align*}
Specifically,
\begin{align*}
&\E\big(X_{i}(s)Y_i\big)=
 \E\big(X_{i}(s)g(X_i(\tau_1),\dots, X_i(\tau_S))\big)=\sum_{r=1}^S\sigma(s,\tau_r) \E\big(\frac{\partial}{\partial x_r}g(X_i(\tau_1),\dots,X_i(\tau_S)\big).
\end{align*}
Setting $\vartheta_r  = \E(\frac{\partial}{\partial x_r}g(X_i(\tau_1),\dots,X_i(\tau_S))$ completes the proof.
\end{proof}

\subsection{Identification for non-Gaussian processes}\label{app:poiID-2}

So far, our strategy for identifying and estimating points of impact rests upon assuming a non-smooth Gaussian process. Theorem \ref{thm:NP1} together with Assumption \ref{assum1} then implies that $\E\big(X_{i}(s)Y_i\big)$ is not twice differentiable at the points of impact $\tau_r$, $r=1,\dots,S$.  For the auxiliary process $Z_{\delta,i}$ defined  in Section \ref{sec:POI} we then obtain peaks of the function $\E\big(Z_{\delta,i}(s)Y_i\big)$ for $s\in\{\tau_1,\dots,\tau_S\}$.

In the following we will show that the Gaussian assumption can be relaxed. We will provide more general assumptions under which the same identification strategy can be pursued.
The generalization is based on the idea that realizations of  $X_i$ may depend on some latent random variable $V_i$ such that the conditional distributions of $X_i$ given $V_i=v$ are Gaussian. The (unconditional) distribution of $X_i$ then additionally depends on the distribution of $V_i$ and may be far from Gaussian.

A simple example are {\bf elliptical} processes: For instance, for some strictly positive real-valued random variable $V_i>0$ with $E(V_i^2)<\infty$ it holds
\begin{equation}
X_i(t)=V_i X_i^*(t),  \quad V_i \text{  independent of } X_i^*,
\label{poiID 1}
\end{equation}
where $X_i^*(t)$ is a zero mean Gaussian process with covariance function $\sigma^*(s,t)$. In this case the conditional distribution of $X_i$ given $V_i=v$ is Gaussian with mean zero and covariance function
$v^2\sigma^*(s,t)$.

A more general framework is given by the following condition:

\begin{itemize}
\item[A)] $X_i$ is a zero mean stochastic process on $[a,b]$. 
Realizations of the process $X_i$ depend on the realizations of a latent random variable $V_i$ defined on a metric space ${\cal V}$. The joint distribution of $(X_i,V_i)$ is such that for each $v\in {\cal V}$ the conditional distribution of $X_i$ given $V_i=v$ is {\bf Gaussian} with conditional mean function
$$\mu(s;v)=E(X_i(s)|V_i=v), \quad s\in [a,b]$$
and continuous conditional covariance function
$$\sigma(s,t;v):=\E\left((X_i(s)-\mu(s;v))(X_i(t)-\mu(t;v))\big| V_i=v\right)<\infty,\quad (s,t)\in [a,b]^2$$
Moreover, the error term $\varepsilon_i$ in \eqref{eq:whnp} is independent of $V_i$ and $X_i$, and for all $r=1,\dots,S$
\begin{equation*}
\vartheta_r(v)  =
\E\big(\frac{\partial}{\partial x_r}g(X_i(\tau_1),\dots,X_i(\tau_S))\big|\ V_i=v\big)<\infty
\end{equation*}
is a measurable function of $v\in {\cal V}$. In the following we will additionally assume that the joint distribution of $(X_i,V_i)$ is such that
all conditional and unconditional expectations used in subsequent arguments exist. This will go without saying.
\end{itemize}
An additional condition then ensures identifiability of points of impact:
\begin{itemize}
\item[B)] $M(s):=\E\left[\mu(s;V_i)\E\big( g(X_i(\tau_1),\dots,X_i(\tau_r))\big| V_i\big)\right]$ is a twice continuously differentiable function of $s\in[a,b]$. Furthermore,
the conditional covariance functions satisfy Assumption \ref{assum1}. With $\Omega:= [a,b]^2\times [0,b-a]$, there exists a function $\omega:\Omega \times {\cal V} \rightarrow
\mathbb{R}$ such that
\begin{align}
\sigma(s,t;v)=\omega(s,t,|s-t|^\kappa;v),\label{poiID 3}
\end{align}
where $\omega(\cdot;v)$ is measurable in $v$, and
\begin{align}
W_r(s,t,z):=\E\left(\omega(s,t,z;V_i)\vartheta_r(V_i)\right), \quad r=1,\dots,S,\label{poiID 4}
\end{align}
are twice continuously differentiable functions of $s,t,z\in [a,b]^2\times [0,b-a]$. Moreover, $0\neq C(\tau_r):=-\frac{\partial}{\partial z}W_r(\tau_r,\tau_r,z)|_{z=0}$ for all $r=1,\dots,S$.
\end{itemize}

\bigskip

\begin{proposition}\label{idgeneral}

\begin{itemize}
\item[i)] Under Condition A) we obtain
\begin{align}
\E\big(X_{i}(s)Y_i\big)=\sum_{r=1}^S \E\left( \sigma(s,\tau_r;V_i) \vartheta_r(V_i)\right) +
\E\left[\mu(s;V_i)\E\big( g(X_i(\tau_1),\dots,X_i(\tau_r))\big| V_i\big)\right]
\label{poiID 5}
\end{align}
\item[ii)] Under Conditions A) and B) we have $\E\big(X_{i}(s)Y_i\big)=\sum_{r=1}^S W_r(s,\tau_r,|s-\tau_r|^\kappa)+M(s)$, and
\begin{equation}
\begin{array}{ll}
\mathbb{E}(Z_{\delta,i}(s)Y_i) =  C(\tau_r)\delta^\kappa + o(\delta^\kappa)&\text{if } s=\tau_r, \text{ for some }  r\in\{1,\dots,S\}\\
 \mathbb{E}(Z_{\delta,i}(s)Y_i)   = o(\delta^2) &\text{if } s\notin\{\tau_1,\dots,\tau_r\}
\end{array}
\label{poiID 6}
\end{equation}
as $\delta\rightarrow 0$.
\end{itemize}

\end{proposition}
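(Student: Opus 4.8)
The plan is to prove part i) by conditioning on the latent variable $V_i$ and applying the Gaussian integration-by-parts identity to the conditional law, and then to obtain part ii) by inserting the structural definitions of Condition~B and analysing the resulting symmetric second difference.

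For part i) I would first split $\E(X_i(s)Y_i)=\E\big(X_i(s)g(X_i(\tau_1),\dots,X_i(\tau_S))\big)+\E(X_i(s)\varepsilon_i)$. Since Condition~A stipulates that $\varepsilon_i$ is independent of $X_i$ and $\E(\varepsilon_i)=0$ by the model assumption, the second term vanishes. For the first term I would use the tower property, $\E(X_i(s)g(\cdot))=\E\big[\E(X_i(s)g(\cdot)\mid V_i)\big]$. Conditionally on $V_i=v$ the vector $(X_i(s),X_i(\tau_1),\dots,X_i(\tau_S))$ is Gaussian with mean $(\mu(s;v),\mu(\tau_1;v),\dots,\mu(\tau_S;v))$ and covariance entries $\sigma(\cdot,\cdot;v)$, so the same identity used in Theorem~\ref{thm:NP1} (Lemma~1 of \citeappendix{L1994}), extended to the nonzero-mean case, gives $\E(X_i(s)g(\cdot)\mid V_i=v)=\mu(s;v)\,\E(g(\cdot)\mid V_i=v)+\sum_{r=1}^S\sigma(s,\tau_r;v)\vartheta_r(v)$, because $g$ does not depend on the $X_i(s)$-coordinate. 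Taking the expectation over $V_i$ yields \eqref{poiID 5}; the measurability and integrability needed to interchange differentiation and conditional expectation are exactly those postulated in Condition~A.

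For part ii) I would insert the definitions of Condition~B. Writing $\sigma(s,\tau_r;v)=\omega(s,\tau_r,|s-\tau_r|^\kappa;v)$ and using \eqref{poiID 4}, the $r$th summand of \eqref{poiID 5} equals $W_r(s,\tau_r,|s-\tau_r|^\kappa)$ while the mean term equals $M(s)$, which establishes $f(s):=\E(X_i(s)Y_i)=\sum_{r=1}^S W_r(s,\tau_r,|s-\tau_r|^\kappa)+M(s)$. By linearity, $\E(Z_{\delta,i}(s)Y_i)=f(s)-\tfrac12\big(f(s+\delta)+f(s-\delta)\big)$ is the symmetric second difference of $f$. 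For $s\notin\{\tau_1,\dots,\tau_S\}$ and $\delta$ small, every summand is twice continuously differentiable in a neighbourhood of $s$ (the map $s\mapsto|s-\tau_r|^\kappa$ is smooth away from $\tau_r$ and $M\in C^2$), so a Taylor expansion gives a second difference of order $O(\delta^2)$. For $s=\tau_k$ I would split $f$ into the smooth parts, namely $M$ and the terms $W_r$ with $r\neq k$ (each contributing $O(\delta^2)=o(\delta^\kappa)$ since $\kappa<2$), and the single singular term $W_k(s,\tau_k,|s-\tau_k|^\kappa)$.

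The heart of the argument, and the step I expect to be most delicate, is the expansion of this singular term. Here I would Taylor-expand $W_k$ in its first and third arguments about $(\tau_k,\tau_k,0)$, using the two different increment scales $\Delta s=\pm\delta$ and $\Delta z=\delta^\kappa$. The symmetric differencing cancels the contributions that are odd in $\Delta s$, namely the first-order term $\partial_s W_k\,\delta$ and the mixed second-order term $\partial_{sz}^2 W_k\,\delta^{1+\kappa}$, leaving $-\partial_z W_k(\tau_k,\tau_k,0)\,\delta^\kappa=C(\tau_k)\delta^\kappa$ as the leading term. The surviving even contributions are of orders $\delta^2$ and $\delta^{2\kappa}$, together with the $C^2$ Taylor remainder $o(\max(\delta,\delta^\kappa)^2)$; since $0<\kappa<2$ all of these are $o(\delta^\kappa)$, which yields the claimed $\E(Z_{\delta,i}(\tau_k)Y_i)=C(\tau_k)\delta^\kappa+o(\delta^\kappa)$. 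The main care lies in the bookkeeping of these mixed-scale Taylor terms and in verifying that only the $\delta^\kappa$ term survives at leading order.
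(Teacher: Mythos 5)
Your proposal is correct and follows essentially the same route as the paper: part i) by conditioning on $V_i$, applying the Gaussian (Stein-type) identity to the conditional law, and using the tower property, and part ii) by rewriting $\E(X_i(s)Y_i)$ via Condition~B and Taylor-expanding the symmetric second difference, with the odd-in-$\delta$ terms cancelling and the $\partial_z W_k$ term supplying the leading $C(\tau_k)\delta^\kappa$ contribution. Your more explicit bookkeeping of the mixed-scale Taylor terms (and your $O(\delta^2)$ rather than the $o(\delta^2)$ in the proposition's display, which is what a $C^2$ expansion actually delivers and what the paper's own proof states) matches the published argument.
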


\bigskip

The elliptical process introduced in \eqref{poiID 1} provides an example. In this case we have
$\sigma(s,t;v)=v^2 \sigma^*(s,t)$ as well as $\mu(s;v)=0$. If all relevant moments exist, then \eqref{poiID 5} simplifies to
$$\E\big(X_{i}(s)Y_i\big)=\sum_{r=1}^S \sigma^*(s,\tau_r) \E\left( V_i^2  \vartheta_r(V_i)\right)=
\sum_{r=1}^S \sigma(s,\tau_r) \frac{\E\left( V_i^2  \vartheta_r(V_i)\right)}{\V(V_i)},$$
where $\sigma(s,t)=\V(V_i)\sigma^*(s,t)$ is the covariance function of $X_i$. Additionally, if the Gaussian process $X_i^*$ satisfies Assumption \ref{assum1} for some $\omega^*:\Omega  \rightarrow
\mathbb{R}$, then $\omega(t,s;v)=v^2 \omega^*(s,t,|s-t|^\kappa)$, and \eqref{poiID 4} leads to
$$W_r(s,t,z)= \omega^*(s,t,z)\E\left( V_i^2  \vartheta_r(V_i)\right)$$
which is twice continuously differentiable in $(s,t,z)$. Result \eqref{poiID 6} then holds with
$C(\tau_r)=c^*(\tau_r)
\E\left( V_i^2  \vartheta_r(V_i)\right)$, where $c^*(\tau_r)= -\frac{\partial}{\partial z}\omega^*(\tau_r,\tau_r,z)|_{z=0}$, $r=1,\dots,S$.

\bigskip

A more complex example is given by the following situation: There exist smooth, non-Gaussian stochastic processes $V_{i1}(t),V_{i2}(t)$, $t\in [a,b]$, as well as a zero mean Gaussian process $X_i^*(t)$ such that
\begin{equation*}
X_i(t)=V_{i1}(t) X_i^*(t)+V_{i2}(t), \quad  t\in [a,b],
\end{equation*}
where $V_i=(V_{i1},V_{i2})$ are independent of $X_i^*$. All relevant moments exist,
 and with
probability 1 any realization of $V_{i1}$ as well as any realization of $V_{i2}$  are  twice continuously differentiable functions on $[a,b]$. If $\sigma^*$ denotes the covariance function of $X_i^*$, then given $(V_{i1},V_{i2})=(v_1,v_2)$ the conditional distribution of $X_i$ is Gaussian, and
$$\mu(t,v)=v_2(t), \quad \sigma(t,s;v)=v_1(s)v_1(t)\sigma^*(s,t).$$
Consequently, \eqref{poiID 5}  becomes
$$\E\big(X_{i}(s)Y_i\big)=\sum_{r=1}^S \sigma^*(s,\tau_r) \E\left( V_{i1}(s)V_{i1}(\tau_r)  \vartheta_r(V_i)\right)+
\E\left( V_{i2}(s) \E\big( g(X_i(\tau_1),\dots,X_i(\tau_r))\big| V_i\big)\right)$$
Smoothness of  $v_2$ implies smoothness of $v_2(s) \E\big( g(X_i(\tau_1),\dots,X_i(\tau_r))\big| V_i=v_2\big)$ in $s\in[a,b]$, and hence
$M(s)=\E\left( V_{i2}(s) \E\big( g(X_i(\tau_1),\dots,X_i(\tau_r))\big| V_i\big)\right)$ is twice continuously differentiable for
$s\in [a,b]$. Furthermore, if $X_i^*$ satisfies Assumption \ref{assum1} for some $\omega^*:\Omega  \rightarrow
\mathbb{R}$, then for almost every realization $v=(v_1,v_2)$ of $V_i$ and any $r=1,\dots,S$
$$\omega(s,t,z;v)\vartheta_r(v)=\omega^*(s,t,z;v)v_1(s)v_1(t)\vartheta_r(v)$$
is a.s. a twice continuously differentiable function of $(s,t,z)$. This in turn implies that
$$W_r(s,t,z)= \omega^*(s,t,z)\E\left(  V_{i1}(s)V_{i1}(\tau_r)  \vartheta_r(V_i)\right),\quad r=1,\dots,S,$$
are twice continuously differentiable functions of $s,t,z\in [a,b]^2\times [0,b-a]$.

\begin{proof}[{\bf Proof of Proposition \ref{idgeneral}}]

For $v\in\Omega$ the conditional distribution of $X_i-\mu(\cdot;v)$ given $V_i=v$ corresponds to a zero mean Gaussian process, and hence the arguments in the
proof of Theorem \ref{thm:NP1} imply that
\begin{align*}
\E\big((X_{i}(s)-\mu(s;v))Y_i\big |V_i=v \big)&=
\sum_{r=1}^S\sigma(s,\tau_r;v) \E\big(\frac{\partial}{\partial x_r}g(X_i(\tau_1),\dots,X_i(\tau_S))\big|V_i=v  \big)\\
&=\sum_{r=1}^S\sigma(s,\tau_r;v)\vartheta_r(v).
\end{align*}
Therefore
\begin{align*}
\E\big(X_{i}(s)Y_i\big |V_i=v \big)=\sum_{r=1}^S\sigma(s,\tau_r;v)\vartheta_r(v)+\mu(s;v)\E\big( g(X_i(\tau_1),\dots,X_i(\tau_r))\big| V_i\big).
\end{align*}
 Assertion i) thus follows from
$$\E\big(X_{i}(s)Y_i\big)=\E\left(\E\big(X_{i}(s)Y_i\big |V_i \big)\right)$$
Now note that under Condition B) our definition of $Z_{\delta,i}$ leads to
\begin{align*}
 \mathbb{E}(Z_{\delta,i}(s)Y_i) &=\sum_{r=1}^S\left( W_r(s,\tau_r,|s-\tau_r|^\kappa)-\frac{1}{2}  W_r(s+\delta,\tau_r,|s+\delta-\tau_r|^\kappa)-\frac{1}{2} W_r(s-\delta,\tau_r,|s-\delta-\tau_r|^\kappa)\right) \\
 &+ M(s)-\frac{1}{2}  M(s+\delta)-\frac{1}{2} M(s-\delta)
\end{align*}
Since $M(s)$ and $W(t,s,z)$ are twice continuously differentiable, Taylor expansions then immediately lead to $ \mathbb{E}(Z_{\delta,i}(s)Y_i) =O(\delta^2)$ for all $s\notin\{\tau_1,\dots,\tau_r\}$. Furthermore,  for $s\in \{\tau_1,\dots,\tau_r\}$ Taylor expansions imply that
\begin{align*}
 \mathbb{E}(Z_{\delta,i}(\tau_r)Y_i) = W_r(\tau_r,\tau_r,0)-  W_r(\tau_r,\tau_r,\delta^\kappa)+o(\delta^\kappa)=
 -\frac{\partial}{\partial z}W_r(\tau_r,\tau_r,z)|_{z=0}\delta^\kappa+o(\delta^\kappa)
\end{align*}
as $\delta\rightarrow 0$,  $r=1,\dots,S$. 

\end{proof}
\bigskip

\section{Estimating points of impact}\label{app:poiEST}
Parts of the proofs of Proposition \ref{thmcharX}, Lemmas \ref{lem1}-\ref{lem3}, Theorem \ref{poicon} and Lemma~\ref{lem:1new} follow similar arguments as in \cite{KPS2015}. However, we emphasize that we consider a fundamentally different, more challenging nonparametric statistical model, which requires additional new theoretical arguments. Furthermore, we correct some minor mistakes occurring in \cite{KPS2015}; especially the arguments around Equation \eqref{poicon-eq11} of Theorem \ref{poicon} correct the arguments around Equation (C.36) in Appendix C of the supplementary paper \citeappendix{KPS_S_2015}.
\bigskip

\begin{proposition}\label{thmcharX}
Under Assumption~\ref{assum1} we have for all $t \in (a,b)$, and any sufficiently small $\delta>0$ for some constants $M_1<\infty$ and $M_2<\infty$:
\begin{align}
\E(Z_{\delta,i}(t)&X_i(t)) = \delta^\kappa c(t) +R_{1;\delta,t},
\quad \text{with } \sup_{t\in [a+\delta,b-\delta]} |R_{1;\delta,t}|\leq M_1\delta^{\min\{2\kappa,2\}}, \label{Zdelta1} \\
\E(Z_{\delta,i}(t)^2) &=\delta^\kappa\left(2c(t)-\frac{2^\kappa}{2}c(t)\right) +R_{2;\delta,t},
\quad \text{with } \sup_{t\in [a+\delta,b-\delta]} |R_{2;\delta,t}|<M_2 \delta^{\min\{2\kappa,2\}}.
 \label{Zdelta2}
\end{align}
\smallskip

Moreover, for any $0<c<\infty$  we have for any sufficiently small $\delta>0$ and all $u\in [-c,c]$:
\begin{align}
\E(Z_{\delta,i}(t+u\delta)X_i(t))
&= \E(Z_{\delta,i}(t)X_i(t)) \nonumber\\
&\qquad -c(t)\delta^\kappa\left(|u|^\kappa
-\frac{1}{2}(|u+1|^\kappa-1)-\frac{1}{2}(|u-1|^\kappa-1)\right)
+ R_{3;c,u,\delta,t}
 \label{Zdelta3}\\
=&-c(t)\delta^\kappa\left(|u|^\kappa
-\frac{1}{2}|u+1|^\kappa-\frac{1}{2}|u-1|^\kappa\right)
+ R_{4;c,u,\delta,t},
 \label{Zdelta4}
\end{align}
where for some constants $M_{3,c}<\infty$ and $M_{4,c}<\infty$
{\small
$$\sup_{t\in [a+\delta,b-\delta]}
R_{3;c,u,\delta,t}\leq M_{3,c}(|u|^{1/2}\delta)^{\min\{2\kappa,2\}}, \quad
 \sup_{t\in [a+\delta,b-\delta]} R_{4;c,u,\delta,t}\leq M_{4,c}\delta^{\min\{2\kappa,2\}}
$$}
hold for all $u\in [-c,c]$.
\smallskip

Finally, for all $s\in[a,b]$ with $|t-s|\geq \delta$ we have for some constant $M_5<\infty$
\begin{align}
|\mathbb{E} (Z_{\delta,i}(t)X_i(s))|= \begin{cases}
 M_5 \frac{\delta^2}{|t-s|^{2-\kappa}} & \text{if } \kappa\neq 1\\
  M_5 \delta^2 & \text{if } \kappa= 1.
\end{cases}
 \label{Zdelta5}
\end{align}
\end{proposition}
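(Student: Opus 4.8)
The plan is to reduce each of \eqref{Zdelta1}--\eqref{Zdelta5} to a finite linear combination of covariance values $\sigma(\cdot,\cdot)$, exploiting that $Z_{\delta,i}$ is a second central difference, and then to substitute $\sigma(x,y)=\omega(x,y,|x-y|^\kappa)$ from \eqref{deromega} and Taylor-expand $\omega$ about the diagonal base point $(t,t,0)$. The key structural observation is that two scales are in play: the \emph{smooth} increments of order $\delta$ entering the first two arguments of $\omega$, and the single \emph{rough} increment of order $\delta^\kappa$ (or $(2\delta)^\kappa$, $(u\delta)^\kappa$) entering the third. Since $0<\kappa<2$, the leading contribution is always the first-order term in the rough argument, of exact order $\delta^\kappa$, whereas every genuinely second-order contribution -- the square of the smooth increment ($\delta^2$), the square of the rough one ($\delta^{2\kappa}$), and the mixed term ($\delta^{1+\kappa}$) -- is $O(\delta^{\min\{2,2\kappa\}})$. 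Because $\omega$ is only twice continuously differentiable, I would not invoke a third-order expansion; instead I would expand to first order with the exact Hessian remainder $\tfrac12 h^\top H(\xi)h$ and bound it by $\tfrac12\|H\|_\infty\|h\|^2$, with $\|H\|_\infty$ finite since the second partials of $\omega$ are continuous, hence bounded, on a compact subset of $\Omega$ containing all base and evaluation points once $\delta$ is small. This is precisely what yields the \emph{uniform}-in-$t$ remainder bounds.

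For \eqref{Zdelta1} I would write $\E(Z_{\delta,i}(t)X_i(t))=\sigma(t,t)-\tfrac12(\sigma(t-\delta,t)+\sigma(t+\delta,t))$: the first-order smooth terms cancel by oddness in $\pm\delta$, the first-order rough terms sum to $-\delta^\kappa\,\partial_z\omega(t,t,0)=\delta^\kappa c(t)$ by the definition of $c$, and the remainder is $O(\delta^{\min\{2,2\kappa\}})$. For \eqref{Zdelta2} the quadratic form $\E(Z_{\delta,i}(t)^2)$ expands into six covariance values, and collecting the coefficients of $\delta^\kappa$ produces $(-1-1+2^\kappa/2)\,\partial_z\omega(t,t,0)=(2-2^\kappa/2)c(t)$, the stated leading term. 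The subtlety I want to flag is that the first-order \emph{smooth} terms do \emph{not} cancel term by term: after symmetrizing $\omega$ in its first two arguments (permissible since $\sigma$ is symmetric), one has $\partial_x\omega(t,t,0)=\partial_y\omega(t,t,0)$, and only then do the two leftover linear contributions $-\tfrac{\delta}{2}\partial_x\omega$ and $+\tfrac{\delta}{2}\partial_y\omega$ annihilate.

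For \eqref{Zdelta4} and \eqref{Zdelta3} I would set $h(w):=\sigma(t+w\delta,t)=\omega\big(t+w\delta,\,t,\,|w|^\kappa\delta^\kappa\big)$ and write the moment as $h(u)-\tfrac12 h(u-1)-\tfrac12 h(u+1)$. A first-order expansion again kills the linear-in-$w$ smooth part, since $u-\tfrac12(u-1)-\tfrac12(u+1)=0$, and leaves $-c(t)\delta^\kappa\big(|u|^\kappa-\tfrac12|u+1|^\kappa-\tfrac12|u-1|^\kappa\big)$, giving \eqref{Zdelta4} with remainder $O(\delta^{\min\{2,2\kappa\}})$ uniformly over $u\in[-c,c]$ (the constant now depending on $c$ through $|w|\le c+1$). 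Equation \eqref{Zdelta3} follows by adding and subtracting the expansion of \eqref{Zdelta1}. The sharper remainder $M_{3,c}(|u|^{1/2}\delta)^{\min\{2\kappa,2\}}$ is more delicate, as it must vanish as $u\to0$ (both sides of \eqref{Zdelta3} agree exactly at $u=0$); I would show that in the comparison of $h(u)-h(0)$ with the shifted differences $h(u\pm1)-h(\pm1)$ not only the first-order but also the pure quadratic-in-$w$ contributions cancel, leaving a remainder governed by the modulus of continuity of the second partials of $\omega$ and carrying the advertised extra factor tending to zero with $u$.

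The genuinely hardest step is the off-diagonal bound \eqref{Zdelta5}. For $x$ near $t$ with $|x-s|$ bounded away from $0$, the map $\psi(x):=\sigma(x,s)=\omega(x,s,|x-s|^\kappa)$ is $C^2$, and the second central difference equals $-\tfrac{\delta^2}{2}\psi''(\xi)$ for some $\xi\in[t-\delta,t+\delta]$. Differentiating twice by the chain rule, the dominant near-diagonal term is $\partial_z\omega\cdot\kappa(\kappa-1)|x-s|^{\kappa-2}$, so $|\psi''(x)|=O(|x-s|^{\kappa-2})$; together with $|\xi-s|\ge|t-s|-\delta$ this yields $\delta^2/|t-s|^{2-\kappa}$ in the far regime $|t-s|\ge 2\delta$ (where $|t-s|-\delta\ge|t-s|/2$), while for $\kappa=1$ the factor $\kappa(\kappa-1)$ vanishes, $\psi''$ is uniformly bounded, and the clean $\delta^2$ bound follows -- which is exactly the case split. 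The obstacle is that the mean-value form requires $s\notin[t-\delta,t+\delta]$ and becomes useless in the near regime $\delta\le|t-s|\le 2\delta$ (in particular at $|t-s|=\delta$, where one evaluation point lands on the diagonal). There I would abandon $\psi''$ and expand all three covariance values directly about $(s,s,0)$ as in \eqref{Zdelta1}--\eqref{Zdelta2}, finding a leading term of order $\delta^\kappa$ whose coefficient is proportional to $(1-2^{\kappa-1})$ and hence vanishes at $\kappa=1$; since $|t-s|\asymp\delta$ in this regime, $O(\delta^\kappa)=O(\delta^2/|t-s|^{2-\kappa})$, so patching the near and far regimes completes the bound consistently with the $\kappa=1$ case.
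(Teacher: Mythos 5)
Your proposal is correct and follows essentially the same route as the paper's proof: reduce each moment to a finite linear combination of covariance values, substitute $\sigma(x,y)=\omega(x,y,|x-y|^\kappa)$, Taylor-expand $\omega$ to first order with a Hessian remainder bounded uniformly via compactness, and for \eqref{Zdelta5} split into a far regime (where $x\mapsto\sigma(x,s)$ is $C^2$ and the second difference is controlled by $|t-s|^{\kappa-2}$) and a near regime $\delta\le|t-s|\le 2\delta$ handled by a direct expansion. The only deviations are cosmetic -- your symmetrization remark for the linear terms in \eqref{Zdelta2} and expanding about $(s,s,0)$ rather than $(t,s,|t-s|^\kappa)$ in the near regime of \eqref{Zdelta5} -- and both lead to the same bounds.
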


\begin{proof}[{\bf Proof of Proposition~\ref{thmcharX}}]
Assumption \ref{assum1} implies that the absolute values of all
first and second order partial derivatives of $\omega(t,s,z)$ are uniformly bounded
by some constant $M<\infty$ for all $(t,s,z)$ in the compact subset $[a,b]^2\times [0,b-a]$
of $\Omega$.

\noindent By definition of $Z_{\delta,i}$  it thus follows from a Taylor expansion
 of $\omega$ that for $t\in(a,b)$, any sufficiently small $\delta>0$ and some constant $M_1<\infty$
\begin{align}
\mathbb {E}(Z_{\delta,i}(t)&X_i(t))=\sigma(t,t)-\frac{1}{2}\sigma(t-\delta,t)
-\frac{1}{2}\sigma(t+\delta,t)\nonumber \\
=&\ \omega(t,t,0)-\frac{1}{2}\omega(t-\delta,t,\delta^\kappa)
-\frac{1}{2}\omega(t+\delta,t,\delta^\kappa)\nonumber\\
=&\ \delta^\kappa c(t) +R_{1;\delta,t},
\quad \text{with } \sup_{t\in [a+\delta,b-\delta]} |R_{1;\delta,t}|\leq M_1\delta^{\min\{2\kappa,2\}},
 \label{Zdelta1.pf}
\end{align}
which proofs assertion \eqref{Zdelta1}.
For $\E(Z_{\delta,i}(t)^2)$, i.e.~the variance of $Z_{\delta,i}(t)$,  we obtain by similar arguments
{\small\begin{align}
&var(Z_{\delta,i}(t))=
2\omega(t,t,0)-\omega(t,t-\delta,\delta^\kappa)-\omega(t,t+\delta,\delta^\kappa)
- \frac{1}{2}\left(\omega(t,t,0)-\omega(t+\delta,t-\delta,(2\delta)^\kappa)\right) \nonumber\\
 &\quad - \frac{1}{4}\left(2\omega(t,t,0)-\omega(t-\delta,t-\delta,0)
-\omega(t+\delta,t+\delta,0)\right)
\nonumber\\
&=\delta^\kappa\left(2c(t)-\frac{2^\kappa}{2}c(t)\right) +R_{2;\delta,t},
\quad \text{with } \sup_{t\in [a+\delta,b-\delta]} |R_{2;\delta,t}|<M_2\delta^{\min\{2\kappa,2\}}
 \label{Zdelta2.pf}
\end{align}}
for some constant $M_2<\infty$, i.e.~assertion \eqref{Zdelta2}.

Assertion \eqref{Zdelta3} and \eqref{Zdelta4} follow again from Taylor expansions of $\omega$, i.e.~we have for any $0<c<\infty$  and any sufficiently small $\delta>0$ and all $u\in [-c,c]$
\begin{align}
\E(Z(t+u\delta)X(t)) &=
 \sigma(t+u\delta,t) - \frac{1}{2} \sigma(t+(u-1)\delta,t) - \frac{1}{2} \sigma(t+(u+1)\delta,t)\nonumber \\
\begin{split}\label{zdelta.aux1}
			&=\omega(t,t,0) - \frac{1}{2} \omega(t-\delta, t, \delta^\kappa)  - \frac{1}{2}\omega(t+\delta, t, \delta^\kappa) \\
&\quad - c(t)\delta^\kappa\left( |u|^\kappa - \frac{1}{2}(|u-1|^\kappa-1) - \frac{1}{2}(|u+1|^\kappa-1)\right) + R_{3;c,u;\delta,t}
\end{split} \\
&= -c(t) \delta^\kappa \left(|u|^\kappa - \frac{1}{2}(|u-1|^\kappa) - \frac{1}{2}(|u+1|^\kappa) \right) + R_{4;c,u;\delta,t} \label{zdelta.aux2}
\end{align}
%
%
%
where for some constants $M_{3,c}<\infty$ and $M_{4,c}<\infty$
$$\sup_{t\in [a+\delta,b-\delta]}
R_{3;c,u,\delta,t}\leq M_{3,c}(|u|^{1/2}\delta)^{\min\{2\kappa,2\}}, \quad
 \sup_{t\in [a+\delta,b-\delta]} R_{4;c,u,\delta,t}\leq M_{4,c}\delta^{\min\{2\kappa,2\}}
$$
hold for all $u\in [-c,c]$. Assertion \eqref{Zdelta3} now follows immediately from \eqref{zdelta.aux1} together with the expression of $\E(Z_{\delta,i)}(t)X_i(t))$ in terms of $\omega$, while assertion \eqref{Zdelta4} corresponds to \eqref{zdelta.aux2}.

In order to proof Assertion \eqref{Zdelta5}, let $s\in[a,b]$ with $|t-s|\geq \delta$. Another Taylor expansion yields:
\small\begin{align}
\mathbb{E} (Z_{\delta,i}(t)X_i(s))= & \omega(t,s,|t-s|^\kappa)
-\frac{1}{2}\omega(t-\delta,s,|t-s-\delta|^\kappa)
-\frac{1}{2}\omega(t+\delta,s,|t-s+\delta|^\kappa) \nonumber \\
&= -\frac{1}{2} (\omega_3(t,s,|t-s|^\kappa)(|t-s+\delta|^\kappa + |t-s-\delta|^\kappa - 2|t-s|^\kappa)) + R_{5;\delta,t}\label{eq:Zdelta5.a}
\end{align}
where for some constant $M_{5,1}<\infty$ we have $$|R_{5;\delta,t}| \leq M_{5,1}((\delta^2 + (|t-s+\delta|^\kappa - |t-s|^\kappa)^2) + (\delta^2 + (|t-s-\delta|^\kappa - |t-s|^\kappa)^2)),$$
and $\omega_3(x,y,z)$ denotes the partial derivative of $\omega(x,y,z)$ with respect to its third argument.

For $\kappa =1$ we then have for $|t-s|\geq \delta$,
$||t-s+\delta| - |t-s|| = \delta$,
$||t-s-\delta| - |t-s|| = \delta$
as well as
$|t-s+\delta| + |t-s-\delta| -2|t-s| =0$,
which leads together with \eqref{eq:Zdelta5.a} to
\begin{align}
|\mathbb{E}(Z_{\delta,i}(t)X_i(s))|\leq M_{5,2}\delta^2 \label{eq:Zdelta5.b}
\end{align}
for some constant $M_{5,2}<\infty$.

For $\kappa \neq 1$,  define $v := t-s$ to shorten the notation and suppose $v \geq \delta>0 $ (i.e. suppose $t>s$, the case for which $s>t$ follows from similar arguments as below).
\begin{align*}
|t-s+\delta|^\kappa + |t-s-\delta|^\kappa - 2|t-s|^\kappa &= |v + \delta|^\kappa + |v-\delta|^\kappa - 2|v|^\kappa =\underbrace{(v + \delta)^\kappa + (v-\delta)^\kappa - 2v^\kappa}_{=: f(\delta)}.
\end{align*}
Suppose $v \geq 2\delta$. It follows from a Taylor expansion of $f(\delta)$ around $0$, that there exists a $\xi$ between $0$ and $\delta$ such that
$$|f(\delta)| \leq \frac{1}{2}\kappa|(\kappa-1)| \delta^2 (|(v+\xi)^{\kappa-2}| + |(v-\xi)^{\kappa-2}|).$$
Since $\kappa<2$ we have
\begin{align}
(v+\xi)^{\kappa-2}\leq v^{\kappa-2} = \frac{1}{|t-s|^{2-\kappa}}.\label{eq:Zdelta5.c}
\end{align}
 On the other hand, since $v \geq 2\delta$, we have for all $0\leq \xi \leq \delta$
\begin{align}
|(v-\xi)^{\kappa-2}| \leq |(v-\frac{v}{2})^{\kappa-2}| 
= \frac{2^{2-\kappa}}{|t-s|^{2-\kappa}}.\label{eq:Zdelta5.d}
\end{align}
Now consider the case $\delta \leq v \leq 2\delta$. If $\kappa >1$, then $f(\delta)\geq 0$ and
\begin{align}
|f(\delta)| &= (v + \delta)^\kappa + (v-\delta)^\kappa - 2v^\kappa  \leq (v + \delta)^\kappa + (v-\delta)^\kappa 	\nonumber \\
&\leq 2 (u+\delta)^\kappa
\leq 2 (2\delta)^\kappa = 2 (2 \delta)^2 (2\delta)^{\kappa-2} \nonumber \\
	&\leq 
		 8 \frac{\delta^2}{|t-s|^{2-\kappa}}. \label{eq:Zdelta5.e}
\end{align}
For $\kappa<1$ we have $f(\delta) \leq 0$. With $\delta \leq v \leq 2\delta$, we then have
\begin{align}
|f(\delta)| = 2v^\kappa - (v+\delta)^\kappa - (v-\delta)^\kappa \leq 2v^\kappa
\leq 2 \cdot (2 \delta)^\kappa = 8 \delta^2 (2\delta)^{\kappa-2}
\leq 8 \frac{\delta^2}{v^{2-\kappa}}  = 8 \frac{\delta^2}{|t-s|^{2-\kappa}}.\label{eq:Zdelta5.f}
\end{align}
Finally, similar arguments may be used to show that additionally we have for some constant $M_{5,3}<\infty$
\begin{align}
|R_{5;\delta,t}| \leq M_{5,3}\left( \frac{\delta^2}{|t-s|^{2-\kappa}}\right).\label{eq:Zdelta5.g}
\end{align}
Assertion~\eqref{Zdelta5} then follows from \eqref{eq:Zdelta5.a} - \eqref{eq:Zdelta5.g}.
\end{proof}

\subsection*{Proofs for estimation of impact points}

We begin by stating a deviation bound for the central $\chi^2$ distribution.
\begin{lemma}\label{lem:chi2dev}
Let $W \sim \chi^2_{n}$ then for all $0\leq x < 1/2$ we have
\begin{align}
P( |n^{-1}W - 1|>x) \leq 2 \exp(-3nx^2/16).\label{eq:chidevi}
\end{align}
\end{lemma}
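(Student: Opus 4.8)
The plan is to use the standard Chernoff-bound (exponential Markov) technique, treating the upper and lower tails separately and then combining them by a union bound. Recall that $W = \sum_{i=1}^n Z_i^2$ with $Z_i$ i.i.d.\ standard normal, so its moment generating function is $\E(e^{tW}) = (1-2t)^{-n/2}$ for $t < 1/2$. All the work will be to extract the specific constant $3/16$.

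For the \textbf{upper tail} I would write, for any $0 < t < 1/2$,
$$P(n^{-1}W - 1 > x) = P(W > n(1+x)) \leq e^{-tn(1+x)}(1-2t)^{-n/2},$$
and minimize the exponent over $t$. The optimizer is $t = x/(2(1+x))$, which yields the bound $\exp\!\big(\tfrac{n}{2}[\log(1+x) - x]\big)$. The \textbf{lower tail} is handled symmetrically: for $t > 0$, $P(W < n(1-x)) \leq e^{tn(1-x)}(1+2t)^{-n/2}$, and optimizing at $t = x/(2(1-x))$ gives $\exp\!\big(\tfrac{n}{2}[x + \log(1-x)]\big)$.

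It then remains to bound both exponents by $-3nx^2/16$. The lower tail is easy: the expansion $\log(1-x) = -\sum_{k\geq 1} x^k/k$ gives $x + \log(1-x) \leq -x^2/2$, hence a bound $\exp(-nx^2/4)$, which is stronger than required since $1/4 > 3/16$. The delicate step, and the main obstacle, is the upper tail, where I need the sharp analytic inequality $\log(1+x) - x \leq -3x^2/8$ on $[0, 1/2)$; this is exactly what produces the constant $3/16$. To verify it I would study $h(x) := \log(1+x) - x + 3x^2/8$: one computes $h(0) = 0$ and $h'(x) = x\big(3/4 - 1/(1+x)\big)$, so $h$ decreases on $(0, 1/3)$ and increases on $(1/3, 1/2)$. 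The only candidates for the maximum on $[0,1/2)$ are therefore the endpoints, and since $h(0) = 0$ while $h(1/2) = \log(3/2) - 1/2 + 3/32 < 0$, we conclude $h(x) \leq 0$ throughout, i.e.\ the desired inequality holds.

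Putting the pieces together, the upper tail is at most $\exp(-3nx^2/16)$ and the lower tail is at most $\exp(-nx^2/4) \leq \exp(-3nx^2/16)$, so by the union bound $P(|n^{-1}W - 1| > x) \leq 2\exp(-3nx^2/16)$, as claimed. The only nonroutine ingredient is the calculus verification of the endpoint inequality near $x = 1/2$, which holds with a very small margin and hence cannot be replaced by a cruder Taylor truncation; everything else is the routine Chernoff optimization.
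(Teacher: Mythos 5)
Your proof is correct, but it takes a more self-contained route than the paper. The paper's entire proof is a citation: it invokes equations (A.2) and (A.3) of Johnstone and Lu (2009), which state precisely the two one-sided bounds you derive, namely $P(W \geq n(1+x)) \leq \exp(-3nx^2/16)$ and $P(W \leq n(1-x)) \leq \exp(-nx^2/4)$, and then combines them by the same union bound you use. What you do differently is supply the Chernoff optimization yourself: your optimizing choices $t = x/(2(1+x))$ and $t = x/(2(1-x))$ are correct, the resulting exponents $\tfrac{n}{2}[\log(1+x)-x]$ and $\tfrac{n}{2}[x+\log(1-x)]$ are correct, and your verification of the delicate inequality $\log(1+x)-x \leq -3x^2/8$ on $[0,1/2)$ via the sign analysis of $h'(x) = x\bigl(3/4 - 1/(1+x)\bigr)$ and the endpoint check $h(1/2) = \log(3/2) - 1/2 + 3/32 < 0$ is sound (the margin is indeed tiny, about $-0.0008$, so you are right that a crude quadratic Taylor bound would not suffice there). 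What your approach buys is a proof that does not lean on an external reference and makes visible where the constant $3/16$ comes from and how tight it is at the boundary $x = 1/2$; what the paper's approach buys is brevity. Either is acceptable; yours is arguably more informative.
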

\begin{proof}[{\bf Proof of Lemma \ref{lem:chi2dev}}]
Equations (A.2) and (A.3) in \citeappendix{JL2009} imply that for $0\leq x < 1/2$ we have
\begin{align*}
P(|n^{-1}W - 1|>x) & = P(W \geq n(1+x)) + P(W \leq n(1-x)) \\
												 &\leq \exp(-3nx^2/16) + \exp(-nx^2/4) \\
												 &\leq 2\exp(-3nx^2/16).
\end{align*}
\end{proof}

\bigskip

\begin{lemma}\label{lem1}
Under Assumption \ref{assum1} there exist constants $0<D_{1}<\infty$ and
$0<D_{2}<\infty$,
such that for all
$n$, all
$0<\delta<(b-a)/2$, all
$t\in [a+\delta,b-\delta]$, all $0<s\leq  1/2$ with $\delta^\kappa s^\kappa\geq s\delta^2$,
and every $0<z\leq  \sqrt{n}$
we obtain
{\small
\begin{align} \label{lem1eq1}
P\biggl(\sup_{t-s\delta\leq u\leq t+s\delta}  &|\frac{1}{n}\sum_{i=1}^n[(Z_{\delta,i}(t) -Z_{\delta,i}(u))Y_i-\mathbb{E}((Z_{\delta,i}(t) -Z_{\delta,i}(u))Y_i)]|
\leq z D_1 \sqrt{\frac{\delta^k s^\kappa}{n}}\biggr) \nonumber \\
&\geq 1-2 \exp(-z^2)
\end{align}}
and
{\small
\begin{align} \label{lem1eq2}
P\biggl(\sup_{t-s\delta\leq u\leq t+s\delta}  &|\frac{1}{n}\sum_{i=1}^n[(Z_{\delta,i}(t)^2 -Z_{\delta,i}(u)^2)-\mathbb{E}(Z_{\delta,i}(t)^2 -Z_{\delta,i}(u)^2)]|
\leq z D_{2} \delta^{\kappa}\sqrt{\frac{s^\kappa}{n}}\biggr) \nonumber \\
&\geq 1 - 2 \exp(-z^2).
\end{align}}
\end{lemma}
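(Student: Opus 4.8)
The plan is to read each of the two statements as a uniform (over $u$) concentration bound for an empirical mean of centered \emph{sub-exponential} summands: obtain the estimate at a fixed $u$ from Bernstein's inequality, then upgrade to the supremum by chaining. Write $g_n^{(1)}(u)=\frac1n\sum_{i=1}^n[(Z_{\delta,i}(t)-Z_{\delta,i}(u))Y_i-\E(\cdot)]$ and let $g_n^{(2)}(u)$ denote the analogous square-based quantity. Note that $g_n^{(1)}(t)=g_n^{(2)}(t)=0$, so the two suprema are really suprema of increments $g_n^{(j)}(u)-g_n^{(j)}(t)$ over the short interval $|u-t|\le s\delta$. The two target scales, $\sqrt{\delta^\kappa s^\kappa/n}$ and $\delta^\kappa\sqrt{s^\kappa/n}$, will emerge directly from the variances (equivalently the $\psi_1$-norms) of the summands, divided by $\sqrt n$.

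First I would establish the governing second-moment estimates for the Gaussian increments. Using the Taylor expansions of $\omega$ underlying Proposition \ref{thmcharX} (in particular the mechanism producing \eqref{Zdelta4} and \eqref{Zdelta2}), I would show that for $|u-t|\le s\delta$ with $s\le 1/2$,
\[
\E\big((Z_{\delta,i}(t)-Z_{\delta,i}(u))^2\big)\le C\,\delta^\kappa s^\kappa,\qquad \E\big(Z_{\delta,i}(u)^2\big)=O(\delta^\kappa),
\]
together with the finer increment bound $\E((Z_{\delta,i}(u)-Z_{\delta,i}(u'))^2)\le C\,|u-u'|^\kappa$ needed for chaining. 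Here the hypothesis $\delta^\kappa s^\kappa\ge s\delta^2$ is precisely what guarantees that the leading rough contribution $\delta^\kappa s^\kappa$ dominates the smooth Taylor remainder (of order $s\delta^2$ when $\kappa\ge1$, cf.\ the $R_3$-bound in \eqref{Zdelta3}), so that the variance bound holds at the advertised scale. Because the $Z_{\delta,i}$ are jointly Gaussian, these second moments control all higher moments; combined with the sub-Gaussian moment growth of $Y_i$ from Assumption \ref{assum2}b), the summand $(Z_{\delta,i}(t)-Z_{\delta,i}(u))Y_i$ is sub-exponential with $\psi_1$-norm of order $\sqrt{\delta^\kappa s^\kappa}$, while $Z_{\delta,i}(t)^2-Z_{\delta,i}(u)^2=(Z_{\delta,i}(t)-Z_{\delta,i}(u))(Z_{\delta,i}(t)+Z_{\delta,i}(u))$ is sub-exponential with $\psi_1$-norm of order $\delta^\kappa\sqrt{s^\kappa}$ (this one requiring only the Gaussian structure of Assumption \ref{assum1}).

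The pointwise step is then Bernstein's inequality for sub-exponential variables: for fixed $u$ it gives $P(|g_n^{(j)}(u)|>x)\le 2\exp(-c\,n\min\{x^2/K_j^2,\,x/K_j\})$ with $K_j$ the relevant $\psi_1$-norm. Taking $x$ equal to the target bound ($z\,D_1\sqrt{\delta^\kappa s^\kappa/n}$, resp.\ $z\,D_2\delta^\kappa\sqrt{s^\kappa/n}$) makes $n\,x^2/K_j^2\asymp z^2$ and $n\,x/K_j\asymp z\sqrt n$; the restriction $z\le\sqrt n$ is exactly what keeps us in the quadratic (sub-Gaussian) regime where $x^2/K_j^2\le x/K_j$, so the tail is $2\exp(-c\,z^2)$ and $D_1,D_2$ can be chosen with $c\ge1$.

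The remaining, and main, difficulty is passing from a fixed $u$ to $\sup_{|u-t|\le s\delta}$. I would do this by a generic-chaining / Dudley argument for the empirical process $u\mapsto g_n^{(j)}(u)$, using the $L^2$ increment metric $d(u,u')\lesssim|u-u'|^{\kappa/2}$ derived above; since the index interval has length $2s\delta$, its $d$-diameter is of the order of the pointwise scale itself, and the entropy integral over this short interval is dominated by that scale (with at most a $\sqrt{\log}$ factor that is absorbed, again thanks to $z\le\sqrt n$, into the constants $D_1,D_2$). The delicate point is to verify that the chaining correction does not inflate the rate: one must check that summing the geometric sequence of Bernstein increments along the chain preserves the $\exp(-c z^2)$ form at the single-point scale, rather than multiplying in a union-bound factor that worsens it. This is exactly where the bookkeeping must be done carefully, and where the argument parallels (and, per the authors' remark, corrects) the corresponding steps in \cite{KPS2015}.
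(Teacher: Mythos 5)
Your proposal is correct and follows essentially the same route as the paper: the authors derive the increment bound $\E((Z_{\delta,i}(t+q_1s\delta)-Z_{\delta,i}(t+q_2s\delta))^2)\leq L\,s^\kappa\delta^\kappa|q_1-q_2|^{\min\{1,\kappa\}}$ by Taylor expansion of $\omega$, verify a Bernstein moment condition for the normalized summands (combining the Gaussian structure of $Z_{\delta,i}$ with the moment growth of $Y_i$ from Assumption \ref{assum2}b), convert it to a Bernstein--Orlicz norm bound via Corollary 1 of van de Geer (2013), and then apply the chaining maximal inequality of van der Vaart and Wellner (Theorem 2.2.4) over $q\in[-1,1]$, with the restriction $z\leq\sqrt n$ entering exactly as you describe, through the inequality $\tfrac n6(\sqrt{1+z/\sqrt n}-1)^2\geq D z^2$ that keeps the tail in the quadratic regime. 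The second assertion is likewise handled via the factorization $Z^2(t)-Z^2(u)=(Z(t)-Z(u))(Z(t)+Z(u))$ together with $\E((Z_{\delta,i}(t)+Z_{\delta,i}(u))^2)=O(\delta^\kappa)$, just as you propose.
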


\bigskip

\begin{proof}[{\bf Proof of Lemma~\ref{lem1}.}]
Choose some arbitrary $0<\delta<(b-a)/2$, $t\in [a+\delta,b-\delta]$, as well as $0< s\leq 1/2$. For $q_1,q_2\in [-1,1]$, Taylor expansions then yield
\begin{align}
	\mathbb{E}& ((Z_{\delta,i}(t+q_1s\delta)-Z_{\delta,i}(t+q_2s\delta))^2)= \mathbb{E}((Z_{\delta,i}(t+q_1s\delta)-Z_{\delta,i}(t+q_1s\delta+(q_2-q_1)s\delta))^2)\nonumber\\
					=& c(t+q_1s\delta)s^\kappa\delta^\kappa 2\left(3/2|q_2-q_1|^\kappa -(|q_2-q_1+\frac{1}{s}|^\kappa +|q_2-q_1-\frac{1}{s}|^\kappa-2\frac{1}{s^\kappa})\right)\nonumber\\
						&+\frac{1}{2}c(t+q_1s\delta)2^\kappa s^\kappa\delta^\kappa
							\left(|\frac{q_2-q_1}{2}+\frac{1}{s}|^\kappa+ |\frac{q_2-q_1}{2}-\frac{1}{s}|^\kappa-2\frac{1}{s^\kappa}\right) + R_{6;\delta,t}\nonumber\\
						& \quad \text{with } |R_{6;\delta,t}|\leq L_{2,1} ||q_2-q_1|^{1/2}s^{1/2}\delta|^{\min\{2\kappa,2\}} \label{lem1pr01}
\end{align}
for some constant $L_{2,1}<\infty$.\\

Note that there exists a constant $L_{2,2}<\infty$ such that for all $0<s\leq 0.5$ we have
$||q_2-q_1+\frac{1}{s}|^\kappa +|q_2-q_1-\frac{1}{s}|^\kappa-2\frac{1}{s^\kappa}|\leq L_{2,2} |q_2-q_1|^2$
as well as
$||\frac{q_2-q_1}{2}+\frac{1}{s}|^\kappa +|\frac{q_2-q_1}{2}-\frac{1}{s}|^\kappa-2\frac{1}{s^\kappa}|\leq L_{2,2} |q_2-q_1|^2$.

Together with \eqref{lem1pr01} this implies that there exists a constant $L_{2,3}<\infty$, which can be chosen independent of $s$ and $\delta$, such that for all $q_1,q_2\in [-1,1]$
 \begin{align}
\mathbb{E} ((Z_{\delta,i}(t+q_1s\delta)-Z_{\delta,i}(t+q_2s\delta))^2)\leq L_{2,3} s^\kappa\delta^\kappa |q_1-q_2|^{\min\{1,\kappa\}}.
\label{lem1pr02}
\end{align}
Define $Z_{\delta,i}^*(q) := \frac{1}{\sqrt{s^\kappa \delta^\kappa}} (Z_{\delta,i}(t+qs\delta)Y_i - \mathbb{E}(Z_{\delta,i}(t+qs\delta)Y_i))$ and
$Z_{\delta}^*(q) := \frac{1}{\sqrt{n}} \sum_{i=1}^n Z_{\delta,i}^*(q)$.
By bounding the absolute moments of $E(|Y_i|^{2m})$ according to Assumption~\ref{assum2}, one can easily verify that for $K= 4\sqrt{L_{2,3} |q_2-q_1|^{\min\{1,\kappa\}}}\sigma_{|y|}$,
the Bernstein condition
$$E(|Z_{\delta,i}^*(q_1) - Z_{\delta,i}^*(q_2)|^m) \leq \frac{m!}{2} K^{m-2} K^2$$
holds for all $0<s\leq 0.5$, all integers $m \geq 2$, all $q_1,q_2 \in [-1,1]$ and all $0<\delta<(b-a)/2$.\\

An application of Corollary 1 in \citeappendix{vandeGeer2013} then guarantees that the Orlicz norm of $Z^*(q_1) - Z^*(q_2)$ is bounded, i.e., one has for all $q_1, q_2 \in [-1,1]$
\begin{align}
|| Z_{\delta}^*(q_1) - Z_{\delta}^*(q_2)||_{\Psi}\leq L_{2,4} |q_1-q_2|^{\min\{\frac{1}{2},\frac{1}{2}\kappa\}} \label{eq:orliczbd}
\end{align}
for some constant $0<L_{2,4}<\infty$.  

The proof then follows from well known maximal inequalities of empirical process theory. In particular, by \eqref{eq:orliczbd} one may apply theorem 2.2.4 of \citeappendix{vanderVaart1996}. It is immediately seen that the covering integral appearing in this theorem is finite, and we can thus infer that there exists a constant $0<D_{1,1}<\infty$ such that
\begin{align*}
\mathbb{E}\left(\exp{\left( \sup\limits_{q_1,q_2 \in \left[-1,1\right]} n/6  \left(\sqrt{1+2\sqrt{\frac{6}{nD_{1,1}^2}}|Z_{\delta}^*(q_1)-Z_{\delta}^*(q_2)|}-1\right)^2\right)}\right)
\leq 2.
\end{align*}
For every $z>0$, the Markov inequality then yields
{\small \begin{align*}
&P\bigg( \sup\limits_{q_1,q_2 \in [-1,1]} |Z_{\delta}^*(q_1)-Z_{\delta}^*(q_2)|
\geq z \frac{D_{1,1}}{2\sqrt{6}}  \bigg)\\
& =  P\Bigg(\exp{\bigg( \sup\limits_{q_1,q_2 \in \left[-1,1\right]} n/6
\bigg(\sqrt{1+2\sqrt{\frac{6}{n D_{1,1}^2}}|Z_{\delta}^*(q_1)-Z_{\delta}^*(q_2)|}-1\bigg)^2\bigg)}
 \geq \exp\left(n/6 \bigg(\sqrt{1+z/\sqrt{n}}-1\bigg)^2\right)\Bigg)\\
&\leq 2 \exp\bigg(-n/6 \bigg(\sqrt{1+ z/\sqrt{n}}-1\bigg)^2\bigg).
\end{align*}
}
At the same time it follows from a Taylor expansion that for any $0<z  \leq  \sqrt{n}$  there exists a constant $0<D_{1,2}<\infty$ such that
\begin{equation}\label{lem1:tayl}
\frac{n}{6}\left(\sqrt{1+z/\sqrt{n}}-1\right)^2 \geq  D_{1,2} z^2.
\end{equation}
 Assertion \eqref{lem1eq1} is an immediate consequence.

In order to prove (\ref{lem1eq2}) first note that
$Z_{\delta,i}(t_1)^2 -Z_{\delta,i}(t_2)^2=
(Z_{\delta,i}(t_1) -Z_{\delta,i}(t_2))(Z_{\delta,i}(t_1) +Z_{\delta,i}(t_2))$.
Equation \eqref{Zdelta5} 
 implies the existence of a constant
$0<L_{2,5}<\infty$ such that
$\mathbb{E}((Z_{\delta,i}(t+q_1s\delta) +Z_{\delta,i}(t+q_2s\delta))^2)\leq L_{2,5} \delta^\kappa$
for all $q_1,q_2\in [-1,1]$, and all $n$, $t$, $s$ and $\delta$.
With $Z_{\delta}^{**}(q)=
\frac{1}{\sqrt{\delta^{2\kappa}s^\kappa}}\frac{1}{\sqrt{n}}\sum_{i=1}^n
(Z_{\delta,i}(t+qs\delta)^2 - \mathbb{E}(Z_{\delta,i}(t+qs\delta)^2))$,
 similar steps as above now imply the existence of a constant $0<L_{2,6}<\infty$ such that
{\small
\begin{align*}
\|Z_{\delta}^{**}(q_1)-Z_{\delta}^{**}(q_2)\|_{\Psi} \leq L_{2,6} |q_1-q_2|^{\min\{\frac{1}{2},\frac{1}{2}\kappa\}}.
\end{align*}}
Using again  maximal inequalities of empirical process theory and \eqref{lem1:tayl},
Assertion \eqref{lem1eq2} now follows from arguments similar to those used to prove
\eqref{lem1eq1}.
\end{proof}

\bigskip

\begin{lemma}\label{lem0}
Under the assumptions of Theorem \ref{poicon} there exist constants $0<D_3<D_4<\infty$ and
$0<D_5<\infty$ such that
\begin{align}
&0<D_3\delta^\kappa\leq \inf_{t\in [a+\delta,b-\delta]}\mathbb{E}(Z_{\delta,i}(t)^2) \leq
\sigma_{z,sup}^2:=  \sup_{t\in [a+\delta,b-\delta]}\mathbb{E}(Z_{\delta,i}(t)^2)
\leq D_4\delta^\kappa \label{lem0eq0} \\
&\lim_{n\rightarrow\infty} P\left(\sup_{t\in [a+\delta,b-\delta]}
|\frac{1}{n}\sum_{i=1}^n[Z_{\delta,i}(t)^2 -\mathbb{E}(Z_{\delta,i}(t)^2)]| \leq D_5
\delta^\kappa\sqrt{\frac{1}{n}\log \Big(\frac{b-a}{\delta}\Big)}\right)=1.
\label{lem0eq1}
\end{align}

Moreover, there exist a constant $0<D<\infty$ such that for any $A^*$ with $D<A^*\leq A$ we obtain as $n\rightarrow\infty$:
\begin{align}\label{lem0eq2}
\begin{split}
P\Bigg(\sup_{t\in [a+\delta,b-\delta]} &(\frac{1}{n}\sum_{i=1}^n Z_{\delta,i}(t)^2)^{-\frac{1}{2}} |\frac{1}{n}\sum_{i=1}^n
( Z_{\delta,i}(t)Y_i-\mathbb{E}(Z_{\delta,i}(t)Y_i))| \\
&\qquad \qquad \qquad \qquad \leq A^*\sqrt{\frac{\sigma_{|y|}^2}{n}\log \Big(\frac{b-a}{\delta}\Big)}
\Bigg) \rightarrow 1,
\end{split}\\
\begin{split}\label{lem0eq3}
P\Bigg(\sup_{t\in [a+\delta,b-\delta]} &|\frac{1}{n}\sum_{i=1}^n
( Z_{\delta,i}(t)Y_i-\mathbb{E}(Z_{\delta,i}(t)Y_i))|\\
&\qquad \qquad \qquad \qquad  \leq A^*\sqrt{\frac{\sigma_{|y|}^2 D_{4,\epsilon} \delta^\kappa}{n}\log \Big(\frac{b-a}{\delta}\Big)}\Bigg)
\rightarrow 1,
\end{split}
\end{align}
for any constant $D_{4,\epsilon}<\infty$ with $D_4<D_{4,\epsilon}$.
\end{lemma}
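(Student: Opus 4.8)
The plan is to treat the four displays in turn: derive the variance bounds \eqref{lem0eq0} directly from Proposition~\ref{thmcharX}, then establish the two uniform concentration statements \eqref{lem0eq1} and \eqref{lem0eq3} by a common ``pointwise bound on a grid plus oscillation control'' argument, and finally read off the self-normalized bound \eqref{lem0eq2} as a quotient of \eqref{lem0eq3} and the lower bound furnished by \eqref{lem0eq0}--\eqref{lem0eq1}.

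First I would dispatch \eqref{lem0eq0}. Equation~\eqref{Zdelta2} gives $\E(Z_{\delta,i}(t)^2)=\delta^\kappa(2-2^{\kappa-1})\,c(t)+R_{2;\delta,t}$ with $\sup_t|R_{2;\delta,t}|<M_2\delta^{\min\{2\kappa,2\}}$. For $0<\kappa<2$ the factor $2-2^{\kappa-1}$ is strictly positive and bounded, and Assumption~\ref{assum1} supplies $0<\inf_t c(t)\le\sup_t c(t)<\infty$; since $\min\{2\kappa,2\}>\kappa$ the remainder is $o(\delta^\kappa)$ uniformly in $t$. Hence for all sufficiently small $\delta$ there are constants $0<D_3<D_4<\infty$ with $D_3\delta^\kappa\le\inf_t\E(Z_{\delta,i}(t)^2)\le\sup_t\E(Z_{\delta,i}(t)^2)\le D_4\delta^\kappa$, which is \eqref{lem0eq0}.

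For \eqref{lem0eq1} and \eqref{lem0eq3} I would cover $[a+\delta,b-\delta]$ by a grid $t_1<\dots<t_N$ of mesh of order $\delta$, so that $N\asymp(b-a)/\delta$, and bound each supremum by its value on the grid plus the maximal oscillation on the cells. For the grid values I would use, for \eqref{lem0eq1}, that $Z_{\delta,i}(t)$ is Gaussian (Assumption~\ref{assum2}a), so $\sum_i Z_{\delta,i}(t_k)^2/\V(Z_{\delta,i}(t_k))$ is $\chi^2_n$ and Lemma~\ref{lem:chi2dev} yields an exponential tail; for \eqref{lem0eq3} I would instead verify a Bernstein moment condition for the products $Z_{\delta,i}(t_k)Y_i$ out of the Gaussian moments of $Z_{\delta,i}(t_k)$ and the moment bound in Assumption~\ref{assum2}b, producing an effective variance of order $\delta^\kappa$ that can be bounded by $\sigma_{|y|}^2 D_{4,\epsilon}\delta^\kappa$ (the slack $D_{4,\epsilon}>D_4$ absorbing the numerical constant from the product moments), and then apply Bernstein's inequality. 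The oscillation on each cell is controlled by Lemma~\ref{lem1}: taking $s=\tfrac12$, \eqref{lem1eq2} bounds the increments of the centered $Z^2$--process and \eqref{lem1eq1} those of the centered $ZY$--process. Choosing $z$ of order $\sqrt{\log((b-a)/\delta)}$ makes both the grid union bound (with the $\chi^2$/Bernstein tails) and the oscillation union bound (with the $2e^{-z^2}$ tails from Lemma~\ref{lem1}) summable over the $N\asymp(b-a)/\delta$ cells, so each probability tends to $1$; the resulting magnitudes are exactly of order $\delta^\kappa\sqrt{n^{-1}\log((b-a)/\delta)}$ and $\sqrt{\sigma_{|y|}^2 D_{4,\epsilon}\delta^\kappa\,n^{-1}\log((b-a)/\delta)}$, giving \eqref{lem0eq1} and \eqref{lem0eq3}.

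Finally, \eqref{lem0eq2} follows by normalization. On the intersection of the events in \eqref{lem0eq0} and \eqref{lem0eq1}, and using that the assumptions of Theorem~\ref{poicon} force $n^{-1}\log((b-a)/\delta)\to0$ (so the fluctuation in \eqref{lem0eq1} is $o(\delta^\kappa)$), one has $\tfrac1n\sum_i Z_{\delta,i}(t)^2\ge D_3'\delta^\kappa$ uniformly in $t$ for some $0<D_3'<D_3$ and $n$ large. Dividing the numerator bound \eqref{lem0eq3} by $\sqrt{D_3'\delta^\kappa}$ cancels the $\delta^\kappa$ and leaves a bound proportional to $\sqrt{\sigma_{|y|}^2\,n^{-1}\log((b-a)/\delta)}$; choosing the threshold constant $D$ large enough to dominate $\sqrt{D_{4,\epsilon}/D_3'}$ times the numerical constant yields \eqref{lem0eq2} for every $A^*\in(D,A]$. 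I expect the main obstacle to be this uniform-over-continuum step---specifically, verifying the Bernstein/Orlicz moment conditions for $Z_{\delta,i}(t)Y_i$ with the sharp $\delta^\kappa$ variance scaling and then reconciling the various constants ($z$, $A^*$, $D$, $D_{4,\epsilon}$) so that the grid bound, the oscillation bound, and the denominator lower bound combine into the single clean constant appearing in \eqref{lem0eq2}.
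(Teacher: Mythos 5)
Your proposal is correct and rests on the same toolkit as the paper's proof (the expansion \eqref{Zdelta2} from Proposition~\ref{thmcharX} for \eqref{lem0eq0}, a grid-plus-oscillation chaining argument using Lemma~\ref{lem:chi2dev}, a Bernstein/Orlicz bound at the grid points, and Lemma~\ref{lem1} for the increments), but it organizes the last two assertions differently. Two points of comparison. First, the paper uses a grid of $N_{w_1}=[((b-a)/\delta)^{w_1}]$ points with $w_1>1$, so the mesh is $O(\delta^{w_1})$ and the oscillation terms from Lemma~\ref{lem1} are of strictly smaller order than the grid maxima; your mesh of order $\delta$ with $s=1/2$ makes the oscillation term the \emph{same} order as the grid term, which still proves the lemma (the constant $D_5$ and the threshold $D$ merely get larger) but forfeits the feature that $D$ is governed by the grid maximum alone --- the feature the authors exploit in the remark after the lemma to motivate the CLT-based practical threshold $A=\sqrt{2\sqrt{3}}$. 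Second, you derive the self-normalized bound \eqref{lem0eq2} from the unnormalized bound \eqref{lem0eq3} by dividing by the uniform lower bound $\inf_t \frac{1}{n}\sum_i Z_{\delta,i}(t)^2\geq D_3'\delta^\kappa$, which costs a factor $\sqrt{D_{4,\epsilon}/D_3'}$ absorbed into $D$; the paper goes in the opposite direction, treating the self-normalized statistic directly via the three-term decomposition \eqref{lem0pr01} (grid maximum of the normalized statistic, a cross term from the difference of empirical second moments, and a normalized oscillation term) and then obtaining \eqref{lem0eq3} from \eqref{lem0eq2} by multiplying by $\sup_t(\frac{1}{n}\sum_i Z_{\delta,i}(t)^2)^{1/2}\leq (D_{4,\epsilon}\delta^\kappa)^{1/2}$ with no loss in the constant. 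Since the statement only asserts existence of a finite $D$, your route is a legitimate and somewhat simpler proof; just make sure, when reconciling constants, that the single $D$ you declare works simultaneously for \eqref{lem0eq2} and \eqref{lem0eq3} (i.e., take $D$ to be the larger of the two thresholds your argument produces), and verify $z\leq\sqrt{n}$ for $z\asymp\sqrt{\log((b-a)/\delta)}$, which follows from $n\delta^\kappa/|\log\delta|\to\infty$ in Theorem~\ref{poicon}.
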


\bigskip

\begin{proof}[{\bf Proof of Lemma \ref{lem0}.}]
Assertion~\ref{lem0eq0} follows directly from Assumption~\ref{assum1} and equation \eqref{Zdelta2}.

\noindent Let ${\cal J}_\delta:=\{j| \ t_j\in[a+\delta,b-\delta], j\in\{1,\dots,p\}\}$.
Choose some constants $w_1,w_2$ with
 $1<w_1<w_2$ and determine an equidistant grid
 $s_1=a+\delta<s_2<\dots<s_{N_{w1}}=b-\delta$ of $N_{w_1}=[(\frac{b-a}{\delta})^{w_1}]$ points in $[a+\delta,b-\delta]$.
 Obviously, $\ell_{w_1}:=|s_{j}-s_{j-1}|=O(\delta^{w_1})$, $j=2,\dots,N_{w_1}$, as $\delta\rightarrow 0$.
 Then
{\small \begin{align*}
\sup_{t\in [a+\delta,b-\delta]}& |\frac{1}{n}\sum_{i=1}^n
 Z_{\delta,i}(t)^2-\mathbb{E}(Z_{\delta,i}(t)^2)|\leq
\sup_{j\in \{2,\dots,N_{w_1}\}} |\frac{1}{n}\sum_{i=1}^n
 Z_{\delta,i}(s_j)^2-\mathbb{E}(Z_{\delta,i}(s_j)^2)|\\
&+ \sup_{j\in \{2,\dots,N_{w_1}\}}\sup_{t\in [s_{j-1},s_{j}]}
|\frac{1}{n}\sum_{i=1}^n
 Z_{\delta,i}(t)^2-\mathbb{E}(Z_{\delta,i}(t)^2)-(Z_{\delta,i}(s_j)^2-\mathbb{E}(Z_{\delta,i}(s_j)^2))|.
\end{align*}}

By Assumption \ref{assum2}, using the deviation bound \eqref{eq:chidevi} as well as $\sup_{t\in [a+\delta,b-\delta]}\mathbb{E}(Z_{\delta,i}(t)^2) \leq D_4\delta^\kappa$, it follows from
 the Bonferroni-inequality
that as $n\rightarrow \infty$
\begin{align*}
P\Bigg(&\sup_{j\in \{2,\dots,N_{w_1}\}} |\frac{1}{n}\sum_{i=1}^n
 Z_{\delta,i}(s_j)^2-\mathbb{E}(Z_{\delta,i}(s_j)^2)|
\leq \sqrt{\frac{16}{3}w_2}D_4\delta^\kappa
\sqrt{\frac{1}{n} \log \Big(\frac{b-a}{\delta}\Big)}
\Bigg)\\ \nonumber
&\geq   1-
2 N_{w_1} \cdot \exp\left(-w_2\log \Big(\frac{b-a}{\delta}\Big)\right)
\geq 1- 2 \Big(\frac{b-a}{\delta}\Big)^{w_1-w_2} \rightarrow 1,
\end{align*}
while Lemma \ref{lem1} implies that as $n\rightarrow\infty$
\begin{align*}
P\Bigg(&\sup_{j\in \{2,\dots,N_{w_1}\}}\sup_{t\in [s_{j-1},s_{j}]} |\frac{1}{n}\sum_{i=1}^n
 Z_{\delta,i}(t)^2-\mathbb{E}(Z_{\delta,i}(t)^2)-(Z_{\delta,i}(s_j)^2-\mathbb{E}(Z_{\delta,i}(s_j)^2))|  \\
& \qquad \quad
\leq D_2 \sqrt{w_2} \delta^\kappa
\sqrt{\frac{\ell_{w_1}^\kappa}{\delta^\kappa n} \log \Big(\frac{b-a}{\delta}\Big)}
\Bigg) \rightarrow 1.
\end{align*}
Recall that $\frac{\ell_{w_1}^\kappa}{\delta^\kappa }=O(\delta^{\kappa(w_1-1)})$ and
hence $\sqrt{\frac{\ell_{w_1}^\kappa}{\delta^\kappa n} \log (\frac{b-a}{\delta})}=o(\sqrt{\frac{1}{n} \log (\frac{b-a}{\delta})})$. When combining the above arguments we thus obtain (\ref{lem0eq1}).\\

Before considering \eqref{lem0eq2}  note that it follows from \eqref{lem0eq0} and \eqref{lem0eq1} that there exists a constant $0<L_{3,1}<\infty$
such that as $n\rightarrow\infty$.
\begin{align}\label{eq:lem0inf}
P(\inf_{u\in [a+\delta,b-\delta]}\frac{1}{n}\sum_{i=1}^n Z_{\delta,i}(u)^2\geq L_{3,1}\delta^\kappa)
\rightarrow 1
\end{align}
Now consider \eqref{lem0eq2} and keep in mind that
$$(\frac{1}{n}\sum_{i=1}^n Z_{\delta,i}(t)^2)^{\frac{1}{2}}-(\frac{1}{n}\sum_{i=1}^n Z_{\delta,i}(s)^2)^{\frac{1}{2}}
=\frac{\frac{1}{n}\sum_{i=1}^n Z_{\delta,i}(t)^2-\frac{1}{n}\sum_{i=1}^n Z_{\delta,i}(s)^2}{(\frac{1}{n}\sum_{i=1}^n Z_{\delta,i}(t)^2)^{\frac{1}{2}}+(\frac{1}{n}\sum_{i=1}^n Z_{\delta,i}(s)^2)^{\frac{1}{2}}}.$$
Some straightforward computations lead to
\begin{align}\begin{split}\label{lem0pr01}
&\sup_{t\in [a+\delta,b-\delta]} (\frac{1}{n}\sum_{i=1}^n Z_{\delta,i}(t)^2)^{-\frac{1}{2}}|\frac{1}{n}\sum_{i=1}^n
 Z_{\delta,i}(t)Y_i-\mathbb{E}(Z_{\delta,i}(t)Y_i)|\\
 &\leq \sup_{j\in \{2,\dots,N_{w_1}\}}(\frac{1}{n}\sum_{i=1}^n Z_{\delta,i}(s_j)^2)^{-\frac{1}{2}} |\frac{1}{n}\sum_{i=1}^n
 Z_{\delta,i}(s_j)Y_i-\mathbb{E}(Z_{\delta,i}(s_j)Y_i)|\\
&+  \sup_{j\in \{2,\dots,N_{w_1}\}}\left(\sup_{t\in [s_{j-1},s_{j}]}
\frac{|\frac{1}{n}\sum_{i=1}^n Z_{\delta,i}(t)^2-\frac{1}{n}\sum_{i=1}^n Z_{\delta,i}(s_j)^2|}{2\inf_{u\in [s_{j-1},s_{j}]}\frac{1}{n}\sum_{i=1}^n Z_{\delta,i}(u)^2}
 \frac{|\frac{1}{n}\sum_{i=1}^n
 Z_{\delta,i}(s_j)Y_i-\mathbb{E}(Z_{\delta,i}(s_j)Y_i)|}{(\frac{1}{n}\sum_{i=1}^n Z_{\delta,i}(s_j)^2)^{\frac{1}{2}}}\right)\\
 &+
 \sup_{j\in \{2,\dots,N_{w_1}\}} \sup_{t\in [s_{j-1},s_{j}]}
\frac{|\frac{1}{n}\sum_{i=1}^n
 Z_{\delta,i}(t)Y_i-\mathbb{E}(Z_{\delta,i}(t)Y_i)-(Z_{\delta,i}(s_j)Y_i-\mathbb{E}(Z_{\delta,i}(s_j)Y_i))|}{\inf_{u\in [s_{j-1},s_{j}]}(\frac{1}{n}\sum_{i=1}^n Z_{\delta,i}(u)^2)^{\frac{1}{2}}}.
\end{split}
\end{align}
Furthermore,
{\small \begin{align*}
|\frac{1}{n}\sum_{i=1}^n Z_{\delta,i}(t)^2&-\frac{1}{n}\sum_{i=1}^n Z_{\delta,i}(s_j)^2|\leq |\mathbb{E}(Z_{\delta,i}(t)^2)-\mathbb{E}(Z_{\delta,i}(s_j)^2)|\\
&+
|\frac{1}{n}\sum_{i=1}^n Z_{\delta,i}(t)^2-\frac{1}{n}\sum_{i=1}^n Z_{\delta,i}(s_j)^2-
(\mathbb{E}(Z_{\delta,i}(t)^2)-\mathbb{E}(Z_{\delta,i}(s_j)^2))|,
\end{align*}}
\noindent and it follows from \eqref{lem1pr02} and \eqref{lem0eq0} that there is a constant $0<L_{3,2}<\infty$ such that
 for every $j\in \{2,\dots,N_{w_1}\}$
{\small $$|\mathbb{E}(Z_{\delta,i}(t)^2)-\mathbb{E}(Z_{\delta,i}(s_j)^2)|
\leq \sqrt{\mathbb{E}((Z_{\delta,i}(t)-Z_{\delta,i}(s_j))^2)\mathbb{E}((Z_{\delta,i}(t)+Z_{\delta,i}(s_j))^2)}
\leq L_{3,2} \delta^\kappa \sqrt{\frac{\ell_{w_1}^\kappa}{\delta^\kappa}}$$}
\noindent holds for all sufficiently
large $n$. Together with \eqref{eq:lem0inf} We can therefore infer from Lemma \ref{lem1} that for some constants $0<L_{3,3}<\infty$,
$0<L_{3,4}<\infty$
\begin{align} \label{lem0pr02}
P\Bigg(\sup_{j\in \{2,\dots,N_{w_1}\}}\sup_{t\in [s_{j-1},s_{j}]} \frac{|\frac{1}{n}\sum_{i=1}^n Z_{\delta,i}(t)^2-\frac{1}{n}\sum_{i=1}^n Z_{\delta,i}(s_j)^2|}{2\inf_{u\in [s_{j-1},s_{j}]}\frac{1}{n}\sum_{i=1}^n Z_{\delta,i}(u)^2}
\leq L_{3,3} \sqrt{\frac{\ell_{w_1}^\kappa}{\delta^\kappa}}
\Bigg) \rightarrow 1,
\end{align}
\begin{align}
\begin{split}\label{lem0pr03}
P\Bigg(&\sup_{j\in \{2,\dots,N_{w_1}\}} \sup_{t\in [s_{j-1},s_{j}]}
\frac{|\frac{1}{n}\sum_{i=1}^n
 Z_{\delta,i}(t)Y_i-\mathbb{E}(Z_{\delta,i}(t)Y_i)-(Z_{\delta,i}(s_j)Y_i-\mathbb{E}(Z_{\delta,i}(s_j)Y_i))|}{\inf_{u\in [s_{j-1},s_{j}]}(\frac{1}{n}\sum_{i=1}^n Z_{\delta,i}(u)^2)^{\frac{1}{2}}} \\
& \qquad \quad
\leq L_{3,4} \sqrt{\frac{\ell_{w_1}^\kappa}{\delta^\kappa n} \log \Big(\frac{b-a}{\delta}\Big)}
\Bigg) \rightarrow 1,
\end{split}
\end{align}
as $n\rightarrow\infty$.
Moreover, by \eqref{eq:lem0inf}, we have with probability $1$ as $n \to \infty$,
\begin{align}\label{eq:lem0inf1}
\sup_{j\in\{2,3,\dots, N_{\omega_1}\}} \frac{|\frac{1}{n}\sum_{i=1}^n Z_{\delta,i}(s_j)Y_i - \E(Z_{\delta,i}(s_j)Y_i)|}{(\frac{1}{n}\sum_{i=1}^n Z_{\delta,i}(s_j)^2 )^{\frac{1}{2}}} \leq \sup_{j\in\{2,3,\dots, N_{\omega_1}\}} \frac{|\frac{1}{n}\sum_{i=1}^n Z_{\delta,i}(s_j)Y_i - \E(Z_{\delta,i}(s_j)Y_i)|}{(L_{3,1}\delta^\kappa )^{\frac{1}{2}}}.
\end{align}
Chose an arbitrary point $s_j$, and define $W_i(s_j):= \frac{1}{\sqrt{D_4 \delta^\kappa\sigma_{|y|}^2}} (Z_{\delta,i}(s_j)Y_i  - \E(Z_{\delta,i}(s_j)Y_i))$. Then $E(W_i(s_j))=0$ and it is easy to show that under Assumption \ref{assum2} with 
$K=4$, 
a constant which is independent of $s_j$, 
$W_i(s_j)$ satisfies the Bernstein condition in Corollary 1 of \citeappendix{vandeGeer2013}, i.e., we have
$$
\E(|W_i(s_j)|^m) \leq \frac{m!}{2} K^{m-2} K^2\quad\text{for all}\quad m=2,3,\dots
$$
It immediately follows from an application of Corollary 1 in \citeappendix{vandeGeer2013} that there exists a constant $0<L_{3,5}<\infty$ 
such that 
the Orlicz-Norm $||\frac{1}{\sqrt{n}}\sum_{i=1}^n W_i(s_j)||_{\Psi}$ can be bounded by  $L_{3,5}<\infty$. And hence we can infer that
$$\mathbb{E}\Bigg(\exp(\frac{n}{6}(\sqrt{1+2\sqrt{\frac{6}{L_{2,5}^2 n}}|\frac{1}{\sqrt{n}}\sum_{i=1}^n W_i(s_j)|}-1)^2)\Bigg)\leq 2.$$
It then follows from similar steps as in the proof of Lemma~\ref{lem1} that there exists a constant $0<L_{3,6}<\infty$ such that for all $0<z\leq \sqrt{n}$ we obtain for all $s_j$
\begin{align*}
P\Bigg(&|\frac{1}{\sqrt{n}}\sum_{i=1}^n W_i(s_j)| > z L_{2,6}\Bigg)\\
& = P\Bigg(\frac{|\frac{1}{n}\sum_{i=1}^n Z_{\delta,i}(s_j)Y_i  - E(Z_{\delta,i}(s_j)Y_i)|}{\sqrt{L_{3,1} \delta^\kappa}} > z L_{3,6} \sqrt{\frac{D_4 \delta^\kappa\sigma_{|y|}^2}{n L_{3,1} \delta^\kappa}}\Bigg) \leq 2\exp(-z^2).
\end{align*}
We can thus conclude that there exists a constant $0<L_{3,7}<\infty$ such that
\begin{align*}
P\Bigg(\frac{|\frac{1}{n}\sum_{i=1}^n Z_{\delta,i}(s_j)Y_i  - E(Z_{\delta,i}(s_j)Y_i)|}{(L_{3,1} \delta^\kappa)^\frac{1}{2}} > z L_{3,7} \sqrt{\frac{\sigma_{|y|}^2}{n}}\Bigg) \leq 2\exp(-z^2).
\end{align*}
Choose some  $\omega_3$ with $\omega_3>\omega_1>1$ and note that our conditions on $\delta$ imply that $z =  \sqrt{\omega_{3} \log{(\frac{b-a}{\delta}})} \leq \sqrt{n}$ for all sufficiently large $n$.
Using the union bound this leads to
\begin{align}
P\Bigg(\sup_{j\in\{2,3,\dots,  N_{\omega_1}\}}& \frac{|\frac{1}{n}\sum_{i=1}^n Z_{\delta,i}(s_j)Y_i - E(Z_{\delta,i}(s_j)Y_i)|}{(L_{3,1}\delta^\kappa )^{\frac{1}{2}}} \leq  \sqrt{\omega_{3} \log{\Big(\frac{b-a}{\delta}}\Big)} L_{3,7} \sqrt{\frac{\sigma_{|y|}^2}{n}}\Bigg) \nonumber  \\
& \geq 1 - \sum_{j=1}^{N_{\omega_1}} P\Bigg(\frac{|\frac{1}{n}\sum_{i=1}^n Z_{\delta,i}(s_j)Y_i - E(Z_{\delta,i}(s_j)Y_i)|}{(L_{3,1}\delta^\kappa )^{\frac{1}{2}}} >  \sqrt{\omega_{3} \log{\Big(\frac{b-a}{\delta}}\Big)} L_{3,7} \sqrt{\frac{\sigma_{|y|}^2}{n}}\Bigg) \nonumber \\
&\geq 1- N_{\omega_1} 2 \exp\left(-\omega_{3} \log{\Big(\frac{b-a}{\delta}\Big)}\right) \geq 1 - 2\Big(\frac{b-a}{\delta}\Big)^{\omega_{1}-\omega_3} \to 1. \label{eq:lem0new}
\end{align}
With $D = \sqrt{\omega_3}L_{3,7}<\infty$, assertion~\eqref{lem0eq2} holds for all $A^*>D$ by \eqref{lem0pr01}-\eqref{eq:lem0new}.

Finally, \eqref{lem0eq3} follows from \eqref{lem0eq0}, \eqref{lem0eq1} and \eqref{lem0eq2} by noting that the assertions in particular imply that with probability converging to $1$ (as $n\to \infty$) we have $\sup_{t\in(a,b)} \frac{1}{n}\sum_{i=1}^n Z_{\delta,i}(t)^2 \leq D_{4,\epsilon} \delta^\kappa$ for any constant $D_{4,\epsilon}>D_4$.
\end{proof}

\bigskip

Contrary to Lemma 2 in the supplementary paper to Kneip et al.~(2016), we don't have the simple expression $D=\sqrt{2}$; this is the price to pay for not assuming $Y_i$ to be Gaussian.

\bigskip

\noindent \textbf{Remarks to Lemma~\ref{lem0} concerning the threshold $\lambda$:}
\begin{enumerate}
\item Using a slight abuse of notation, first note that there is a close connection between  $\lambda = A\sqrt{\sigma_{|y|}^2\log (\frac{b-a}{\delta})/n}$ for some $A>D$ given in Theorem~\ref{poicon} and $\widetilde\lambda := A\sqrt{\sqrt{\E(Y^4)}\log (\frac{b-a}{\delta})/n}$ for $A=\sqrt{2\sqrt{3}}$ as used in our simulations. Indeed, set $\sigma_{|y|}^2 = \E(Y^2)$. Jensen's inequality implies that there exists a constant $0<\widetilde{D}\leq 1$ such that $\E(Y^2) \widetilde{D} = \sqrt{\E(Y^4)}$.
We can therefore rewrite the expression for $\widetilde\lambda$ in the form of $\lambda$ presented in Theorem~\ref{poicon} as $A\sqrt{\sigma_{|y|}^2\log (\frac{b-a}{\delta})/n}$ with $A=\sqrt{2\sqrt{3}\widetilde{D}}$.
\end{enumerate}
We proceed to give more details about the motivation for the threshold used in the simulation:

\begin{enumerate}[resume]
\item Arguments for the applicability of the threshold $\lambda$ in the proof of Theorem~\ref{poicon} follow from Lemma~\ref{lem0}. The crucial step for determining an operable threshold $\lambda$ is to derive useful bounds on
 $$\sup_{j\in\{2,3,\dots, N_{\omega_1}\}} \frac{|\frac{1}{n}\sum_{i=1}^n Z_{\delta,i}(s_j)Y_i - E(Z_{\delta,i}(s_j)Y_i)|}{(\frac{1}{n}\sum_{i=1}^n Z_{\delta,i}(s_j)^2 )^{\frac{1}{2}}}.$$
Define $V_{\delta}(t) :=(1/n\sum_{i=1}^n Z_{\delta,i}(t)Y_i - \E(Z_{\delta,i}(t)Y_i))/(1/n \sum_{i=1}^n Z_{\delta,i}(t)^2)^{1/2}$. It is then easy to see that under our assumptions
$\sqrt{n}(1/n\sum_{i=1}^n Z_{\delta,i}(t)Y_i - \E(Z_{\delta,i}(t)Y_i))$ satisfies the Lyapunov conditions. We hence can conclude that $\sqrt{n}V_{\delta}(t)$ converges for all $t$ in distribution to $N(0, \V(Z_{\delta,i}(t)Y_i)/\E(Z_{\delta,i}(t)^2))$, while at the same time the Cauchy-Schwarz inequality implies $\V(Z_{\delta,i}(t)Y_i)/ \E(Z_{\delta,i}(t)^2) \leq \sqrt{3 \E(Y_i^4)}$.\\ 
If the convergence to the normal distribution is sufficiently fast, the union bound in the proof of Lemma~\ref{lem0} together with an elementary bound on the tails of the normal distribution leads to
$$P\left(\sup_{j\in\{2,3,\dots, N_{\omega_1}\}} \frac{|\frac{1}{n}\sum_{i=1}^n Z_{\delta,i}(s_j)Y_i - E(Z_{\delta,i}(s_j)Y_i)|}{(\frac{1}{n}\sum_{i=1}^n Z_{\delta,i}(s_j)^2 )^{\frac{1}{2}}} \leq A^*\sqrt{ \frac{\sqrt{\E(Y_i^4)}}{n} \log (\frac{b-a}{\delta})} \right) \to 1,$$
for some $A^*\geq \sqrt{2\sqrt{3}}$. The threshold $A\sqrt{\sqrt{\E(Y_i^4)}\log (\frac{b-a}{\delta})/n}$ for some $A\geq \sqrt{2\sqrt{3}}$ is then an immediate consequence.
\end{enumerate}

\bigskip

\begin{lemma}\label{lem3}
Under the assumptions of Theorem \ref{poicon} let $I_r:=\{t\in[a,b]|\ |t-\tau_r|\leq
\min_{s\neq r} |t-\tau_s|\}$, $r=1,\dots,S$. If $S>0$, there then exist a constant $0<Q_1<\infty$ and
for each $0<\kappa<2$ 
a constant $0<Q_2<\infty$ such that for all sufficiently small $\delta>0$ and all $r=1,\dots,S$ we have
\begin{align}
|\mathbb{E}(Z_{\delta,i}(t)Y_i)|\leq  Q_1\frac{\delta^2}{\max\{\delta,|t-\tau_r|\}^{2-\kappa}} 
\quad \text{ for every } t\in I_r,
\label{lem3eq1}
\end{align}
as well as
\begin{align}
\sup_{t\in I_r,\ |t-\tau_r|\geq \frac{\delta}{2}}
|\mathbb{E}(Z_{\delta,i}(t)Y_i)|\leq  (1-Q_2) |\vartheta_r| c(\tau_r)\delta^\kappa,
\label{lem3eq2}
\end{align}
and for any $u\in [-0.5,0.5]$
\begin{align} \label{lem3eq3}
&\mathbb{E}(Z_{\delta,i}(\tau_r+u\delta)Y_i) - \mathbb{E}(Z_{\delta,i}(\tau_r)Y_i)\nonumber \\
& \qquad =
- c \vartheta_r c(\tau_r)\delta^\kappa\left(|u|^\kappa
-\frac{1}{2}(|u+1|^\kappa-1)-\frac{1}{2}(|u-1|^\kappa-1)\right)
+ R_{7;r}(u),
\end{align}
where $|R_{7;r}(u)|\leq \widetilde{M}_{r}||u|^{1/2}\delta|^{\min\{2\kappa,2\}}$ for some constants
$\widetilde{M}_r<\infty$, $r=1,\dots,S$ and $\vartheta_r = \E(\frac{\partial}{\partial x_r}g(X_i(\tau_1),\dots,X_i(\tau_S))$.
\end{lemma}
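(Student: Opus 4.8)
The plan is to reduce all three assertions to Proposition~\ref{thmcharX} by means of the linear decomposition supplied by Theorem~\ref{thm:NP1}. Since $f_{XY}(s)=\sum_{k=1}^S\vartheta_k\sigma(s,\tau_k)$ and $\mathbb{E}(Z_{\delta,i}(t)X_i(\tau_k))=\sigma(t,\tau_k)-\tfrac12(\sigma(t+\delta,\tau_k)+\sigma(t-\delta,\tau_k))$, one has the exact identity
\begin{equation*}
\mathbb{E}(Z_{\delta,i}(t)Y_i)=\sum_{k=1}^S\vartheta_k\,\mathbb{E}(Z_{\delta,i}(t)X_i(\tau_k)),
\end{equation*}
so every bound on $\mathbb{E}(Z_{\delta,i}(t)Y_i)$ is obtained by inserting the appropriate estimate of Proposition~\ref{thmcharX} termwise and summing. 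Throughout I would fix $\rho:=\min_{k\neq l}|\tau_k-\tau_l|>0$ and restrict to $\delta<\rho/4$, so that for $t\in I_r$ the ``own'' point $\tau_r$ is the only impact point that can come within distance $\delta$ of $t$; every other $\tau_k$, $k\neq r$, stays at distance $\ge\delta$ from $t$ and contributes only an off-diagonal term governed by \eqref{Zdelta5}.

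For \eqref{lem3eq1} I would split $I_r$ into the far range $|t-\tau_r|\ge\delta$ and the near range $|t-\tau_r|<\delta$. In the far range, $t\in I_r$ gives $|t-\tau_k|\ge|t-\tau_r|\ge\delta$ for all $k$, so \eqref{Zdelta5} yields $|\mathbb{E}(Z_{\delta,i}(t)X_i(\tau_k))|\le M_5\delta^2/|t-\tau_k|^{2-\kappa}\le M_5\delta^2/|t-\tau_r|^{2-\kappa}$ (using $2-\kappa>0$); summing over $k$ with weights $|\vartheta_k|$ gives \eqref{lem3eq1} with $\max\{\delta,|t-\tau_r|\}=|t-\tau_r|$. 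In the near range I write $t=\tau_r+u\delta$ with $|u|<1$, bound the $k=r$ term by \eqref{Zdelta4} (its leading factor is bounded on $[-1,1]$ and $R_4=O(\delta^{\min\{2\kappa,2\}})$), so that term is $O(\delta^\kappa)$, while each $k\neq r$ term is $O(\delta^2)=O(\delta^\kappa)$ by \eqref{Zdelta5}; this matches $Q_1\delta^2/\delta^{2-\kappa}=Q_1\delta^\kappa$ since $\max\{\delta,|t-\tau_r|\}=\delta$. The only structural input here is $\kappa<2$, which guarantees $\delta^2=o(\delta^\kappa)$.

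For \eqref{lem3eq3} I would apply \eqref{Zdelta3} with $t=\tau_r$ to the $k=r$ term, which reproduces the stated main term $-\vartheta_r c(\tau_r)\delta^\kappa\bigl(|u|^\kappa-\tfrac12(|u+1|^\kappa-1)-\tfrac12(|u-1|^\kappa-1)\bigr)$ together with a remainder of size $M_{3}(|u|^{1/2}\delta)^{\min\{2\kappa,2\}}$. For $k\neq r$ the map $t\mapsto\mathbb{E}(Z_{\delta,i}(t)X_i(\tau_k))$ is a second difference of the off-diagonal (hence $C^2$) function $\sigma(\cdot,\tau_k)$, so it is itself $O(\delta^2)$ with an $O(\delta^2)$ derivative; the mean value theorem then bounds its increment between $\tau_r$ and $\tau_r+u\delta$ by $O(|u|\delta^3)$, which is dominated by $|u|^{\min\{\kappa,1\}}\delta^{\min\{2\kappa,2\}}$ for $|u|\le\tfrac12$ and small $\delta$, and is absorbed into $R_{7;r}(u)$.

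Finally, \eqref{lem3eq2} is where the strict dominance of the peak at $\tau_r$ enters, and this is the step I expect to be the crux. Writing $\psi(u):=\tfrac12|u+1|^\kappa+\tfrac12|u-1|^\kappa-|u|^\kappa$, the $k=r$ term equals $\vartheta_r c(\tau_r)\delta^\kappa\psi(u)+O(\delta^{\min\{2\kappa,2\}})$ by \eqref{Zdelta4}, with $\psi(0)=1$ giving the peak value $|\vartheta_r|c(\tau_r)\delta^\kappa$ attained at $t=\tau_r$. On the intermediate range $\tfrac12\le|u|\le C_0$ I must show $\sup|\psi(u)|=1-\widetilde Q_2<1$; this is the genuine analytic obstacle and relies on the strict convexity/concavity of $x\mapsto|x|^\kappa$ for $0<\kappa<2$ (equivalently, that the second difference $\psi$ has a strict maximum in modulus at $u=0$ and decays, $\psi(u)\to0$, as $|u|\to\infty$). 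Granting this, the $k=r$ term on the intermediate range is at most $(1-\widetilde Q_2)|\vartheta_r|c(\tau_r)\delta^\kappa$, and the $k\neq r$ terms plus the remainder are $o(\delta^\kappa)$ (again using $\min\{2\kappa,2\}>\kappa$), so they can be absorbed by slightly shrinking $\widetilde Q_2$ to $Q_2$ for $\delta$ small. For the remaining far range $|t-\tau_r|>C_0\delta$ I invoke \eqref{lem3eq1}, which gives a bound $\le Q_1 C_0^{-(2-\kappa)}\delta^\kappa$; choosing $C_0$ large enough (using $\inf_t c(t)>0$ from Assumption~\ref{assum1}) makes this smaller than $(1-Q_2)|\vartheta_r|c(\tau_r)\delta^\kappa$, completing \eqref{lem3eq2}.
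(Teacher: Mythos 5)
Your proposal is correct and follows essentially the same route as the paper: decompose $\mathbb{E}(Z_{\delta,i}(t)Y_i)=\sum_{k}\vartheta_k\,\mathbb{E}(Z_{\delta,i}(t)X_i(\tau_k))$ via Theorem~\ref{thm:NP1} and then apply Proposition~\ref{thmcharX} termwise, using \eqref{Zdelta5} for the off-diagonal/far contributions and \eqref{Zdelta3}--\eqref{Zdelta4} near $\tau_r$. You are in fact more explicit than the paper (which declares \eqref{lem3eq1}--\eqref{lem3eq2} ``immediate'') about the one genuinely analytic ingredient, namely $\sup_{|u|\ge 1/2}|\psi(u)|<1$ for $\psi(u)=\tfrac12|u+1|^\kappa+\tfrac12|u-1|^\kappa-|u|^\kappa$; the only small overstatement is the $O(|u|\delta^3)$ increment bound for the $k\neq r$ terms in \eqref{lem3eq3}, which would need $C^3$ smoothness, but the $O(|u|\delta^2)$ bound available under Assumption~\ref{assum1} is already absorbed by $R_{7;r}(u)$.
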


\bigskip

\begin{proof}[{\bf Proof of Lemma~\ref{lem3}.}]
Theorem~\ref{thm:NP1} guarantees us the existence of constants $\vartheta_1,\dots, \vartheta_S$ with $\vartheta_r = \E(\frac{\partial}{\partial x_r}g(X_i(\tau_1),\dots,X_i(\tau_S)))$, $r=1,\dots, S$,  such that that for all $t\in [a+\delta,b-\delta]$ we have
\begin{align}
\mathbb{E}(Z_{\delta,i}(t)Y_i) =\sum_{r=1}^S \vartheta_r \mathbb{E}(Z_{\delta,i}(t)X_i(\tau_r)).
\label{lem3eqp1}
\end{align}
 Since $\tau_1,\dots,\tau_S\in(a,b)$ are fixed, we have
$\tau_r\in [a+\delta,b-\delta]$, $r=1,\dots,S$, as well as
  $\delta\ll \frac{1}{2}\min_{r\neq s} |\tau_r-\tau_s|$ for all sufficiently small $\delta>0$.
 Using \eqref{lem3eqp1}, assertions \eqref{lem3eq1} and \eqref{lem3eq2} are thus immediate consequences of
\eqref{Zdelta4} and \eqref{Zdelta5}.

\noindent In order to prove \eqref{lem3eq3}) note that similar to \eqref{Zdelta3}
straightforward Taylor expansions can be used to show that
\begin{align}
\E(&Z_{\delta,i}(\tau_r + u\delta)X(t)  - Z_{\delta,i}(\tau_r)X(t)) =
\omega(\tau_r + u\delta,t,|\tau_r - t + u\delta|^\kappa) - \omega(\tau_r,t,|\tau_r - t|^\kappa) \nonumber\\
&\quad -0.5(\omega(\tau_r + (u+1)\delta,t,|\tau_r - t + (u+1)\delta|^\kappa) -\omega(\tau_r + \delta,t,|\tau_r - t +\delta|^\kappa))\nonumber\\
&\quad -0.5(\omega(\tau_r + (u-1)\delta,t,|\tau_r - t + (u-1)\delta|^\kappa) -\omega(\tau_r - \delta,t,|\tau_r - t -\delta|^\kappa))\nonumber\\
\begin{split}\label{eq:lem4.aux}
& =
 \omega_3(\tau_r ,t,|\tau_r - t|^\kappa) \Big((|\tau_r - t + u\delta|^\kappa-|\tau_r - t|^\kappa)\\
&\quad -0.5(|\tau_r - t + (u+1)\delta|^\kappa- |\tau_r - t + \delta|^\kappa)
-0.5(|\tau_r - t + (u-1)\delta|^\kappa- |\tau_r - t - \delta|^\kappa)\Big)\\
&\quad + R_{8;u,t,\delta}
\end{split}
\end{align}
%
Where for $|u|\leq 0.5$ we have $|R_{8;u,t,\delta}| \leq L_{4,1}(|u|^{0.5}\delta)^{\min\{2,2\kappa\}}$ for some constant $L_{4,1}$ with $|L_{4,1}|<\infty$.
Since $\delta \ll \frac{1}{2}\min_{r\neq s}|\tau_r-\tau_s|$ for all sufficiently small $\delta$. We can conclude from \eqref{eq:lem4.aux} that $$|\E(Z_{\delta,i}(\tau_r + u\delta)X(\tau_s)  - Z_{\delta,i}(\tau_r)X(\tau_s))| \leq L_{4,2} ||u|^{1/2}\delta|^{\min\{2,2\kappa\}}$$
for some constant $L_{4,2}$ with  $|L_{4,2}|<\infty$ and all $r,s \in \{1,\dots, S\}$, $r\neq s$.
Assertion \eqref{lem3eq3} is then an immediate consequence of \eqref{lem3eqp1} and \eqref{Zdelta3}.
\end{proof}

\bigskip

\begin{proof}[{\bf Proof of Theorem \ref{poicon}.}]
Let $\lambda_n = A\sqrt{\frac{\sigma_{|y|}^2}{n}\log\big(\frac{b-a}{\delta} \big)}$  and let
 $I_{\delta}:=\{t_j|\ t_j\in[a+\delta,b-\delta], j\in\{1,\dots,p\}\}$.
 For any $t\in I_{\delta}$ we have
 \begin{align}
\frac{1}{n}\sum_{i=1}^n
 Z_{\delta,i}(t)Y_i=
\mathbb{E}(Z_{\delta,i}(t)Y_i) + \frac{1}{n}\sum_{i=1}^n
( Z_{\delta,i}(t)Y_i-\mathbb{E}(Z_{\delta,i}(t)Y_i)).
\label{poicon-eq1}
\end{align}

\noindent In the case in which there are no points of impact, i.e. $S=0$,
we then have $\E(Z_{\delta,i}(t)Y_i) = 0$ for all $t \in [a-\delta, b-\delta]$ and all $\delta>0$. Lemma \ref{lem0} implies that
$$P\left(\sup_{t\in [a+\delta,b-\delta]} \frac{|\frac{1}{n}\sum_{i=1}^n
 Z_{\delta,i}(t)Y_i|}{(\frac{1}{n}\sum_{i=1}^n Z_{\delta,i}(t)^2)^{1/2}}\leq \lambda_n\right)
\rightarrow 1,$$
 and hence $P(\widehat S=S)\rightarrow 1$, as $n\rightarrow\infty$.


\noindent Now consider the case $S\ge 1$.
Select some arbitrary $\alpha>2$. As $n\rightarrow\infty$ we have $\delta\equiv\delta_n
\rightarrow 0$. Therefore, $\tau_r\in [a+\delta,b-\delta]$, $r=1,\dots,S$, as well as
  $\sqrt{\delta}/\alpha<\frac{1}{2}\min_{r\neq s} |\tau_r-\tau_s|$, provided that $n$ is sufficiently large.
	
Let $I_{r,\delta,\alpha}:=\{t\in I_{\delta}\ | \ |t-\tau_r|\leq \sqrt{\delta}/\alpha\}$, $r=1,\dots,S$, as well as $I_{\delta,\alpha}=\bigcup_{r=1}^S I_{r,\delta,\alpha}$ and
$I_{\delta,\alpha}^C:=I_{\delta}\backslash I_{\delta,\alpha}$.

\noindent By our assumptions on the sequence $\delta\equiv\delta_n$ we can infer from \eqref{poicon-eq1}, \eqref{lem3eq1}, and \eqref{lem0eq3} that there exist constants $0<C_1<\infty$
and $0<C_2<\infty$  such
that the event
\begin{align}
\sup_{t\in I_{\delta,\alpha}^C} |\frac{1}{n}\sum_{i=1}^n
Z_{\delta,i}(t)Y_i|\leq C_1\sqrt{\frac{\delta^\kappa}{n}|\log \delta|}+
C_2\alpha^{2-\kappa} \delta^{1+\kappa/2}
\label{poicon-eq2}
\end{align}
holds with probability tending to 1 as $n\rightarrow\infty$.
%
 Since by assumption
$\frac{|\log \delta|}{n\delta^\kappa}\rightarrow 0$ and hence
 $\sqrt{\frac{\delta^\kappa}{n}|\log \delta|}=o(\delta^\kappa)$,  the decomposition given in \eqref{poicon-eq1} together with \eqref{lem3eq2} and \eqref{lem0eq3}
imply the existence of a constant $0<C_3<Q_2$ such that
\begin{align}
\sup_{t\in I_{r,\delta,\alpha}, |t-\tau_r|\geq \delta/2} |\frac{1}{n}\sum_{i=1}^n
Z_{\delta,i}(t)Y_i|\leq (1-C_3) |\vartheta_r| c(\tau_r)\delta^\kappa, \quad r=1,\dots,S
\label{poicon-eq3}
\end{align}
hold with probability tending to 1 as $n\rightarrow\infty$.

\noindent For $r=1,\dots,S$ let $j(r)$ be an index satisfying $|\tau_r-t_{j(r)}|=\min_{j=1,\dots,p} |\tau_r-t_{j}|$.
Obviously $|\tau_r-t_{j(r)}|\leq \frac{b-a}{2(p-1)}$  and
 by  \eqref{Zdelta4} our conditions on $p\equiv p_n$ imply 
\begin{align*}
|\mathbb{E}(Z_{\delta,i}(t_{j(r)})X_i(\tau_r))-c(\tau_r)\delta^\kappa| = o(\delta^\kappa), \, r=1,\dots, S.
\end{align*}

Using again \eqref{lem0eq3} together with $\sqrt{\frac{\delta^\kappa}{n}|\log \delta|}=o(\delta^\kappa)$ and \eqref{Zdelta5}, we can thus conclude that there exists a sequence
$\{\epsilon_{n}\}$ of positive numbers with $\lim_{n\rightarrow\infty} \epsilon_{n}\rightarrow 0$ such that
\begin{align}
|\frac{1}{n}\sum_{i=1}^nZ_{\delta,i}(t_{j(r)})Y_i|\geq (1-\epsilon_n) |\vartheta_r| c(\tau_r)\delta^\kappa, \quad r=1,\dots, S
\label{poicon-eq4}
\end{align}
holds with probability tending to 1 as $n\rightarrow\infty$.

 Now define
 \begin{align}
\widetilde{\tau}_r:=arg \max_{t\in I_{\delta}:\ |t-\tau_r|\leq \delta/2} |\frac{1}{n}\sum_{i=1}^nZ_{\delta,i}(t)Y_i|.
\label{poicon-eq5}
\end{align}
 Since $\delta^{1+\kappa/2}=o(\delta^\kappa)$ and since $\alpha>2$, one can infer from \eqref{poicon-eq2} -
\eqref{poicon-eq4} that the following assertions hold
 with probability tending to 1
 as $n\rightarrow\infty$:
  \begin{align}
\widetilde{\tau}_r=arg \max_{t\in I_{r,\delta,\alpha}} |\frac{1}{n}\sum_{i=1}^nZ_{\delta,i}(t)Y_i|=
arg \max_{t\in I_{r,\delta,\alpha}\cup I_{\delta,\alpha}^C} |\frac{1}{n}\sum_{i=1}^nZ_{\delta,i}(t)Y_i|,
\quad r=1,\dots,S,
\label{poicon-eq6}
\end{align}
as well as
\begin{align}
I_{r,\delta,\alpha}\subset [\widetilde{\tau}_r-\sqrt{\delta}/2,\widetilde{\tau}_r+\sqrt{\delta}/2]
\quad   \quad r=1,\dots,S.
\label{poicon-eq7}
\end{align}
But under \eqref{poicon-eq6} and \eqref{poicon-eq7} construction of the estimators $\widehat{\tau}_k$,
$k=1,\dots,S$, for
the first $S$ steps of our estimation procedure implies that $\{ \widehat{\tau}_1,\dots,\widehat{\tau}_S\} =\{\widetilde{\tau}_1,\dots,\widetilde{\tau}_S\}$. Therefore,
\begin{align}
&P\left(\{ \widehat{\tau}_1,\dots,\widehat{\tau}_S\} =\{\widetilde{\tau}_1,\dots,\widetilde{\tau}_S\}\right) \rightarrow 1
\label{poicon-eq8}\\
& P\left(I_{\delta}\backslash \bigcup_{r=1}^S [\widehat{\tau}_r-\sqrt{\delta}/2,\widehat{\tau}_r+\sqrt{\delta}/2]\subset
 I_{\delta,\alpha}^C\right) \rightarrow 1
 \label{poicon-eq9}
\end{align}
 as $n\rightarrow\infty $.

\noindent By definition of $\widetilde{\tau}_r$, $r=1,\dots,S$, in \eqref{poicon-eq5} it already follows from
 \eqref{poicon-eq8} that $\widehat{\tau}_1,\dots,\widehat{\tau}_S$ provide consistent estimators of the true points of impact. Some more precise approximations are, however, required to show Assertion \eqref{thm3eq1}.

\noindent Note that for all $r=1,\dots,S$ and all $t\in (a,b)$
\begin{align}
\begin{split}\label{poicon-eq10}
\frac{1}{n}\sum_{i=1}^nZ_{\delta,i}(t)Y_i=&\mathbb{E}(Z_{\delta,i}(t)Y_i)
- (\E(Z_{\delta,i}(\tau_r)Y_i)-c(\tau_r)\vartheta_r \delta^\kappa)\\
&+\frac{1}{n}\sum_{i=1}^n[(Z_{\delta,i}(t) -Z_{\delta,i}(\tau_r))Y_i-\mathbb{E}((Z_{\delta,i}(t) -Z_{\delta,i}(\tau_r))Y_i)]\\
&+\frac{1}{n}\sum_{i=1}^nZ_{\delta,i}(\tau_r)Y_i
-\mathbb{E}(Z_{\delta,i}(\tau_r)Y_i)
+ (\E(Z_{\delta,i}(\tau_r)Y_i)-c(\tau_r)\vartheta_r \delta^\kappa)
\end{split}
\end{align}

\noindent Note that for all sufficiently large $n$, our assumptions on $p_n$ guarantee that $|\tau_r-t_{j(r)}|\leq \frac{b-a}{2(p-1)} \leq M_L n^{-1/\kappa}$  for some constant $M_L<1$. Let $M_p:=arg \max \{m\in\mathbb{N}| \ \frac{\delta}{2^m}\geq 2n^{-1/\kappa}\}$.
\noindent Our assumptions on the sequence $\delta\equiv\delta_n$ yield
$\sup_{m=1,\dots,M_p} \frac{|2^{-m/2}\delta|^{\min\{2\kappa,2\}}}{2^{-\kappa m}\delta^\kappa}\rightarrow 0$.

\noindent
We can therefore infer from \eqref{lem3eq3} that there are
constants $0<C_{5,m}<C_{6,m}<\infty$ with $\inf(C_{6,m}-C_{5,m}) \geq M_C$ for some constant $M_C>0$ such that for all $m=1,2,\dots,M_p$ and all sufficiently large $n$
\begin{align}
\begin{split}\label{poicon-eq11}
\sup_{t\in I_{r,\delta,\alpha},\ |t-\tau_r|\geq \frac{\delta}{2^{m+1}}}
|\E(Z_{\delta,i}(t)Y_i) - (\E(Z_{\delta,i}(\tau_r)Y_i)-c(\tau_r)\vartheta_r \delta^\kappa)|&\leq  |\vartheta_r| c(\tau_r) \left(\delta^\kappa-
C_{6,m}  \frac{\delta^\kappa}{2^{\kappa m}}\right),\\
 |\mathbb{E}(Z_{\delta,i}(t_{j(r)})Y_i) - (\E(Z_{\delta,i}(\tau_r)Y_i)-c(\tau_r)\vartheta_r \delta^\kappa)|&> |\vartheta_r| c(\tau_r) \left(\delta^\kappa-
C_{5,m}  \frac{\delta^\kappa}{2^{\kappa m}}\right)
\end{split}
\end{align}
hold for every $r=1,\dots,S$.
On the other hand, the exponential inequality \eqref{lem1eq1} obviously implies the
 existence of a constant $0<C_7<\infty$ such that for any $0<q\leq \sqrt{n}$
\begin{align}
P\left(\sup_{|t-\tau_r|\leq \delta/2^{m}} |\frac{1}{n}\sum_{i=1}^n[(Z_{\delta,i}(t) -Z_{\delta,i}(\tau_r))Y_i-\mathbb{E}((Z_{\delta,i}(t) -Z_{\delta,i}(\tau_r))Y_i)]|
\leq C_7 q \sqrt{\frac{\delta^\kappa}{2^{\kappa m} n}}\right) \geq  1- \frac{1}{q},
\label{poicon-eq12}
\end{align}
holds for all $m=1,2,\dots$ and each $r=1,\dots,S$.

For all $m=1,2,\dots$ and $r=1,\dots,S$ let ${\cal A}(n,m,r)$ denote the event that
$$\sup_{|t-\tau_r|\leq \delta/2^{m}}|\frac{1}{n}\sum_{i=1}^n[(Z_{\delta,i}(\tau_r) -Z_{\delta,i}(t))Y_i-\mathbb{E}((Z_{\delta,i}(\tau_r) -Z_{\delta,i}(t))Y_i)]|< (C_{6,m}-C_{5,m})|\vartheta_r| c(\tau_r)\frac{\delta^\kappa}{2^{\kappa m}}. $$
Inequality \eqref{poicon-eq12} implies that with $C_8:=\frac{C_7}{M_C (\min_{r=1,\dots,S}|\vartheta_r|c(\tau_r))}$
 the complementary events
${\cal A}(n,m,r)^C$ can be bounded by
\begin{align}
P\left({\cal A}(n,m,r)^C\right)
\leq C_8
\sqrt{\frac{2^{\kappa m}}{\delta^\kappa n}}
\label{poicon-eq13}
\end{align}
 for all
$m=1,2,\dots$ and $r=1,\dots,S$.
If $m\leq M_p$, then
 \eqref{poicon-eq10} and  \eqref{poicon-eq11}   imply that
 under  ${\cal A}(n,m,r)$ we have
\begin{align}
\widetilde{\tau}_{r,m}:=arg \sup_{t\in I_{r,\delta,\alpha},|t-\tau_r|\leq \delta/2^{m}} |\frac{1}{n}\sum_{i=1}^nZ_{\delta,i}(t)Y_i| \ \in \left[\tau_r-\frac{\delta}{2^{m+1}},\tau_r
+\frac{\delta}{2^{m+1}}\right]
\label{poicon-eq14}
\end{align}
for each $r=1,\dots,S$ and all sufficiently large $n$.

\noindent Choose an arbitrary $\epsilon>0$ and set
\begin{equation*}
m^*(\epsilon):= \max \left\{m=1,2,\dots | \ \epsilon\geq C_8
\sqrt{\frac{2^{\kappa m}}{\delta^\kappa n}}\right\}
\end{equation*}
whenever there exists an integer $m>0$ such that $\epsilon\geq C_8
\sqrt{\frac{2^{\kappa m}}{\delta^\kappa n}}$ and set $m^*(\epsilon) := 1$ otherwise. Furthermore define
\begin{equation*}
m(\epsilon):=\min\{m^*(\epsilon), M_p\}.
\end{equation*}

By our assumptions on $\delta\equiv\delta_n$
there then obviously exists a constant $A(\epsilon)<\infty$ such that for all sufficiently large $n$,
\begin{align}
\frac{\delta}{2^{m(\epsilon)}}\leq A(\epsilon) n^{-1/\kappa}.
\label{poicon-eq15}
\end{align}
\noindent Now consider the event ${\cal A}(n,\epsilon):=\bigcap_{r=1}^{S}\bigcap_{m=1}^{m(\epsilon)}{\cal A}(n,m,r)$. By \eqref{poicon-eq13} the Bonferroni inequality  implies that
\begin{align}
P\left({\cal A}(n,\epsilon)\right)
\geq 1- 2^{\kappa/2}S\sum_{m=1}^{m(\epsilon)}  C_8
\sqrt{\frac{2^{\kappa m}}{\delta^\kappa n}}
\geq 1- 2^{\kappa/2}S\sum_{m=0}^{m(\epsilon)-1}
(\frac{1}{2^{\kappa/2}})^{m(\epsilon)-m} \epsilon \geq 1- \frac{2^{\kappa/2}S\epsilon}{1-(\frac{1}{2})^{\kappa/2}}.
\label{poicon-eq16}
\end{align}

But under event ${\cal A}(n,\epsilon)$ we can infer from \eqref{poicon-eq14} that
\begin{align}
\widetilde{\tau}_{r,1}=\widetilde{\tau}_{r,2}=\dots = \widetilde{\tau}_{r,m(\epsilon)}.
\label{poicon-eq17}
\end{align}
Additionally let ${\cal A}^*(n)$ denote the event that $\{ \widehat{\tau}_1,\dots,\widehat{\tau}_S\} =\{\widetilde{\tau}_1,\dots,\widetilde{\tau}_S\}$. The definitions in (\ref{poicon-eq5}) and (\ref{poicon-eq14}) yield
$\widetilde{\tau}_{r,1}=\widetilde{\tau}_r$, $r=1,\dots,S$, and
we can thus conclude from
\eqref{poicon-eq15} and \eqref{poicon-eq17} that under events ${\cal A}^*(n)$ and
${\cal A}(n,\epsilon)$ we have
\begin{equation}
\max_{r=1,\ldots,S}\min_{s=1,\ldots,S}|\widehat{\tau}_r-\tau_s|
=\max_{r=1,\ldots,S}\min_{s=1,\ldots,S}|\widetilde{\tau}_{r,m(\epsilon)}-\tau_s|
\leq \frac{\delta}{2^{m(\epsilon)+1}}\leq
\frac{A(\epsilon)}{2}  n^{-1/\kappa}
\label{poicon-eq18}
\end{equation}
for all $n$ sufficiently large.

Recall that by \eqref{poicon-eq8} we have $P({\cal A}^*(n))\rightarrow 1$ as
$n\rightarrow\infty$. Since $\epsilon$ is arbitrary,  \eqref{thm3eq1} thus follows from \eqref{poicon-eq16} and \eqref{poicon-eq18}.

It remains to prove Assertion \eqref{thm3eq4}.
For some $A^*<A$ with $D\leq A^*$ define $\lambda_n^*<\lambda_n$ by $\lambda_n^*:=A^*\sqrt{\frac{\sigma_{|y|}^2}{n}\log (\frac{b-a}{\delta})}$. By \eqref{lem0eq2}
it is immediately seen that in addition to \eqref{poicon-eq2} also
\begin{align*}
\sup_{t\in I_{\delta,\alpha}^C} \frac{|\frac{1}{n}\sum_{i=1}^n
Z_{\delta,i}(t)Y_i|}{\sqrt{\frac{1}{n}\sum_{i=1}^n
Z_{\delta,i}(t)^2}}\leq \lambda_n^*
+C_2 \alpha^{2-\kappa} \frac{\delta^{1+\kappa/2}}{\inf_{t\in I_{\delta,\alpha}^C} \sqrt{\frac{1}{n}\sum_{i=1}^n
Z_{\delta,i}(t)^2}}
\end{align*}
holds with probability tending to 1 as $n\rightarrow\infty$.

But \eqref{lem0eq1} and our assumptions on the sequence $\delta\equiv \delta_n$ lead to
$\frac{\delta^{1+\kappa/2}}{\inf_{t\in I_{\delta,\alpha}^C} \sqrt{\frac{1}{n}\sum_{i=1}^n
Z_{\delta,i}(t)^2}}=o_P(\lambda_n^*)$.

 Using \eqref{poicon-eq9}, the construction of the estimator $\widehat{\tau}_{S+1}$ therefore implies that as $n\rightarrow\infty$,
\begin{align*}
P\left(\frac{|\frac{1}{n}\sum_{i=1}^n
Z_{\delta,i}(\widehat{\tau}_{S+1})Y_i|}{\sqrt{\frac{1}{n}\sum_{i=1}^n
Z_{\delta,i}(\widehat{\tau}_{S+1})^2}}
<\lambda_n\right)=P\left(
\sup_{t\in I_{\delta,\alpha}^C} \frac{|\frac{1}{n}\sum_{i=1}^n
Z_{\delta,i}(t)Y_i|}{\sqrt{\frac{1}{n}\sum_{i=1}^n
Z_{\delta,i}(t)^2}}\leq \lambda_n \right)\rightarrow 1,
\end{align*}
while \eqref{poicon-eq4} together with \eqref{poicon-eq8}, \eqref{lem0eq0} and \eqref{lem0eq1}  yield
\begin{align*}
P\left(\min_{r=1,\dots,S} \frac{|\frac{1}{n}\sum_{i=1}^n
Z_{\delta,i}(\widehat{\tau}_{r})Y_i|}{\sqrt{\frac{1}{n}\sum_{i=1}^n
Z_{\delta,i}(\widehat{\tau}_{r})^2}}
>\lambda_n\right)\rightarrow 1.
\end{align*}
By definition of our estimator $\widehat{S}$,  \eqref{thm3eq4} is an immediate consequence.
\end{proof}

\section[]{Subsequent estimation of $g$}

In this appendix, $||X||_{\Psi} = \inf\{C>0|\E(\Psi(|X|/C))\leq 1\}$ refers to the Orlicz norm of a random variable $X$ with respect to $\Psi(x) = \exp(n/6(\sqrt{1+2\sqrt{6}x/\sqrt{n}}-1)^2)-1$. Similar we use for $p\geq 1$ the Orlicz norm $||X||_{p} = \{ \inf C>0: (\E(|X|^p))^{1/p}<C \}$ which corresponds to the usual $L_p$-norm.

\bigskip

For the proofs of this section we need the following Lemma:

\begin{lemma}\label{lem:1new}
Under Assumption \ref{assum1},  there exist constants
$M<\infty$, $M_{*}<\infty$ and $M_{**}<\infty$ such that for all sufficiently small $s>0$, all $q\in[-1,1]$, all $t \in (a,b)$ and all $t^*\in[a,b]$ we have
\begin{align}\label{eq:diffexpect}
|\E((X_i(t+qs)-X_i(t))X_i(t^*))| & \leq M |qs|^{\min\{1,\kappa\}}.
\intertext{and for all $q_1, q_2 \in [-1,1]$}
\begin{split}\label{eq:vari}
\E\Big((X_i(\tau_r+sq_1)-X_i(\tau_r+sq_2))^2\Big) & \leq M_{*}|q_1-q_2|^\kappa s^\kappa \leq M_{**} s^\kappa.
\end{split}
\end{align}
\end{lemma}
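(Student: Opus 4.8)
The statement to prove is Lemma~\ref{lem:1new}, which gives two bounds: \eqref{eq:diffexpect} controls the cross-covariance of an increment $X_i(t+qs)-X_i(t)$ with an arbitrary value $X_i(t^*)$, and \eqref{eq:vari} controls the variance of a difference of two increments. Both are purely second-moment statements about the Gaussian process, so the entire proof reduces to analyzing the covariance function $\sigma(s,t)=\omega(s,t,|s-t|^\kappa)$ under Assumption~\ref{assum1}, using Taylor expansions of the smooth part $\omega$. Let me sketch what I would do.

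=== PROOF PLAN ===

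The plan is to reduce everything to Taylor expansions of $\omega$, exactly as in the proof of Proposition~\ref{thmcharX}. Assumption~\ref{assum1} guarantees that all first- and second-order partial derivatives of $\omega$ are uniformly bounded by some constant $M<\infty$ on the compact set $[a,b]^2\times[0,b-a]$. First I would write
\[
\E\big((X_i(t+qs)-X_i(t))X_i(t^*)\big)=\sigma(t+qs,t^*)-\sigma(t,t^*)=\omega(t+qs,t^*,|t+qs-t^*|^\kappa)-\omega(t,t^*,|t-t^*|^\kappa).
\]
The difference in the first two arguments is $qs=O(|qs|)$, which is harmless since $\omega$ is Lipschitz in those arguments. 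The delicate term is the third argument: I must bound $\big||t+qs-t^*|^\kappa-|t-t^*|^\kappa\big|$. The key elementary inequality is that for $0<\kappa\le 1$ the map $x\mapsto|x|^\kappa$ is $\kappa$-H\"older, so this difference is $O(|qs|^\kappa)$, whereas for $1<\kappa<2$ the map is Lipschitz away from zero but only controls the increment by $|qs|$ on the relevant scale; combining these gives the exponent $\min\{1,\kappa\}$ appearing in \eqref{eq:diffexpect}. Since $\omega$ has bounded first derivatives, the whole difference is then $O(|qs|+|qs|^{\min\{1,\kappa\}})=O(|qs|^{\min\{1,\kappa\}})$ for small $s$, giving the bound with a suitable constant $M$.

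For \eqref{eq:vari} I would expand the variance of the increment as
\[
\E\big((X_i(\tau_r+sq_1)-X_i(\tau_r+sq_2))^2\big)=\sigma(\tau_r+sq_1,\tau_r+sq_1)-2\sigma(\tau_r+sq_1,\tau_r+sq_2)+\sigma(\tau_r+sq_2,\tau_r+sq_2),
\]
and rewrite each term via $\omega$. Here the two diagonal terms contribute $\omega(\cdot,\cdot,0)$ and the cross term contributes $\omega(\cdot,\cdot,|s(q_1-q_2)|^\kappa)$; the smooth dependence on the first two arguments cancels to leading order by a Taylor expansion, and the dominant contribution comes from the third argument evaluated at $|s(q_1-q_2)|^\kappa$ versus $0$. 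Using $-\partial_z\omega(\tau_r,\tau_r,z)|_{z=0}=c(\tau_r)>0$ from Assumption~\ref{assum1}, this produces a leading term of order $c(\tau_r)|q_1-q_2|^\kappa s^\kappa$ plus a remainder of order $\delta^{\min\{2\kappa,2\}}$-type corrections, which are lower order. Bounding $c(\tau_r)$ uniformly over $r$ and using $|q_1-q_2|^\kappa\le 2^\kappa$ yields the two inequalities with constants $M_*$ and $M_{**}$.

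The main obstacle — though a mild one — is handling the non-smoothness of $|x|^\kappa$ at $x=0$ cleanly across the full range $0<\kappa<2$, so that a single constant works uniformly. The subtlety is that the H\"older/Lipschitz behavior of $z\mapsto z^\kappa$ changes character at $\kappa=1$, and for \eqref{eq:diffexpect} the point $t^*$ may coincide with $t$ or $t+qs$, forcing the third argument of $\omega$ through zero where the $|\cdot|^\kappa$ term is least regular. I would therefore split into the cases $\kappa\le 1$ and $\kappa>1$ (as is done implicitly in Proposition~\ref{thmcharX}) and verify the elementary inequality $\big||a+h|^\kappa-|a|^\kappa\big|\le C|h|^{\min\{1,\kappa\}}$ uniformly for $a$ ranging over $[-(b-a),b-a]$ and small $h$. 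Once that elementary bound is in hand, both assertions follow directly from the boundedness of the derivatives of $\omega$ and the finiteness of $\E(X_i(t^*)^2)$ on $[a,b]$, with the constants $M$, $M_*$, $M_{**}$ absorbing $M$, $c(\tau_r)$, and the elementary H\"older constant.
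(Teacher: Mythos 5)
Your plan is correct and follows essentially the same route as the paper's proof: both assertions are reduced to Taylor expansions of the smooth function $\omega$ (whose first- and second-order partials are uniformly bounded on the compact set), combined with the elementary bound $\bigl||a+h|^\kappa-|a|^\kappa\bigr|\le C|h|^{\min\{1,\kappa\}}$ for the third argument, and for \eqref{eq:vari} the paper likewise isolates the $O(|(q_1-q_2)s|^\kappa)$ contribution from the third argument and notes that the remaining smooth part contributes only $O((q_1-q_2)^2s^2)$, which is dominated since $\kappa<2$. The only cosmetic difference is that the paper does not need the explicit constant $c(\tau_r)$ for the upper bound; boundedness of $\partial_z\omega$ suffices, exactly as you also observe.
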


\bigskip

\begin{proof}[Proof of Lemma \ref{lem:1new}]
Let $q \in [-1,1]$ and choose an arbitrary $t^* \in [a,b]$ as well as $t\in (a,b)$ for all sufficiently small $s$ it then follows from Taylor expansions that

\begin{align*}
\E((X_i(t+qs)-X_i(t))X_i(t^*)) & =  \omega(t+qs,t^*, |t+qs-t^*|^\kappa)- \omega(t,t^*,|t-t^*|^\kappa)\\
& = O(qs + (|t+qs-t^*|^\kappa - |t-t^*|^\kappa)).
\end{align*}
Assertion \eqref{eq:diffexpect} then follows immediately from the fact that $||t+qs-t^*|^\kappa - |t-t^*|^\kappa| = O(|qs|^{\min\{1,\kappa\}})$ as well as $|qs|=O(|qs|^{\min\{1,\kappa\}})$.

Moreover, further Taylor expansions can be used to show that for all $q_1,q_2\in[-1,1]$, all $t\in(a,b)$ and all sufficiently small $s>0$ we have 
{\small
\begin{align*}
\E\Big((X_i(t+q_1s) - X_i(t+q_2s))^2\Big) &= \omega(t+q_1s,t+q_1s,0) +\omega(t+q_2s,t+q_2s,0) \\
     &\quad  - \omega(t+q_1s,t+q_2s,|(q_1-q_2)s|^\kappa)  - \omega(t+q_2s,t+q_1s,|(q_1-q_2)s|^\kappa)\\
  & = \omega(t+q_1s,t+q_1s,0) + \omega(t+q_2s,t+q_2s,0) \\
    &\quad - \omega(t+q_1s,t+q_2s,0)  - \omega(t+q_2s,t+q_1s,0) + O(|(q_1-q_2)s|^\kappa)\\
  & = O\Big((q_1-q_2)^2s^2\Big) + O\Big(|q_1-q_2|^\kappa s^\kappa\Big)  = O\Big(|q_1-q_2|^\kappa s^\kappa\Big),
\end{align*}
}
proving Assertion \eqref{eq:vari}.
\end{proof}

\bigskip

\subsection{The nonparametric case}
In order to proof Theorem \ref{thm:lcr} we need auxiliary results.

\bigskip

\begin{prop}\label{prop:1new}
Let $X_i=(X_i(t):t\in[a,b])$, i=1,\dots, n, be i.i.d. Gaussian processess with covariance functions $\sigma(s,t)$ satisfying Assumption~\ref{assum1}. For any differentiable and bounded function $F(x_1,\dots, x_S)$ with
$|\E\Big(\frac{\partial}{\partial x_r} F(X_i(\tau_1),\dots, X_i(\tau_S))\Big)|<\infty$ for $r=1,\dots, S$, we then have
{\small
\begin{align}
\frac{1}{n}\sum_{i=1}^n (X_i(\widehat{\tau}_r)- X_i(\tau_r))F(X_i(\tau_1),\dots, X_i(\tau_S))& = O_p\Bigg(n^{-\min\{1,\frac{1}{\kappa}\}}\sum_{r=1}^S|\E\Big(\frac{\partial}{\partial x_r} F(X_i(\tau_1),\dots, X_i(\tau_S))\Big)|\Bigg) \label{eq:prop1new1}\\ 
\frac{1}{n}\sum_{i=1}^n (X_i(\widehat{\tau}_r)-X_i(\tau_r))^2 &= O_p(n^{-1})\label{eq:lcdev1}\\
\frac{1}{n}\sum_{i=1}^n (X_i(\widehat{\tau}_r)-X_i(\tau_r))\varepsilon_i F(X_i(\tau_1),\dots, X_i(\tau_S))&= O_p(n^{-1}),\label{eq:lcdev2}\\ 
\frac{1}{n}\sum_{i=1}^n (X_i(\widehat{\tau}_r)-X_i(\tau_r))^4 &= O_p(n^{-2})\label{eq:lcdev3}
\end{align}
}
\end{prop}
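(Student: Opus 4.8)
The plan is to handle all four assertions by a common two-step scheme, exploiting that the only source of randomness in the argument $\widehat{\tau}_r$ is controlled by the super-consistency rate. First I would fix $\eta>0$ and use Theorem~\ref{poicon} to choose a constant $c<\infty$ with $P(\max_{r}|\widehat{\tau}_r-\tau_r|\le c\,n^{-1/\kappa})\ge 1-\eta$ for all large $n$. On this event every average evaluated at $\widehat{\tau}_r$ is dominated by its supremum over the deterministic shrinking neighborhood, so it suffices to bound
\[
\sup_{|s-\tau_r|\le c\,n^{-1/\kappa}}\Big|\tfrac1n\sum_{i=1}^{n}\Phi_i(s)\Big|
\]
for the four choices $\Phi_i(s)=(X_i(s)-X_i(\tau_r))F_i$, $(X_i(s)-X_i(\tau_r))^2$, $(X_i(s)-X_i(\tau_r))\varepsilon_iF_i$, and $(X_i(s)-X_i(\tau_r))^4$, where $F_i:=F(X_i(\tau_1),\dots,X_i(\tau_S))$. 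I then write $\tfrac1n\sum_i\Phi_i(s)=\E(\Phi_i(s))+\tfrac1n\sum_i(\Phi_i(s)-\E(\Phi_i(s)))$, splitting each problem into a deterministic bias term and a centered fluctuation term.

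For the bias terms I would use Gaussianity together with Lemma~\ref{lem:1new}. For \eqref{eq:prop1new1}, the Gaussian integration-by-parts identity underlying Theorem~\ref{thm:NP1} gives $\E(X_i(s)F_i)=\sum_{k=1}^S\sigma(s,\tau_k)\E(\partial_{x_k}F)$, whence $\E((X_i(s)-X_i(\tau_r))F_i)=\sum_{k}(\sigma(s,\tau_k)-\sigma(\tau_r,\tau_k))\E(\partial_{x_k}F)$; by \eqref{eq:diffexpect} each covariance difference is $O(|s-\tau_r|^{\min\{1,\kappa\}})$, which at $|s-\tau_r|\le c\,n^{-1/\kappa}$ gives exactly $O(n^{-\min\{1,1/\kappa\}}\sum_k|\E(\partial_{x_k}F)|)$. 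For \eqref{eq:lcdev1} and \eqref{eq:lcdev3} I would apply \eqref{eq:vari} and the Gaussian moment relations $\E(W^2)=\V(W)$ and $\E(W^4)=3\,\V(W)^2$ to $W=X_i(s)-X_i(\tau_r)$, producing biases $O(|s-\tau_r|^{\kappa})=O(n^{-1})$ and $O(|s-\tau_r|^{2\kappa})=O(n^{-2})$. For \eqref{eq:lcdev2} the conditional-mean-zero property $\E(\varepsilon_i\mid X_i)=0$ makes $(X_i(s)-X_i(\tau_r))F_i$ a factor that can be pulled out, so the bias vanishes identically and only the fluctuation survives.

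For the fluctuation terms I would reproduce the chaining argument used in the proof of Lemma~\ref{lem1}. The increment bound \eqref{eq:vari} supplies the Hölder-type modulus (exponent $\kappa$ in $s$) needed to control the Orlicz norm of the increments of $s\mapsto\tfrac1n\sum_i(\Phi_i(s)-\E\Phi_i(s))$, the requisite Bernstein condition following from Gaussianity of $X_i$, boundedness of $F$, and the moment control on $Y_i$, hence on $\varepsilon_i$, from Assumption~\ref{assum2}. Since the supremum runs over a neighborhood of radius $\asymp n^{-1/\kappa}$, the single-term variance scale is $\asymp(n^{-1/\kappa})^{\kappa}=n^{-1}$ for the linear choices, so the averaged fluctuations for $\Phi_i=(X_i(s)-X_i(\tau_r))F_i$ and $(X_i(s)-X_i(\tau_r))\varepsilon_iF_i$ are $O_P(n^{-1})$; the quadratic and quartic $\Phi_i$ give fluctuation scales $O_P(n^{-3/2})$ and $O_P(n^{-5/2})$, dominated by their biases. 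Crucially, and unlike Lemma~\ref{lem0} where the supremum over the full interval forces a $\sqrt{\log((b-a)/\delta)}$ penalty, here the neighborhood has vanishing length, its covering integral stays finite, and a fixed deviation level suffices to keep the exceedance probability below $\eta$, so no logarithmic factor appears.

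Combining the bias and fluctuation bounds on the super-consistency event and letting $\eta\to0$ yields the four rates (for \eqref{eq:prop1new1} the bias carries the derivative factor and, for $\kappa>1$, strictly dominates the $O_P(n^{-1})$ fluctuation, while for $\kappa\le1$ both are of order $n^{-1}$ and are absorbed for a fixed non-degenerate $F$). The genuinely delicate point, which I would treat most carefully, is that $\widehat{\tau}_r$ is data-dependent while, as emphasized before the theorem, Assumption~\ref{assum1} rules out any Taylor expansion of the rough paths $X_i$; every estimate must therefore come from the covariance and moment bounds of Lemma~\ref{lem:1new} and from uniform empirical-process control over the shrinking neighborhood rather than from pointwise linearization. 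Verifying that the chaining delivers the stated rates \emph{without} a spurious logarithmic factor, and that the Bernstein condition holds for the mixed products involving $\varepsilon_i$, is the main obstacle.
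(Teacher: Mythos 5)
Your proposal follows essentially the same route as the paper: restrict to the super-consistency event, bound the supremum of each average over the shrinking neighborhood of radius $\asymp n^{-1/\kappa}$, split into a bias term (controlled via the Gaussian/Stein identity behind Theorem~\ref{thm:NP1} together with Lemma~\ref{lem:1new}) and a centered fluctuation (controlled by the Orlicz-norm chaining of Lemma~\ref{lem1}), and the resulting bias and fluctuation scales match the paper's. The one point of divergence is exactly the step you flag as the main obstacle: for the terms involving $\varepsilon_i$ and for the fourth powers, the paper sidesteps the exponential Bernstein condition by switching to the polynomial Orlicz norm $\|\cdot\|_p$ for some even $p>2/\kappa$, which still yields a finite covering integral in the maximal inequality while requiring only finite $p$-th conditional moments of $\varepsilon_i$.
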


\bigskip


\bigskip

\begin{proof}[Proof of Proposition \ref{prop:1new}]
To ease notation, let $F(\bX_i) = F(X_i(\tau_1),\dots, X_i(\tau_S))$. For an arbitrary choice of $r \in \{1,\dots, S\}$ choose any $0<s$ sufficiently small and define for $q_1,q_2 \in [-1,1]$
{\small \begin{align*}
\chi_i(q_1,q_2) & := (X_i(\tau_r + sq_1)-X_i(\tau_r))F(\bX_i) - (X_i(\tau_r + sq_2)-X_i(\tau_r))F(\bX_i)\\
                & \qquad - \E\big((X_i(\tau_r + sq_1)-X_i(\tau_r))F(\bX_i) - (X_i(\tau_r + sq_2)-X_i(\tau_r))F(\bX_i) \big)\\
                & = (X_i(\tau_r + sq_1)-X_i(\tau_r + sq_2))F(\bX_i) - \E\big((X_i(\tau_r + sq_1)-X_i(\tau_r + sq_2))F(\bX_i) \big)
\end{align*}
}
We then have $\E(\chi_i(q_1,q_2)) = 0$ and it follows from some straightforward calculations, since $|F(\bX_i)|\leq M_F<\infty$, that there exists a constant $0<M_{1}<\infty$ such that for $m=2,3,\dots$ we have
\begin{align}\label{eq:BERNIE}
\E(|\frac{1}{s^{\kappa/2}}\chi_i(q_1,q_2)|^m) \leq \frac{m!}{2}(M_{1}|q_1-q_2|^{\kappa/2})^m. 
\end{align}
Corollary 1 in \citeappendix{vandeGeer2013} now guarantees that there exists a constant $0<M_{2}<\infty$ such that the Orlicz norm of $\frac{1}{\sqrt{ns^\kappa}}\sum_{i=1}^n \chi_i(q_1,q_2)$ can be bounded, i.e., we have for some $0<M_{2}<\infty$:
\begin{align}\label{eq:BERNIE1}
||\frac{1}{\sqrt{ns^\kappa}}\sum_{i=1}^n \chi_i(q_1,q_2)^m||_{\Psi}\leq M_{2} |q_1-q_2|^{\kappa/2}.
\end{align}
The proof then follows from maximum inequalities of empirical processes. By \eqref{eq:BERNIE1} one can apply Theorem 2.2.4 of \citeappendix{vanderVaart1996}. The covering integral in this theorem can easily be seen to be finite and one can thus infer that there exists a constant $0<M_{3}<\infty$ such that
$$\mathbb{E}\Bigg( \exp\Big(\sup_{q_1,q_2\in [-1,1]} n/6 \big(\sqrt{1+2\sqrt{\frac{6}{n M_{3}^2}}|\frac{1}{\sqrt{n s^\kappa}}\sum_{i=1}^n\chi_i(q_1,q_2)|}-1 \big)^2 \Big) \Bigg)\leq 2.$$
For every $x>0$, the Markov inequality then yields
\begin{align*}
P&\Bigg(\sup_{q_1,q_2\in [-1,1]}| \frac{1}{\sqrt{n s^\kappa}}\sum_{i=1}^n\chi_i(q_1,q_2)| \geq x\frac{M_{3}}{2\sqrt{6}} \Bigg)\leq 2\exp\Big(-\frac{n}{6}(\sqrt{1+x/\sqrt{n}}-1 )^2\Big)\nonumber.
\end{align*}
For improving the readability, it then follows from a Taylor expansion of $\frac{n}{6}(\sqrt{1+x/\sqrt{n}}-1 )^2$ that there exists a constant $0<M_{4}<\infty$ such that for all $0<x\leq \sqrt{n}$ we have
\begin{align}\label{eq:supaux1}
P&\Bigg(\sup_{q_1,q_2\in [-1,1]}| \frac{1}{\sqrt{n s^\kappa}}\sum_{i=1}^n\chi_i(q_1,q_2)| < M_{4}x\Bigg) \geq 1-2 \exp(-x^2).
\end{align}
%
Now, note that it follows from Theorem~\ref{thm:NP1} that
$$\E(X(t^*)F(\bX_i)) = \sum_{r=1}^S \E(X_i(t^*)X_i(\tau_r)) \E(\frac{\partial}{\partial x_r}F(\bX_i)).$$
Together with \eqref{eq:diffexpect} we we then have for all $q_1 \in [-1,1]$ for some constant $M_5<\infty$:
\begin{align}
|\E((X_i(\tau_r + sq_1)-X_i(\tau_r))F(\bX_i))|& = |\sum_{l=1}^S\E(\frac{\partial}{\partial x_r}F(\bX_i))\E((X_i(\tau_r + sq_1)-X_i(\tau_r))X_i(\tau_l))| \nonumber \\ \
&\leq M_{5} s^{\min\{1,\kappa\}}(\sum_{l=1}^S|\E(\frac{\partial}{\partial x_l}F(\bX_i))|). \label{eq:scpfaux1}
\end{align}
Using \eqref{eq:scpfaux1} together with \eqref{eq:supaux1} we then can conclude that for all $0<x\leq \sqrt{n}$:
{\small \begin{align}
\label{eq:tatatat1}
\begin{split}
P\Bigg(\sup_{\tau_r-s\leq  u_r \leq \tau_r +s} |\frac{1}{n} \sum_{i=1}^n (X_i(u_r) - X_i(\tau_r))F(\bX_i)| < M_{5} s^{\min\{1,\kappa\}} (\sum_{l=1}^S|\E(\frac{\partial}{\partial x_l}F(\bX_i))|) & + M_{4}\frac{s^\frac{\kappa}{2}}{\sqrt{n}}x\Bigg)\\
& \geq 1-2\exp(-x^2).
\end{split}
\end{align}
}
Since $|\widehat{\tau}_r-\tau_r| = O_p(n^{-\frac{1}{\kappa}})$, Assertion \eqref{eq:prop1new1} then follows immediately.\\ 

The proof of Assertion~\eqref{eq:lcdev1} follows from similar steps. Define
\begin{align*}
\chi_i^{(1)}(q_1,q_2) & := (X_i(\tau_r + sq_1)-X_i(\tau_r))^2 - (X_i(\tau_r + sq_2)-X_i(\tau_r))^2\\
                & - \E\big((X_i(\tau_r + sq_1)-X_i(\tau_r))^2 - (X_i(\tau_r + sq_2)-X_i(\tau_r))^2 \big).\\
\end{align*}
We then have $\E\Big(\chi_i^{(1)}(q_1,q_2)\Big) = 0$ and with
\begin{align*}
(X_i(\tau_r)&-X_i(\tau+sq_1))^2-(X_i(\tau_r)-X_i(\tau+sq_2))^2 \\
&= (X_i(\tau+sq_2)-X_i(\tau+sq_1))(2X_i(\tau_r)-X_i(\tau_r+sq_1)-X_i(\tau+sq_2)),
\end{align*}
it follows from some straightforward calculations, that there exists a constant $0<M_{6}<\infty$ such that for $m=2,3,\dots$ we have
\begin{align}\label{eq:BERNIEAUX}
\E(|\frac{1}{s^{\kappa}}\chi_i^{(1)}(q_1,q_2)|^m) \leq \frac{m!}{2}(M_{6}|q_1-q_2|^{\kappa/2})^m 
\end{align}
Once more, Corollary 1 in \citeappendix{vandeGeer2013} now guarantees that there exists a constant $M_{7}<\infty$ such that
\begin{align}\label{eq:BERNIEAUX1}
||\frac{1}{\sqrt{n}s^\kappa}\sum_{i=1}^n \chi_i(q_1,q_2)^m||_{\Psi}\leq M_{7} |q_1-q_2|^{\kappa/2}.
\end{align}
By \eqref{eq:BERNIEAUX1} another application of the maximal inequalities of empirical processes, using similar steps as in the proof of Assertion~\eqref{eq:prop1new1}, and since by \eqref{eq:vari} we have $\E( (X_i(\tau_r+sq_1)-X_i(\tau_r))^2) \leq M_{8} s^\kappa$ for some constant $M_8<\infty$, we can conclude that for all $0<x\leq \sqrt{n}$ we have for some constant $0\leq M_{9}<\infty$:
\begin{align}
\begin{split}\label{eq:tatatat2}
P\Bigg(\sup_{\tau_r-s\leq  u_r \leq \tau_r +s} |\frac{1}{n} \sum_{i=1}^n (X_i(u_r) - X_i(\tau_r))^2| < M_{8} s^{\kappa} + M_{9}\frac{s^\kappa}{\sqrt{n}}x\Bigg) \geq 1-2\exp(-x^2).
\end{split}
\end{align}
Since $|\widehat{\tau}_r-\tau_r| = O_p(n^{-\frac{1}{\kappa}})$, Assertion~\eqref{eq:lcdev2} then follows immediately.\\

In order to show assertion $\eqref{eq:lcdev2}$ we make use the Orlicz-norm $||X||_p$.\\
Choose some  $p>\frac{2}{\kappa}=p_{\kappa}$, and let $p$ be even. Note that $\E((X_i(\tau_r+sq_1)-X_i(\tau_r+sq_2))\varepsilon_i F(\bX_i)) = 0$. For all sufficiently small $0<s$ and all $q_1, q_2 \in [-1,1]$ it is easy to show that there exists a constant $M_{10}<\infty$ such that
$$\E(|s^{-\kappa/2} \frac{1}{\sqrt{n}}\sum_{i=1}^{n}(X_i(\tau_r+sq_1)-X_i(\tau_r+sq_2))\varepsilon_i F(\bX_i)|^p) \leq M_{10}^p |q_1-q_2|^{\frac{p \kappa}{2}}.
$$
We may conclude that
\begin{align}
||s^{-\kappa/2} \frac{1}{\sqrt{n}}\sum_{i=1}^{n}(X_i(\tau_r+sq_1)-X_i(\tau_r+sq_2))\varepsilon_i F(\bX_i)||_p\leq M_{10}|q_1-q_2|^{\frac{\kappa}{2}}.\label{eq:on1aux}
\end{align}
By inequality \eqref{eq:on1aux} one may once again apply Theorem 2.2.4 in \citeappendix{vanderVaart1996}.
Our condition on $p$ ensures that the covering integral appearing in this theorem is finite.
The maximum inequalities of empirical processes then imply:
$$||\sup_{q_1,q_2\in [-1,1]}|s^{-\kappa/2} \frac{1}{\sqrt{n}}\sum_{i=1}^{n}(X_i(\tau_r+sq_1)-X_i(\tau_r+sq_2))\varepsilon_i F(\bX_i)|||_{\Psi_p} \leq M_{11}$$
for some constant $M_{11}<\infty$. At the same time, the Markov inequality implies
\begin{align*}
P\Bigg(&\sup_{\tau_r-s\leq u \leq \tau_r + s} |\frac{1}{n}\sum_{i=1}^n(X_i(u)- X_{i}(\tau_r))\varepsilon_i F(\bX_i)|>s^{\kappa/2}\frac{x}{\sqrt{n}}\Bigg) \\
&\leq P\Bigg( |\sup_{q_1,q_2\in [-1,1]} |s^{-\frac{\kappa}{2}}\frac{1}{\sqrt{n}}\sum_{i=1}^n (X_i(\tau_r +sq_1)- X_{i}(\tau_r+sq_2))\varepsilon_i F(\bX_i)||^p > x^p\Bigg)
\leq \frac{M_{11}^p}{x^p}.
\end{align*}
Assertion \eqref{eq:lcdev2} then follows from $|\widehat{\tau}_r-\tau_r| = O_p(n^{-\frac{1}{\kappa}})$.

It remains to proof \eqref{eq:lcdev3}. For real numbers $x$ and $y$ it obviously holds that $x^4-y^4=(x-y)(x+y)(x^2+y^2)$. With the help of this equation and inequality \eqref{eq:vari}
it is easy to see that there exists a constant $M_{12}<\infty$ such that for all $p\geq 1$, all sufficiently small $s$, all $q_1,q_2\in [-1,1]$ we now have
\begin{align}
\begin{split}\label{eq:welldoneaux}
\E\Bigg(|s^{-2\kappa}\frac{1}{\sqrt{n}}\sum_{i=1}^n & ((X_i(\tau_r+sq_1)-X_i(\tau_r))^4- (X_i(\tau_r+sq_2)-X_i(\tau_r))^4 \\
& - \E\left((X_i(\tau_r+sq_1)-X_i(\tau_r))^4- (X_i(\tau_r+sq_2)-X_i(\tau_r))^4\right))|^p\Bigg)\leq M_{12}^p |q_1-q_2|^{\frac{p\kappa}{2}}.
\end{split}
\end{align}
At the same time inequality \eqref{eq:vari} implies that there exists a constant $0<M_{13}<\infty$ such that $|\E((X_i(\tau_r+sq_1)-X_i(\tau_r))^4)| \leq M_{13}s^{2\kappa}$. Choose some $p>\frac{2}{\kappa}$, by \eqref{eq:welldoneaux} and with the help of another application of the maximum inequalities for empirical processes we can then conclude that there exists a constant $M_{14}<\infty$ such that
\begin{align*}
P\left(\sup_{\tau_r-s\leq u \leq \tau_r + s} |\frac{1}{n}\sum_{i=1}^n(X_i(u)- X_{i}(\tau_r))^4|>M_{13}s^{2\kappa} + \frac{s^{2\kappa}}{\sqrt{n}}x\right) \leq \frac{M_{14}^p}{x^p},
\end{align*}
Assertion~\eqref{eq:lcdev3} then follows once more from $|\widehat{\tau}_r-\tau_r| = O_p(n^{-\frac{1}{\kappa}})$. 
\end{proof}

\bigskip

Suppose $\widehat{S}=S$. The feasible kernel density estimator for the density $f_{\tau}(\bx)=f(x_1,\dots, x_S)$ of $(X_i(\tau_1),\dots, X_i(\tau_S))$ replaces the unknown $\tau_r$ with their estimates $\widehat{\tau}_r$:
\begin{align*}
\widehat{f}_{\widehat{\tau}}(\bx) = \widehat{f}_{\widehat{\tau}}(x_1,\dots, x_S) = \frac{1}{n h_1\cdots h_S}\sum_{i=1}^n K\Big(\frac{X_i(\widehat{\tau}_1)-x_1}{h_1},\dots, \frac{X_i(\widehat{\tau}_S)-x_S}{h_S}\Big).
\end{align*}

\bigskip

\begin{theorem}\label{thm:KDE}
Let $\widehat{S}=S$, $\max_{r=1,\dots, S}|\widehat{\tau}_r-\tau_r| = O_p(n^{-1/\kappa})$ and let $X_i$ be Gaussian process satisfying Assumption~\ref{assum1}. If the Kernel $K: \mathbb{R}^S \to \mathbb{R}$ is bounded and twice continuously differentiable with bounded derivatives, 
we then have
\begin{align}\label{eq:thmkde1}
\widehat{f}_{\widehat{\tau}}(\bx)  = \widehat{f}_{\tau}(\bx) + O_p\Big(\sum_{r=1}^S\frac{1}{n^{\min\{1,1/\kappa\}}(h_1\cdots h_S) h_r^2}\Big),
\end{align}
where $\widehat{f}_\tau(\bx)$ denotes the kernel density estimator for the density of $(X_i(\tau_1),\dots,X_i(\tau_S))$. 
\end{theorem}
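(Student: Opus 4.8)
The plan is to write $\widehat{f}_{\widehat{\tau}}(\bx)-\widehat{f}_{\tau}(\bx)$ as a single average of the increment of the kernel and to expand this increment by a second-order Taylor expansion. Writing $\Delta_{ir}:=X_i(\widehat{\tau}_r)-X_i(\tau_r)$, $u_{ir}:=(X_i(\tau_r)-x_r)/h_r$ and $\widehat{u}_{ir}:=(X_i(\widehat{\tau}_r)-x_r)/h_r$, so that $\widehat{u}_{ir}-u_{ir}=\Delta_{ir}/h_r$, the $C^2$-assumption on $K$ gives, for some $\xi_i$ on the segment joining $u_i=(u_{i1},\dots,u_{iS})$ and $\widehat{u}_i$,
\[
K(\widehat{u}_i)-K(u_i)=\sum_{r=1}^S\partial_r K(u_i)\,\frac{\Delta_{ir}}{h_r}+\frac{1}{2}\sum_{r,s=1}^S\partial_r\partial_s K(\xi_i)\,\frac{\Delta_{ir}\Delta_{is}}{h_r h_s}.
\]
Dividing by $n h_1\cdots h_S$ and summing over $i$ splits the difference $\widehat{f}_{\widehat{\tau}}(\bx)-\widehat{f}_{\tau}(\bx)$ into a first-order part $L_n$ and a quadratic remainder $R_n$, which I would bound separately.

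For the remainder $R_n$ I would use only that the second derivatives of $K$ are bounded, say $|\partial_r\partial_s K|\le C$. Then for each pair $(r,s)$ the Cauchy--Schwarz inequality bounds $\big|\tfrac1n\sum_i\partial_r\partial_s K(\xi_i)\Delta_{ir}\Delta_{is}\big|$ by $C\,(\tfrac1n\sum_i\Delta_{ir}^2)^{1/2}(\tfrac1n\sum_i\Delta_{is}^2)^{1/2}$, which is $O_p(n^{-1})$ by \eqref{eq:lcdev1}. Collecting the prefactors $((h_1\cdots h_S)h_r h_s)^{-1}$ and using $1/(h_r h_s)\le \tfrac12(1/h_r^2+1/h_s^2)$ yields $R_n=O_p\big(n^{-1}\sum_{r=1}^S 1/((h_1\cdots h_S)h_r^2)\big)$, which is dominated by the asserted bound \eqref{eq:thmkde1} because $n^{-1}\le n^{-\min\{1,1/\kappa\}}$.

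The delicate part is the first-order term $L_n=\sum_{r}((h_1\cdots h_S)h_r)^{-1}\,\tfrac1n\sum_i \Delta_{ir}\,\partial_r K(u_i)$. A naive Cauchy--Schwarz bound here would only give $\tfrac1n\sum_i\Delta_{ir}\partial_r K(u_i)=O_p(n^{-1/2})$ (again via \eqref{eq:lcdev1} and boundedness of $\partial_r K$), which after multiplication by $((h_1\cdots h_S)h_r)^{-1}$ is too crude: for $S=2$ and $h_r\sim n^{-1/(S+4)}$ it need not even tend to zero. Instead I would invoke the sharp near-orthogonality bound \eqref{eq:prop1new1} of Proposition~\ref{prop:1new} with the bounded function $F(X_i(\tau_1),\dots,X_i(\tau_S))=\partial_r K(u_i)$. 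Since $\partial F/\partial x_\ell=h_\ell^{-1}\partial_\ell\partial_r K(u_i)$ and the second derivatives of $K$ are bounded, $|\E(\partial F/\partial x_\ell)|\le C/h_\ell$, so $\sum_\ell|\E(\partial F/\partial x_\ell)|\le C\sum_\ell 1/h_\ell$. Hence \eqref{eq:prop1new1} gives $\tfrac1n\sum_i\Delta_{ir}\partial_r K(u_i)=O_p(n^{-\min\{1,1/\kappa\}}\sum_\ell 1/h_\ell)$, and summing the prefactors yields $L_n=O_p\big(n^{-\min\{1,1/\kappa\}}(h_1\cdots h_S)^{-1}(\sum_r 1/h_r)^2\big)$, which by the same AM--GM step is $O_p\big(n^{-\min\{1,1/\kappa\}}\sum_r 1/((h_1\cdots h_S)h_r^2)\big)$. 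Combining the two parts proves \eqref{eq:thmkde1}. The main obstacle is precisely this first-order term: its control rests essentially on Theorem~\ref{thm:NP1} (through \eqref{eq:prop1new1}), that is, on the fact that $\Delta_{ir}=X_i(\widehat{\tau}_r)-X_i(\tau_r)$ is nearly uncorrelated with smooth functions of $(X_i(\tau_1),\dots,X_i(\tau_S))$, rather than on its mere $L^2$ magnitude.
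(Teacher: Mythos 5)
Your proposal is correct and follows essentially the same route as the paper's proof: the same Taylor expansion of the kernel increment, the same use of \eqref{eq:lcdev1} for the quadratic remainder, and the same application of \eqref{eq:prop1new1} with $F=K_r'\big((\bX_i(\btau)-\bx)/\bh\big)$ (whose partial derivatives are of order $1/h_\ell$) to control the first-order term, followed by $(\sum_r 1/h_r)^2\lesssim\sum_r 1/h_r^2$. You also correctly identify the key point, namely that a naive Cauchy--Schwarz bound on the first-order term is insufficient and the near-orthogonality from Proposition~\ref{prop:1new} is what drives the rate.
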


\bigskip

\begin{proof}[Proof of Theorem \ref{thm:KDE}]
By the usual decomposition we have
\begin{align}
\label{eq:kdeq0}
\begin{split}
\widehat{f}_{\widehat{\tau}}(\bx) = \widehat{f}_{\tau}(\bx) + \frac{1}{n h_1\cdots h_S}\sum_{i=1}^n
\Bigg(K\Big(\frac{\bX_i(\widehat{\btau})-\bx}{\bh}\Big)- K\Big(\frac{\bX_i(\btau)-\bx}{\bh}\Big)\Bigg),
\end{split}
\end{align}
with $K\Big(\frac{\bX_i(\widehat{\btau})-\bx}{\bh}\Big) = K\Big(\frac{X_i(\widehat{\tau_1})-x_1}{h_1},\dots,\frac{X_i(\widehat{\tau_S})-x_S}{h_S}\Big)$ and
$K\Big(\frac{\bX_i(\btau)-\bx}{\bh}\Big) = K\Big(\frac{X_i(\tau_1)-x_1}{h_1},\dots,\frac{X_i(\tau_S)-x_S}{h_S}\Big)$.
A Taylor expansion then yields
{\small \begin{align}
\frac{1}{n h_1\dots h_S}&\sum_{i=1}^n
\Bigg(K\Big(\frac{\bX_i(\widehat{\btau})-\bx}{\bh}\Big) -  K\Big(\frac{\bX_i(\btau)-\bx}{\bh}\Big)\Bigg)\nonumber \\
&=\sum_{r=1}^S \frac{1}{h_r}\, \frac{1}{h_1\cdots h_S} \, \frac{1}{n}\sum_{i=1}^n \big(X_i(\widehat{\tau}_r)-X_r(\tau_r)\big)K_r'\Big(\frac{\bX_i(\btau)-\bx}{\bh}\Big)
+\frac{1}{n h_1 \cdots h_S} \sum_{i=1}^n R_i(\bx), \label{eq:kdeq1}
\end{align}}
where $|R_i(\bx)| \leq M \sum_{r=1}^S \left(\frac{X_i(\widehat{\tau}_r)-X_i(\tau_r)}{h_r}\right)^2$ for some constant $M<\infty$. Remember that under our assumptions we have by assertion \eqref{eq:lcdev2} $\frac{1}{n}\sum_{i=1}^n(X_i(\widehat{\tau}_r)-X_r(\tau_r))^2 = O_p(n^{-1})$, hence:
\begin{align}
\left|\frac{1}{n h_1 \cdots h_S} \sum_{i=1}^n R_i(\bx)\right| & \leq \frac{1}{h_1\cdots h_S} \frac{1}{n}\sum_{i=1}^n |R_i(\bx)| \nonumber\\
&\leq M \sum_{r=1}^S \frac{1}{(h_1\cdots h_S) h_r^2} \frac{1}{n}\sum_{i=1}^n (X_i(\widehat{\tau}_r)-X_r(\tau_r))^2 
 = O_p\left(\sum_{r=1}^S\frac{1}{(n h_1\cdots h_S) h_r^2}\right).\label{eq:kdeq2}
\end{align}
It then remains to bound $\frac{1}{n}\sum_{i=1}^n \big(X_i(\widehat{\tau}_r)-X_r(\tau_r)\big) K_r'\Big((\bX_i(\btau)-\bx)/\bh \Big)$.
With $F(X_i(\tau_1),\dots, X_i(\tau_S))=K_r'\Big((X_i(\tau_1)-x_1)/h_1,\dots, (X_i(\tau_S)-x_S)/h_S \Big)$
we have
\begin{align*}
\frac{\partial}{\partial u_l} F(X_i(\tau_1),\dots, X_i(\tau_S)) 
&= \frac{1}{h_l}K_{rl}''\Big(\frac{X_i(\tau_1)-x_1}{h_1},\dots, \frac{X_i(\tau_S)-x_S}{h_S}\Big).
\end{align*}
Since $|K_{rl}''(\cdot)|<\infty$, it then follows from \eqref{eq:prop1new1} in Proposition \ref{prop:1new} that
{\small
\begin{align}\label{eq:kdeqaux}
\frac{1}{n}\sum_{i=1}^n \big(X_i(\widehat{\tau}_r)-X_r(\tau_r)\big) K_r'\Big(\frac{\bX_i(\btau)-\bx}{\bh}\Big) &= O_p\Big(n^{-\min\{1,1/\kappa\}}\sum_{l=1}^S{\frac{1}{h_l}}\Big).
\end{align}}
 Note that for some constant $0<M_S<\infty$ we have $(\sum_{r=1}^S 1/h_r)^2 \leq M_S \sum_{r=1}^S 1/h_r^2$, we can thus conclude from \eqref{eq:kdeqaux}  that
{\small \begin{align}
\sum_{r=1}^S \frac{1}{h_r}\, \frac{1}{h_1\cdots h_S} \,  \frac{1}{n}\sum_{i=1}^n \big(X_i(\widehat{\tau}_r)-X_r(\tau_r)\big)K_r'\Big(\frac{\bX_i(\btau)-\bx}{\bh}\Big) 
= O_p\left(\sum_{r=1}^S \frac{1}{n^{\min\{1,1/\kappa\}}(h_1 \cdots h_S)h_r^2}\right).\label{eq:kdeq3}
\end{align}}
The assertion of the theorem now follows immediately from \eqref{eq:kdeq0}, \eqref{eq:kdeq1}, \eqref{eq:kdeq2} and \eqref{eq:kdeq3}.
\end{proof}

\bigskip

\begin{proof}[Proof of Theorem \ref{thm:lcr}]
The proof follows from generalizations of arguments used in proofs for nonparametric regression (see for example \citeappendix[][Ch. 2]{LR06}). We have
\begin{align}\label{eq:lceq1}
\widehat{g}_{\widehat{\tau}}(\bx) - g(\bx) = \frac{(\widehat{g}_{\widehat{\tau}}(\bx) - g(\bx))\widehat{f}_{\widehat{\tau}}(\bx)}{\widehat{f}_{\widehat{\tau}}(\bx)} =: \frac{\widehat{m}_{\widehat{\tau}}(\bx)}{\widehat{f}_{\widehat{\tau}}(\bx)},
\end{align}
where $\widehat{m}_{\widehat{\tau}}(\bx) = (\widehat{g}_{\widehat{\tau}}(\bx) - g(\bx))\widehat{f}_{\widehat{\tau}}(\bx)$.
With $Y_i=g(\bX_i(\btau))+\varepsilon_i$ we can then write
\begin{align}\label{eq:lceq2}
\widehat{m}_{\widehat{\tau}}(\bx) = \widehat{m}_{\widehat{\tau},1}(\bx) + \widehat{m}_{\widehat{\tau},2}(\bx),
\end{align}
where
\begin{align}
\widehat{m}_{\widehat{\tau},1}(\bx) & = \frac{1}{n (h_1\cdots h_S)} \sum_{i=1}^n (g(\bX_i(\btau))-g(\bx)) K\Big(\frac{\bX_i(\widehat{\btau})-\bx}{\bh}\Big)\label{eq:lceq3}\\
\intertext{and}
\widehat{m}_{\widehat{\tau},2}(\bx) &= \frac{1}{n (h_1\cdots h_S)} \sum_{i=1}^n \varepsilon_i K\Big(\frac{\bX_i(\widehat{\btau})-\bx}{\bh}\Big).\label{eq:lceq4}
\end{align}
With $\widehat{m}_{\tau,1}(\bx) = (nh_1\cdots h_S)^{-1}\sum_{i=1}^n g(\bX_i(\btau))-g(\bx))K((\bX_i(\btau)-\bx)/\bh)$ we have by the usual decomposition:
\begin{align*}
\widehat{m}_{\widehat{\tau},1}(\bx) & = \widehat{m}_{\tau,1}(\bx)
 + \frac{1}{n (h_1\cdots h_S)} \sum_{i=1}^n (g(\bX_i(\btau))-g(\bx)) \Bigg( K\Big(\frac{\bX_i(\widehat{\btau})-\bx}{\bh}\Big)-K\Big(\frac{\bX_i(\btau)-\bx}{\bh}\Big)\Bigg).
\end{align*}
A Taylor expansion then yields
\begin{align}
\widehat{m}_{\widehat{\tau},1}(\bx)&  =\widehat{m}_{\tau,1}(\bx) \nonumber\\
& + \frac{1}{(h_1\cdots h_S)}\sum_{r=1}^S \frac{1}{h_r}  \frac{1}{n} \sum_{i=1}^n (g(\bX_i(\btau))-g(\bx))
    (X_i(\widehat{\tau}_r)-X_i(\tau_r))K_r'\Big(\frac{\bX_i(\btau)-\bx}{\bh}\Big)\label{eq:lceq5}\\
& + \frac{1}{n (h_1\cdots h_S)} \sum_{i=1}^n (g(\bX_i(\btau))-g(\bx))R_{i,1}(\bx), \label{eq:lceq6}
\end{align}
where $|R_{i,1}(\bx)| \leq M \sum_{r=1}^S \left(\frac{X_i(\widehat{\tau}_r)-X_i(\tau_r)}{h_r}\right)^2$ for some constant $0<M<\infty$, since all partial second derivatives of $K$ are bounded.\\
Now, assuming that $g$ and its first derivative is bounded, we obtain by Proposition \ref{prop:1new}:
$$\frac{1}{n}\sum_{i=1}^n (g(\bX_i(\btau))-g(\bx))(X_i(\widehat{\tau}_r)-X_i(\tau_r)) K_r'\Big(\frac{\bX_i(\btau)-\bx}{\bh}\Big) = O_p\Big(\sum_{r=1}^S \frac{1}{n^{\min\{1,1/\kappa\}} h_r}\Big).$$
On the other hand, since by \eqref{eq:lcdev1} we have $\frac{1}{n}\sum_{i=1}^n (X_i(\widehat{\tau}_r)- X_i(\tau_r))^2 = O_p(n^{-1})$ and since $g$ is bounded, we obtain:
$$|\frac{1}{n}\sum_{i=1}^n (g(\bX_i(\btau))-g(\bx))R_{i,1}(\bx)|=O_p\Bigg(\sum_{r=1}^S \frac{1}{n h_r^2}\Bigg).$$
Together with \ref{eq:lceq5} and \ref{eq:lceq6} we then obtain
\begin{align}
\widehat{m}_{\widehat{\tau},1}(\bx)&  =
\widehat{m}_{\tau,1}(\bx)
+ O_p\Bigg(\sum_{r=1}^S\frac{1}{n^{\min\{1,1/\kappa\}} (h_1\dots h_S) h_r^2}\Bigg)
+ O_p\Bigg(\sum_{r=1}\frac{1}{n(h_1\cdots h_S)h_r^2}\Bigg)\nonumber \\
& = \widehat{m}_{\tau,1}(\bx) + O_p\Bigg(\sum_{r=1}^S\frac{1}{n^{\min\{1,1/\kappa\}} (h_1\cdots h_S) h_r^2}\Bigg).\label{eq:m1}
\end{align}

\noindent At the same time, since $K$ is bounded, \eqref{eq:lcdev2} implies
$$\frac{1}{n}\sum_{i=1}^n (X_i(\widehat{\tau}_r) - X_i(\tau_r))\varepsilon_i K\Big(\frac{\bX_i(\btau)-\bx}{\bh}\Big) = O_p(n^{-1}).$$
Another Taylor expansion then yields 
\begin{align*}
\widehat{m}_{\widehat{\tau},2}(\bx) & = \widehat{m}_{\tau,2}(\bx) \\
&+ \frac{1}{h_1\cdots h_S}\,\sum_{r=1}^S \frac{1}{h_r} \frac{1}{n} (X_i(\widehat{\tau}_r)-X_i(\tau_r))\varepsilon_iK_r'\Big(\frac{\bX_i(\btau)-\bx}{\bh}\Big)\\
&+ \frac{1}{h_1\cdots h_S}\, \frac{1}{n}\sum_{i=1}^n R_{i,2}(\bx)
\end{align*}
With $|R_{i,2}(\bx)| \leq M_K\sum_{r=1}^S  (X_i(\widehat{\tau}_r)-X_i(\tau_r))^2/h_r^2 \cdot |\varepsilon_i|$, for some constant $M_K<\infty$, it follows from \eqref{eq:lcdev2} and \eqref{eq:lcdev3} together with the Cauchy-Schwarz inequality, that
\begin{align}
\label{eq:m2}
\begin{split}
\widehat{m}_{\widehat{\tau},2}(\bx) & = \widehat{m}_{\tau,2}(\bx)  + O_p\Bigg(\sum_{r=1}^S\frac{1}{n (h_1\cdots h_S) h_r}\Bigg) + O_p\Bigg(\sum_{r=1}^S\frac{1}{n (h_1\cdots h_S) h_r^2}\Bigg)\\
&= \widehat{m}_{\tau,2}(\bx)  + O_p\Bigg(\sum_{r=1}^S\frac{1}{n (h_1\cdots h_S) h_r^2}\Bigg).
\end{split}
\end{align}
With \eqref{eq:m1} and \eqref{eq:m2} we can conclude from \eqref{eq:lceq2} that
$$\widehat{m}_{\widehat{\tau}}= \widehat{m}_{\tau} + O_p\Bigg(\sum_{r=1}^S \frac{1}{n^{\min\{1,1/\kappa\}}(h_1\cdots h_S)h_r^2}\Bigg).$$
Since $f_{\tau}(\bx)>0$ and $\widehat{m}_{\tau}= O_p\Big(\sum_{r=1}^S h_r^2 + (h_1\dots h_S)^{-1/2}\Big)$ (see for example \citeappendix[][Ch.2]{LR06}) we then arrive together with \eqref{eq:thmkde1} at
\begin{align}
\begin{split}
\widehat{g}_{\widehat{\tau}}(\bx)-g(\bx) &= O_p\Bigg(\frac{\widehat{m}_{\widehat{\tau}}}{\widehat{f}_{\widehat{\tau}}(\bx)} \Bigg) =O_p\Bigg(\frac{\widehat{m}_{\widehat{\tau}}}{f_{\tau}(\bx)+o_p(1)} \Bigg)  \\
& =O_p\Bigg(\sum_{r=1}^S h_r^2 + (h_1\cdots h_S)^{-1/2} + \sum_{r=1}^S \frac{1}{n^{\min\{1,1/\kappa\}}(h_1\cdots h_S)h_r^2}\Bigg),
\end{split}
\end{align}
provided $n^{\min\{1,1/\kappa\}}(h_1\cdots h_S)h_r^2 \to \infty$ for $r=1, \dots, S$
\end{proof}

\bigskip

\begin{proof}[Proof of Corollary~\ref{cor:lcr}] For $\kappa \leq 1$ and $h_r\sim n^{-1/(S+4)}$, $r=1,\dots, S$, the assertion of Corollary~\ref{cor:lcr} is a direct consequence of \eqref{eq:NWkernelREG} in Theorem~\ref{thm:lcr}.
\end{proof}

\bigskip

\subsection{The parametric case}\label{app:PARAest} 
In proofs of this appendix, we will make use of the following Lemma which summarizes results from \citeappendix{B2012b} and \citeappendix{B2012a}:

\begin{lemma}\label{lem:1aux}
Let $X_1$ and $X_2$ be two random variables with $0<\V(X_j)<\infty$, $j=1,2$. For any real valued function $f:\mathbb{R}\to \mathbb{R}$ with $\E(|f(X_1)|)<\infty$ and $\E(|f(X_1)X_1|)<\infty$ we have
\begin{align}\label{eq:lem1aux}
cov(f(X_1),X_2)= \frac{cov(f(X_1),X_1)}{\V(X_1)}cov(X_1,X_2),
\end{align}
provided that $cov\left(X_2-\frac{cov(X_1,X_2)}{\V(X_1)}X_1, f(X_1)\right) = 0$.
\end{lemma}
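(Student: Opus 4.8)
The plan is to observe that, given the stated proviso, the asserted identity is an immediate consequence of the bilinearity and symmetry of the covariance operator; there is essentially no analytic content beyond a one-line manipulation. First I would introduce the population regression coefficient $\beta := \operatorname{cov}(X_1,X_2)/\V(X_1)$, which is well defined because $0<\V(X_1)<\infty$. The moment conditions $\E(|f(X_1)|)<\infty$ and $\E(|f(X_1)X_1|)<\infty$, together with $0<\V(X_2)<\infty$, ensure that $\operatorname{cov}(f(X_1),X_1)$ is finite, while the proviso itself presupposes the finiteness of $\operatorname{cov}(X_2-\beta X_1,f(X_1))$ and hence of $\operatorname{cov}(f(X_1),X_2)$, so every quantity appearing below is well defined.

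Next I would expand the residual covariance in the proviso using linearity of covariance in its first argument, giving
\begin{equation*}
\operatorname{cov}\!\left(X_2-\beta X_1,\,f(X_1)\right)=\operatorname{cov}(X_2,f(X_1))-\beta\,\operatorname{cov}(X_1,f(X_1)).
\end{equation*}
By hypothesis the left-hand side vanishes, so rearranging yields $\operatorname{cov}(X_2,f(X_1))=\beta\,\operatorname{cov}(X_1,f(X_1))$. Substituting the definition of $\beta$ and invoking the symmetry relations $\operatorname{cov}(X_2,f(X_1))=\operatorname{cov}(f(X_1),X_2)$ and $\operatorname{cov}(X_1,f(X_1))=\operatorname{cov}(f(X_1),X_1)$ then delivers exactly the claimed identity
\begin{equation*}
\operatorname{cov}(f(X_1),X_2)=\frac{\operatorname{cov}(X_1,X_2)}{\V(X_1)}\,\operatorname{cov}(f(X_1),X_1).
\end{equation*}

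There is no substantive obstacle in the algebra: the entire content of the lemma is absorbed into the proviso, which is a conditional orthogonality (Stein-type) condition stating that the best linear predictor of $X_2$ from $X_1$ already captures all of the covariance between $X_2$ and any transformation $f(X_1)$. The only point requiring care is integrability — ensuring each covariance in the displays above is finite so that the bilinear expansion is legitimate — and for this I would simply invoke the stated moment assumptions together with the results of \citeappendix{B2012b} and \citeappendix{B2012a}, from which both the proviso and the requisite integrability are inherited. Accordingly the proof reduces to the single bilinear step carried out above.
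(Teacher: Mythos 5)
Your proof is correct and is essentially identical to the paper's: both arguments amount to the single bilinear expansion $\operatorname{cov}(f(X_1),X_2)=\frac{\operatorname{cov}(f(X_1),X_1)}{\V(X_1)}\operatorname{cov}(X_1,X_2)+\operatorname{cov}\bigl(X_2-\tfrac{\operatorname{cov}(X_1,X_2)}{\V(X_1)}X_1,\,f(X_1)\bigr)$, with the second term vanishing by the stated proviso. The paper phrases this as decomposing $X_2$ into its linear projection on $X_1$ plus a residual, whereas you expand the proviso directly, but these are the same one-line manipulation.
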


\bigskip

\begin{proof}[{\bf Proof of Lemma \ref{lem:1aux}.}]
With $X_2 = \frac{cov(X_1, X_2)}{\V(X_1)} X_1 +  \left(X_2 - \frac{cov(X_1, X_2)}{\V(X_1)} X_1  \right)$, the assertion of the lemma follows immediately, since
\begin{align*}
cov(f(X_1),X_2) & = \frac{cov(f(X_1),X_1)}{\V(X_1)} cov(X_1,X_2) + cov\left(X_2-\frac{cov(X_1,X_2)}{\V(X_1)}X_1, f(X_1)\right)\\
								& = \frac{cov(f(X_1),X_1)}{\V(X_1)} cov(X_1,X_2).
\end{align*}
\end{proof}




\bigskip

We begin with the proof of Theorem~\ref{thmident}.




\begin{proof}[{\bf Proof of Theorem \ref{thmident}.}]
Since $X_i$ satisfies Assumption~\ref{assum1}, Theorem 3 in \cite{KPS2015} implies that the assumptions of Theorem 1 in \cite{KPS2015} are met.
Since
{\small
\begin{align*}
\E&\Bigg(\Big(
(\alpha + \sum_{r=1}^{S}\beta_rX_i(\tau_r) )
- (\alpha^* + \sum_{r=1}^{S^*}\beta_r^*X_i(\tau_r) )
\Big)^2\Bigg)\\
& = (\alpha-\alpha^*)^2 +
\E\Bigg(\Big(
\sum_{r=1}^{S}\beta_rX_i(\tau_r)
- \sum_{r=1}^{S^*}\beta_r^*X_i(\tau_r)
\Big)^2\Bigg).
\end{align*}}
It follows from Theorem 1 in \cite{KPS2015} that
{\small
\begin{align}
\E\Bigg(\Big(
(\alpha + \sum_{r=1}^{S}\beta_rX_i(\tau_r) )
- (\alpha^* + \sum_{r=1}^{S^*}\beta_r^*X_i(\tau_r) )
\Big)^2\Bigg)>0, \label{eq:thmident1}
\end{align}}
whenever $|\alpha-\alpha^*|>0$,
or $\sup_{r=1,\ldots,S}|\beta_r-\beta^*_r|>0$,
or $\sup_{r=S+1,\ldots,S^*}|\beta^*_r|>0$.\\

Now suppose
{\small
$$\E\Bigg(\Big(
g(\alpha + \sum_{r=1}^{S}\beta_rX_i(\tau_r) )
- g(\alpha^* + \sum_{r=1}^{S^*}\beta_r^*X_i(\tau_r) )
\Big)^2\Bigg) =0.$$
}
It then follows that
$
g(\alpha + \sum_{r=1}^{S}\beta_rX_i(\tau_r) )
$
and
$
g(\alpha^* + \sum_{r=1}^{S^*}\beta_r^*X_i(\tau_r) )
$
must be identical, i.e.,
{\small
$$
P\Bigg(
g(\alpha + \sum_{r=1}^{S}\beta_rX_i(\tau_r) )
= g(\alpha^* + \sum_{r=1}^{S^*}\beta_r^*X_i(\tau_r) )
\Bigg)=1.
$$
}
Since $g$ is invertible we then have
{\small
\begin{align*}
P\Bigg(
g(\alpha + \sum_{r=1}^{S}\beta_rX_i(\tau_r) )
= g(\alpha^* + \sum_{r=1}^{S^*}\beta_r^*X_i(\tau_r) )
\Bigg)=1
\end{align*}
}
if and only if
{\small
\begin{align*}
P\Bigg((\alpha + \sum_{r=1}^{S}\beta_rX_i(\tau_r) )
= (\alpha^* + \sum_{r=1}^{S^*}\beta_r^*X_i(\tau_r) )
\Bigg)=1.
\end{align*}
}
But by \eqref{eq:thmident1} we have
{\small
$$\E\Bigg( \Big(
(\alpha + \sum_{r=1}^{S}\beta_rX_i(\tau_r) )
-
(\alpha^*  + \sum_{r=1}^{S^*}\beta_r^*X_i(\tau_r) )
\Big)^2\Bigg)>0,$$
}
whenever $|\alpha-\alpha^*|>0$,
or $\sup_{r=1,\ldots,S}|\beta_r-\beta^*_r|>0$,
or $\sup_{r=S+1,\ldots,S^*}|\beta^*_r|>0$, implying
{\small
$$
P\Bigg(
(\alpha + \sum_{r=1}^{S}\beta_rX_i(\tau_r) )
= (\alpha^* + \sum_{r=1}^{S^*}\beta_r^*X_i(\tau_r) )
\Bigg)<1,
$$
}
whenever $|\alpha-\alpha^*|>0$,
or $\sup_{r=1,\ldots,S}|\beta_r-\beta^*_r|>0$,
or $\sup_{r=S+1,\ldots,S^*}|\beta^*_r|>0$, which proves the assertion of the theorem. 
\end{proof}

\bigskip

The following Propostion~\eqref{lem:score1} is instrumental to derive rates of convergence for the system of estimated score equations $\widehat{\bU}_n$ and their derivatives.

\begin{prop}\label{lem:score1}
Let $X_i=(X_i(t):t\in[a,b])$, $i=1,...,n$ be i.i.d.~Gaussian processes with covariance function $\sigma(s,t)$ satisfying Assumption~\ref{assum1}. Let $\E(\varepsilon_i|X_i)=0$ with $\E(\varepsilon_i^p|X_i)\leq M_{\varepsilon} <\infty$ for some even $p$ with $p>\frac{2}{\kappa}$ and let $\widehat{\tau}_r$ enjoy the property given by \eqref{thm3eq1}, i.e. $|\widehat{\tau}_r - \tau_r| = O_P(n^{-\frac{1}{\kappa}})$.
We then have for any
bounded function $f: \mathbb{R} \to \mathbb{R}$ with $|f(x)|\leq M_f <\infty$,
any $t^* \in[a,b]$, any linear predictor $\eta_i^{*} = \beta_0^* + \sum_{r=1}^{S^*}\beta_r^* X_i(t_r^*)$, where $t_r^* \in [a,b]$, $\beta_r^*\in \mathbf{R}$ and $S^*$ are arbitrary
and any $r=1,\dots,S$:
\begin{align}
\frac{1}{n}\sum_{i=1}^n (X_i(\widehat{\tau}_r)-X_i(\tau_r))^2 & = O_P(n^{-1})\label{lem:res0}\\
\frac{1}{n}\sum_{i=1}^n (X_i(\widehat{\tau}_r)-X_i(\tau_r))f(\eta_i^*) &= O_P(n^{-\min\{ 1,\frac{1}{\kappa}\}}) \label{lem:res4a}\\
\frac{1}{n}\sum_{i=1}^n X_i(t^*)(X_i(\widehat{\tau}_r)-X_i(\tau_r))f(\eta_i^*) &= O_P(n^{-\min\{1,\frac{1}{\kappa}\}}) \label{lem:res3}\\
\frac{1}{n}\sum_{i=1}^n (X_i(\widehat{\tau}_r)-X_i(\tau_r))\varepsilon_i f(\eta_i^*) &= O_P(n^{-1}) \label{lem:res7} \\
\frac{1}{n}\sum_{i=1}^n X_i(t^*)(X_i(\widehat{\tau}_r)-X_i(\tau_r))\varepsilon_i f(\eta_i^*) &= O_P(n^{-1}) \label{lem:res6}\\
\frac{1}{n}\sum_{i=1}^n (X_i(\widehat{\tau}_r)-X_i(\tau_r))^4 &= O_P(n^{-2}) \label{lem:res8}
\end{align}
\end{prop}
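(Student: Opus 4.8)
The plan is to recognize that Proposition~\ref{lem:score1} is a close relative of Proposition~\ref{prop:1new}, so the bulk of the machinery can be reused, with one genuinely new ingredient. First, assertions \eqref{lem:res0} and \eqref{lem:res8} are literally \eqref{eq:lcdev1} and \eqref{eq:lcdev3} of Proposition~\ref{prop:1new} and therefore follow at once. The remaining four assertions all have the same shape: each is a sample average of an increment $X_i(\widehat{\tau}_r)-X_i(\tau_r)$ times an auxiliary factor. For every such quantity I would (i) replace $X_i(\widehat{\tau}_r)$ by $X_i(u)$ with $u$ ranging over a shrinking neighbourhood $[\tau_r-s,\tau_r+s]$, (ii) split the average into its mean and a centered fluctuation, (iii) bound the fluctuation uniformly in $u$ by a maximal inequality, and finally (iv) evaluate at $s=|\widehat{\tau}_r-\tau_r|=O_P(n^{-1/\kappa})$, invoking the assumed rate exactly as in the closing lines of the proof of Proposition~\ref{prop:1new}.

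For the two bounded-$f$ statements \eqref{lem:res4a} and \eqref{lem:res3} I would argue as in the proof of \eqref{eq:prop1new1}. The fluctuation is controlled by verifying a Bernstein moment condition for the increments of the relevant summand (here the increment variance scales like $|q_1-q_2|^{\kappa}s^{\kappa}$ via \eqref{eq:vari}), bounding the exponential Orlicz norm of the normalized sum through Corollary~1 of \citeappendix{vandeGeer2013}, and then applying Theorem~2.2.4 of \citeappendix{vanderVaart1996}; for \eqref{lem:res3} the extra factor $X_i(t^*)$ is Gaussian, so the product $X_i(t^*)(X_i(u)-X_i(\tau_r))f(\eta_i^*)$ is still sub-exponential and only inflates the Bernstein constant by a bounded factor, leaving the $s^{\kappa/2}$ scaling intact. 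The novelty lies in the \emph{mean}: I must bound $\E((X_i(\tau_r+sq)-X_i(\tau_r))f(\eta_i^*))$ where $f$ is merely bounded and $\eta_i^*$ is built from arbitrary locations $t_r^*$, so the Stein-type identity of Theorem~\ref{thm:NP1} (which needs differentiability of the regression function) is not available. The resolution is Lemma~\ref{lem:1aux}: treating $\eta_i^*$ as the conditioning Gaussian and $X_i(\tau_r+sq)-X_i(\tau_r)$ as the target, \eqref{eq:lem1aux} reduces the covariance to a finite multiple of $\mathrm{cov}(\eta_i^*,X_i(\tau_r+sq)-X_i(\tau_r))$, which is $O(|sq|^{\min\{1,\kappa\}})$ by \eqref{eq:diffexpect}. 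For \eqref{lem:res3} I would additionally decompose $X_i(t^*)$ and the increment into their Gaussian regressions onto $\eta_i^*$ plus independent remainders; the cross terms with $f(\eta_i^*)$ either vanish or factor, and each surviving piece again carries a $\mathrm{cov}(\cdot,\text{increment})=O(s^{\min\{1,\kappa\}})$. Evaluating mean ($O(s^{\min\{1,\kappa\}})$) and fluctuation ($O_P(\sqrt{s^{\kappa}/n})$) at $s=O_P(n^{-1/\kappa})$ makes the mean dominant and yields the rate $O_P(n^{-\min\{1,1/\kappa\}})$.

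For the error-term statements \eqref{lem:res7} and \eqref{lem:res6} the situation is simpler in one respect and harder in another. Since $\E(\varepsilon_i\mid X_i)=0$ and the remaining factors are $X_i$-measurable, the mean is exactly zero, so only the fluctuation survives and the target rate must come entirely from it. However $\varepsilon_i$ has merely $p$ conditional moments, so the exponential Orlicz machinery is unavailable; instead I would follow the proof of \eqref{eq:lcdev2} and work with the polynomial Orlicz norm $\|\cdot\|_p$, where the condition $p>2/\kappa$ from Assumption~\ref{assum3}a) is precisely what makes the entropy/covering integral in Theorem~2.2.4 of \citeappendix{vanderVaart1996} finite. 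Bounding the increment $L_p$-norm by $|q_1-q_2|^{\kappa/2}$ (using $\E(\varepsilon_i^2\mid X_i)\le M_{\varepsilon}^{2/p}$ and \eqref{eq:vari}), the maximal inequality followed by Markov gives $\sup_u|\tfrac1n\sum_i(\cdot)|=O_P(s^{\kappa/2}/\sqrt n)$, which at $s=O_P(n^{-1/\kappa})$ equals $O_P(n^{-1})$; the extra bounded-moment factor $X_i(t^*)$ in \eqref{lem:res6} is absorbed without changing the scaling.

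I expect the main obstacle to be the mean computation for \eqref{lem:res4a} and \eqref{lem:res3}, since this is exactly the place where Proposition~\ref{prop:1new} relied on differentiability of its integrand through Theorem~\ref{thm:NP1}, whereas here $f$ is only bounded and possibly non-differentiable. Lemma~\ref{lem:1aux} is the tool that closes this gap, but applying it cleanly in the presence of the unbounded factor $X_i(t^*)$ (statement \eqref{lem:res3}) requires the care of the Gaussian regression decomposition onto $\eta_i^*$ to ensure no mean term of order larger than $s^{\min\{1,\kappa\}}$ is left behind; everything else is a routine repetition of the Bernstein/Orlicz and empirical-process arguments already established for Proposition~\ref{prop:1new}.
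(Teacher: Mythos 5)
Your proposal is correct and follows essentially the same route as the paper's proof: assertions \eqref{lem:res0} and \eqref{lem:res8} are imported from Proposition \ref{prop:1new}, the fluctuations are controlled via Bernstein/Orlicz bounds and the maximal inequality of Theorem 2.2.4 (switching to the polynomial Orlicz norm with $p>2/\kappa$ for the $\varepsilon_i$ terms, whose mean vanishes), and the means for the bounded-$f$ assertions are handled through Lemma \ref{lem:1aux} together with \eqref{eq:diffexpect}. Your treatment of \eqref{lem:res3} by regressing $X_i(t^*)$ and the increment onto $\eta_i^*$ is a cosmetic variant of the paper's argument, which instead conditions on $X_i(t^*)$ and applies Lemma \ref{lem:1aux} to the conditional Gaussian law; both yield the same $O(s^{\min\{1,\kappa\}})$ bound on the mean.
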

\bigskip

The proof of this Proposition shares some arguments used in the proof of Lemma~\ref{lem1}.

\bigskip

\begin{proof}[{\bf Proof of Proposition \ref{lem:score1}.}]

Before the different assertions are proven, note that if $(X_1,X_2)$ are bivariate Gaussian, then $cov\left(X_2-\frac{cov(X_1,X_2)}{\V(X_1)}X_1, X_1\right)=0$ and $\left(X_2-\frac{cov(X_1,X_2)}{\V(X_1)}X_1\right)$ and $X_1$ are independent. Hence we additionally have $cov\left(X_2-\frac{cov(X_1,X_2)}{\V(X_1)}X_1, f(X_1)\right) = 0$ and by Lemma~\ref{lem:1aux}:
\begin{align*}
cov(f(X_1),X_2) = \frac{cov(f(X_1),X_1)}{Var(X_1)} cov(X_1,X_2).
\end{align*}
Furthermore, it follows from Stein's Lemma (\citeappendix{S1981}) that $\frac{cov(f(X_1),X_1)}{Var(X_1)} = \E(f'(X_1))$ provided $f$ is differentiable and $\E(|f'(X_1)|)<\infty$ (cf. Lemma 1 in \citeappendix{B2012a})

\bigskip

We are now equipped with the tools to proof the different assertions of the proposition. Assertion \eqref{lem:res0} and \eqref{lem:res8} are proven in Proposition~\ref{prop:1new}. The remaining assertions are proven in a similar manner: In order to proof Assertion \eqref{lem:res4a}, choose any $0<s$ sufficiently small and define for $q_1,q_2 \in [-1,1]$
\begin{align*}
\chi_i(q_1,q_2) & :=  (X_i(\tau_r+sq_1)-X_i(\tau_r))f(\eta_i^*) - (X_i(\tau_r+sq_2)-X_i(\tau_r))f(\eta_i^*) \\
&\qquad - \mathbb{E}\Big((X_i(\tau_r+sq_1)-X_i(\tau_r))f(\eta_i^*) - (X_i(\tau_r+sq_2)-X_i(\tau_r))f(\eta_i^*) \Big)\\
& = (X_i(\tau_r+sq_1) - X_i(\tau_r+sq_2))f(\eta_i^*) - \mathbb{E}((X_i(\tau_r+sq_1) - X_i(\tau_r+sq_2))f(\eta_i^*)).
\end{align*}
We then have $\mathbb{E}(\chi_i(q_1,q_2))=0$ and it follows from some straightforward calculations, since $|f(\eta_i^*)| \leq M_f$, that there exists a constant $L_{1}<\infty$ such that for $m=2,3,\dots$ we have
\begin{align}
\mathbb{E}&(|\frac{1}{s^\frac{\kappa}{2}}\chi_i(q_1,q_2)|^m) \leq \frac{m!}{2} (L_{1} |q_1-q_2|^\frac{\kappa}{2})^m.\label{eq:scpf1}
\end{align}
Corollary 1 in \citeappendix{vandeGeer2013} now guarantees that there exists a constant $0<L_{2}<\infty$ such that the Orlicz norm of $\frac{1}{\sqrt{n s^\kappa}}\sum_{i=1}^n (\chi_i(q_1,q_2))$ can be bounded, i.e., we have for some $0<L_{2}<\infty$:
\begin{align}\label{eq:sc4}
||\frac{1}{\sqrt{n s^\kappa}}\sum_{i=1}^n \chi_i(q_1,q_2)||_\Psi \leq L_{2}|q_1-q_2|^\frac{\kappa}{2}.
\end{align}
By \eqref{eq:sc4} one may apply Theorem 2.2.4 of \citeappendix{vanderVaart1996}. The covering integral in this theorem can easily be seen to be finite and one can thus infer that there exists a constant $0<L_{3}<\infty$ such that
$$\mathbb{E}\Bigg( \exp\Big(\sup_{q_1,q_2\in [-1,1]} n/6 \big(\sqrt{1+2\sqrt{\frac{6}{n L_{3}^2}}|\frac{1}{\sqrt{n s^\kappa}}\sum_{i=1}^n\chi_i(q_1,q_2)|}-1 \big)^2 \Big) \Bigg)\leq 2.$$
For every $x>0$, the Markov inequality then yields
\begin{align*}
P&\Bigg(\sup_{q_1,q_2\in [-1,1]}| \frac{1}{\sqrt{n s^\kappa}}\sum_{i=1}^n\chi_i(q_1,q_2)| \geq x\frac{L_{3}}{2\sqrt{6}} \Bigg)\leq 2\exp\Big(-\frac{n}{6}(\sqrt{1+x/\sqrt{n}}-1 )^2\Big)\nonumber.
\end{align*}
Improving the readability, it then follows from a Taylor expansion of $\frac{n}{6}(\sqrt{1+x/\sqrt{n}}-1 )^2$ that we may conclude that there exists a constant $0<L_{4}<\infty$ such that for all $0<x\leq \sqrt{n}$ we have
\begin{align}\label{eq:sup1}
P&\Bigg(\sup_{q_1,q_2\in [-1,1]}| \frac{1}{\sqrt{n s^\kappa}}\sum_{i=1}^n\chi_i(q_1,q_2)| < L_{4}x\Bigg) \geq 1-2 \exp(-x^2).
\end{align}


\noindent Now, note that it follows from Lemma~\ref{lem:1aux} 
that there exists a constant $|c_0|<\infty$, not depending on $t^*$, such that $\mathbb{E}(X(t^*)f(\eta_i^*))=c_0\E(X(t^*)\eta_i^*)$ for all $t^* \in [a,b]$. 
Together with \eqref{eq:diffexpect}
we can therefore conclude that there exists a constant $0 \leq L_{5}<\infty$ such that for all $q_1\in[-1,1]$:
\begin{align}
|\mathbb{E}((X_i(\tau_r + sq_1)-X_i(\tau_r))f(\eta_i^*))| 
\leq L_{5}s^{\min\{1,\kappa\}}\label{eq:scpf2}
\end{align}

Using \eqref{eq:scpf2} together with \eqref{eq:sup1} we can conclude that for all $0<x\leq \sqrt{n}$ we have:
\begin{align}
\begin{split}
P\Bigg(\sup_{\tau_r-s\leq  u_r \leq \tau_r +s} |\frac{1}{n} \sum_{i=1}^n (X_i(u_r) - X_i(\tau_r))f(\eta_i^*)| < L_{5}s^{\min\{1,\kappa\}} & + L_{4}\frac{s^\frac{\kappa}{2}}{\sqrt{n}}x\Bigg)\\
& \geq 1-2\exp(-x^2)
\end{split}\label{eq:tatatat}
\end{align}
Assertion \eqref{lem:res4a} then follows immediately from \eqref{thm3eq1}.\\

By the boundedness of $f$, the proof of \eqref{lem:res3} proceeds similar, but one now has to bound
$$|\mathbb{E}(X_i(t^*)(X_i(\tau_r + sq_1) - X_i(\tau_r))f(\eta_i^*))|.$$
\noindent For $X_i(t^*)= \eta_i^*$, Lemma~\ref{lem:1aux} 
together with \eqref{eq:diffexpect} 
already implies that there exists a constant $L_{6}<\infty$ such that  $|\mathbb{E}(X_i(t^*)(X_i(\tau_r + sq_1) - X_i(\tau_r))f(\eta_i^*))| \leq L_{6} s^{\min\{1,\kappa\}}$. Let $X_i(t^*) \neq \eta_i^*$. Note that $(X_i(t^{*}),(X_i(\tau_r + sq_1) - X_i(\tau_r)), \eta_i^*)$ are multivariate normal. Hence also the conditional distribution of $((X_i(\tau_r + sq_1) - X_i(\tau_r)), \eta_i^*)$ given $X_i(t^*)$ is multivariate normal. To ease the notation set $X_1= \eta_i^*$, $X_2 = (X_i(\tau_r + sq_1) - X_i(\tau_r))$ and $X_3=X_i(t^*)$ and define by $\sigma_{i,j}$, $i,j \in \{1,2,3\}$ their associated covariance and variances.
We then have by conditional expectation together with an application of Lemma~\ref{lem:1aux} 
%
\begin{align*}
|\mathbb{E}(X_i(t^*)&(X_i(\tau_r + sq_1)  - X_i(\tau_r))f(\eta_i^*))| =|\mathbb{E}(f(X_1)X_2X_3)|  = |\mathbb{E}(X_3 \mathbb{E}(f(X_1)X_2|X_3))| \\
 &= |(\sigma_{12}-\frac{\sigma_{13}\sigma_{23}}{\sigma_{33}})\mathbb{E}(\frac{cov(f(X_1),X_1|X_3)}{\bV(X_1|X_3)} X_3)
 +\frac{\sigma_{23}}{\sigma_{33}} \mathbb{E}(X_3^2  \mathbb{E}(f(X_1)|X_3))|.
\end{align*}
Using \eqref{eq:diffexpect}
 it is then easy to see that there exists a constant $0<L_{7}<\infty$ such that $|\sigma_{12}|\leq L_{7}s^{\min\{1,\kappa\}}$, as well as $|\sigma_{23}| \leq L_{7}s^{\min\{1,\kappa\}}$. On the other hand Assumption \ref{assum1} implies that there exists a constant $0<L_{8}<\infty$ such that $|\sigma_{13}|\leq L_{8}$. 
Note that $cov(f(X_1),X_1|X_3) = \E(f(X_1)X_1|X_3) - \E(f(X_1)|X_3)\E(X_1|X_3)$ and  $\bV(X_1|X_3)=\sigma_{11}-\frac{\sigma_{13}^2}{\sigma_{33}}>0.$
Moreover note that if $f$ is assumed to be differentiable and $\E(|f'(X_1)||X_3)<\infty$, it follows from and Stein's Lemma (\citeappendix{S1981}) that $cov(f(X_1),X_1|X_3)/\bV(X_1|X_3)$ can be substituted by $\E(f'(X_1)|X_3)$.

\noindent Since $f$ is bounded
it then follows immediately that for all linear predictors $\eta_i^*$ and all $t^* \in [a,b]$ there exists a constant $0<L_{9}<\infty$ such that for all $q_1 \in[-1,1]$ and all sufficiently small $s$ and all $r=1,\dots, S$ we have:
\begin{align}\label{eq:tada0}
|\mathbb{E}(X_i(t^{*})(X_i(\tau_r + sq_1) - X_i(\tau_r))f(\eta_i^*))| \leq L_{9} s^{\min\{1,\kappa\}}.
\end{align}
By \eqref{eq:tada0} one can conclude similar to \eqref{eq:tatatat} that for all $0<x\leq \sqrt{n}$ and for some constant $0< L_{10}<\infty$
\begin{align*}
P\Bigg(\sup_{\tau_r-s\leq  u \leq \tau_r +s} |\frac{1}{n} \sum_{i=1}^n X_i(t^*)(X_i(u) - X_i(\tau_r))f(\eta_i^*)| < L_{9}s^{\min\{1,\kappa\}} &+ L_{10}\frac{s^\frac{\kappa}{2}}{\sqrt{n}}x\Bigg)\\
&\geq 1-2\exp(-x^2).
\end{align*}
Assertion \eqref{lem:res3} then follows again immediately from \eqref{thm3eq1}.\\

In order to show assertion $\eqref{lem:res7}$ we make use the Orlicz-norm $||X||_p$.\\
Choose some  $p>\frac{2}{\kappa}=p_{\kappa}$, and let $p$ be even. Note that $\E((X_i(\tau_r+sq_1)-X_i(\tau_r+sq_2))\varepsilon_i f(\eta_i^*)) = 0$. For all sufficiently small $0<s$ and all $q_1, q_2 \in [-1,1]$ it is easy to show that there exists a constant $L_{11}<\infty$ such that
$$\E(|s^{-\kappa/2} \frac{1}{\sqrt{n}}\sum_{i=1}^{n}(X_i(\tau_r+sq_1)-X_i(\tau_r+sq_2))\varepsilon_i f(\eta_i^*)|^p) \leq L_{11}^p |q_1-q_2|^{\frac{p \kappa}{2}}.
$$
We may conclude
\begin{align}
||s^{-\kappa/2} \frac{1}{\sqrt{n}}\sum_{i=1}^{n}(X_i(\tau_r+sq_1)-X_i(\tau_r+sq_2))\varepsilon_i f(\eta_i^*)||_p\leq L_{11}|q_1-q_2|^{\frac{\kappa}{2}}.\label{eq:on1}
\end{align}
By assertion \eqref{eq:on1} one may apply Theorem 2.2.4 in \citeappendix{vanderVaart1996}. Our condition on $p$ ensures that the covering integral appearing in this theorem is finite.
The maximum inequalities of empirical processes then imply:
$$||\sup_{q_1,q_2\in [-1,1]}|s^{-\kappa/2} \frac{1}{\sqrt{n}}\sum_{i=1}^{n}(X_i(\tau_r+sq_1)-X_i(\tau_r+sq_2))\varepsilon_i f(\eta_i^*)|||_{\Psi_p} \leq L_{12}$$
for some constant $L_{12}<\infty$. At the same time, the Markov inequality implies
\begin{align*}
P\Bigg(&\sup_{\tau_r-s\leq u \leq \tau_r + s} |\frac{1}{n}\sum_{i=1}^n(X_i(u)- X_{i}(\tau_r))\varepsilon_i f(\eta_i^*)|>s^{\kappa/2}\frac{x}{\sqrt{n}}\Bigg) \\
&\leq P\Bigg( |\sup_{q_1,q_2\in [-1,1]} |s^{-\frac{\kappa}{2}}\frac{1}{\sqrt{n}}\sum_{i=1}^n (X_i(\tau_r +sq_1)- X_{i}(\tau_r+sq_2))\varepsilon_i f(\eta_i^*)||^p > x^p\Bigg)
\leq \frac{L_{12}^p}{x^p}.
\end{align*}
Assertion \eqref{lem:res7} then follows from \eqref{thm3eq1} and our conditions on $p$. Moreover, assertion \eqref{lem:res6} follows from similar steps.

\end{proof}

\bigskip


\bigskip

The rest of the Appendix makes use of the notation introduced in Section \ref{sec:PES}.  In what follows, it will, however, be convenient to set $\bX_i:=\bX_i(\btau) = (1, X_i(\tau_1),\dots, X_i(\tau_S)$ and $\widehat{\bX}_i := \bX_i(\widehat{\btau}) = (1, X_i(\widehat{\tau}_1),\dots, X_i(\widehat{\tau}_S)$. The $j$th element of $\bX_i$  and $\widehat{\bX}_i$ is then denoted by $X_{ij}$ and $\widehat{X}_{ij}$, respectively.

For the following proofs we introduce some additional notation. Let {\small$h(x)=g'(x)/\sigma^2(g(x))$} and note that differentiating the estimation equation $$\frac{1}{n}\widehat{\bU}_n(\bbeta)= \frac{1}{n}\widehat{\bD}_n(\bbeta)\widehat{\bV}_n^{-1}(\bbeta)(y-\bmu(\bbeta)) = \frac{1}{n}\sum_{i=1}^n h(\widehat{\eta}_i(\bbeta))\widehat{\bX}_i(y_i-g(\widehat{\eta}_i(\bbeta)))$$ leads to
\begin{align*}
\frac{1}{n}\widehat{\bH}(\bbeta) &
=   \frac{1}{n} \frac{\partial\widehat{\bU}_n(\bbeta)}{\partial \bbeta}
= - \frac{1}{n} \widehat{\bD}_n(\bbeta)^T\widehat{\bV}_n(\bbeta)^{-1}\widehat{\bD}_n(\bbeta)  + \frac{1}{n}\sum_{i=1}^n h'(\widehat{\eta}_i) \widehat{\bX}_i \widehat{\bX}_i^T(y_i-g(\widehat{\eta}_i(\bbeta)))\\
&= -\frac{1}{n}\widehat{\bF}_n(\bbeta) + \frac{1}{n}\widehat{\bR}_n(\bbeta) \quad \text{say.}
\end{align*}
Similarly, one obtains by replacing the estimates $\widehat{\tau}_r$ with their true counterparts $\tau_r$:
\begin{align*}
\frac{1}{n}\bH(\bbeta) = \frac{1}{n} \frac{\partial\bU_n(\bbeta)}{\partial \bbeta} = - \frac{1}{n}\bF_n(\bbeta) + \frac{1}{n}\bR_n(\bbeta),
\end{align*}
where
$$\frac{1}{n}\bF_n(\bbeta) = \frac{1}{n} \bD_n(\bbeta)^T\bV_n(\bbeta)^{-1}\bD_n(\bbeta),$$
and
$$\frac{1}{n}\bR_n(\bbeta)=\frac{1}{n}\sum_{i=1}^n h'(\eta_i(\bbeta)) \bX_i \bX_i^T(y_i-g(\eta_i(\bbeta))).$$

Now, let $\widehat{\eta}(\bbeta)$, $\widehat{\bX}$ and $y$ be generic copies of $\widehat{\eta}_i(\bbeta)$, $\widehat{\bX}_i$ and $y_i$. We then have
$$\E(\frac{1}{n}\widehat{\bF}_n(\bbeta)) = \E(\frac{g'(\widehat{\eta}(\bbeta))^2}{\sigma^2(g(\widehat{\eta}(\bbeta))} \widehat{\bX} \widehat{\bX}^T) =: \E(\widehat{\bF}(\bbeta)),$$
as well as
$$\frac{1}{n}\widehat{\bR}_n(\bbeta) = \E(h'(\widehat{\eta}(\bbeta)) \widehat{\bX} \widehat{\bX}^T(y-g(\widehat{\eta}(\bbeta)))) =: \E(\widehat{\bR}(\bbeta)).$$
In a similar manner $\E(\bF(\bbeta))=\E(n^{-1}\bF_n(\bbeta))$ and $\E(\bR(\bbeta))=\E(n^{-1}\bR_n(\bbeta))$ are defined.\\


\bigskip

The next proposition is crucial, as it tells us that the estimated score function and its derivative are sufficiently close to each other. Of particular importance are the facts that $$\frac{1}{n}\widehat{\bU}_n(\bbeta_0) = \frac{1}{n}\bU_n(\bbeta_0) + o_P(n^{-\frac{1}{2}}),$$
and
$$\frac{1}{n}\widehat{\bF}_n(\bbeta_0) = \frac{1}{n}\bF_n(\bbeta_0) + O_P(n^{-\frac{1}{2}}),$$
which follow from this proposition.

\begin{prop}\label{th:paraestQML}
Let $X_i=(X_i(t):t\in[a,b])$, $i=1,...,n$ be i.i.d.~Gaussian processes. Under Assumption \ref{assum3} and under the results of Proposition~\ref{lem:score1}
we have
\begin{align}
\frac{1}{n}\widehat{\bU}_n(\bbeta_0) &= \frac{1}{n}\bU_n(\bbeta_0) + O_P(n^{-\min\{1,1/\kappa\}}).\label{eq:thQML1}
\end{align}
Additionally, for all $\bbeta \in \mathbf{R}^{S+1}$:
\begin{align}
\frac{1}{n}\widehat{\bU}_n(\bbeta) &= \frac{1}{n}\bU_n(\bbeta) + O_P(n^{-\frac{1}{2}}),\label{eq:thQML1a}\\
\frac{1}{n}\widehat{\bF}_n(\bbeta) &= \frac{1}{n}\bF_n(\bbeta) + O_P(n^{-1/2}),\label{eq:thQML2} \\
\frac{1}{n}\widehat{\bR}_n(\bbeta) &= \frac{1}{n}\bR_n(\bbeta) + O_P(n^{-\frac{1}{2}}) \label{eq:thQML3}.
\end{align}
Moreover, we have as $n\to \infty$
\begin{align}
\E(\frac{1}{n}\widehat{\bU}_n(\bbeta))  \to \E(\frac{1}{n}\bU_n(\bbeta))  \label{eq:thQML3mean0},\\
\E(\frac{1}{n}\widehat{\bF}_n(\bbeta)) \to \E(\frac{1}{n}\bF_n(\bbeta))  \label{eq:thQML3mean1}, \\
\E(\frac{1}{n}\widehat{\bR}_n(\bbeta)) \to \E(\frac{1}{n}\bR_n(\bbeta))  \label{eq:thQML3mean2}.
\end{align}
Particularly,
\begin{align}
\E(\frac{1}{n}\widehat{\bR}_n(\bbeta_0)) \to 0 \label{eq:thQML3mean}\\
 \intertext{and}
\E(\frac{1}{n}\widehat{\bU}_n(\bbeta_0)) \to 0 \label{eq:thQML3meanaa}.
\end{align}
\end{prop}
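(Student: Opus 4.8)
The plan is to reduce every assertion to the elementary estimates collected in Proposition~\ref{lem:score1}, exploiting throughout that at $\bbeta_0$ the fitted residual equals the true error, $y_i-g(\eta_i(\bbeta_0))=\varepsilon_i$, and that the auxiliary functions $h$, $h'$, $g'$, $g''$ and $\psi(\cdot):=g'(\cdot)^2/\sigma^2(g(\cdot))$ together with $\psi'$ are all bounded by Assumption~\ref{assum3}. First I would record the generic device used for all displays. Every summand of $n^{-1}\widehat{\bU}_n$, $n^{-1}\widehat{\bF}_n$ and $n^{-1}\widehat{\bR}_n$ is a product of a bounded smooth function of $\widehat{\eta}_i=\widehat{\eta}_i(\bbeta)$, of entries $\widehat{X}_{ij}=X_i(\widehat{\tau}_{j-1})$, and of a residual factor. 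Writing $\widehat{X}_{ij}-X_{ij}=X_i(\widehat{\tau}_{j-1})-X_i(\tau_{j-1})$ and $\widehat{\eta}_i-\eta_i=\sum_r\beta_r(X_i(\widehat{\tau}_r)-X_i(\tau_r))$, a first-order Taylor expansion of each smooth factor in these increments produces three kinds of terms: (i) linear increments multiplied by a bounded function of $\eta_i$ and at most one fixed-location factor $X_{ij}=X_i(\tau_{j-1})$; (ii) the same carrying an extra $\varepsilon_i$; and (iii) quadratic remainders or terms carrying two fixed-location factors. Type (i) terms are controlled at rate $n^{-\min\{1,1/\kappa\}}$ by \eqref{lem:res4a}--\eqref{lem:res3}, type (ii) at rate $n^{-1}$ by \eqref{lem:res7}--\eqref{lem:res6}, and type (iii) at rate $n^{-1/2}$ by Cauchy--Schwarz together with \eqref{lem:res0} and \eqref{lem:res8} (the $O_P(1)$ factors needing only finite moments of the Gaussian $X_i$ and of $\varepsilon_i$, guaranteed by Assumption~\ref{assum3}a).

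For \eqref{eq:thQML1} I would evaluate at $\bbeta_0$ and split $y_i-g(\widehat{\eta}_i)=\varepsilon_i+(g(\eta_i)-g(\widehat{\eta}_i))$. Multiplying by $h(\widehat{\eta}_i)\widehat{\bX}_i$, the $\varepsilon_i$ part contributes only type (ii) and quadratic terms, hence $O_P(n^{-1})$, while $g(\eta_i)-g(\widehat{\eta}_i)=-g'(\eta_i)\sum_r\beta_r(X_i(\widehat{\tau}_r)-X_i(\tau_r))+O((\widehat{\eta}_i-\eta_i)^2)$ contributes type (i) terms, giving the sharp rate $O_P(n^{-\min\{1,1/\kappa\}})$. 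Since $\bU$ is linear in $\bX$, the only fixed-location factor present is a single $X_{ij}$, so no type (iii) two-factor term arises and the estimate is genuinely sharp. The same expansion performed at a general $\bbeta$, where $y_i-g(\eta_i(\bbeta))=g(\eta_i(\bbeta_0))-g(\eta_i(\bbeta))+\varepsilon_i$ has a bounded deterministic part, again yields only type (i) and (ii) contributions, hence \eqref{eq:thQML1a}. For \eqref{eq:thQML2} and \eqref{eq:thQML3} the matrices are quadratic in $\bX$, so the entries with $j,k>1$ contain a product $X_{ij}X_{ik}$ of two fixed-location factors; the Taylor term in $\psi'(\eta_i)$ (resp.\ $h'(\eta_i)$) multiplied by a single increment is then of type (iii) and is bounded by $n^{-1/2}$ via Cauchy--Schwarz and \eqref{lem:res0}, which is exactly the claimed rate.

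The mean convergences \eqref{eq:thQML3mean0}--\eqref{eq:thQML3mean2} I would obtain by upgrading the in-probability statements just proved to $L^1$ convergence. For each object the $\btau$-version is an average of i.i.d.\ terms and converges in probability to its (constant in $n$) mean by the weak law, and the $O_P$-displays show the $\widehat{\btau}$-version has the same probability limit; it then remains to establish uniform integrability. Here the main obstacle is that $\widehat{\tau}_r$ is data-dependent, so the summands of $n^{-1}\widehat{\bU}_n$ are not i.i.d.\ and the usual uniform-integrability arguments for i.i.d.\ averages do not apply directly. The remedy is to dominate uniformly over the random locations by replacing each $|X_i(\widehat{\tau}_r)|$ with $\sup_{t\in[a,b]}|X_i(t)|$, whose moments are all finite because $X_i$ is an a.s.\ continuous Gaussian process. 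The dominating quantity is then an average $n^{-1}\sum_i W_i$ of i.i.d.\ nonnegative terms with a finite $(1+\delta)$-moment (using boundedness of $h,h',\psi$ and the moment bound on $\varepsilon_i$), and Jensen's inequality gives $\E[(n^{-1}\sum_i W_i)^{1+\delta}]\le\E[W_1^{1+\delta}]<\infty$, so the families are bounded in $L^{1+\delta}$ and hence uniformly integrable; convergence in probability plus uniform integrability yields convergence of the means. Finally, \eqref{eq:thQML3mean} and \eqref{eq:thQML3meanaa} follow because at $\bbeta_0$ the $\btau$-version means vanish exactly: the coefficients $h'(\eta_i)\bX_i\bX_i^T$ and $h(\eta_i)\bX_i$ are functions of $X_i$ alone, and $\E(\varepsilon_i\mid X_i)=0$, so the tower property gives $\E(n^{-1}\bR_n(\bbeta_0))=0$ and $\E(n^{-1}\bU_n(\bbeta_0))=0$, and \eqref{eq:thQML3mean2} together with \eqref{eq:thQML3mean0} then pass these zeros to the hatted quantities.
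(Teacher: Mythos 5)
Your proposal follows essentially the same route as the paper's proof: write each hatted object as its $\btau$-counterpart plus a remainder, Taylor-expand the smooth factors in the increments $X_i(\widehat{\tau}_r)-X_i(\tau_r)$, invoke the rates of Proposition~\ref{lem:score1} for the linear terms with and without $\varepsilon_i$, fall back on Cauchy--Schwarz with \eqref{lem:res0} and \eqref{lem:res8} for quadratic and two-factor terms, and upgrade the in-probability statements to convergence of means via uniform integrability. Your three-type bookkeeping, the treatment of \eqref{eq:thQML1} and of \eqref{eq:thQML2}--\eqref{eq:thQML3}, the domination of $|X_i(\widehat{\tau}_r)|$ by $\sup_{t\in[a,b]}|X_i(t)|$ for the uniform-integrability step, and the final tower-property argument for \eqref{eq:thQML3mean} and \eqref{eq:thQML3meanaa} all match the paper's argument in substance.

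One step is misjustified, although the claimed bound survives. For \eqref{eq:thQML1a} you assert that $g(\eta_i(\bbeta_0))-g(\eta_i(\bbeta))$ is a ``bounded deterministic part'' and that consequently only type (i) and (ii) terms arise. Under Assumption~\ref{assum3} only $g'$ and $g''$ are bounded, not $g$ itself, so this difference is an unbounded \emph{random} quantity (Lipschitz-equivalent to $\sum_{r}(\beta_{r}^{(0)}-\beta_{r})X_i(\tau_r)$), and since it depends on two distinct linear predictors it cannot be absorbed into the bounded $f(\eta_i^*)$ required by \eqref{lem:res4a} or \eqref{lem:res3}. After Taylor-expanding $g$, the offending contributions take the form $n^{-1}\sum_i X_{ij}X_{il}\,h'(\xi_i)g'(\xi_i')\,(X_{ir}-\widehat{X}_{ir})$, i.e.\ they carry two fixed-location factors and are exactly your type (iii); this is precisely why the rate degrades from $n^{-\min\{1,1/\kappa\}}$ at $\bbeta_0$ to $n^{-1/2}$ at general $\bbeta$ --- if only types (i) and (ii) occurred, the sharper rate would persist for all $\bbeta$. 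The repair lies entirely within your own toolkit (Cauchy--Schwarz with \eqref{lem:res0}, as the paper does in its bounds \eqref{eq:Uhaha3}--\eqref{eq:Uhaha4}), but the classification as stated is wrong and obscures where the $n^{-1/2}$ comes from.
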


\bigskip

\begin{proof}[{\bf Proof of Proposition \ref{th:paraestQML}.}]
To ease notation we use $\bbeta_0 = (\beta_0^{(0)},\beta_1^{(0)}, \dots,\beta_S^{(0)})^T$ to denote the true parameter vector. For instance, the intercept is given by $\beta_0^{(0)}$, while $\beta_r^{(0)}$ is the coefficient for the $r$th point of impact. Similar we denote the entries of $\bbeta$ by $(\beta_0,\dots, \beta_S)$. Write
\begin{align}
\frac{1}{n}\widehat{\bU}_n(\bbeta)  = \frac{1}{n}\bU_n(\bbeta) +\mathbf{Rest}_n(\bbeta),\label{eq:QMLSTART0}
\end{align}
then $\mathbf{Rest}_n(\bbeta)$ can be decomposed into two parts:
{\small\begin{align}
\mathbf{Rest}_n(\bbeta)
		 &= \frac{1}{n}(\widehat{\bD}_n^T(\bbeta)\widehat{\bV}_n^{-1}(\bbeta)-\bD_n^T(\bbeta)\bV_n^{-1}(\bbeta))(\bY-\bmu(\bbeta)) - \frac{1}{n}\widehat{\bD}_n^T(\bbeta)\widehat{\bV}_n^{-1}(\bbeta)(\widehat{\bmu}_n(\bbeta)-\bmu_n(\bbeta)) \nonumber \\
			&= \mathbf{Rest}_{1}(\bbeta) + \mathbf{Rest}_{2}(\bbeta), \quad \text{say.} \label{eq:QMLRest1}
\end{align}}
The first summand $\mathbf{Rest}_{1}(\bbeta)$ is given by:
$$\mathbf{Rest}_{1}(\bbeta)=\frac{1}{n}(\widehat{\bD}_n^T(\bbeta)\widehat{\bV}_n^{-1}(\bbeta)-\bD_n^T(\bbeta)\bV_n^{-1}(\bbeta))(\bY_n-\bmu_n(\bbeta)).$$
The $j$th equation of $\mathbf{Rest}_{1}(\bbeta)$ can be written as
\begin{align}
Rest_{j,1}(\bbeta) &= \frac{1}{n}\sum_{i=1}^n (\frac{g'(\widehat{\eta}_i(\bbeta))}{\sigma^2(g(\widehat{\eta}_i(\bbeta)))} \widehat{X}_{ij}
-    \frac{g'(\eta_i(\bbeta))}{\sigma^2(g(\eta_i(\bbeta)))} X_{ij})(y_i-g(\eta_i(\bbeta))) \nonumber \\
				&=\frac{1}{n}\sum_{i=1}^n X_{ij}(\frac{g'(\widehat{\eta}_i(\bbeta))}{\sigma^2(g(\widehat{\eta}_i(\bbeta)))} -  \frac{g'(\eta_i(\bbeta))}{\sigma^2(g(\eta_i(\bbeta)))}) (y_i-g(\eta_i(\bbeta))) \nonumber \\
				&\quad + \frac{1}{n}\sum_{i=1}^n (\widehat{X}_{ij}-X_{ij})(\frac{g'(\widehat{\eta}_i(\bbeta))}{\sigma^2(g(\widehat{\eta}_i(\bbeta)))}) (y_i-g(\eta_i(\bbeta))) \nonumber \\
				&= R_{j,1,a}(\bbeta) + R_{j,1,b}(\bbeta), \quad \text{say.}  \label{eq:thasdda}
\end{align}

%

With $h(x) = g'(x)/\sigma^2(g(x))$, a Taylor expansion implies the existence of some some $\xi_{i,1}$ between $\widehat{\eta}_i(\bbeta)$ and $\eta_i(\bbeta)$ such that for $\bbeta = \bbeta_0$
\begin{align*}
R_{j,1,a}(\bbeta_0) & = \frac{1}{n}\sum_{i=1}^n X_{ij}(\frac{g'(\widehat{\eta}_i(\bbeta_0))}{\sigma^2(g(\widehat{\eta}_i(\bbeta_0)))} -  \frac{g'(\eta_i(\bbeta_0))}{\sigma^2(g(\eta_i(\bbeta_0)))}) (y_i-g(\eta_i(\bbeta_0)))\\
&= \sum_{r=2}^{S+1} \beta_{r-1}^{(0)} \frac{1}{n}\sum_{i=1}^n X_{ij}(\widehat{X}_{ir}-X_{ir})\varepsilon_i h'(\eta_i(\bbeta_0)) \\
&\quad  + \frac{1}{n}\sum_{i=1}^n X_{ij}\varepsilon_i h''(\xi_{i,1})/2 (\sum_{l=2}^{S+1} \beta_{l-1}^{(0)} (X_{il}-\widehat{X}_{il}))^2.
\end{align*}
Since $|h'(\cdot)| \leq M_h$ and $|h''(\cdot)| \leq M_h$, $R_{j,1,a}(\bbeta_0) = O_P(n^{-1})$ for $j=1,\dots,S+1$ follows immediately from \eqref{lem:res7}  and  \eqref{lem:res6}
together with the Cauchy-Schwarz inequalitiy and \eqref{lem:res8}.
At the same time it follows from similar arguments that for all $j=1,\dots,S+1$ we have
 $R_{j,1,b}(\bbeta_0) = \frac{1}{n}\sum_{i=1}^n (\widehat{X}_{ij}-X_{ij})h(\widehat{\eta}_i(\bbeta_0))\varepsilon_i = O_P(n^{-1})$.
The above arguments then imply:
\begin{align}
\mathbf{Rest}_{1}(\bbeta_0) = O_P(n^{-1}).\label{eq:thqml1eq1}
\end{align}
\bigskip
The $j$th equation of $\mathbf{Rest}_2(\bbeta)$ can be written as $Rest_{j,2}(\bbeta) = \frac{1}{n} \sum_{i=1}^n h(\widehat{\eta}_i(\bbeta))\widehat{X}_{ij}(g(\eta_i(\bbeta))-g(\widehat{\eta}_i(\bbeta)))$. Using again Taylor expansions together with assertions \eqref{lem:res4a}, \eqref{lem:res3} as well as the Cauchy-Schwarz inequality together with \eqref{lem:res8}, can now be used to conclude that for all $\bbeta$ and $j=1,\dots, S+1$ we have
\begin{align}
Rest_{j,2}(\bbeta) = O_P(n^{-\min\{1,1/\kappa\}})\label{eq:thqml1eq2}.
\end{align}
Assertion  \eqref{eq:thQML1} then follows from  \eqref{eq:thasdda}, \eqref{eq:thqml1eq1} and \eqref{eq:thqml1eq2}. Note that our assumptions in particular imply that $\mathbf{Rest}_1(\bbeta_0)$ and $\mathbf{Rest}_2(\bbeta_0)$ are uniform integrable. Additional to $\eqref{eq:thQML1}$, we thus have $\E(\mathbf{Rest}_n(\bbeta_0)) \to \bzero$ implying \eqref{eq:thQML3meanaa}, $\E(\widehat{\bU}_n(\bbeta_0)/n) \to \bzero$, since $\E(\bU_n(\bbeta_0)/n) = 0$.\\

In order to proof assertion \eqref{eq:thQML1a} suppose $\bbeta \neq \bbeta_0$ and note that we still have \eqref{eq:thqml1eq2}. However, $\mathbf{Rest}_{1}(\bbeta)$ needs a closer investigation. Its $j$th row can be written as
\begin{align*}
Rest_{j,1}(\bbeta) &= \frac{1}{n}\sum_{i=1}^n (\frac{g'(\widehat{\eta}_i(\bbeta))}{\sigma^2(g(\widehat{\eta}_i(\bbeta)))} \widehat{X}_{ij}
-    \frac{g'(\eta_i(\bbeta))}{\sigma^2(g(\eta_i(\bbeta)))} X_{ij})(y_i-g(\eta_i(\bbeta)))\\
				&=\frac{1}{n}\sum_{i=1}^n X_{ij}(h(\widehat{\eta}_i(\bbeta)) - h(\eta_i(\bbeta))) (y_i-g(\eta_i(\bbeta_0)))\\
				&\quad -\frac{1}{n}\sum_{i=1}^n X_{ij}(h(\widehat{\eta}_i(\bbeta)) - h(\eta_i(\bbeta))) (g(\eta_i(\bbeta)) - g(\eta_i(\bbeta_0)))\\
				&\quad + \frac{1}{n}\sum_{i=1}^n (\widehat{X}_{ij}-X_{ij})h(\widehat{\eta}_i(\bbeta)) (y_i-g(\eta_i(\bbeta_0)))\\
				&\quad - \frac{1}{n}\sum_{i=1}^n (\widehat{X}_{ij}-X_{ij})h(\widehat{\eta}_i(\bbeta)) (g(\eta_i(\bbeta))-g(\eta_i(\bbeta_0))).
\end{align*}
To obtain \eqref{eq:thQML1a} it is sufficient to use some rather conservative inequalities of each of the appearing terms. 
For instance, another Taylor expansion together with the Cauchy-Schwarz inequality and \eqref{lem:res0} now yield
\begin{align}
\frac{1}{n}\sum_{i=1}^n& X_{ij}(h(\widehat{\eta}_i(\bbeta)) - h(\eta_i(\bbeta))) (y_i-g(\eta_i(\bbeta_0)))  = O_P(n^{-\frac{1}{2}}). \label{eq:Uhaha1}
\end{align}
While the Cauchy-Schwarz inequalitiy together with \eqref{lem:res0} yields
\begin{align}
\frac{1}{n}\sum_{i=1}^n (\widehat{X}_{ij}-X_{ij})h(\widehat{\eta}_i(\bbeta))(y_i-g(\eta_i(\bbeta_0))) = O_P(n^{-\frac{1}{2}}). \label{eq:Uhaha2}
\end{align}
It follows from additional Taylor expansions that there exists a $\xi_{i,2}$ between $\widehat{\eta}_i(\bbeta)$ and $\eta_i(\bbeta)$ as well as some $\xi_{i,3}$ between $\eta_i(\bbeta)$ and $\eta_i(\bbeta_0)$ such that:
\begin{align*}
\frac{1}{n}\sum_{i=1}^n & X_{ij}(h(\widehat{\eta}_i(\bbeta)) - h(\eta_i(\bbeta))) (g(\eta_i(\bbeta)) - g(\eta_i(\bbeta_0)))\\
& = \sum_{r=2}^{S+1}\beta_{r-1} \sum_{l=1}^{S+1} (\beta_{l-1}^{(0)}- \beta_{l-1}) \frac{1}{n}\sum_{i=1}  X_{ij}(X_{ir}-\widehat{X}_{ir})X_{il}h'(\xi_{i,2})g'(\xi_{i,3}).
\end{align*}
Again, with the help of the Cauchy-Schwarz inequality together with \eqref{lem:res0} it can immediately seen that
\begin{align}
\frac{1}{n}\sum_{i=1}^n & X_{ij}(h(\widehat{\eta}_i(\bbeta)) - h(\eta_i(\bbeta))) (g(\eta_i(\bbeta)) - g(\eta_i(\bbeta_0))) = O_P(n^{-\frac{1}{2}}).\label{eq:Uhaha3}
\end{align}
Similar one may show that
\begin{align}
\frac{1}{n}\sum_{i=1}^n (\widehat{X}_{ij}-X_{ij})(\frac{g'(\widehat{\eta}_i(\bbeta))}{\sigma^2(g(\widehat{\eta}_i(\bbeta)))}) (g(\eta_i(\bbeta))-g(\eta_i(\bbeta_0)))= O_P(n^{-\frac{1}{2}}).\label{eq:Uhaha4}
\end{align}
Assertion \eqref{eq:thQML1a} then follows from \eqref{eq:thqml1eq2} and \eqref{eq:Uhaha1}--\eqref{eq:Uhaha4}.  \eqref{eq:thQML3mean0} follows again from a closer investigation of the existence and boundedness of moments of the involved remainder terms, leading to \eqref{eq:thqml1eq2}.

In order to proof \eqref{eq:thQML2}, note that the $(s+1) \times (s+1)$ matrix $\widehat{\bF}(\bbeta)=\widehat{\bD}^T(\bbeta)\widehat{\bV}^{-1}(\bbeta)\widehat{\bD}(\bbeta)$ may be written as

$$\frac{1}{n}\widehat{\bF}_n(\bbeta)
 = \frac{1}{n}\bF_n(\bbeta) +\mathbf{Rest}_n^{(F)}(\bbeta).$$
$\mathbf{Rest}_n^{(F)}(\bbeta)$ has a typical element $Rest_{jk}^{(F)}(\bbeta)$ which is given by
\begin{align}
Rest_{jk}^{(F)}(\bbeta) & = \frac{1}{n}\sum_{i=1}^n (\frac{g'(\widehat{\eta}_i(\bbeta))^2}{\sigma^2(g(\widehat{\eta}_i(\bbeta)))}\widehat{X}_{ij}\widehat{X}_{ik} - \frac{g'(\eta_i(\bbeta))^2}{\sigma^2(g(\eta_i(\bbeta)))}X_{ij}X_{ik})\nonumber  \\
     & = \frac{1}{n}\sum_{i=1}^n (\frac{g'(\widehat{\eta}_i(\bbeta))^2}{\sigma^2(g(\widehat{\eta}_i(\bbeta)))} - \frac{g'(\eta_i(\bbeta))^2}{\sigma^2(g(\eta_i(\bbeta)))})X_{ij}X_{ik} \label{eq:restQMLa} \\
		 &   \quad + \frac{1}{n}\sum_{i=1}^n \frac{g'(\widehat{\eta}_i(\bbeta))^2}{\sigma^2(g(\widehat{\eta}_i(\bbeta)))}(\widehat{X}_{ij}-X_{ij}) X_{ik}\label{eq:restQMLb}\\
		 &   \quad + \frac{1}{n}\sum_{i=1}^n \frac{g'(\widehat{\eta}_i(\bbeta))^2}{\sigma^2(g(\widehat{\eta}_i(\bbeta)))}(\widetilde{X}_{ik}-X_{ik}) X_{ij}\label{eq:restQMLc}\\
		 &   \quad + \frac{1}{n}\sum_{i=1}^n \frac{g'(\widehat{\eta}_i(\bbeta))^2}{\sigma^2(g(\widehat{\eta}_i(\bbeta)))}(\widehat{X}_{ik}-X_{ik})(\widehat{X}_{ij}-X_{ij}).\label{eq:restQMLd}
\end{align}
$Rest_{jk}^{(F)}(\bbeta)$ consists of the sum of four terms. We begin with \eqref{eq:restQMLa}.\\

\noindent Define $h_1(x) = g'(x)^2/\sigma^2(g(x))$ and note that $|h_1(x)|\leq M_{h_1}$ as well as $|h_1'(x)|\leq M_{h_1}$ for some constant $M_{h_1}<\infty$.
With the help of the Cauchy-Schwarz inequality and $\eqref{lem:res0}$, it follows from another Taylor expansion that there exists a $\xi_{i,4}$ between $\widehat{\eta}_i(\bbeta)$ and $\eta_i(\bbeta)$ such that:
\begin{align*}
|\frac{1}{n}\sum_{i=1}^n& (\frac{g'(\widehat{\eta}_i(\bbeta))^2}{\sigma^2(g(\widehat{\eta}_i(\bbeta)))} - \frac{g'(\eta_i(\bbeta))^2}{\sigma^2(g(\eta_i(\bbeta)))})X_{ij}X_{ik}|
= |\sum_{r=2}^{S+1} \beta_{r-1} \frac{1}{n}\sum_{i=1}^n (\widehat{X}_{ir}-X_{ir})X_{ij}X_{ik}h_1'(\xi_{i,4})|\\
&\leq \sum_{r=2}^{S+1} |\beta_{r-1}| \sqrt{\frac{1}{n}\sum_{i=1}^n (\widehat{X}_{ir}-X_{ir})^2}\sqrt{\frac{1}{n}\sum_{i=1}^n (X_{ij}X_{ik}h_1'(\xi_{i,4}))^2} = O_P(n^{-\frac{1}{2}}).
\end{align*}
On the other hand, the Cauchy-Schwarz inequality together with the boundedness $|h_1(x)|$ and \eqref{lem:res0} implies that each of the other terms \eqref{eq:restQMLb}--\eqref{eq:restQMLd} is $O_P(n^{-1/2})$.
Assertion \eqref{eq:thQML2} is then an immediate consequence. Moreover, since $h_1(x)$ is bounded, it can immediately be seen that $Rest_{jk}^{(F)}(\bbeta)$ is uniform integrable, providing additionally $\E(Rest_{jk}^{(F)}(\bbeta)) \to 0$. Assertion \eqref{eq:thQML3mean1} follows immediately.\\

In order so show \eqref{eq:thQML3}, note that
$\widehat{\bR}_n(\bbeta)/n = {\bR}_n(\bbeta)/n + \mathbf{Rest}_n^{(R)}(\bbeta)$.
 A typical entry $Rest^{(R)}_{jk}(\bbeta)$ of $\mathbf{Rest}_n^{(R)}(\bbeta)$ reads as
\begin{align}
Rest^{(R)}_{jk}(\bbeta) &= \frac{1}{n}\sum_{i=1}^n (h'(\widehat{\eta}_i(\bbeta))-h'(\eta_i(\bbeta)))X_{ij}X_{ik}(y_i-g(\eta_i(\bbeta))) \label{eq:h1}\\
&\qquad + \frac{1}{n}\sum_{i=1}^n h'(\widehat{\eta}_i(\bbeta))X_{ij}(\widehat{X}_{ik}-X_{ik})(y_i-g(\eta_i(\bbeta))) \label{eq:h2} \\
&\qquad + \frac{1}{n}\sum_{i=1}^n h'(\widehat{\eta}_i(\bbeta))(\widehat{X}_{ij}-X_{ij})X_{ik}(y_i-g(\eta_i(\bbeta))) \label{eq:h3} \\
&\qquad + \frac{1}{n}\sum_{i=1}^n h'(\widehat{\eta}_i(\bbeta))(\widehat{X}_{ij}-X_{ij})(\widehat{X}_{ik}-X_{ik})(y_i-g(\eta_i(\bbeta))) \label{eq:h4}\\
&\quad - \frac{1}{n}\sum_{i=1}^n h'(\widehat{\eta}_i(\bbeta)) \widehat{X}_{ij} \widehat{X}_{ik} (g(\widehat{\eta}_i(\bbeta))-g(\eta_i(\bbeta))). \label{eq:h5}
\end{align}
We will first show
\begin{align}
\frac{1}{n}\widehat{\bR}_n(\bbeta_0) &= \frac{1}{n}\bR_n(\bbeta_0) + O_P(n^{-\frac{1}{2}}). \label{eq:thQML3a}
\end{align}
For $\bbeta=\bbeta_0$, since $|h''(\cdot)|\leq M_h$, a Taylor expansion together with the Cauchy-Schwarz inequality and \eqref{lem:res0} yield  $\frac{1}{n}\sum_{i=1}^n (h'(\widehat{\eta}_i(\bbeta_0))-h'(\eta_i(\bbeta_0)))X_{ij}X_{ik}\varepsilon_i  = O_P(n^{-\frac{1}{2}})$.
Similarly each of the assertions \eqref{eq:h2}--\eqref{eq:h4} are $O_P(n^{-\frac{1}{2}})$
At the same time another Taylor expansion of \eqref{eq:h5} yields together with the Cauchy-Schwarz inequality and \eqref{lem:res0} for some $\xi_{i,5}$ between $\widehat{\eta}_i(\bbeta)$ and $\eta_i(\bbeta)$:
\begin{align*}
\frac{1}{n}\sum_{i=1}^n& h'(\widehat{\eta}_i(\bbeta_0)) \widehat{X}_{ij} \widehat{X}_{ik} (g(\widehat{\eta}_i(\bbeta_0))-g(\eta_i(\bbeta_0)))\\
&  = \sum_{r=2}^{S+1} \beta_{r-1} \frac{1}{n}\sum_{i=1}^n h'(\widehat{\eta}_i(\bbeta_0)) g'(\xi_{i,5}) \widehat{X}_{ij} \widehat{X}_{ik} (X_{ir}- \widehat{X}_{ir}) =  O_P(n^{-\frac{1}{2}}).
\end{align*}
We may conclude that
$$\widehat{\bR}_n(\bbeta_0) = \bR_n(\bbeta_0) + O_P(n^{-\frac{1}{2}}).$$
Moreover, our assumptions in particular imply that besides $Rest^{(R)}_{jk}(\bbeta_0)/n = O_P(n^{-\frac{1}{2}})$ we have $\E(Rest^{(R)}_{jk}(\bbeta_0)) \to 0,$ proving assertions \eqref{eq:thQML3a} and \eqref{eq:thQML3mean}.

\noindent Now suppose $\bbeta\neq \bbeta_0$ and take another look at \eqref{eq:h1}:
\begin{align*}
\frac{1}{n}\sum_{i=1}^n &(h'(\widehat{\eta}_i(\bbeta))-h'(\eta_i(\bbeta)))X_{ij}X_{ik}(y_i-g(\eta_i(\bbeta))) \\
&= \frac{1}{n}\sum_{i=1}^n (h'(\widehat{\eta}_i(\bbeta))-h'(\eta_i(\bbeta)))X_{ij}X_{ik}\varepsilon_i \\
 &\quad  -  \frac{1}{n}\sum_{i=1}^n (h'(\widehat{\eta}_i(\bbeta))-h'(\eta_i(\bbeta)))X_{ij}X_{ik} (g(\eta_i(\bbeta)) - g(\eta_i(\bbeta_0)))
\end{align*}
Similar arguments as before, together with $\E(\varepsilon_i^4)<\infty$, can now be used to show that $$\frac{1}{n}\sum_{i=1}^n (h'(\widehat{\eta}_i(\bbeta))-h'(\eta_i(\bbeta)))X_{ij}X_{ik}\varepsilon_i  = O_P(n^{-\frac{1}{2}}).$$
A Taylor expansion of $g(\eta_i(\bbeta))$ leads for some $\xi_{i,6}$ between $\eta_i(\bbeta)$ and $\eta_i(\bbeta_0)$ to
\begin{align*}
\frac{1}{n}\sum_{i=1}^n & (h'(\widehat{\eta}_i(\bbeta))-h'(\eta_i(\bbeta)))X_{ij}X_{ik} (g(\eta_i(\bbeta)) - g(\eta_i(\bbeta_0))) \\
& =\sum_{r=1}^{S+1} (\beta_{r-1}-\beta_{r-1}^{(0)}) \frac{1}{n}\sum_{i=1}^n  (h'(\widehat{\eta}_i(\bbeta))-h'(\eta_i(\bbeta)))X_{ij}X_{ik}X_{ir}g'(\xi_{i,6}).
\end{align*}
Another Taylor expansion of $h'(\widehat{\eta}_i(\bbeta))$ together with the Cauchy-Schwarz inequality and the boundedness of $|g'(x)|$ and $|h''(x)|$ leads for some $\xi_{i,7}$ between $\widehat{\eta}_i(\bbeta)$ and $\eta_i(\bbeta)$ to
\begin{align*}
\sum_{r=1}^{S+1} (\beta_{r-1}-\beta_{r-1}^{(0)}) \sum_{l=2}^{S+1}\beta_{l-1} \frac{1}{n}\sum_{i=1}^n & (X_{il}-\widehat{X}_{il})X_{ij}X_{ik}X_{ir}g'(\xi_{i,6})h''(\xi_{i,7}) = O_P(n^{-\frac{1}{2}}).
\end{align*}
With similar arguments $\eqref{eq:h5}$ and \eqref{eq:h2} are, for all $\bbeta$,  $O_P(n^{-\frac{1}{2}})$. \\
Considerations for \eqref{eq:h3}--\eqref{eq:h4} are parallel to the case $\eqref{eq:h2}$ assertion \eqref{eq:thQML3} follows immediately. \eqref{eq:thQML3mean2} follows again from a closer investigation of the existence and boundedness of the moments of the rest terms used in the derivations \eqref{eq:thQML3}.
\end{proof}


\bigskip

The proof of Theorem~\ref{th:paraestML1} consists roughly of two steps. In a first step asymptotic existence and consistency of our estimator $\widehat{\bbeta}$ is developed. In a second step we can then make use of the usual Taylor expansion of the estimation equation $\widehat{\bU}_n(\bbeta)$. With the help of Proposition \ref{th:paraestQML} asymptotic normality of our estimator will follow.

\bigskip

\begin{proof}[{\bf{Proof of Theorem~\ref{th:paraestML1}.}}]
For a $q_1\times q_2$ matrix $\mathbf{A}$ let $||\mathbf{A}|| = \sqrt{\sum_{i=1}^{q_1}\sum_{j=1}^{q_2} a_{ij}^2}$ its Frobenius norm. Moreover we denote by $\mathbf{A}^{1/2}$ ($\mathbf{A}^{T/2}$) the left (the corresponding right) square root of a positive definite matrix $\mathbf{A}$.

The proof generalizes the arguments used in Corollary 3 and Theorem 1 in \citeappendix{FK1985}.
For $\delta_1>0$ define the neighborhoods $$N_n(\delta_1) = \{\bbeta:||\widehat{\bF}_n^{1/2}(\bbeta_0)(\bbeta - \bbeta_0)|| \leq \delta_1 \},$$ and remember that with $h_1(x)=g'(x)^2/\sigma^2(g(x))$ we have:
$$\frac{1}{n}\widehat{\bF}_n(\bbeta) = \frac{1}{n}\sum_{i=1}^n h_1(\widehat{\eta_i}(\bbeta)) \widehat{\bX}_{i}\widehat{\bX}_{i}^T.$$
The $(j,k)$-element of this random matrix is given by $1/n\sum_{i=1}^nh_1(\widehat{\eta_i}(\bbeta)) \widehat{X}_{ij}\widehat{X}_{ik}$
and constitutes a triangular array of row-wise independent and identical distributed random variables.
Let $\widehat{\eta}(\bbeta)$, $\widehat{\bX}$ and $\varepsilon$ be generic copies of $\widehat{\eta_i}(\bbeta)$, $\widehat{\bX}_i$ and $\varepsilon_i$.
Since $h_1$ is bounded it is then easy to see that for any compact neighborhood $N$ around $\bbeta_0$ we have for all $p\geq1$:
\begin{align}
\E(\max_{\bbeta\in N}|h_1(\widehat{\eta}(\bbeta)) \widehat{X}_{j}\widehat{X}_{k}|^p)\leq M_{1}\label{eq:thm2pf1}
\end{align}
for some constant $M_{1}<\infty$, not depending on $n$. On the other hand the $(j,k)$-element  of $\widehat{\bR}_n(\bbeta)/n$ can be written as
\begin{align*}
 \frac{1}{n}\sum_{i=1}^n  h'(\widehat{\eta_i}(\bbeta))\widehat{X}_{ij}\widehat{X}_{ik}(g(\eta_i(\bbeta_0))-g(\widehat{\eta}_i(\bbeta))) + \frac{1}{n}\sum_{i=1}^n h'(\widehat{\eta_i}(\bbeta))\widehat{X}_{ij}\widehat{X}_{ik}\varepsilon_i.
\end{align*}
Using the boundedness of $g'$ and $h'$ it follows from a Taylor expansion that for all $p\geq 1$:
\begin{align}
\E(\max_{\bbeta \in N} |h'(\widehat{\eta}(\bbeta))\widehat{X}_{j}\widehat{X}_{k}(g(\eta(\bbeta_0))-g(\widehat{\eta}(\bbeta)))|^p)\leq  M_{2} \label{eq:thm2pf2}
\end{align}
for some constant $M_{2}<\infty$, not depending on $n$. While the Cauchy-Schwarz inequality together with the assumption $\E(\varepsilon^4)<\infty$ implies that for $1\leq p\leq 2$:
\begin{align}
\E(\max_{\bbeta \in N} |h'(\widehat{\eta}(\bbeta))\widehat{X}_{j}\widehat{X}_{k}\varepsilon|^p) \leq M_{3} \label{eq:thm2pf3}
\end{align}
for some constant $M_{3}<\infty$, not depending on $n$. By \eqref{eq:thm2pf1}, \eqref{eq:thm2pf2} and \eqref{eq:thm2pf3} a uniform law of large numbers for triangular arrays leads to
\begin{align}
\max_{\bbeta \in N}||\frac{1}{n}\widehat{\bF}_n(\bbeta)- \E(\widehat{\bF}(\bbeta))|| &\stackrel{p}{\to} 0, \label{eq:exist0}
\intertext{as well as}
\max_{\bbeta \in N}||\frac{1}{n}\widehat{\bR}_n(\bbeta)- \E(\widehat{\bR}(\bbeta))|| &\stackrel{p}{\to} 0. \label{eq:exist0a}
\end{align}
Moreover, by \eqref{eq:thm2pf1}, $\widehat{\bF}_n(\bbeta_0)/n$ converges a.s.~to
$\E(\widehat{\bF}(\bbeta_0))$, implying $\lambda_{min}\widehat{\bF}_n(\bbeta_0) \to \infty$ a.s., where $\lambda_{min} \mathbf{A}$ denotes the smallest eigenvalue of a matrix $\mathbf{A}$. Note that as a direct consequence the neighborhoods $N_n(\delta_1)$ shrink (a.s.) to $\bbeta_0$ for all $\delta_1>0$.
On the other hand, since by \eqref{eq:thQML3mean}, $\E(\widehat{\bR}(\bbeta_0)) \to 0$ and $\E(\widehat{\bR}(\bbeta))$ is continuous in $\bbeta$ we have for all $\epsilon>0$, with probability converging to $1$,
\begin{align*}
||\frac{1}{n}\widehat{\bR}_n(\bbeta) ||  \leq ||\frac{1}{n}\widehat{\bR}_n(\bbeta) - \E(\widehat{\bR}(\bbeta))|| + ||\E(\widehat{\bR}(\bbeta)) - \E(\widehat{\bR}(\bbeta_0))|| +  ||\E(\widehat{\bR}(\bbeta_0))|| \leq \epsilon
\end{align*}
if $\bbeta$ is sufficiently close to $\bbeta_0$.

The usual decomposition then yields for all $\epsilon>0$, with probability converging to $1$:
\begin{align*}
|| & - \frac{1}{n}\widehat{\bH}_n(\bbeta) - \frac{1}{n} \widehat{\bF}_n(\bbeta_0)|| \leq ||\frac{1}{n}\widehat{\bF}_n(\bbeta) - \frac{1}{n} \widehat{\bF}_n(\bbeta_0)|| +  ||\frac{1}{n}\widehat{\bR}_n(\bbeta)||  \\
& \leq  || \frac{1}{n}\widehat{\bF}_n(\bbeta) - \E(\widehat{\bF}(\bbeta))|| + ||\E(\widehat{\bF}(\bbeta_0)) - \frac{1}{n} \widehat{\bF}_n(\bbeta_0)||  + ||\E(\widehat{\bF}(\bbeta)) - \E(\widehat{\bF}(\bbeta_0))|| \\
& + ||\frac{1}{n}\widehat{\bR}_n(\bbeta)|| \leq \epsilon,
\end{align*}
if $\bbeta$ is sufficiently close to $\bbeta_0$. Similar to the proof of Corollary 3 in \citeappendix{FK1985} we may infer from this inequality that for all $\delta_1>0$ we have
$$\max_{\bbeta \in N_n(\delta_1)}|| \widehat{\boldsymbol{\mathcal{V}}}_n(\bbeta) - \bI_{S+1}|| \stackrel{p}{\to} 0,$$
where  $\widehat{\boldsymbol{\mathcal{V}}}_n(\bbeta) = -\widehat{\bF}_n^{-1/2}(\bbeta_0) \widehat{\bH}_n(\bbeta)\widehat{\bF}_n^{-T/2}(\bbeta_0)$ and $\bI_p$ denotes the $p\times p$ identity matrix. Again, following the arguments in \citeappendix[cf. Section 4.1]{FK1985}, this in particular implies that for all $\delta_1>0$ we have
\begin{align}
P(-\widehat{\bH}_n(\bbeta) - c \widehat{\bF}_n(\bbeta_0) \text{ positive semidefinite for all $\bbeta \in N_n(\delta_1)$}) \to 1 \label{eq:QMLexistscstar}
\end{align}
for some constant $c>0$, $c$ independent of $\delta_1$.\\
Let $\widehat{Q}_n(\bbeta)$ be the quasi-likelihood function evaluated at the points of impact estimates $\widehat{\tau}_r$. We aim to show that for any $\zeta >0$ there exists a $\delta_1>0$ such that
\begin{align}
P(\widehat{Q}_n(\bbeta) - \widehat{Q}_n(\bbeta_0)< 0 \text{ for all } \bbeta \in \partial N_n(\delta_1)) \geq 1-\zeta \label{eq:QMLEexist}
\end{align}
for all sufficiently large $n$. Note that the event $\widehat{Q}_n(\bbeta) - \widehat{Q}_n(\bbeta_0)< 0 \text{ for all } \bbeta \in \partial N_n(\delta_1)$ implies that the there is a maximum inside of $N_n(\delta_1)$. Moreover, since $\widehat{\bR}_n(\bbeta)/n$ is asymptotical negligible in a neighborhood around $\bbeta_0$, and at the same time $\widehat{\bF}_n(\bbeta)/n$ converges in probability to a positive definite matrix, the maximum will, with probability converging to 1,  be uniquely determined as a zero of the score function $\widehat{\bU}_n(\bbeta)$.  \eqref{eq:QMLEexist} then in particular implies that $P(\widehat{\bU}_n(\widehat{\bbeta}) = 0) \to 1$ and, together with the observation that $N_n(\delta_1)$ shrink (a.s.) to $\bbeta_0$, it implies consistency of our estimator, i.e. $\widehat{\bbeta} \stackrel{p}{\to} \bbeta_0$. \\
A Taylor expansion yields, with $\boldsymbol{\lambda}= \widehat{\bF}_n^{T/2}(\bbeta_0)(\bbeta-\bbeta_0)/\delta_1$, for  some $\widetilde{\bbeta}$ on the line segment between $\bbeta$ and $\bbeta_0$:
$$\widehat{Q}_n(\bbeta) -\widehat{Q}_n(\bbeta_0) = \delta_1 \boldsymbol{\lambda}'\widehat{\bF}_n^{-1/2}(\bbeta_0)\widehat{\bU}_n(\bbeta_0) - \delta_1^2 \boldsymbol{\lambda}'\widehat{\boldsymbol{\mathcal{V}}}_n(\widetilde{\bbeta})\boldsymbol{\lambda}/2, \quad \boldsymbol{\lambda}'\boldsymbol{\lambda}=1.$$

Using for the next few lines the spectral norm one may argue similarly to (3.9) in \citeappendix{FK1985}, that it suffices to show that for any $\zeta>0$ we have
$$P(||\widehat{\bF}_n^{-1/2}(\bbeta_0)\widehat{\bU}_n(\bbeta_0)||<\delta_1^2\lambda_{\min}^2\widehat{\boldsymbol{\mathcal{V}}}_n(\widetilde{\bbeta})/4) \geq 1-\zeta.$$
Note that \eqref{eq:QMLexistscstar} implies that with probability converging to one we have
$$\lambda_{\min}^2\widehat{\boldsymbol{\mathcal{V}}}_n(\widetilde{\bbeta})\geq c^2.$$
Hence, with probability converging to one:
$$P(||\widehat{\bF}_n^{-1/2}(\bbeta_0)\widehat{\bU}_n(\bbeta_0)||^2<\delta_1^2\lambda_{\min}^2\widehat{\boldsymbol{\mathcal{V}}}_n(\widetilde{\bbeta})/4)
\geq P(||\widehat{\bF}_n^{-1/2}(\bbeta_0)\widehat{\bU}_n(\bbeta_0)||^2<(\delta_1 c)^2/4).$$
At the same time \eqref{eq:thQML1} and \eqref{eq:thQML2} can be used to derive
$$\widehat{\bF}_n^{-1/2}(\bbeta_0)\widehat{\bU}_n(\bbeta_0) = (\frac{1}{n}\widehat{\bF}_n)^{-1/2}(\bbeta_0)\frac{1}{\sqrt{n}}\widehat{\bU}_n(\bbeta_0) = \bF_n^{-1/2}(\bbeta_0)\bU_n(\bbeta_0) + o_P(1).$$
By the continuous mapping theorem  we then have for all $\epsilon>0$ with probability converging to~$1$
\begin{align}
||\widehat{\bF}_n^{-1/2}(\bbeta_0)\widehat{\bU}_n(\bbeta_0)||^2 \leq  ||\bF_n^{-1/2}(\bbeta_0)\bU_n(\bbeta_0)||^2 + \epsilon.\label{eq:thmparablablabla}
\end{align}
Since $\E(||\bF_n^{-1/2}(\bbeta_0)\bU_n(\bbeta_0)||^2)=p$, we may conclude from \eqref{eq:thmparablablabla} that with probability converging to $1$ we have for all sufficiently large $n$:
\begin{align*}
P(||\widehat{\bF}_n^{-1/2}(\bbeta_0)\widehat{\bU}_n(\bbeta_0)||^2<\delta_1^2\lambda_{\min}^2\widehat{\boldsymbol{\mathcal{V}}}_n(\widetilde{\bbeta})/4)
&\geq P(||\widehat{\bF}_n^{-1/2}(\bbeta_0)\widehat{\bU}_n(\bbeta_0)||^2<(\delta_1 c)^2/4)\\
&\geq P(||\bF_n^{-1/2}(\bbeta_0)\bU_n(\bbeta_0)||^2<(\delta_1 c)^2/8) \\
&\geq 1- 8p/(\delta_1 c)^2 =  1 - \zeta, \end{align*}
yielding \eqref{eq:QMLEexist} for $\delta_1^2 = 8p/(c^2\zeta)$. Asymptotic existence and consistency of our estimator are immediate consequences.\bigskip

Remember that we have
\begin{align*}\frac{1}{n}\widehat{\bH}_n(\bbeta)& = \frac{1}{n} \frac{\partial\widehat{\bU}_n(\bbeta)}{\partial \bbeta} \\
& =  -\frac{1}{n}\widehat{\bD}_n(\bbeta)^T\widehat{\bV}_n(\bbeta)^{-1}\widehat{\bD}_n(\bbeta)  + \frac{1}{n}\sum_{i=1}^n h'(\widehat{\eta}_i) \widehat{\bX}_i \widehat{\bX}_i^T(y_i-g(\widehat{\eta}_i(\bbeta)))\\
&= -\frac{1}{n}\widehat{\bF}_n(\bbeta) + \frac{1}{n}\widehat{\bR}_n(\bbeta).
\end{align*}
Now, a Taylor expansion of $\widehat{\bU}_n(\widehat{\bbeta})$ around $\bbeta_0$ yields for some $\widetilde{\bbeta}$ between $\widehat{\bbeta}$ and $\bbeta_0$ (note that $\widetilde{\bbeta}$ obviously differs from element to element):
{\small \begin{align*}
 \widehat{\bU}_n(\bbeta_0) & =\widehat{\bU}_n(\widehat{\bbeta}) - \widehat{\bH}_n(\widetilde{\bbeta})(\widehat{\bbeta} - \bbeta_0) = - \widehat{\bH}_n(\widetilde{\bbeta})(\widehat{\bbeta} - \bbeta_0)\\
& = -\Big(- \widehat{\bF}_n(\bbeta_0)(\widehat{\bbeta}-\bbeta_0) + (\widehat{\bH}_n(\widetilde{\bbeta})-\widehat{\bH}_n(\bbeta_0))(\widehat{\bbeta} - \bbeta_0) + (\widehat{\bH}_n(\bbeta_0) +\widehat{\bF}_n(\bbeta_0))(\widehat{\bbeta} - \bbeta_0) \Big).
\end{align*}}
With some straightforward calculations this leads to
\begin{align}
\begin{split}\sqrt{n}(\widehat{\bbeta}-\bbeta_0) & = \Bigg(\mathbf{I}_{S+1} - \Big(\frac{1}{n}\widehat{\bF}_n(\bbeta_0)\Big)^{-1}\Big(\frac{\widehat{\bH}_n(\widetilde{\bbeta})-\widehat{\bH}_n(\bbeta_0)}{n}\Big)  \\
& \quad - \Big(\frac{1}{n}\widehat{\bF}_n(\bbeta_0)\big)^{-1}\Big(\frac{\widehat{\bH}_n(\bbeta_0)+\widehat{\bF}_n(\bbeta_0)}{n}\Big) \Bigg)^{-1} \Big(\frac{\widehat{\bF}_n(\bbeta_0)}{n}\Big)^{-1}\frac{\widehat{\bU}_n(\bbeta_0)}{\sqrt{n}}. \label{eq:thm111}
\end{split}
\end{align}

By \eqref{eq:thQML2} and \eqref{eq:thQML3} in Proposition \ref{th:paraestQML} we have
$$\frac{\widehat{\bH}_n(\bbeta_0)+\widehat{\bF}_n(\bbeta_0)}{n} = \frac{\bH_n(\bbeta_0)+\bF_n(\bbeta_0)}{n} + o_P(1).$$
But since $h'$ is bounded we have for all $\bbeta \in \mathbf{R}^{S+1}$
\begin{align*}
\E( ||\frac{\bH_n(\bbeta)+\bF_n(\bbeta)}{n}||_2^2) &= \sum_{j=1}^{S+1}\sum_{k=1}^{S+1} \E\Big( (\frac{1}{n}\sum_{i=1}^n \varepsilon_i X_{ij}X_{ik}h'(\eta_i(\bbeta)))^2\Big)=O(n^{-1}),
\end{align*}
implying $(\bH_n(\bbeta_0)+\bF_n(\bbeta_0))/n = O_P(n^{-\frac{1}{2}})$ and hence also
$$||\Big(\frac{1}{n}\widehat{\bF}_n(\bbeta_0)\Big)^{-1}\Big(\frac{\widehat{\bH}(\bbeta_0)+\widehat{\bF}(\bbeta_0)}{n}\Big)||_2 = o_P(1).$$
By using \eqref{eq:exist0} and \eqref{eq:exist0a} we can conclude that for any compact neighborhood $N$ around $\bbeta_0$:
\begin{align}
\max_{\bbeta \in N} ||\frac{1}{n}\widehat{\bH}_n(\bbeta)- \E(\widehat{\bH}(\bbeta))|| &\stackrel{p}{\to} 0. \label{eq:exist0bb}
\end{align}
Obviously, $\widetilde{\bbeta}$ is consistent for $\bbeta_0$, since $\widehat{\bbeta}$ is consistent for $\bbeta_0$. We may conclude that $\widetilde{\bbeta}$ will be in some compact neighborhood $N$ around $\bbeta_0$ with probability converging to $1$. Moreover, since $\E(\widehat{\bH}(\bbeta))$ is continuous in $\bbeta$, \eqref{eq:exist0bb} then implies that additionally we have
\begin{align}
\max_{\widetilde{\bbeta} \in N} ||\frac{1}{n}\widehat{\bH}_n(\widetilde{\bbeta})- \E(\widehat{\bH}(\bbeta_0))|| = o_P(1).\label{eq:exists0bbb}
\end{align}

The above arguments can then be used to show that
\begin{align*}
||\Big(\frac{\widehat{\bH}_n(\widetilde{\bbeta})-\widehat{\bH}_n(\bbeta_0)}{n}\Big)|| \leq
||\frac{\widehat{\bH}_n(\widetilde{\bbeta})}{n} - \E(\widehat{\bH}(\bbeta_0))|| + ||\frac{\widehat{\bH}_n(\bbeta_0)}{n} - \E(\widehat{\bH}(\bbeta_0))|| = o_P(1).
\end{align*}
Hence it also holds that
$$||\Big(\frac{1}{n}\widehat{\bF}_n(\bbeta_0)\Big)^{-1}\Big(\frac{\widehat{\bH}_n(\widetilde{\bbeta})-\widehat{\bH}_n(\bbeta_0)}{n}\Big)|| = o_P(1).$$
The asymptotic prevailing term in \eqref{eq:thm111} can then be seen as
\begin{align}
\sqrt{n}(\widehat{\bbeta}-\bbeta) \sim \Bigg(\frac{\widehat{\bF}_n(\bbeta_0)}{n}\Bigg)^{-1}\frac{\widehat{\bU}_n(\bbeta_0)}{\sqrt{n}}.\label{eq:finalctdwn}
\end{align}
It is easy to see that our assumptions on $h(x)=g'(x)/\sigma^2(g(x))$ imply that {\small$\E(||\bF_n(\bbeta_0)/n||^2) = O(\frac{1}{n})$}. Together with \eqref{eq:thQML2} we thus have $\widehat{\bF}_n(\bbeta_0)/n = \bF_n(\bbeta_0)/n + O_P(n^{-\frac{1}{2}}) = \E(\bF(\bbeta_0)) + O_P(n^{-\frac{1}{2}})$ as well as $(\widehat{\bF}_n(\bbeta_0)/n)^{-1} = (\E(\bF(\bbeta_0)))^{-1}+ O_P(n^{-\frac{1}{2}})$.

On the other hand, the Lindeberg-L\'{e}vy central limit theorem implies that $\frac{1}{\sqrt{n}}U(\bbeta_0) \stackrel{d}{\to} N(\bzero, \E(\bF(\bbeta_0)))$. Together with \eqref{eq:thQML1} we then obtain
\begin{align*}
\Bigg(\frac{\widehat{\bF}(\bbeta_0)}{n}\Bigg)^{-1}\frac{\widehat{\bU}(\bbeta_0)}{\sqrt{n}}
&\stackrel{d}{\to} N(\bzero, (\E(\bF(\bbeta_0))^{-1}),
\end{align*}
which proves the theorem.
\end{proof}

\bigskip

\begin{corollary}\label{cor:uniform}
Under the assumptions of Section~\ref{sec:PES}. For any compact neighborhood $N$ around $\bbeta_0$ we have
\begin{align}
\max_{\bbeta \in N}||\frac{1}{n}\widehat{\bU}_n(\bbeta) - \frac{1}{n}\bU_n(\bbeta)|| &= o_P(1), \label{eq:unif1}\\
\max_{\bbeta \in N}||\frac{1}{n}\widehat{\bF}_n(\bbeta) - \frac{1}{n}\bF_n(\bbeta)|| &= o_P(1), \label{eq:unif2}\\
\max_{\bbeta \in N}||\frac{1}{n}\widehat{\bR}_n(\bbeta) - \frac{1}{n}\bR_n(\bbeta)|| &= o_P(1), \label{eq:unif3}\\
\intertext{as well as}
\max_{\bbeta \in N}||\frac{1}{n}\widehat{\bH}_n(\bbeta) - \frac{1}{n}\bH_n(\bbeta)|| &= o_P(1). \label{eq:unif4}
\end{align}
\end{corollary}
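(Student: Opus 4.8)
The plan is to show that the uniform (in $\bbeta$) bounds follow from exactly the same estimates already established in the proof of Proposition~\ref{th:paraestQML}, once one observes that those estimates factor through quantities that do not depend on $\bbeta$. Concretely, every remainder term appearing in the decompositions of $n^{-1}(\widehat{\bU}_n-\bU_n)$, $n^{-1}(\widehat{\bF}_n-\bF_n)$ and $n^{-1}(\widehat{\bR}_n-\bR_n)$ is, after one Taylor expansion and one application of the Cauchy-Schwarz inequality, bounded by a product of three factors: (i) a uniform bound on a derivative of $g$, of $h=g'/\sigma^2(g)$ or of $h_1=(g')^2/\sigma^2(g)$, which is finite and independent of $\bbeta$ by Assumption~\ref{assum3}; (ii) finitely many coefficients $\beta_r$, bounded by $\sup_{\bbeta\in N}\max_r|\beta_r|<\infty$ because $N$ is compact; and (iii) one of the $\bbeta$-free empirical moments controlled in Proposition~\ref{lem:score1}, such as $n^{-1}\sum_i(X_i(\widehat{\tau}_r)-X_i(\tau_r))^2=O_P(n^{-1})$. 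Since factors (i) and (iii) are genuinely independent of $\bbeta$ and factor (ii) is bounded over $N$, the resulting upper bound is a deterministic multiple of an $O_P(n^{-1/2})$ quantity not depending on $\bbeta$, so the supremum over $\bbeta\in N$ inherits the same $O_P(n^{-1/2})=o_P(1)$ order.

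I would carry this out term by term, mirroring the proof of Proposition~\ref{th:paraestQML}. For \eqref{eq:unif2} I would revisit the four summands \eqref{eq:restQMLa}--\eqref{eq:restQMLd} of $Rest_{jk}^{(F)}(\bbeta)$: in \eqref{eq:restQMLa} the Taylor expansion of $h_1(\widehat{\eta}_i(\bbeta))-h_1(\eta_i(\bbeta))$ produces the factor $h_1'(\xi_{i})\sum_r\beta_r(X_i(\widehat{\tau}_r)-X_i(\tau_r))$, and since $|h_1'|\le M_{h_1}$ for every argument, Cauchy-Schwarz with \eqref{lem:res0} yields the bound $M_{h_1}(\sup_{\bbeta\in N}\sum_r|\beta_r|)\,O_P(n^{-1/2})$, while the summands \eqref{eq:restQMLb}--\eqref{eq:restQMLd} are controlled by the boundedness of $h_1$ together with \eqref{lem:res0}. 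For \eqref{eq:unif1} and \eqref{eq:unif3} I would reuse the corresponding decompositions; in particular the terms carrying the stochastic error $\varepsilon_i$ (arising e.g.\ in \eqref{eq:h1}) are bounded by first extracting the uniformly bounded factor $h''(\xi_{i})$ and then applying Cauchy-Schwarz to reduce to $n^{-1}\sum_i(X_i(\widehat{\tau}_r)-X_i(\tau_r))^2=O_P(n^{-1})$ and the $\bbeta$-free moment $n^{-1}\sum_i X_{ij}^2X_{ik}^2\varepsilon_i^2=O_P(1)$, finite under Assumption~\ref{assum3}. Finally, the identity $n^{-1}\widehat{\bH}_n(\bbeta)=-n^{-1}\widehat{\bF}_n(\bbeta)+n^{-1}\widehat{\bR}_n(\bbeta)$ and its analogue for $\bH_n$ give \eqref{eq:unif4} at once from \eqref{eq:unif2} and \eqref{eq:unif3} by the triangle inequality.

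The point requiring the most care — and the only genuine difference from Proposition~\ref{th:paraestQML} — is verifying that the uniformity is in fact ``free,'' i.e.\ that no empirical-process maximal inequality in $\bbeta$ is needed. This hinges on the observation that after the Cauchy-Schwarz step every stochastic factor is stripped of its $\bbeta$-dependence: the intermediate Taylor points $\xi_i$ enter only inside derivatives that are uniformly bounded on their entire domain, so replacing $|h_1'(\xi_{i}(\bbeta))|\le M_{h_1}$ by its supremum over $\bbeta$ costs nothing, and the coefficients $\beta_r$ contribute only the finite constant $\sup_{\bbeta\in N}\max_r|\beta_r|$. I would therefore make explicit, for each remainder term, the chain ``Taylor expansion $\Rightarrow$ uniformly bounded derivative $\times$ bounded coefficients $\times$ Cauchy-Schwarz $\Rightarrow$ $\bbeta$-free $O_P(n^{-1/2})$ bound,'' and conclude that the maximum over the compact neighbourhood $N$ of each of the four differences is $o_P(1)$.
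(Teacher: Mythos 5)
Your argument is correct, but it takes a genuinely different route from the paper's. The paper proves each of \eqref{eq:unif1}--\eqref{eq:unif3} by inserting the expectations: it bounds $\|\tfrac1n\widehat{\bU}_n(\bbeta)-\tfrac1n\bU_n(\bbeta)\|$ by $\|\tfrac1n\widehat{\bU}_n(\bbeta)-\E(\widehat{\bU}(\bbeta))\|+\|\tfrac1n\bU_n(\bbeta)-\E(\bU(\bbeta))\|+\|\E(\widehat{\bU}(\bbeta))-\E(\bU(\bbeta))\|$, controls the first two terms with a uniform law of large numbers for triangular arrays (after verifying moment bounds for the maxima over $N$), and upgrades the pointwise convergence $\E(\widehat{\bU}(\bbeta))\to\E(\bU(\bbeta))$ from Proposition~\ref{th:paraestQML} to uniform convergence by exhibiting an $n$-free Lipschitz constant on the compact set $N$ (equicontinuity); the same scheme is repeated for $\widehat{\bF}_n$ and $\widehat{\bR}_n$, and \eqref{eq:unif4} follows, as in your write-up, from the identity $\tfrac1n\widehat{\bH}_n=-\tfrac1n\widehat{\bF}_n+\tfrac1n\widehat{\bR}_n$. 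You instead bound the raw difference directly, observing that after one Taylor expansion and one Cauchy--Schwarz step every remainder term factors as a deterministic function of $\bbeta$ that is bounded on $N$ (derivatives of $g$, $h$, $h_1$ evaluated at intermediate points, times finitely many coefficients $\beta_r$) multiplied by a $\bbeta$-free empirical quantity controlled by Proposition~\ref{lem:score1}, so that the supremum over $N$ costs only a finite constant. This is a legitimate and more elementary argument: it avoids both the uniform LLN and the equicontinuity step, and it even yields the sharper uniform rate $O_P(n^{-1/2})$ rather than mere $o_P(1)$. What it does require, and what you correctly flag as the crux, is a term-by-term verification that no $\bbeta$-dependence survives inside the stochastic factors (e.g.\ that quantities such as $\tfrac1n\sum_i X_{ij}^2X_{ik}^2\varepsilon_i^2$ are $O_P(1)$ under Assumption~\ref{assum3}a and Gaussianity of $X$); the paper's route trades that bookkeeping for two standard off-the-shelf tools. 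Either proof establishes the corollary.
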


\bigskip

\begin{proof}[{\textbf{Proof of Corollary~\ref{cor:uniform}:}}]
The proofs of Assertions \ref{eq:unif1}-\ref{eq:unif3} are very similar. We begin with the proof of Assertion~\ref{eq:unif1}.
Using again generic copies of $\widehat{\eta}_i$, $\widehat{\bX}_i$ and $y_i$ we have with $h(x) = g'(x)/\sigma^2(g(x))$:
$$\E(n^{-1}\widehat{\bU}_n(\bbeta))= \E(\widehat{\bU}(\bbeta)) = \E(h(\widehat{\eta})\widehat{\bX}(y-g(\widehat{\eta}(\bbeta)))).$$
The $j$-th equation of $\widehat{\bU}(\bbeta)$ can be rewritten as
\begin{align*}
h(\widehat{\eta}(\bbeta))\widehat{X}_{j}(y-g(\widehat{\eta}(\bbeta))) & = h(\widehat{\eta}(\bbeta))\widehat{X}_{j}\epsilon  +  h(\widehat{\eta}(\bbeta))\widehat{X}_{j}(g(\eta(\bbeta_0))-g(\widehat{\eta}(\bbeta))).
\end{align*}
Choose an arbitrary compact neighborhood $N$ around $\bbeta_0$. Since $|h(\cdot)|\leq M_h$, $\E(\epsilon^4)<\infty$ and $|g'(\cdot)|<M_g$, it follows from a Taylor expansion that for $1\leq p \leq 2$ we have
\begin{align}
\E(\max_{\bbeta \in N}| h(\widehat{\eta}(\bbeta))\widehat{X}_{j}(y-g(\widehat{\eta}(\bbeta)))|^p) \leq M_{1,1}\label{eq:coruni1}
\end{align}
for a constant $0\leq M_{1,1}<\infty$ not depending on $n$. By \eqref{eq:coruni1} we can apply a uniform law of large numbers for triangular arrays to conclude that
\begin{align}
\max_{\bbeta \in N} || \frac{1}{n}\widehat{\bU}_n(\bbeta) - \E(\widehat{\bU}(\bbeta))|| = o_P(1). \label{eq:coruni2}
\end{align}
Similar considerations lead to
\begin{align}
\max_{\bbeta \in N} || \frac{1}{n}\bU_n(\bbeta) - \E(\bU(\bbeta))|| = o_P(1). \label{eq:coruni3}
\end{align}
By the usual decomposition we have
\begin{align}
\begin{split}
||\frac{1}{n} \widehat{\bU}_n(\bbeta) - \frac{1}{n}\bU_n(\bbeta)|| &\leq ||\frac{1}{n}\widehat{\bU}_n(\bbeta) - \E(\widehat{\bU}(\bbeta))|| \\
&\quad + ||\frac{1}{n}\bU_n(\bbeta) - \E(\bU(\bbeta))|| + ||\E(\widehat{\bU}(\bbeta) ) - \E(\bU(\bbeta))||.
\end{split}\label{eq:coruni3a}
\end{align}
Assertion \eqref{eq:unif1} then follows immediately from \eqref{eq:coruni2}, \eqref{eq:coruni3}, if we can show that $\E(\widehat{\bU}(\bbeta))$ converges uniformly to $\E(\bU(\bbeta))$ and not only pointwise as given in \eqref{eq:thQML3mean0}.\\

It is well known that pointwise convergence of a sequence of functions $f_n$  on a compact set $N$ can be extended to uniform convergence over $N$, if $f_n$ is an equicontinuous sequence. Remember that a sufficient condition for equicontinuity is that there exists a common Lipschitz constant. We aim to show that there exists a constant $L<\infty$, where $L$ does not depend on $n$, such that for all $\bbeta$ and $\widetilde{\bbeta}$ in $N$ we have $||\E(\widehat{\bU}(\bbeta)) - \E(\widehat{\bU}(\widetilde{\bbeta}))|| \leq L ||\bbeta-\widetilde{\bbeta}||.$
Remember that the $j$th equation of $\E(\widehat{\bU}(\bbeta))$ is given by
$\E(h(\widehat{\eta}(\bbeta))\widehat{X}_{j}(y-g(\widehat{\eta}(\bbeta))))$.
Note that
\begin{align}
\begin{split}h&(\widehat{\eta}(\bbeta)) \widehat{X}_{j}(y-g(\widehat{\eta}(\bbeta))) -
h(\widehat{\eta}(\widetilde{\bbeta}))\widehat{X}_{j}(y-g(\widehat{\eta}(\widetilde{\bbeta}))))\\
& = \widehat{X}_j y (h(\widehat{\eta}(\bbeta))-h(\widehat{\eta}(\widetilde{\bbeta})))\\
& \quad + \widehat{X}_{j}h(\widehat{\eta}(\widetilde{\bbeta}))(g(\widehat{\eta}(\widetilde{\bbeta}))-g(\widehat{\eta}(\bbeta)) )\\
& \quad - \widehat{X}_{j}(h(\widehat{\eta}(\widetilde{\bbeta})) -  h(\widehat{\eta}(\bbeta)))g(\widehat{\eta}(\bbeta)).
\end{split}\label{eq:mvmcc}
\end{align}
Since for a $J\times K$ matrix  $A$ we have $||A||= \sqrt{\sum_{j,k}a_{jk}^2} \leq \sum_{j,k} |a_{jk}|$ and since $h$, $h'$ and $g'$ are bounded and $N$ is compact, our assumptions on $X$ then in particularly imply together with \eqref{eq:mvmcc} that there exists a constant $L<\infty$, which is in particular independent of $n$ such that for all $\bbeta$ and $\widetilde{\bbeta} \in N$
\begin{align}
||\E(\widehat{\bU}(\bbeta)) - \E(\widehat{\bU}(\widetilde{\bbeta}))|| \leq L ||\bbeta-\widetilde{\bbeta}||.\label{eq:splittige1}
\end{align}
Assertion \eqref{eq:unif1} then follows from \eqref{eq:coruni3a} together with \eqref{eq:coruni2}, \eqref{eq:coruni3}, \eqref{eq:thQML3mean0} and \eqref{eq:splittige1}.

In order to proof Assertion \eqref{eq:unif2} we can use the decomposition
\begin{align*}
||\frac{1}{n}\widehat{\bF}_n(\bbeta) - \frac{1}{n}\bF_n(\bbeta)|| &\leq ||\frac{1}{n}\widehat{\bF}_n(\bbeta) - \E(\widehat{\bF}(\bbeta))|| \\
&\quad + ||\frac{1}{n}\bF_n(\bbeta) - \E(\bF(\bbeta))|| + ||\E(\widehat{\bF}(\bbeta) ) - \E(\bF(\bbeta))||.
\end{align*}
Let $h_1(x)= g'(x)^2/\sigma^2(g(x))$ and remember that $|h_1(\cdot)|\leq M_{h_1}$ for some constant $M_{h_1}<\infty$.
It immediately follows that for any compact neighborhood $N$ arround $\bbeta_0$ we have
\begin{align}
\E(\max_{\bbeta\in N}||h_1(\eta_i(\bbeta))\bX_i\bX_i^T||) <\infty. \label{eq:lalalalala}
\end{align}
By $\eqref{eq:lalalalala}$ we can apply a uniform law of large numbers to derive
\begin{align}
\max_{\bbeta \in N}|| \frac{1}{n}\bF_n(\bbeta) - \E(\bF(\bbeta))|| = o_P(1). \label{eq:lalalalala3}
\end{align}
Assertion \ref{eq:unif2} then follows immediately from \eqref{eq:lalalalala3}, \eqref{eq:exist0} and
\eqref{eq:thQML3mean1}, which holds uniformly on $N$ by similar steps as above by noting that a typical element 
of $\widehat{\bF}(\bbeta)-\widehat{\bF}(\widetilde{\bbeta})$ can be written as $\widehat{X}_j\widehat{X}_k(h_1(\widehat{\eta}(\bbeta))- h_1(\widehat{\eta}(\widetilde{\bbeta})))$. \\
Assertion \ref{eq:unif3} can be proved in a similar manner using \eqref{eq:thQML3mean0},  \eqref{eq:exist0bb} and the uniform convergence of $||\bR_n(\bbeta)/n- \E(\bR(\bbeta))||$, which is easy to establish using  $h_1(\eta(\bbeta))\bX \bX^T(y-g(\eta(\bbeta))) = h_1(\eta(\bbeta)) \bX \bX^T\varepsilon + h_1(\eta(\bbeta)) \bX \bX^T(g(\eta(\bbeta_0))-g(\eta(\bbeta)))$ and the assumption that $|h_1'(\cdot)|\leq M_{h_1}$.\\

To proof assertion \eqref{eq:unif4}, remember that $\widehat{\bH}(\bbeta)/n = -\widehat{\bF}_n(\bbeta)/n + \widehat{\bR}_n(\bbeta)/n$, assertion \eqref{eq:unif4} follows then immediately from \eqref{eq:unif2} and \eqref{eq:unif3}.
\end{proof}
\bibliographystyleappendix{Chicago}
\bibliographyappendix{bibfileappendix}

\end{document}